\documentclass[11pt,oneside,english]{amsart}
\usepackage[latin9]{inputenc}
\usepackage{color}
\usepackage{babel}
\usepackage{amstext}
\usepackage{amsthm}
\usepackage{amssymb}
\usepackage[a4paper]{geometry}
\usepackage{setspace}
\usepackage[pdfusetitle,
 bookmarks=true,bookmarksnumbered=false,bookmarksopen=false,
 breaklinks=false,pdfborder={0 0 1},backref=false,colorlinks=false]
 {hyperref}

\makeatletter
\numberwithin{equation}{section}
\numberwithin{figure}{section}
\theoremstyle{plain}
\newtheorem{thm}{\protect\theoremname}[section]
\theoremstyle{plain}
\newtheorem{prop}[thm]{\protect\propositionname}
\theoremstyle{plain}
\newtheorem{conjecture}[thm]{\protect\conjecturename}
\theoremstyle{plain}
\newtheorem{lem}[thm]{\protect\lemmaname}
\theoremstyle{plain}
\newtheorem{cor}[thm]{\protect\corollaryname}
\theoremstyle{definition}
\newtheorem{defn}[thm]{\protect\definitionname}
\theoremstyle{remark}
\newtheorem*{acknowledgement*}{\protect\acknowledgementname}
\theoremstyle{remark}
\newtheorem*{org*}{\protect\orgname}
\theoremstyle{remark}
\newtheorem{rem}[thm]{\protect\remarkname}
\theoremstyle{definition}
\newtheorem{example}[thm]{\protect\examplename}
\theoremstyle{plain}
\newtheorem{fact}[thm]{\protect\factname}
\theoremstyle{plain}
\newtheorem{question}[thm]{\protect\questionname}

\usepackage{babel}
\usepackage{enumitem}

\providecommand{\corollaryname}{Corollary}
\providecommand{\definitionname}{Definition}
\providecommand{\examplename}{Example}
\providecommand{\lemmaname}{Lemma}
\providecommand{\propositionname}{Proposition}
\providecommand{\remarkname}{Remark}
\providecommand{\theoremname}{Theorem}

\usepackage{ytableau}

\providecommand{\factname}{Fact}

\providecommand{\questionname}{Question}

\makeatother
\providecommand{\acknowledgementname}{Acknowledgement}
\providecommand{\orgname}{Organization}
\providecommand{\conjecturename}{Conjecture}
\providecommand{\corollaryname}{Corollary}
\providecommand{\definitionname}{Definition}
\providecommand{\examplename}{Example}
\providecommand{\factname}{Fact}
\providecommand{\lemmaname}{Lemma}
\providecommand{\propositionname}{Proposition}
\providecommand{\questionname}{Question}
\providecommand{\remarkname}{Remark}
\providecommand{\theoremname}{Theorem}

\allowdisplaybreaks[1]

\begin{document}
\title{Anti-concentration of polynomials: $L^{p}$ balls and symmetric measures}
\author{Itay Glazer}
\address{Department of Mathematics, Technion - Israel Institute of Technology,
Haifa, Israel}
\email{glazer@technion.ac.il}
\urladdr{https://sites.google.com/view/itay-glazer}
\author{Dan Mikulincer}
\address{Department of Mathematics, University of Washington, Seattle, WA}
\email{danmiku@uw.edu}
\urladdr{https://sites.math.washington.edu/\textasciitilde danmiku}
\begin{abstract}
We begin with the observation, based on previous results, that dimension-free lower bounds on the variance of a polynomial under a log-concave measure yield dimension-free small-ball and Fourier decay estimates. Motivated by this, we establish variance bounds for polynomials on log-concave random vectors beyond the classical setting of product measures.
First, we consider the family of uniform measures on the $n$-dimensional isotropic $L^{p}$ balls.
We show that for a degree-$d$ homogeneous polynomial $f=\sum_{I}a_{I}x^{I}$,
with $\sum_{I}a_{I}^{2}=1$, the only obstruction to a dimension-free
lower bound on its variance occurs when $p=d$ is an even integer and the coefficients
of $f$ are close to those of $\frac{1}{\sqrt{n}}\left\Vert x\right\Vert _{p}^{p}$.
Second, we consider general isotropic log-concave measures that
are invariant under coordinate permutations and reflections, and determine the minimal variance for quadratic and cubic polynomials.
These variance bounds lead to new dimension-free anti-concentration results in both settings, addressing a natural extension of a question posed by Carbery and Wright.
\end{abstract}

\maketitle
\pagenumbering{arabic}

\global\long\def\N{\mathbb{N}}%
\global\long\def\R{\mathbb{\mathbb{R}}}%
\global\long\def\Z{\mathbb{Z}}%
\global\long\def\val{\mathbb{\mathrm{val}}}%
\global\long\def\Qp{\mathbb{Q}_{p}}%
\global\long\def\Zp{\mathbb{\mathbb{Z}}_{p}}%
\global\long\def\ac{\mathbb{\mathrm{ac}}}%
\global\long\def\C{\mathbb{\mathbb{C}}}%
\global\long\def\Q{\mathbb{\mathbb{Q}}}%
\global\long\def\supp{\mathbb{\mathrm{supp}}}%
\global\long\def\VF{\mathbb{\mathrm{VF}}}%
\global\long\def\RF{\mathbb{\mathrm{RF}}}%
\global\long\def\VG{\mathbb{\mathrm{VG}}}%
\global\long\def\spec{\operatorname{Spec}}%
\global\long\def\Ldp{\mathbb{\mathcal{L}_{\mathrm{DP}}}}%
\global\long\def\sgn{\mathrm{sgn}}%
\global\long\def\id{\mathrm{Id}}%
\global\long\def\Sym{\mathrm{Sym}}%
\global\long\def\Vol{\mathrm{Vol}}%
\global\long\def\cyc{\mathrm{cyc}}%
\global\long\def\U{\mathrm{U}}%
\global\long\def\SU{\mathrm{SU}}%
\global\long\def\Wg{\mathrm{Wg}}%
\global\long\def\E{\mathbb{E}}%
\global\long\def\Irr{\mathrm{Irr}}%
\global\long\def\P{\mathbb{P}}%
\global\long\def\bh{\mathbf{h}}%
\global\long\def\Span{\operatorname{Span}}%
\global\long\def\pr{\operatorname{pr}}%
\global\long\def\sgn{\operatorname{sgn}}%
\global\long\def\sG{\mathsf{G}}%
\global\long\def\sW{\mathsf{W}}%
\global\long\def\sX{\mathsf{X}}%
\global\long\def\sY{\mathsf{Y}}%
\global\long\def\sZ{\mathsf{Z}}%
\global\long\def\sH{\mathsf{H}}%
\global\long\def\sV{\mathsf{V}}%
\global\long\def\sT{\mathsf{T}}%
\global\long\def\v{\mathsf{v}}%
\global\long\def\d{\mathsf{d}}%
\global\long\def\tr{\operatorname{tr}}%
\global\long\def\lct{\operatorname{lct}}%
\global\long\def\fraku{\mathfrak{u}}%
\global\long\def\Rep{\mathrm{Rep}}%
\global\long\def\sgn{\mathrm{sgn}}%
\global\long\def\Ind{\mathrm{Ind}}%
\raggedbottom 

\section{Introduction}

Anti-concentration inequalities capture the idea that a non-constant
function of an appropriate high-dimensional random vector cannot place
too much mass in a small interval. For linear functionals,
such estimates are classical, going back to Littlewood-Offord theory
\cite{littlewood1938number,erdos1945lemma} and its subsequent refinements
\cite{rogozin1961increase,halasz1977estimates,tao2009inverse}. For nonlinear functions, and in particular polynomials, the picture is
more delicate. In this work, we focus on polynomials for which we clarify several notions of anti-concentration appearing in the literature and investigate their relationships.

Our starting point is the observation that for well-behaved high-dimensional measures, notably log-concave measures, these notions become quantitatively equivalent. Building on this, we establish
anti-concentration bounds that go beyond the traditionally
studied product measures. Our results apply to symmetric log-concave measures, with the strongest statements obtained for uniform measures on $L^p$ balls.

\subsection{Different notions of anti-concentration}
Let $\mu$ be a probability measure on $\R^n$, let $X\sim \mu$, and let $f:\R^n \to \R$. We consider three complementary frameworks for formalizing anti-concentration, corresponding to the following types of estimates:  

\begin{itemize}
\item \textbf{\emph{Variance bounds:}}
\begin{equation}
\mathrm{Var}_{\mu}(f):=\int f^{2}d\mu-\left(\int fd\mu\right)^{2}\ge c.\label{eq:variancebound}
\end{equation}
\item \textbf{\emph{Small-ball estimates:}} 
\begin{equation}
	\sup\limits_{a\in\R}\P\left(|f(X)-a|\le\varepsilon\right) \leq C\varepsilon^\alpha.\label{eq:smallball}
\end{equation}

\item \textbf{\emph{Fourier decay estimates:}}
\begin{equation}
\left|\int\limits_{\R^n}e^{\mathrm{i}tf}d\mu\right|:=\left|\mathcal{F}(f_{*}\mu)(t)\right| 
\leq C'\frac{1}{|t|^\beta}.\label{eq:FourierDecay}
\end{equation} 
\end{itemize}
Above $c,C,C',\alpha,\beta>0$ may depend on the data, \eqref{eq:smallball} should hold for arbitrarily small $\varepsilon>0$ and \eqref{eq:FourierDecay} should hold for large $|t|.$

When comparing these three notions, it is easy to observe that small-ball
estimates \eqref{eq:smallball} yield variance bounds, simply by choosing
$a=\int fd\mu$ and applying Chebyshev's inequality 
\begin{equation} \label{eq:cheby}
	\mathrm{Var}_{\mu}(f) \geq \varepsilon^2\P\left(\left|f(X)-a\right|\geq\varepsilon\right) = \varepsilon^2\left(1-\P\left(\left|f(X)-a\right|<\varepsilon\right)\right)
	.
\end{equation}
Additionally, Esseen's inequality \cite{esseen1966kolmogorv} allows to establish small-ball estimates through bounds on the Fourier transform, as in \eqref{eq:FourierDecay} and using  $|\mathcal{F}(f_*\mu)(t)|\leq 1$\footnote{In the special case $\beta = 1$ we need to add a logarithmic factor $\log(\frac{1}{\varepsilon})$},
\begin{equation} \label{eq:berry}
\sup\limits_{a\in\R}\P\left(|f(X)-a|\le\varepsilon\right) \leq \varepsilon\int_{-\frac{2\pi}{\varepsilon}}^{\frac{2\pi}{\varepsilon}}|\mathcal{F}(f_*\mu)(t)|dt \leq \varepsilon\int_{-\frac{2\pi}{\varepsilon}}^{\frac{2\pi}{\varepsilon}}\min\left(1,\frac{C'}{|t|^\beta}\right)dt \leq C''\varepsilon^{\min(1,\beta)}.
\end{equation}
In summary: $\text{\emph{Fourier decay estimates}}\implies\text{\emph{small-ball estimates }}\implies\text{\emph{variance bounds}}.$

On the other hand, in general, the reverse implications cannot hold with reasonable quantitative estimates.
For an atomic $\mu$, its small-ball probabilities will remain bounded away from $0$ even for small $\varepsilon$. As for Fourier estimates, take $X\sim\mu$
to be the uniform measure on the Cantor set. Bounds on the Hausdorff
dimension imply that $\mu$ has small-ball probabilities scaling as 
$\varepsilon^{\frac{\log(3)}{\log(2)}}$, while
a calculation shows that $\left|\mathcal{F}(\mu)\right|$ decays only logarithmically.

The upshot of the above discussion is that some structural assumptions
are needed in order to further relate the three notions. Log-concavity
is one such natural condition. We call $\mu$ \emph{log-concave} if it is
absolutely continuous and $\frac{d\mu}{dx}=e^{-\varphi}dx$, for some
convex $\varphi$. This condition rules out both types of pathologies
mentioned above, while still encompassing many examples of interest
that arise in high-dimensional probability and convex geometry \cite{artstein2015asymptotic,artstein2021asymptotic},
see $\mathsection$\ref{sec:disc} for further discussion. The starting point
of this current work is the following observation, based on \cite{kosov2018fractional,kosov2025oscillatory}, that under log-concavity
the above definitions turn out to be equivalent for polynomial functions. 
\begin{prop}
\label{thm:equivs} Let $X\sim\mu$ be a log-concave measure on $\R^{n}$
and let $f:\R^{n}\to\R$ be a polynomial of degree $d\geq 2$. Then the following
conditions are equivalent: 
\begin{enumerate}
\item \textbf{$f_{*}\mu$ has a variance bound}$~~\mathrm{Var}_{\mu}(f)\geq C_1$, for some $C_1>0$. 
\item \textbf{$f_{*}\mu$ has small-ball estimates}$~ ~\sup\limits_{a\in\R}\P\left(|f(X)-a|\leq\varepsilon\right)\leq C_2\varepsilon^{\frac{1}{d}}$, for some $C_2>0$. 
\item \textbf{The Fourier coefficients of $f_{*}\mu$ decay} at a rate of $|\mathcal{F}(f_{*}\mu)(t)|\leq\frac{C_3}{|t|^{\frac{1}{d}}}$, for some $C_3>0$.
\end{enumerate}
where the constants $C_1,C_2,C_3$ depend on each other and on $d$ only.  
\end{prop}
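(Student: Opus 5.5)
The plan is to close the cycle of implications (3)$\Rightarrow$(2)$\Rightarrow$(1)$\Rightarrow$(3). The first two implications are essentially the discussion preceding the statement. For (3)$\Rightarrow$(2), Esseen's inequality \eqref{eq:berry} with $\beta=\frac{1}{d}\le\frac12$ gives $\sup_a\P(|f(X)-a|\le\varepsilon)\le C\varepsilon\int_{-2\pi/\varepsilon}^{2\pi/\varepsilon}\min(1,C_3|t|^{-1/d})dt\le C_2\varepsilon^{1/d}$. Here $\beta<1$, so no logarithmic loss occurs. For (2)$\Rightarrow$(1), Chebyshev as in \eqref{eq:cheby} with $a=\int f d\mu$ and $\varepsilon$ chosen so that $C_2\varepsilon^{1/d}=\frac12$ gives $\mathrm{Var}_\mu(f)\ge\frac12(2C_2)^{-2d}$. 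All constants depend only on $C_3$ (resp.\ $C_2$) and $d$.

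The substance is (1)$\Rightarrow$(3), and I would do it in two steps, each invoking a known dimension-free result. First, I would go from the variance bound to small-ball estimates via the Carbery--Wright inequality for log-concave measures:
\[
\P(|g(X)|\le\varepsilon)\le C d\,\varepsilon^{1/d}\bigl(\E g(X)^2\bigr)^{-1/(2d)}
\]
for polynomials $g$ of degree $\le d$. I would apply it to $g=f-a$ for arbitrary $a$. Since $\E(f-a)^2\ge\mathrm{Var}_\mu(f)\ge C_1$, this yields (2) uniformly in $a$.

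Second, I would upgrade to Fourier decay. For this I plan to use the oscillatory-integral estimates of Kosov \cite{kosov2018fractional,kosov2025oscillatory}. These state that for a log-concave $\mu$ and a degree-$d$ polynomial $f$, $|\int e^{\mathrm{i}tf}d\mu|\le C(d)\,(|t|^{2}\mathrm{Var}_\mu(f))^{-1/(2d)}$, or an equivalent form. Such a bound gives (3) immediately with $C_3=C(d)C_1^{-1/(2d)}$.

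If I had to prove the Fourier bound myself, my first approach would be to localize to one-dimensional lines, using the fact that log-concavity is preserved under restriction and marginals. I would then apply a van der Corput-type estimate for $\int e^{\mathrm{i}tp(s)}\rho(s)ds$, with $p$ a univariate polynomial and $\rho$ a log-concave density. This estimate is controlled by the small-ball behaviour of $p$ under $\rho$, via integration by parts along sublevel sets of $p'$, using that $\rho$ has bounded variation in a dimension-free sense. Finally I would average over lines using a needle-decomposition or localization lemma to transfer the variance lower bound.

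The main obstacle is keeping this last step dimension-free. A naive van der Corput argument loses factors depending on the coefficients, not on the variance, so the reduction must be phrased purely in terms of the distribution of $f(X)$. This is exactly what Kosov's fractional-smoothness results provide. I would therefore rely on them as a black box, checking only that their constants depend on $d$ alone after normalization by $\mathrm{Var}_\mu(f)$.
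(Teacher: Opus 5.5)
Your proposal is correct and follows the paper's proof: (3)$\Rightarrow$(2)$\Rightarrow$(1) via Esseen and Chebyshev, and (1)$\Rightarrow$(3) by citing Kosov's estimate. The paper cites that estimate in the form $|\int h'\circ f\,d\mu|\le C_d\|h'\|_\infty^{1-1/d}\mathrm{Var}_\mu(f)^{-1/(2d)}$ for $\|h\|_\infty\le 1$, and applying it to $h=\pm\sin(t\cdot),\pm\cos(t\cdot)$ gives exactly your bound $C_d|t|^{-1/d}\mathrm{Var}_\mu(f)^{-1/(2d)}$. Your intermediate Carbery--Wright step is harmless but unnecessary, since the Fourier decay follows directly from the variance bound.
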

\begin{proof}
	$(3)\implies(2)\implies(1)$ follows from \eqref{eq:cheby} and \eqref{eq:berry}. It is left to show $(1) \implies (3)$, which follows from \cite[Theorem 3.5]{kosov2018fractional}, according to which for every smooth function $h:\R \to \R$ with  $\|h\|_{\infty} \leq 1$ and every degree-$d$ polynomial $f$,
	$$\left|\int h' \circ f d\mu\right| \leq \frac{C_d\|h'\|^{1-\frac{1}{d}}_{\infty}}{\mathrm{Var}_\mu(f)^{\frac{1}{2d}}},$$ whenever $\mu$ is log-concave.
	The implication follows by applying this bound to $h_1(x) = \pm\sin(tx)$ and $h_2(x)=\pm\cos(tx)$, since $e^{\mathrm{i}tx} = \cos(tx) + \mathrm{i}\sin(tx).$
\end{proof}

One of the key points in Proposition \ref{thm:equivs} is that the bounds
are \textbf{dimension-free}. This suggests a practical strategy
for establishing strong anti-concentration bounds; rather than attempting
to directly control the decay of the Fourier coefficients, a hard problem in general, one may
instead focus on proving a lower bound on the variance, an $L^{2}$
computation.  

As a warm-up, and to demonstrate the usefulness of Proposition \ref{thm:equivs}, we employ
it for product measures. Below we say that a measure $\mu$ is \emph{isotropic} if it is centered and its covariance matrix is the identity. Further, if $f(x) = \sum\limits_{|I|\leq d} \alpha_Ix^I$ is a degree-$d$ polynomial, where we use the standard multi-index notation, define $\mathrm{coeff}_{d}(f) := \sqrt{\sum_{|I|=d}\alpha_I^2}$. With this notation, the result \cite[Theorem 1]{GM22} states that
for any degree-$d$ polynomial $f$ and any isotropic log-concave
measure $X\sim\mu$ with \textbf{independent coordinates},
\begin{equation}
	\mathrm{Var}_{\mu}(f)\geq c_{d}\cdot\mathrm{coeff}_{d}^{2}(f).\label{eq:ourvarbound}
\end{equation}
Proposition \ref{thm:equivs} immediately implies: 
\begin{prop}
	\label{thm:indepence} For $\mu$ an isotropic log-concave product measure on $\R^n$ and $f:\R^n\to\R$ a degree-$d$ polynomial with $\mathrm{coeff}_d(f) = 1$, the Fourier coefficients
	of $f_{*}\mu$ decay as
	\[
	\left|\int\limits_{\R^n}e^{\mathrm{i}tf(x)}d\mu\right|\leq\frac{C_d}{|t|^{\frac{1}{d}}},
	\]
	where $C_d>0$ is a constant depending only on $d$.
\end{prop}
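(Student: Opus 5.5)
The plan is to combine the variance bound \eqref{eq:ourvarbound} from \cite[Theorem 1]{GM22} with the implication $(1)\implies(3)$ of Proposition \ref{thm:equivs}, keeping track of constants so that they depend only on $d$. The point to verify is that this implication is quantitative: the constant $C_3$ depends only on $d$ and on the variance lower bound $C_1$, and not on $n$ or on the particular log-concave measure.

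First, let $\mu$ be an isotropic log-concave product measure and $f$ a degree-$d$ polynomial with $\mathrm{coeff}_d(f)=1$. By \eqref{eq:ourvarbound}, $\mathrm{Var}_\mu(f)\geq c_d$, where $c_d>0$ depends only on $d$. Next we apply \cite[Theorem 3.5]{kosov2018fractional}, as in the proof of Proposition \ref{thm:equivs}. For fixed $t\neq 0$ take $h(x)=\sin(tx)/\,$ or $h(x)=-\cos(tx)$, so that $\|h\|_\infty\le 1$. Then $h'(x)=t\cos(tx)$, respectively $t\sin(tx)$, and $\|h'\|_\infty\le|t|$. The theorem gives
\[
|t|\left|\int \cos(tf)\,d\mu\right|\le \frac{C_d|t|^{1-\frac1d}}{\mathrm{Var}_\mu(f)^{\frac{1}{2d}}}\le C_d c_d^{-\frac{1}{2d}}|t|^{1-\frac1d},
\]
and the same estimate holds for the sine part. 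Dividing by $|t|$ and combining the real and imaginary parts yields
\[
\left|\int e^{\mathrm{i}tf}\,d\mu\right|\le 2C_dc_d^{-\frac1{2d}}|t|^{-\frac1d}.
\]
This is the claim with constant $2C_dc_d^{-1/(2d)}$.

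The only potential obstacle is confirming that the constant in \cite[Theorem 3.5]{kosov2018fractional} is uniform over all log-concave measures in every dimension and depends only on $d$. That uniformity is what makes the bound dimension-free, and I take it as given, since the paper already uses it in the same form in the proof of Proposition \ref{thm:equivs}.

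A minor issue is the degenerate case of small $|t|$, which needs no separate treatment. The bound holds for all $t\neq0$, and for $|t|\le 1$ it is in any case implied by the trivial estimate $|\mathcal F|\le 1$ once the constant is taken to be at least $1$.
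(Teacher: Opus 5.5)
Your proposal is correct and is essentially the paper's argument: the paper obtains this proposition by combining the variance bound $\mathrm{Var}_{\mu}(f)\geq c_d$ from \cite[Theorem 1]{GM22} with the implication $(1)\implies(3)$ of Proposition \ref{thm:equivs}. That implication is itself Kosov's estimate applied to $\pm\sin(tx)$ and $\pm\cos(tx)$, exactly as you do, so your explicit constant $2C_d c_d^{-1/(2d)}$ is a faithful unpacking of the paper's one-line deduction.
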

Proposition \ref{thm:indepence} appeared before in \cite{kosov2025oscillatory}
where it was also noted it provides a positive answer to a question posed
by Carbery and Wright about the uniform measure on $[-1,1]^{n}$. Crucially though, Proposition \ref{thm:equivs} makes no assumption on
independence, which suggests that Proposition \ref{thm:indepence} could
hold in much greater generality, which is our aim in this work.
\subsection{Main results: Fourier decay of log-concave measures}
To go beyond the independence assumption, we first consider $\mu_{n,p}$, the uniform measure on $B_{n,p}$, the $n$-dimensional isotropic $L^p$ ball. Concretely, 
$
B_{n,p}:=\{x\in\R^{n}:\ \|x\|_p\leq R_{n,p}\},
$
where $\|x\|_{p}^{p}=\sum|x_{i}|^{p}$ and $R_{n,p}$ is chosen to ensure the measure is isotropic, see \eqref{eq:isotropic radius}. According to \cite[Theorem 2]{GM22}
for any degree-$d$ homogeneous polynomial, 
\begin{align} \label{eq:l2norm}
\int f^{2}d\mu_{n,p}\geq C_{d}\cdot\mathrm{coeff}_d(f)^{2}.
\end{align}
However, \eqref{eq:l2norm} falls short of fitting the general machinery of Proposition
\ref{thm:equivs}; it bounds the $L^{2}$ norm, rather than the variance.
Unfortunately, this cannot be improved in general, since for $f_p(x)=\frac{1}{\sqrt{n}}\|x\|_{p}^{p}$,
$\mathrm{Var}_{\mu_{n,p}}(f_p)=o(1),$ as shown in \cite{GM22}.

In our first main result, we show that $f_p$ is essentially the only bad example. In light of Proposition \ref{thm:equivs} we are able to precisely characterize all homogeneous polynomials which admit dimension-independent decay of Fourier coefficients, in terms of the distance from the polynomial $f_p$ using the inner product induced by $\mathrm{coeff}_d$. That is, for $f(x) = \sum\limits_{|I|= d} \alpha_Ix^I, g(x) = \sum\limits_{|I|= d} \beta_Ix^I$ we write $\langle f,g\rangle = \sum\limits_I \alpha_I\beta_I$.
\begin{thm}
\label{thm:lpball} Let $n\in\N$, $p\geq1$, and let $f:\R^{n}\to\R$
be a degree-$d$ homogeneous polynomial with $\mathrm{coeff}_{d}(f)=1$.
Then, there exists a constant $c_{p,d}>0$, depending only on $p$
and $d$ such that $\mathrm{Var}_{\mu_{n,p}}(f)\geq c_{p,d}$ in the
following cases: 
\begin{enumerate}
\item Either $p\notin 2\N$ or $d\neq p$. 
\item If $d=p$ is even, and $|\langle f,f_p\rangle| \leq c_{p} $,  for some $c_{p}<1$, where $f_p(x)=\frac{1}{\sqrt{n}}\|x\|_{p}^{p}$. 
\end{enumerate}
Consequently, in those cases, for every $t \in \R$,
\[
\left|\int\limits_{\R^n}e^{\mathrm{i}tf(x)}d\mu_{n,p}\right|\leq\frac{C_{p,d}}{|t|^{\frac{1}{d}}},
\]
for some constant $C_{p,d}>0$, only depending on $p$ and
$d$. 
\end{thm}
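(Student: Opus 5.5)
The Fourier decay statement follows immediately from Proposition \ref{thm:equivs}, since $\mu_{n,p}$ is log-concave. So the whole task is the variance lower bound. The approach is to use the classical probabilistic representation of the uniform measure on the $L^p$ ball (Barthe--Gu\'edon--Mendelson--Naor). If $Y=(Y_1,\dots,Y_n)$ has i.i.d.\ coordinates with density proportional to $e^{-|t|^p}$, and $Z$ is an independent exponential variable, then $X= Y/(\|Y\|_p^p+Z)^{1/p}$, suitably rescaled by $R_{n,p}$, is distributed as $\mu_{n,p}$. Moreover, the radial part $S=(\|Y\|_p^p+Z)^{1/p}$ is independent of the direction $\theta=Y/S$.

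For homogeneous $f$ of degree $d$ this gives $f(Y)=S^d f(X')$ with $X'\sim\mu_{n,p}$ up to scaling, and $S$ independent of $X'$. Hence
$$\mathrm{Var}(f(Y))=\E[S^{2d}]\,\E[f(X')^2]-\E[S^d]^2\,\E[f(X')]^2.$$
Since $S^p$ is Gamma-distributed with shape of order $n/p$, after normalization $\E S^{2d}/(\E S^d)^2=1+O(1/n)$. The variance of $f$ under $\mu_{n,p}$ is therefore comparable, up to an additive $O(1/n)\cdot\E f^2$ error, to $\mathrm{Var}(f(\tilde Y))$ for the product measure $\tilde Y$ (rescaled $Y$) minus a correction. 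The correction comes from the fluctuation of the radius: under the product measure, part of the variance of $f$ lies in the direction of $\|Y\|_p^p$, and that part is killed by conditioning on the sphere.

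More precisely, I will decompose $f(\tilde Y)$ in $L^2$ of the product measure into its projection onto functions of $S$ and an orthogonal remainder. Then $\mathrm{Var}_{\mu_{n,p}}(f)$ is comparable to $\E\big[\mathrm{Var}(f(\tilde Y)\mid S)\big]$. By the law of total variance, this quantity equals $\mathrm{Var}(f(\tilde Y))-\mathrm{Var}(\E[f(\tilde Y)\mid S])$.

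The first term is bounded below by $c_d\,\mathrm{coeff}_d^2(f)$ by \eqref{eq:ourvarbound}. The key step is to bound $\mathrm{Var}(\E[f\mid S])$ away from this. Since $\E[f\mid S]=S^d\,\E_{\mu_{n,p}}f$, this term equals $(\E_{\mu_{n,p}}f)^2\,\mathrm{Var}(S^d)$, which is of order $(\E_{\mu_{n,p}} f)^2/n$ times $d^2$. So I need to show that $|\E_{\mu_{n,p}} f|$ is at most of order $\sqrt n$ times a small constant, and is strictly smaller than what saturates the variance, unless $f$ is close to $f_p$.

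To do this, compute $\E_{\mu_{n,p}}f$ monomial by monomial. It only sees monomials $x^I$ with all exponents even, and by symmetry and near-independence ($\E x^I\approx\prod\E\tilde Y_i^{I_i}$), $\E f\approx\sum_{I\text{ even}}\alpha_I m_I$. By Cauchy--Schwarz, $|\E f|$ is large (of order $\sqrt n$) only if $f$ correlates with $\sum_i x_i^d$, the only even-exponent family of size $n$ whose moments do not cancel in the centered product variance. Monomials with at least two distinct variables number $\binom{n}{\cdot}$, but their contributions are already captured in $\mathrm{Var}_{\tilde Y}$ with matching weights.

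The cleanest route is to apply the Hoeffding/Efron--Stein decomposition of $f$ under the product measure and identify the component along the function $\sum_i(\tilde Y_i^p-\E\tilde Y_i^p)$, which is exactly the direction of $S$. If $p\notin 2\N$ or $d\neq p$, then $\|x\|_p^p$ is not a polynomial of degree $d$. The first-order Hoeffding component $\sum_i g_i(x_i)$ of $f$, with $g_i$ a univariate polynomial of degree at most $d$, then has correlation with $\sum_i(|x_i|^p-\E|x_i|^p)$ bounded by some $\rho_{p,d}<1$ uniformly; this is a one-dimensional statement about polynomials versus $|t|^p$. That yields $\mathrm{Var}(\E[f\mid S])\le(1-\delta)\,\mathrm{Var}(f)+o(1)$. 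In case (2), the correlation is controlled by $|\langle f,f_p\rangle|\le c_p$, giving the same conclusion.

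The main obstacle I expect is making the transfer rigorous. First, the conditional structure is only asymptotically $S$-independent after normalizing by $R_{n,p}$, so the errors must be controlled uniformly in $n$. Second, I must relate the $L^2(\tilde Y)$ projection onto $\sum_i(|\tilde Y_i|^p-c)$ precisely to $\langle f,f_p\rangle$, which requires handling the non-orthogonality of monomials under the product measure via \eqref{eq:ourvarbound}-type comparisons.
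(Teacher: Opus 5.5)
Your reduction is correct as far as it goes. In the Barthe--Gu\'edon--Mendelson--Naor representation, the pair $(\theta,T)$ with $T=\|Y\|_p^p+Z$ has density proportional to $e^{-T}T^{n/p}$. So $\theta$ is uniform on the unit $L^p$ ball and independent of $S=T^{1/p}$, and the law of total variance gives
\[
\mathbb{E}[S^{2d}]\,\mathrm{Var}(f(\theta))=\mathrm{Var}_{\gamma_p^n}(f)-\mathrm{Var}(S^d)\,\big(\mathbb{E}f(\theta)\big)^2 .
\]
This is the same identity that drives Lemma \ref{lem:low variance means expectation large}. Your first-chaos argument also closes the proof whenever $|t|^p\notin\mathrm{span}\{1,t,\dots,t^d\}$, i.e.\ $p\notin2\mathbb{N}$ or $p>d$ (odd $d$ is trivial since $\mathbb{E}f=0$). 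There, $\mathbb{E}[S^d\mid Y]$ is a multiple of $\sum_i(|Y_i|^p-\tfrac1p)$ up to a relative $L^2$ error $O(n^{-1/2})$, so $\mathrm{Var}(\mathbb{E}[f\mid S])\le(\rho_{p,d}+o(1))^2\,\mathrm{Var}_{\gamma_p^n}(f)$. In that range this is a shorter route than the paper's.

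\textbf{The gap} is the rest of case (1): $p\in2\mathbb{N}$, $p<d$, $d$ even. What matters for your one-dimensional statement is not whether $\|x\|_p^p$ has degree $d$, but whether $|t|^p$ lies in the span of polynomials of degree $\le d$. Here it does, so $\rho_{p,d}=1$.

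Concretely, take $p=2$, $d=4$, $f=\binom{n}{2}^{-1/2}\sum_{i<j}x_i^2x_j^2$. Then $\mathbb{E}_{\gamma_2^n}[f\mid x_k]=\binom{n}{2}^{-1/2}\tfrac{n-1}{2}x_k^2+\mathrm{const}$, so the first chaos of $f$ is exactly a multiple of $\sum_k(x_k^2-\tfrac12)$, the radial direction. It has variance $\tfrac{n-1}{4}$, while the second chaos has variance $\tfrac14$. Hence $\mathrm{Var}(\mathbb{E}[f\mid S])/\mathrm{Var}_{\gamma_2^n}(f)\to1$, and no bound $\mathrm{Var}(\mathbb{E}[f\mid S])\le(1-\delta)\mathrm{Var}(f)+o(1)$ can hold.

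Yet $\mathrm{Var}_{\mu_{n,2}}(f)=\Theta(1)$, since $f\approx\|x\|_2^4/(\sqrt2\,n)$ and $\|x\|_2^4$ fluctuates at scale $\asymp n$ on the ball. The surviving $\Theta(1)$ is a lower-order remainder left after two order-$n$ terms cancel; in your coordinates it is carried by $Z$. Your plan has no mechanism to bound such remainders from below uniformly over $\mathcal{P}_{d,\mathrm{unit}}(\mathbb{R}^n)$.

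The paper's mechanism is a mismatch of scales. Assume a pathological direction exists. The symmetric reduction and the asymptotic expansion of $\mathrm{tr}((\mathcal{V}^{\mathrm{sym}}_{n,p,d})^{-1})$ (Theorem \ref{thm:Possible potential eigenvalues}, Lemmas \ref{lem:approximate eigenvector} and \ref{lem:reduction to p_Gaussian}) then force the projection $f_{n,p,d}$ of $\|x\|_p^d$ to satisfy $\mathrm{coeff}_d(f_{n,p,d})^2\asymp n^{2d/p-1}$. They also force it to lie within $O(n^{d/p-1})$ in $L^2(\gamma_p^n)$ of $b\|x\|_p^d$ with $b\asymp1$. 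But the projection of $\|x\|_p^d$ onto polynomials of degree $\le d$ has $\mathrm{coeff}_d^2=O(n^{d/p})\ll n^{2d/p-1}$ (Lemma \ref{lem:spectrum for large degrees}). Then \cite[Theorem 1]{GM22} for non-homogeneous polynomials turns this into a contradiction.

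Two smaller points.

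First, your intermediate target is false: it is not true that $|\mathbb{E}_{\mu_{n,p}}f|$ must be $O(\sqrt n)$ unless $f$ correlates with $\sum_ix_i^d$. For $\widetilde m_{\lambda,n}$ with all parts even, the expectation is of order $n^{\ell(\lambda)/2}$ (order $n$ for $\lambda=(2,2)$, the example above), and the variance is still bounded below. So you must always control a difference of two possibly large terms, not $\mathbb{E}f$ alone.

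Second, in case (2) the first chaos is not controlled by $\langle f,f_p\rangle$. For $d=p=4$, let $\phi=\sum_{i<j}b_{ij}x_i^2x_j^2$ with $b$ symmetric, zero row sums, and $\mathrm{coeff}_4(\phi)=1$. Then $f=c f_p+\sqrt{1-c^2}\,\phi$ has $\langle f,f_p\rangle=c$, but its first chaos lies exactly along $\sum_i(x_i^4-\tfrac14)$. What is true is the distance bound $\min_b\mathrm{Var}_{\gamma_p^n}(f-bf_p)\ge c_{p,d}(1-\langle f,f_p\rangle^2)$, from \cite[Theorem 1]{GM22}; with careful control of cross terms this could rescue that case. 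The paper instead uses that $\mathcal{E}_{n,p,d}-\mathcal{V}_{n,p,d}$ has rank one, together with \eqref{eq:l2aprioribound}.
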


To give an intuitive explanation of Theorem \ref{thm:lpball}, we recall 
that a uniform measure on a high-dimensional convex body is concentrated near its boundary.
It is therefore natural to expect that if the boundary is described by the zero set (or a level set)
of a polynomial, then this polynomial, when evaluated on the body, will concentrate tightly.
For $B_{n,p}$ this phenomenon only happens when $p$ is an even integer, in which case the boundary is
a level set of the degree-$p$ polynomial $f_p$. In contrast, any polynomial that is sufficiently different from $f_p$, either by having a different degree or, when $d=p$, by having $|\langle f, f_p\rangle|$ small, cannot fully exploit this boundary concentration and therefore retains a nontrivial amount of variance.\footnote{This heuristic does not immediately explain the behavior of $f_p^k$ for $k >1$. However, our proof also applies to these cases.} We emphasize that Theorem \ref{thm:lpball} requires the polynomial to be homogeneous. While Proposition \ref{thm:equivs} applies to arbitrary polynomials, extending Theorem \ref{thm:lpball} to the non-homogeneous setting would lead to a problem of a rather different flavor. In $\mathsection$\ref{sec:just} we explain why one should not expect a statement as clean as Theorem \ref{thm:lpball} in that setting.


Note that the measures in Proposition \ref{thm:indepence} and Theorem
\ref{thm:lpball} are highly symmetric; they are invariant to any
permutation of the coordinates, and $\mu_{n,p}$ is also invariant
to any coordinate reflection, or in other words \emph{$H_{n}$-invariant},
where $H_{n}$ is the group of signed permutations\footnote{$H_{n}$ is also called the \emph{hyperoctahedral group}, see $\mathsection$\ref{sec:Preliminaries-in-representation}
for a detailed discussion.}. In contrast, $\mathsection$\ref{sec:product} exhibits log-concave
measures without such symmetries for which even \eqref{eq:l2norm}
fails. This suggests that symmetry can be a useful condition
to help establish anti-concentration bounds. With this in mind, we specialize to $H_{n}$-invariant measures.

 Given such a measure $\mu$, $H_{n}$ acts on $L^{2}(\mu)$ through composition
$f\to f\circ\sigma^{-1}$ for $\sigma\in H_{n}$, and it preserves
the space of degree-$d$ polynomials. This space can be further decomposed
into $H_{n}$-invariant subspaces of polynomials, or \emph{$H_{n}$-representations}.
By applying tools from representation theory, we can now compute the
$L^{2}$-norm or variance of any polynomial. In $\mathsection$\ref{sec:Spectrum-of-unconditional,}
we give the following partial result in this general setting. 
\begin{thm}
\label{thm:symmetric} Let $\mu$ be an $H_n$-invariant measure on
$\R^{n}$ which is isotropic and log-concave, and let $f:\R^{n}\to\R$,
a degree-$d$ homogeneous polynomial with $\mathrm{coeff}_{d}(f)=1$. Then, $\mathrm{Var}_{\mu}(f)\geq c$
for $c>0$, a universal constant in the following cases: 
\begin{itemize}
\item either $d=3$, 
\item or $d=2$, and $\mathrm{Var}\left(\frac{1}{\sqrt{n}}\|x\|_{2}^{2}\right)\geq c$. 
\end{itemize}
Consequently, in those cases, there exists a constant $C>0$, such that for every $t\in \R$,
\[
\left|\int\limits_{\R^{n}}e^{\mathrm{i}tf}d\mu\right|\leq\frac{C}{|t|^{\frac{1}{d}}}.
\] 
\end{thm}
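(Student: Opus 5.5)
The plan is to use the $H_n$-symmetry to block-diagonalize the quadratic form $f\mapsto\mathrm{Var}_\mu(f)$ on homogeneous degree-$d$ polynomials, reduce everything to finitely many low-dimensional moment quantities, and bound those below using log-concavity in fixed dimension. Once the variance bound holds, the Fourier estimate follows from Proposition \ref{thm:equivs}.

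\textbf{Tool: lower bounds in fixed dimension.} Invariance under reflections gives $\E[x^I x^J]=0$ unless every coordinate appears an even number of times in $IJ$. The only moments that survive are therefore of the form $\E[x_{i_1}^{2k_1}\cdots x_{i_m}^{2k_m}]$ with $m\le 2d$, and by permutation invariance they depend only on the exponent pattern. To bound such quantities below I will use the following fact. A $k$-dimensional marginal of $\mu$ is isotropic and log-concave, so its density is at least $c_k>0$ on a ball $B(0,c_k)$. Hence for any polynomial $g$ in $k$ variables of degree at most $2d$,
\[
\E g^2\ge c_k\int_{B(0,c_k)}g^2\,dx\ge c'_{k,d}\,\|g\|^2,
\]
by equivalence of norms on a finite-dimensional space. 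Applied with $k\le 3$, this gives universal lower bounds on
\[
\E[x_1^2x_2^2],\qquad \E[x_1^2x_2^2x_3^2],\qquad \tfrac12\E(x_1^2-x_2^2)^2,\qquad \tfrac12\E\, x_1^2(x_2^2-x_3^2)^2 .
\]

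\textbf{Case $d=2$.} Write $f=\sum_i a_i x_i^2+\sum_{i<j} b_{ij}x_ix_j$. By reflection symmetry the monomials $x_ix_j$ with $i<j$ are centered, orthogonal to each other and to the diagonal part. They therefore contribute $\E[x_1^2x_2^2]\sum b_{ij}^2\ge c\sum b_{ij}^2$. The diagonal part has covariance matrix $\Sigma=(\sigma-\tau)I+\tau J$, where $\sigma=\mathrm{Var}(x_1^2)$ and $\tau=\mathrm{Cov}(x_1^2,x_2^2)$. Decompose $a=a_0+\bar a\mathbf 1$ with $a_0\perp\mathbf 1$. Since $\mathbf 1$ is an eigenvector of $\Sigma$, there is no cross term, and
\[
\mathrm{Var}(a\cdot x^2)=(\sigma-\tau)|a_0|^2+n\bar a^2\,\mathrm{Var}\Bigl(\tfrac{1}{\sqrt n}\|x\|_2^2\Bigr).
\]
Here $\sigma-\tau=\tfrac12\E(x_1^2-x_2^2)^2\ge c$, and the second term is controlled by the hypothesis, which gives the claim.

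\textbf{Case $d=3$.} Now $f$ is odd, so $\mathrm{Var}_\mu(f)=\E f^2$. Split
\[
f=\sum_{i<j<k}c_{ijk}x_ix_jx_k+\sum_i g_i,\qquad g_i=x_i\Bigl(a_ix_i^2+\sum_{j\ne i}b_{ij}x_j^2\Bigr).
\]
The parity rule shows these pieces are pairwise orthogonal: for instance $\E[g_ig_k]$ with $i\ne k$ involves odd powers of $x_i$ or $x_k$. The first sum contributes $\E[x_1^2x_2^2x_3^2]\sum c_{ijk}^2$. For a fixed $i$, $\E g_i^2$ is the quadratic form in $(a_i,b_{i\cdot})$ with Gram matrix built from $\E x_1^6$, $\alpha=\E x_1^4x_2^2$ and $\beta=\E x_1^2x_2^2x_3^2$; on the $b$-block it equals $(\alpha-\beta)I+\beta J$. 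Decompose $b_{i\cdot}=b^0+s\mathbf 1$ with $b^0\perp\mathbf 1$. The $b^0$ part is orthogonal to the rest, since all rows involving $a_i$ and $\mathbf 1$ are constant along $\mathbf 1$, and it contributes $(\alpha-\beta)|b^0|^2$ with $\alpha-\beta=\tfrac12\E x_1^2(x_2^2-x_3^2)^2\ge c$.

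\textbf{Main obstacle.} It remains to bound
\[
\E\, x_1^2\bigl(a x_1^2+sS\bigr)^2\ge c\,(a^2+(n-1)s^2),\qquad S=\sum_{j\ge2}x_j^2 .
\]
This is where $\|x\|_2^2$ might concentrate, and I expect it to be the hardest step. Unlike the $d=2$ case, the extra factor $x_1$ should save us. If $|s|\sqrt n\le\delta$, the $a x_1^2$ term dominates and the 1-dimensional bound applies. If $|s|\sqrt n\ge\delta$, I will lower bound $\E x_1^2 s^2S^2\ge s^2(\E x_1^2S)^2/\E x_1^2\gtrsim s^2n^2$, using $\E x_1^2x_j^2\ge c$. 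Cancellation with $a x_1^2$ is then ruled out by handling the cross term via Cauchy–Schwarz and log-concave moment comparison, $\E x_1^6\le C$, after splitting into the regimes $|a|\ll |s|n$ and $|a|\gtrsim|s|n$. The latter regime is impossible when $|s|\sqrt n\ge\delta$ and $a^2\le 1$, for large $n$; small $n$ is covered by the fixed-dimension bound. Summing over $i$ then yields $\E f^2\ge c\,\mathrm{coeff}_3(f)^2=c$, and Proposition \ref{thm:equivs} gives the Fourier decay.
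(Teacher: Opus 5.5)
Your $d=2$ argument and your reduction for $d=3$ are correct. They are the paper's isotypic decomposition written in coordinates: the $x_ix_jx_k$ block, the $b^0$ block with eigenvalue $\alpha_{(4,2)}-\alpha_{(2,2,2)}$, and a $2\times 2$ block spanned by $x_1^3$ and $x_1S$. Bounding the easy eigenvalues through density lower bounds on low-dimensional marginals is a legitimate substitute for the paper's thin-shell and Kosov arguments.

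The gap is in the ``main obstacle''. Your first case asserts that for $|s|\sqrt n\le\delta$ the term $ax_1^2$ dominates, and this is false. We have $\|x_1\,sS\|_{L^2}\asymp |s|n$ while $\|ax_1^3\|_{L^2}\asymp|a|$, and in this case $|s|n$ ranges up to $\delta\sqrt n$. More importantly, the critical scale $|s|\asymp|a|/n$ lies inside this case. There $sS\approx s(n-1)$ acts as a constant of the same order as $ax_1^2$, and the two genuinely cancel. For the isotropic cube with $a=1$ and $s=-\frac{9}{5n}$, one gets $\mathbb{E}[x_1^2(ax_1^2+sS)^2]=\mathbb{E}[x_1^2(x_1^2-\tfrac95)^2]+O(n^{-1})=\alpha_6-\alpha_4^2+O(n^{-1})=\tfrac{108}{175}+O(n^{-1})$. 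This is far below $a^2\alpha_6=\tfrac{27}{7}$, and it is exactly the paper's extremal eigenvalue $\lambda_4$ from Proposition \ref{prop:cubic case}. Your second case only looks at $|a|\gtrsim|s|n$ under $|s|\sqrt n\ge\delta$, where that regime is empty. So the comparable regime $|a|\asymp|s|n$ is never handled.

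What is missing is control of the fluctuations of $S$. Write $\kappa=s(n-1)$ and $S=(n-1)+(S-(n-1))$. Then
\[
\|x_1(ax_1^2+sS)\|_{L^2}\ge\|x_1(ax_1^2+\kappa)\|_{L^2}-\frac{|\kappa|}{n-1}\,\|x_1(S-(n-1))\|_{L^2}.
\]
Your one-dimensional bound gives $c\sqrt{a^2+\kappa^2}$ for the first term. The error term is harmless only if $\mathbb{E}[x_1^2(S-(n-1))^2]$ is small compared with $n^2$. That is a thin-shell statement for $\|x\|_2^2$, a genuinely high-dimensional concentration input. Neither your fixed-dimensional density bounds nor moment comparisons such as $\mathbb{E}x_1^6\le C$ supply it.

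Once you have $\mathrm{Var}(\|x\|_2^2)=O(n)$ together with reverse H\"older, the error is $O(|\kappa|/\sqrt n)$. That single estimate then covers all regimes at once, since $\kappa^2\ge(n-1)s^2$. This is precisely the role of Lemma \ref{lem:mixed moments} in the paper. It uses thin shell to show $\alpha_{(2,2,2)}=1+O(n^{-1})$ and $\alpha_{(4,2)}=\alpha_4+O(n^{-1})$, which reduces $\lambda_4$ to $\alpha_6-\alpha_4^2+O(n^{-1})$. That quantity is then bounded below via Lemma \ref{lem:Inequality on mements}, and your one-dimensional bound would serve equally well at that final step.
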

For quadratic polynomials Theorem \ref{thm:symmetric} essentially states that any bad example must be similar to the isotropic Euclidean ball, in the sense that the norm concentrates very tightly.
In contrast, for $d=3$, according to Theorem \ref{thm:symmetric}, there are no bad
examples.


\subsection{Discussion and further questions}
The statement of Theorem \ref{thm:symmetric} immediately raises the question about the behavior of higher degrees, $d > 3$. In principle, our approach should also be useful for $d>3.$
One major challenge in those cases is that we believe that already
for $d=4$, there should be many more bad cases, other than $B_{n,4}$,
making the classification much harder. To overcome this, it is natural to first consider the easier task of bounding the $L^2$-norm. Indeed, the general result from \eqref{eq:l2norm} for $L^p$-balls suggests that in this case there should be no bad examples, leading us to the following conjecture.
\begin{conjecture} \label{conj:l2bound}
	Let $d \in \N$. There exists a constant $C_d>0$, depending only on $d$, such that for every isotropic, log-concave and $H_n$-invariant measure $\mu$ on $\R^n$ and every degree-$d$ homogeneous polynomial $f:\R^n\to \R$,  
	$$\int\limits_{\R^n} f^2d\mu \geq C_d.$$
	Consequently, since homogeneous functions of odd degree are odd, in that case $$\mathrm{Var}_\mu(f) \geq C_d.$$
\end{conjecture}
Due to the difference between $L^2$-norm and variance, Conjecture \ref{conj:l2bound} predicts potential different behaviors for odd and even degrees. Let us expand on this difference, and explain the inherent difficulty in even degrees. First, when $d = 2$ the only polynomial which is both strictly convex and symmetric is,
up to scaling, $\|x\|_{2}^{2}$. It thus stands to reason that any bad example will be of the form $\mu(x)\propto e^{-\varphi(\|x\|_{2})}$, for $\varphi$ convex. This is indeed the case of $B_{n,2}$, as demonstrated by Theorem \ref{thm:symmetric}.
Further, the theorem also shows that $\varphi$ needs to be close to a convex indicator; if we consider $\varphi(x) = \frac{x^2}{2}$ we will instead obtain the standard Gaussian, which by Theorem \ref{thm:indepence} does not have the same pathological behavior.
However, already for $d = 4$, there are many more symmetric convex polynomials. Moreover, classifying these potential bad examples is a potentially hard task, since just checking whether an even degree polynomial is convex is known to be NP-hard.
In contrast, there are no convex polynomials of odd degrees, which is again consistent with Theorem \ref{thm:symmetric}.

Towards Conjecture \ref{conj:l2bound} we remark that our proof of Theorem \ref{thm:symmetric} allows us to describe the minimal value of $\int_{\R^n} f^2d\mu$ for any fixed measure $\mu$ in terms of the moments of the first $2d$ marginals of $\mu$. For a sequence of measures $\mu_n$, this leads to a numerical condition that guarantees $\liminf_n \int_{\R^n} f^2d\mu_{n}>0.$ This can be used to obtain lower bounds for many specific sequences of measures, although we could not establish it uniformly for all $H_n$-invariant log-concave measures.

Curiously, the proof of Theorem \ref{thm:symmetric} also suggests a possible candidate for the value of the constant $C_d$ in Conjecture \ref{conj:l2bound}. It turns out that up to $O(\frac{1}{n})$ terms, when $d = 2,3$, $C_d$ is attained for $\mu_{\mathrm{cube}}$, the uniform measure on the isotropic cube $\left[-\sqrt{3},\sqrt{3}\right]^n$, which is, according to a computation,
$$C_2 =\frac{4}{5}+O\left(\frac{1}{n}\right), \qquad C_3 =\frac{108}{175}+O\left(\frac{1}{n}\right).$$
Extrapolating these results leads to a potential refinement of Conjecture \ref{conj:l2bound}. 
\begin{question} \label{Q:lowerbound}
	Let $d \in \N$. Is it true that for every isotropic, log-concave and $H_n$-invariant measure $\mu$ on $\R^n$ and every degree-$d$ homogeneous polynomial $f:\R^n\to \R$,
	$$\int\limits_{\R^n} f^2d\mu \geq \inf\limits_{\mathrm{coeff}_d(\tilde{f}) = 1}\int_{\R^d}\tilde{f}^2d\mu_{\mathrm{cube}} - O\left(\frac{1}{n}\right)?$$
\end{question}
It seems unlikely that the special role that the cube plays when $d=2,3$ is coincidental. This is in line with \cite[Theorem 1.3]{paouris2012small}, according to which the cube has the worst small-ball estimates for certain multi-linear polynomials within the class of product measures. Let us remark that bounds of this form, and therefore also Theorems \ref{thm:lpball} and \ref{thm:symmetric}, go beyond anti-concentration and, for example, are also useful for comparing statistical distances between the associated measures, as in \cite{kosov2018fractional,nourdin2013convergence}.

Another question involves the exponent $\frac{1}{d}$ in Proposition \ref{thm:equivs}. As can be seen by considering the polynomial $f(x) = x^d$ in the one-dimensional setting, the equivalences in Proposition \ref{thm:equivs} are sharp and cannot be improved in general.
However, this example is degenerate, as a general polynomial on $\R^n$ will have much tamer singularities, and hence will not concentrate as tightly (see detailed discussion in $\mathsection$\ref{subsec:Anti-concentration-and-singulari} and Remark \ref{rem:singularities and anto-concentration}). In that sense, heuristically, the set of equivalences should be considerably improved for certain families of polynomials.

There are a few works that verified the above heuristic in several restricted cases. In \cite{hu2024small} the authors show that the factor
$\varepsilon^{\frac{1}{d}}$ appearing in the small-ball estimates of Proposition \ref{thm:equivs}, can be dramatically improved to $\mathrm{polylog}(1/\varepsilon)\varepsilon$
when $f$ is multi-affine and $\mu$ is log-concave with sufficient independence
between different coordinates. When $\mu$ is a product measure, this result was extended in \cite{kosov2025oscillatory} to a bound of the form $\mathrm{polylog}(1/\varepsilon)\varepsilon^{\frac{1}{m}}$,
where $m$ is the maximal individual degree of any variable, rather than
the total degree $d$. It is an interesting question whether such bounds continue to hold without requiring any independence, and whether similar improvements hold for other structured families of polynomials. Moreover, \cite{kosov2025oscillatory} proves the corresponding estimates on the Fourier decay, so we raise the question whether improved small-ball estimates also imply an improved decay of the Fourier transform.
\begin{conjecture} \label{conj:expo}
	Let $X\sim\mu$ be a log-concave measure on $\mathbb{R}^n$ and let $f:\R^n\to \R$ be a polynomial of degree $d$. Suppose that for some constant $C > 0$ and some $\alpha \in (\frac{1}{d},1)$,
	$$\sup\limits_{a\in\R}\P\left(|f(X)-a|\leq\varepsilon\right)\leq C\varepsilon^{\alpha},\text{ for every }\varepsilon>0.$$
	Then, there exists a constant $C'$, depending only on $C$ and $d$, such that
	$$|\mathcal{F}(f_*\mu)(t)|\leq\frac{C'}{|t|^\alpha},\text{ for every }t\neq 0.$$
\end{conjecture}

\subsection{Strategy of the proof of Theorem \ref{thm:lpball}} \label{sec:strat}

Denote by $\mathcal{P}_{d}(\R^{n})$ the space of degree-$d$ homogeneous
polynomials on $\R^{n}$, and let $\mathcal{P}_{d,\mathrm{unit}}(\R^{n})\subseteq\mathcal{P}_{d}(\R^{n})$
be the subset of $f$ with $\mathrm{coeff}_{d}(f)=1$. To prove Theorem
\ref{thm:lpball}, our goal is to find a lower bound on 
\[
\underset{f\in\mathcal{P}_{d,\mathrm{unit}}(\R^{n})}{\min}\mathrm{Var}_{\mu_{n,p}}(f).\tag{\ensuremath{\star}}
\]
Equivalently, $(\star)$ is the minimal eigenvalue $\lambda_{\min}$
of the covariance matrix $\mathcal{V}_{n,p,d}:=(\mathcal{V}_{IJ})_{\left|I\right|=\left|J\right|=d}$,
where $I$ and $J$ are multi-indices, and $\mathcal{V}_{I,J}:=\mathrm{Cov}_{\mu_{n,p}}(x^{I},x^{J})$.

Following Schechtman--Zinn \cite{SZ90}, a random vector $X$ in the unit $L^{p}$-sphere $S_{n,p}$ is distributed as $\frac{Z}{\left\Vert Z\right\Vert _{p}}$ where $Z=(Z_{1},...,Z_{n})\sim\gamma_{p}^{n}$, and $\gamma_{p}=\frac{1}{\frac{2}{p}\Gamma(\frac{1}{p})}e^{-|x|^{p}}dx$
is the\emph{ $p$-Gaussian measure}. Scaling and normalizing gives
the uniform distribution $\mu_{n,p}$ on the isotropic $L^{p}$-ball $B_{n,p}$. 
In \cite{GM22}, we showed that $\mathrm{Var}_{\gamma_{p}^{n}}(f)=\Omega_{p,d}(1)$
and $\E_{\mu_{n,p}}[f^{2}]=\Omega_{p,d}(1)$ for every $f\in\mathcal{P}_{d,\mathrm{unit}}(\R^{n})$.
Our goal is to utilize the connection between $\mu_{n,p}$ and $\gamma_{p}^{n}$
to further deduce that $\mathrm{Var}_{\mu_{n,p}}(f)=\Omega_{p,d}(1)$
whenever $p\neq d$. Concretely, we apply the following steps:

\begin{itemize}[leftmargin=0.5cm]

\item\textbf{Step 1}: In Proposition \ref{prop:Marginals of Euclidean ball},
using a polar integration formula, we find an explicit constant $C_{n,p,d}$
determining the proportion between $d$-marginals of $\mu_{p,n}$
and $\gamma_{p}^{n}$:
\begin{equation} \label{eq:polarintro}
\int_{\R^{n}}x_{1}^{a_{1}}...x_{k}^{a_{k}}\mu_{p,n}=C_{n,p,d}\int_{\R^{n}}x_{1}^{a_{1}}...x_{k}^{a_{k}}\gamma_{p}^{n}\text{ \,\,\,\,for every }\sum_{i=1}^{k}a_{i}=d.
\end{equation}

\item\textbf{Step 2}: Using \eqref{eq:polarintro}, we connect
$\mathrm{Var}_{\mu_{n,p}}(f)$ and $\mathrm{Var}_{\gamma_{p}^{n}}(f)$,
and deduce the following structural condition: if $f\in\mathcal{P}_{d,\mathrm{unit}}(\R^{n})$ satisfies
$\mathrm{Var}_{\mu_{n,p}}(f)\ll_{p,d}1$ then in fact $\mathrm{Var}_{\mu_{n,p}}(f)=\Omega_{p,d}(n^{-1})$, and moreover
$\E_{\mu_{n,p}}\left[f\right]^{2}=\Omega_{p,d}(n)$.
(Lemma \ref{lem:low variance means expectation large}
and Corollary \ref{cor:lower bound on variance}).\vspace{0.2cm}


\item\textbf{Step 3}: We show that the space of \emph{pathological eigenvectors} of $\mathcal{V}_{n,p,d}$,
of eigenvalue $o_{p,d}(1)$, is at most one-dimensional and consists of $H_{n}$-symmetric polynomials (Proposition \ref{prop:reduction to B_n symmetric}).

\item\textbf{Step 4}: By considering the restriction
$\mathcal{V}_{n,p,d}^{\mathrm{sym}}$ of $\mathcal{V}_{n,p,d}$ to
$H_{n}$-symmetric polynomials,
we improve upon Step 2 and show that any pathological
unit eigenvector $f$ of $\mathcal{V}_{n,p,d}$ must satisfy $\mathrm{Var}_{\mu_{n,p}}(f)=\Theta_{p,d}(n^{-1})$
and $\E_{\mu_{n,p}}\left[f\right]^2=\Theta_{p,d}(n)$ (Theorem \ref{thm:Possible potential eigenvalues}).\vspace{0.2cm}

\item\textbf{Step 5}: To further characterize the pathological eigenvectors we establish that $\|x\|_p^d$ is an approximate eigenvector in the following sense: Since $\|x\|_p^d$ is not always a polynomial, we consider $f_{n,p,d}$, the orthogonal projection of $\left\Vert x\right\Vert _{p}^{d}$
to the space of degree-$d$ $H_{n}$-symmetric
polynomials, with respect to the covariance inner product $\mathrm{Cov}_{\mu_{n,p}}(\cdot,\cdot)$. 



 Then, if there exists a pathological eigenvector, using the above steps, we show:
\begin{enumerate}
\item $\overline{f}_{n,p,d}:=\frac{f_{n,p,d}}{\mathrm{coeff}_{d}(f_{n,p,d})}$
has a pathological variance $\Theta_{p,d}(n^{-1})$ (Lemma \ref{lem:approximate eigenvector}).
\item $\E_{\gamma_{p}^{n}}\left[\left(\overline{f}_{n,p,d}-b\left\Vert x\right\Vert _{p}^{d}\right)^{2}\right]=O_{p,d}(n^{-1})$,
for $b=\frac{\E_{\mu_{n,p}}\left[\overline{f}_{n,p,d}\right]}{\E_{\mu_{n,p}}\left[\left\Vert x\right\Vert _{p}^{d}\right]}$
(Lemma \ref{lem:reduction to p_Gaussian}). 
\end{enumerate}
\item\textbf{Step 6}: We decompose $\left\Vert x\right\Vert _{p}^{d}$
in $L^{2}(\gamma_{p}^{n})$ as a sum of orthogonal polynomials:
\begin{itemize}
\item If $p>d$, or $1\leq p\leq d$ is not an even integer, then $\left\Vert x\right\Vert _{p}^{d}$
is not a polynomial function, so $\left\Vert x\right\Vert _{p}^{d}$
has a large projection in the space of homogeneous polynomials of degree $m>d$ (Lemma
\ref{lem:Fourier coefficients of p-norm}(1)). Combining with
\eqref{eq:l2norm} shows that $\E_{\gamma_{p}^{n}}\left[\left(\overline{f}_{n,p,d}-b\left\Vert x\right\Vert _{p}^{d}\right)^{2}\right]$
is large, in contradiction to Item (2) of Step 5. 
\item If $1\leq p\leq d$ is an even integer, then the projection of $\overline{f}_{n,p,d}-b\left\Vert x\right\Vert _{p}^{d}$
to degree $\leq d$ polynomials has large $\mathrm{coeff}_{d}$ (Lemma
\ref{lem:spectrum for large degrees}), again contradicting Item (2)
of Step 5. 
\end{itemize}
\end{itemize}
\subsection{Further related works}

\subsubsection{Small-ball estimates in high-dimensional convex geometry}\label{sec:disc}
There is a rich line of research dealing with small-ball probabilities in the high-dimensional probability literature. Estimates of these probabilities have proven to be fundamental in various contexts, such as asymptotic convex geometry \cite{paouris2012small, dafnis2010small, bizeul2025slicing}, random matrix theory \cite{rudelson2008littlewood,nguyen2012inverse}, boolean
analysis \cite{mossel2010noise,meka2016anti}, and high-dimensional statistics \cite{EMS21,cornacchia2025low,emschwiller2020neural, hsieh2026rigorous}.




Perhaps one of the best known results concerning polynomials is the inequality of Carbery and Wright from \cite{carbery2001distributional} (See also the works by Nazarov, Sodin,
and Volberg \cite{NSV02} and Bourgain \cite{bourgain1991distribution}):
\begin{equation} \label{eq:CW}
\sup\limits_{a\in\R}\P\left(|f(X)-a|\le\varepsilon\right)\lesssim\left(\frac{\varepsilon}{\sqrt{\mathrm{Var}_{\mu}(f)}}\right)^{\frac{1}{d}},
\end{equation}
which applies to any log-concave $X\sim \mu$ and a degree-$d$ polynomial $f$. The case of $n = 1$ goes back to P\'olya and Remez \cite{Rem36}, and \eqref{eq:CW} can be viewed as a dimension-free multivariate extension.
We briefly mention that 
the prefactor in the right-hand
side of the inequality involves $\mathrm{Var}_{\mu}(f)$, and is thus
unsuitable to deduce a variance lower bound, as in \eqref{eq:cheby}.
In turn this bars the possibility to establish Fourier decay
estimates. For product measures, that specific gap was closed in \cite{GM22} where the
authors showed that $\mathrm{Var}_{\mu}(f)$ can be replaced by the measure-independent quantity $\mathrm{coeff}_{d}(f)$, leading to Theorem \ref{thm:indepence}. 

\subsubsection{Decay of Fourier coefficients}
The connection between small-ball estimates and decay of Fourier coefficients goes back to the classical van der Corput Lemma. Indeed, the proof in \cite[Proposition 2]{ste93} essentially establishes an equivalence between these two forms of anti-concentration for the uniform measure on a one-dimensional interval. In high-dimensions, this connection was later explored in \cite{carbery1999multidimensional,carbery2002what}, where an analog statement of Theorem \ref{thm:indepence} was proven with a dimension-dependent constant.

Other than being an intrinsic quantity of interest, the decay of the Fourier transform is useful in many contexts. Bounds on the decay of the characteristic function go back to Statuljavi$\check{\text{c}}$us \cite{statu1965limit}, as well as the Edgeworth expansion \cite{bhattacharya2010normal}, and form an important component for establishing local limit theorems. We refer the reader to \cite{bobkov2025berry} for a more modern treatment that also explains why dimension-free bounds are useful for multivariate approximations. 


\subsubsection{Anti-concentration and singularities of polynomial maps}\label{subsec:Anti-concentration-and-singulari}

For polynomials, the anti-concentration estimates as in Proposition \ref{thm:equivs} are
closely related to the singularities of the map $f:\R^{n}\rightarrow\R$.
Set $f_{x_{0}}(x):=f(x)-f(x_{0})$, and consider the following local
invariants:
\begin{align*}
\alpha_{f,x_{0},\R} & :=\sup\limits_{\alpha>0}\left\{ \exists\text{ ball }B\ni x_{0}\text{ such that }(f_{x_{0}})_{*}\mu_{B}\left([-\varepsilon,\varepsilon]\right)\leq\varepsilon^{\alpha}\,\,\forall0<\varepsilon\ll1\right\} ,\\
\beta_{f,x_{0},\R} & :=\sup\limits_{\beta>0}\left\{ \exists\text{ ball }B\ni x_{0}\text{ such that }\left|\mathcal{F}\left((f_{x_{0}})_{*}\mu_{B}\right)(t)\right|\leq |t|^{-\beta}\,\,\,\forall t\gg1\right\} ,
\end{align*}
where $\mu_{B}$ is the uniform measure on a ball $B\subseteq\R^{n}$.
We further define global invariants: 
\[
\alpha_{f,\R}:=\underset{x_{0}\in\R^{n}}{\inf}\alpha_{f,x_{0},\R}\text{ \,\,\,and \,\,\,}\beta_{f,\R}:=\underset{x_{0}\in\R^{n}}{\inf}\beta_{f,x_{0},\R}
\]
The numbers $\alpha_{f,x_{0},\R},\beta_{f,x_{0},\R},\alpha_{f,\R}$
and $\beta_{f,\R}$ are known to be rational, and $\alpha_{x_{0},f,\R}$
coincides with the \emph{real log-canonical threshold} (see \cite{Saia}),
defined as 
\begin{equation}
\operatorname{lct}_{\R}(f_{x_{0}};x_{0})=\sup\left\{ s\in\R:\int_{\R^{n}}\left|f_{x_{0}}(x)\right|^{-s}d\mu_{B}<\infty\text{ for some ball }B\ni x_{0}\right\} .\label{eq:rlct}
\end{equation}
It can be effectively computed by applying a suitable change of coordinates,
called (\emph{embedded}) \emph{resolution of singularities}, to the
integral in (\ref{eq:rlct}) after which both $f_{x_{0}}$ and the
Jacobian are locally monomials (see e.g.~\cite[Lemma 2.3]{GHS})\footnote{The existence of this map follows from Hironaka's resolution of singularities
\cite{Hir64}.}. The number $\beta_{f,x_{0},\R}$ is called the \emph{oscillation
index} (\cite[Chapters 6-8]{AGV88}), and it satisfies $\alpha_{x_{0},f,\R}=\min\left\{ 1,\beta_{f,x_{0},\R}\right\} $. 

When $\R$ is replaced with $\C$, $\alpha_{f,x_{0},\C}$ is equal
to the \emph{log-canonical threshold} $\operatorname{lct}(f_{x_{0}};x_{0})$,
a singularity invariant which plays an important role in birational
geometry and the minimal model program (see \cite{Sho92,Kol97,Mus12}).
The complex oscillation index $\beta_{f,x_{0},\C}$ is conjecturally
equal to the \emph{minimal exponent}
(see \cite{Sai94} and \cite[p.460]{MP20} for a precise definition).

We now explain the connection between anti-concentration and singularities. In the discussion below, suppose for simplicity that $\mu$ is smooth
and compactly supported in $\R^{n}$. 

We say that $f:\R^{n}\rightarrow\R$ is \emph{smooth} at $x_{0}\in\R^{n}$
if it is a submersion at $x_{0}$. Otherwise, $f$ is \emph{singular}
at $x_{0}$. Note that if $f$ is smooth at the support of $\mu$, then $f_{*}\mu$
is a smooth measure. In particular, $\alpha_{f,\R}=1$ and $\mathcal{F}(f_{*}\mu)(t)$
decay faster than any polynomial, and hence $\beta_{f,\R}=\infty$.
Thus, \textbf{smooth maps have optimal small ball and Fourier exponents}.

If $f$ is singular at $x_{0}$, one can measure how far it is from being
a submersion by considering the dimension of $\ker d_{x_{0}}f\subseteq T_{x_{0}}\R^{n}$.
This quantitative measurement of singularities can be improved if
one takes into account higher orders of vanishing of curves, rather
than just linear ones captured by the tangent space $T_{x_{0}}\R^{n}$.
By a result of Musta\c{t}\u{a} \cite[Corollary 0.2]{Mus02}, this
is \textbf{precisely} controlled by $\operatorname{lct}(f_{x_{0}};x_{0})$.
The following remark quantitatively illustrates how \textbf{polynomials
tend to concentrate around their singularities}.

\begin{rem}
\label{rem:singularities and anto-concentration} Suppose that $\mu$
is smooth and compactly supported in $\R^{n}$. Then $f_{*}\mu$ and $\mathcal{F}(f_{*}\mu)$ admit asymptotic expansions,
as $\varepsilon\rightarrow0$ and $t\rightarrow\infty$ respectively
(see e.g.~\cite{Igu78}, \cite[Theorem 3.6]{GHS} and \cite[Theorems 7.1 and 7.3]{AGV88}). Explicitly, there
exists $C>1$ such that for every $0<\varepsilon\ll1$, every $x_{0}\in\R^{n}$,
and every $t\gg1$,
\begin{equation}
\P\left(\left|f_{x_{0}}(X)\right|\leq\varepsilon\right)\leq C\left(\ln\left|\varepsilon\right|\right)^{n-1}\varepsilon^{\alpha_{f,\R}}\text{ and }\left|\mathcal{F}(f_{*}\mu)(t)\right|<C\left(\ln\left|t\right|\right)^{n-1}\left|t\right|^{-\beta_{f,\R}}.\label{eq:tight small ball}
\end{equation}
This shows that $\alpha_{f,\R}$ and $\beta_{f,\R}$ are the
\textbf{tightest small ball and Fourier exponents}, asymptotically
for very small balls and very high frequencies.
In addition, one always has $\alpha_{f,\R},\beta_{f,\R}\geq\frac{1}{d}$,
and if $\alpha_{f,\R}=\frac{1}{d}$ there is no logarithmic part,
so that (\ref{eq:tight small ball}) agrees with the Carbery\textendash Wright
inequality (\ref{eq:CW}), although the constant $C$ could depend the different parameters of the problem, such as the dimension.
\end{rem}

\subsubsection{Anti-concentration over arbitrary local fields}\label{subsec:Anti-concentration-over-local}

Let $F$ be a local field. It is \emph{Archimedean} if $F\in\left\{ \R,\C\right\}$, and it is \emph{non-Archimedean} if $F$ is either a finite extension of the field of $p$-adic numbers $\Q_{p}$,
or the formal Laurent series over a finite field $\mathbb{F}_{q}((t))$.
The non-Archimedean version $\int_{\Q_{p}^{n}}\left|f_{x_{0}}(x)\right|^{-s}d\mu$
of the integral in (\ref{eq:rlct}) is called \emph{Igusa's local zeta
function}, and was investigated by Igusa, Denef, Loeser and others
\cite{Igu78,Den87,Den91a,DL98,Igu00}. The \emph{$F$-analytic log-canonical
threshold }is defined similarly to (\ref{eq:rlct}) and is equal to
$\alpha_{f,x_{0},F}$. When $F=\mathbb{F}_{q}((t))$, $\alpha_{f,x_{0},F}$
is related to a singularity invariant called the \emph{$F$-pure threshold}
(see e.g.~\cite{TW04} and \cite[Section 2]{Mus12}), and it
was recently shown in \cite[Theorem 1.4]{GH} that $\alpha_{f,x_{0},F}\geq\frac{1}{d}>0$.
When $F$ is a $p$-adic field, one can establish tight Small-ball and Fourier estimates, similar to the estimates in (\ref{eq:tight small ball}) (see e.g.~\cite{Igu78}
and \cite[Corollary 2.9]{VZG08}).

In the $p$-adic world, small-ball estimates translate into estimates on the number of solutions of congruences of $f$ modulo $p^{k}$. Indeed, if $f\in\Z[x_{1},...,x_{n}]$, and if $\mu_{\mathbb{Z}_{p}^{n}}$
is the Haar probability measure on the ring of $p$-adic integers
$\mathbb{Z}_{p}^{n}$ in $\Q_{p}^{n}$, then for each $k\in\N$: 
\begin{equation}
\mu_{\Zp^{n}}\left(\left|f(x)\right|_{p}\leq p^{-k}\right)=\frac{\#\left\{ a\in\left(\Z/p^{k}\Z\right)^{n}:f(a)=0\mod p^{k}\right\} }{p^{kn}}.\label{eq:small ball and solutions of congruences}
\end{equation}
Similarly, Fourier estimates translate to the study of \textbf{exponential
sums}, as the Fourier coefficients of $f_{*}\mu_{\Z_{p}^{n}}$ are essentially
of the form: 
\begin{equation}
\frac{1}{p^{kn}}\sum_{x\in(\Z/p^{k}\Z)^{n}}\exp\left(\frac{2\pi if(x)}{p^{k}}\right).\label{eq:exponential sums}
\end{equation}
Field independent small-ball and Fourier estimates were given in \cite[Theorem 4.12]{CGH23}.
and \cite[Theorem 1.5]{CMN19}, respectively. Moreover, $p$-adic
analogues of the Van der Corput lemma were given in \cite{Rog05,Clu11}.

\subsubsection{Applications to algebraic combinatorics}\label{subsec:Algebraic combinatorics}

Anti-concentration of polynomials $f:R^{n}\rightarrow R$ can be studied over any ring $R\in\left\{ \R,\C,\Qp,\mathbb{F}_{p}((t)),\mathbb{F}_{p},\Z_{p},\Z/p^{k}\Z\right\} $.
With the philosophy of $\mathsection$\ref{subsec:Anti-concentration-and-singulari}
in mind, if $f$ is sufficiently generic, it should have mild singularities,
and thus good small-ball estimates. The following notion plays
a role in \textbf{algebraic combinatorics.}
\begin{defn}[\cite{Sch85}]
\label{def:strength}Let $f$ be a polynomial of degree $d$. The \emph{strength} (or \emph{Schmidt
rank}) of $f$, denoted $\mathrm{str}(f)$, is the minimal $k\in\N$
such that $f$ can be written as 
\[
f=f_{1}g_{1}+...+f_{k}g_{k},
\]
with $f_{1},...,f_{k},g_{1},...,g_{k}$ are of degree $\leq d-1$.
If $d=1$ we define the strength to be $\infty$.
\end{defn}

Green\textendash Tao \cite{GT09} and later Bhowmick\textendash Lovett
\cite{BL}, showed that if $\mu_{\mathbb{F}_{p}^{n}}$ is the uniform
measure on $\mathbb{F}_{p}^{n}$, then $\left\Vert \frac{f_{*}\mu_{\mathbb{F}_{p}^{n}}}{\mu_{\mathbb{F}_{p}}}-1\right\Vert _{\infty}<\delta$,
whenever $f$ is sufficiently strong, i.e.~$\mathrm{str}(f)>C(\delta,d)$. The key point is
that the required strength is \textbf{dimension-independent}.
More recently, Kazhdan\textendash Ziegler \cite[Theorem 8.3]{KZ21} generalized the above results to $p$-adic fields. 

\subsubsection{Small-ball estimates and asymptotic group theory}\label{subsec:Asymptotic group theory}

The setting in $\mathsection$\ref{subsec:Anti-concentration-over-local}
can be further generalized to polynomial mappings $f=(f_{1},...,f_{m}):F^{n}\rightarrow F^{m}$.
Even more generally, if $X\subseteq F^{n}$ and $Y\subseteq F^{n}$ are
$F$-analytic manifolds, $f(X)\subseteq Y$, and $\mu$ is a smooth,
compactly supported measure on $X$, one can study anti-concentration
of $f_{*}\mu$ around small balls in $Y$.

A model case naturally arises in group theory. If $w(x_{1},x_{2})=x_{i_{1}}^{\varepsilon_{1}}...x_{i_{\ell}}^{\varepsilon_{\ell}}$,
with $\varepsilon_{\ell}\in\left\{ \pm1\right\} $, and $i_\ell \in \{1,2\}$ is a formal word in
$2$ letters, and $G$ is a group, one can associate a \emph{word map} $w_{G}:G^{2}\rightarrow G$,
by $(g_{1},g_{2})\mapsto w(g_{1},g_{2})$. For example, the commutator
word $w=xyx^{-1}y^{-1}$ induces the commutator map on $G$. If $G$
is a compact Lie group such as $\mathrm{SO}_{n}$ or $\mathrm{SU}_{n}$,
or a $p$-adic analytic group such as $\mathrm{SL}_{n}(\Zp)$, then $w_{G}:G^{2}\rightarrow G$
is a polynomial map. Taking $\mu=\mu_{G^{2}}$ to be the Haar probability
measure fits into the above setting. 

In asymptotic group theory, one often studies families groups of growing
dimension. In \cite{AGL}, every word
$w(x,y)$ is shown to have a dimension-independent small ball exponent $\varepsilon(w)>0$
on $\left\{ \mathrm{SU}_{n}\right\} _{n\in\N}$. That is, for every
$n\in\N$, every $A\in\SU_{n}$ and every $0<\delta\ll_{w}1$:
\[
\mathbb{P}\Big(w_{\SU_{n}}(x,y)\in B(A,\delta)\Big)\leq\left(\mu_{\SU_{n}}(B(A,\delta))\right)^{\varepsilon(w)},
\]
where $B(A,\delta)\subseteq\SU_{n}$ is a ball of radius $\delta$
in the Hilbert\textendash Schmidt metric, normalized so that the diameter
of $\SU_{n}$ is $1$. Similar results were obtained for $\left\{ \mathrm{SL}_{n}(\Zp)\right\} _{n,p}$
in \cite[Theorem I]{GHb}. In addition, in the spirit of $\mathsection$\ref{subsec:Algebraic combinatorics},
bounds of the form $\left|w_{\mathrm{SL}_{n}(\mathbb{F}_{p})}^{-1}(g)\right|<\left|\mathrm{SL}_{n}(\mathbb{F}_{p})\right|^{2-\varepsilon(w)}$
were obtained in \cite{LaS12,LST19}, uniformly
in $n$ and $p$. Thinking of $\{g\}$ as a ``small ball''
of volume $\left|\mathrm{SL}_{n}(\mathbb{F}_{p})\right|^{-1}$, this
corresponds to a small ball exponent of $\varepsilon(w)>0$.

\begin{org*}
    The rest of the paper is organized as follows: In Section \ref{sec:lpballs} we prove Theorem \ref{thm:lpball}. Section \ref{sec:Preliminaries-in-representation} is devoted to the necessary background on the representation theory of $H_n$, which we then use in Section \ref{sec:Spectrum-of-unconditional,} to study $H_n$-invariant measures. These results lead to the proof of Theorem \ref{thm:symmetric} whose main technical part we defer to Appendix \ref{sec:Proof-of-Propositions}. Finally, Section \ref{sec:just} contains a discussion on the different conditions and assumptions we impose in the paper.
\end{org*}
\begin{acknowledgement*}
We are grateful to Egor Kosov for several enlightening discussions. In particular, Lemma \ref{lem:Inequality on mements} is due to him. We also thank Nir Avni, Max Gurevich, and Emanuel Milman for
useful discussions. I.G.~was supported by ISF grant 3422/24. D.M.~was partially supported by the Brian and Tiffinie Pang Faculty Fellowship.
\end{acknowledgement*}

\section{Fourier coefficients of $L^{p}$-balls} \label{sec:lpballs}
Our aim in this section is to follow the strategy outlined in $\mathsection$\ref{sec:strat}, leading to the proof of Theorem \ref{thm:lpball}. We begin by introducing some relevant notation.
We denote $\widetilde{B}_{n,p}:=\left\{ x\in\R^{n}:\left\Vert x\right\Vert _{p}=1\right\} $
as the unit $L^{p}$-ball. Let $Z=(Z_{1},...,Z_{n})\sim\gamma_{p}^{n}$
where $\gamma_{p}=\frac{1}{\frac{2}{p}\Gamma(\frac{1}{p})}e^{-|x|^{p}}dx$
is the \emph{$p$-Gaussian measure}, so that the density of $\gamma_{p}^{n}$
is given by
\begin{equation}
\rho_{n}(x_{1},...,x_{n}):=\frac{1}{\left(\frac{2}{p}\Gamma(\frac{1}{p})\right)^{n}}e^{-\left\Vert x\right\Vert _{p}^{p}}.\label{eq:p_Gaussian}
\end{equation}
By \cite{SZ90}, the random vector 
\begin{equation}
\widetilde{X}:=\mathrm{U}^{1/n}\frac{Z}{\left\Vert Z\right\Vert _{p}}\label{eq:L^p ball from Gaussian}
\end{equation}
is uniformly distributed on $\widetilde{B}_{n,p}$, where $\mathrm{U}$
is indepdent from $Z$ and uniformly distributed on $[0,1]$. Setting
\begin{equation}
R_{n,p}:=\E[\widetilde{X}_{1}^{2}]^{-\frac{1}{2}}=\left(\frac{n+2}{n}\cdot\frac{\Gamma(\frac{n+2}{p})}{\Gamma(\frac{n}{p})}\cdot\frac{\Gamma(\frac{1}{p})}{\Gamma(\frac{3}{p})}\right)^{\frac{1}{2}} = \Theta_p\left(n^{\frac{1}{p}}\right),\label{eq:isotropic radius}
\end{equation}
the random vector $X:=R_{n,p}\widetilde{X}$ is
uniformly distributed on the isotropic $L^{p}$-ball $B_{n,p}$ (see e.g.~\cite[end of p.490]{BGMN05}). Write
$\mu_{n,p}:=\frac{1}{\mathrm{Vol}(B_{n,p})}1_{B_{n,p}}dx$, for the law of $X$.

Let $\mathcal{P}_{d}(\R^{n})$ be the space of all real-valued homogeneous
polynomials of degree $d$, and let $\mathcal{P}_{d,\mathrm{unit}}(\R^{n})\subseteq\mathcal{P}_{d}(\R^{n})$
be the subset of $f$ with $\mathrm{coeff}_{d}(f)=1$. In this section
we prove the following Theorem \ref{thm:Main theorem L^p}, and then derive Theorem \ref{thm:lpball} as an easy consequence.
\begin{thm}
\label{thm:Main theorem L^p}Let $d\in\N$ and let $p\in\R_{\geq1}$.
Then for every $n\in\N$: 
\[
\underset{f\in\mathcal{P}_{d,\mathrm{unit}}(\R^{n})}{\min}\mathrm{Var}_{\mu_{n,p}}(f)=\begin{cases}
\Theta_{p,d}(n^{-1}) & \text{if }p=d \text{ and } p \in 2\mathbb{Z}\\
\Theta_{p,d}(1) & \text{otherwise.}
\end{cases}
\]
\end{thm}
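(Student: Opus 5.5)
Throughout, $\mathcal{V}_{n,p,d}$ denotes the covariance matrix $(\mathrm{Cov}_{\mu_{n,p}}(x^I,x^J))_{|I|=|J|=d}$. The quantity in Theorem \ref{thm:Main theorem L^p} is its smallest eigenvalue $\lambda_{\min}$, and I would follow the six steps of $\mathsection$\ref{sec:strat}.

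\textbf{Upper bounds.} The bound $\lambda_{\min}=O_{p,d}(1)$ holds always: take $f=x_1^d$, whose variance under the isotropic log-concave marginal is bounded by moment comparison. For $p=d\in 2\N$, take $f_p=\frac{1}{\sqrt n}\|x\|_p^p$, which has $\mathrm{coeff}_d(f_p)=1$. Representation \eqref{eq:L^p ball from Gaussian} gives $\|X\|_p^p=R_{n,p}^p\mathrm{U}^{p/n}$, so
\[
\mathrm{Var}(f_p)=\frac{R_{n,p}^{2p}}{n}\,\mathrm{Var}(\mathrm{U}^{p/n}).
\]
Since $R_{n,p}^{2p}=\Theta(n^2)$ and $\mathrm{Var}(\mathrm{U}^{p/n})=\Theta(n^{-2})$, this is $\Theta(n^{-1})$.

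\textbf{Lower bounds.} First I would compare marginals with the product measure via polar integration (Step 1). Writing $X=R_{n,p}\mathrm{U}^{1/n}Z/\|Z\|_p$ and using that $\|Z\|_p$ is independent of $Z/\|Z\|_p$, every degree-$d$ monomial satisfies
\[
\E_{\mu_{n,p}}x^I=C_{n,p,d}\,\E_{\gamma_p^n}x^I,\qquad C_{n,p,d}=R_{n,p}^d\,\frac{\E\,\mathrm{U}^{d/n}}{\E\|Z\|_p^d},
\]
and the same holds in degree $2d$ with constant $C_{n,p,2d}$. Consequently
\[
\mathrm{Var}_{\mu}(f)=C_{n,p,2d}\,\mathrm{Var}_{\gamma}(f)+\bigl(C_{n,p,2d}-C_{n,p,d}^2\bigr)\bigl(\E_\gamma f\bigr)^2 .
\]
Asymptotics of Gamma ratios give $C_{n,p,2d}/C_{n,p,d}^2-1=\Theta(1/n)$, with a sign I would need to track. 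Combined with $\mathrm{Var}_{\gamma_p^n}(f)=\Omega(1)$ from \cite{GM22}, this yields Step 2: a small variance forces $(\E f)^2=\Omega(n)$, and then $\mathrm{Var}_\mu f=\Omega(1/n)$ in all cases. The $\Theta(n^{-1})$ lower bound for $p=d$ follows.

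It remains to show $\Omega(1)$ when $p\neq d$ or $p\notin 2\N$. I would decompose $\mathcal{P}_d$ into $H_n$-isotypic components. The mean functional $f\mapsto\E f$ is $H_n$-invariant, so it vanishes on every non-trivial component; the identity above then gives $\mathrm{Var}_\mu\ge c\,\mathrm{Var}_\gamma=\Omega(1)$ there. Cross terms between components vanish by Schur orthogonality, so small eigenvectors lie in the invariant subspace. Within that subspace, the form $\mathrm{Var}_\mu$ differs from a form bounded below by $\Omega(1)$ through a rank-one perturbation, so at most one eigenvalue is $o(1)$ (Step 3).

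\textbf{Main obstacle.} The main difficulty is ruling out a pathological invariant eigenvector when $p\ne d$. If such an $f$ exists, it is nearly constant on the sphere shell, so I would show it is $L^2(\gamma_p^n)$-close, up to $O(1/n)$, to $b\|x\|_p^d$ with $b=\E_\mu f/\E_\mu\|x\|_p^d$. To do so, compare $f$ with the projection of $\|x\|_p^d$ onto invariant degree-$d$ polynomials, using Step 4's refinement $\mathrm{Var}_\mu f=\Theta(1/n)$ (Step 5). Then I would expand $\|x\|_p^d$ in orthogonal polynomials for $\gamma_p^n$ (Step 6):
\begin{itemize}
\item If $\|x\|_p^d$ is not a polynomial ($p\notin 2\N$ or $p>d$), it has a non-negligible component of degree $>d$, which no degree-$d$ polynomial can approximate; this contradicts the $O(1/n)$ closeness.
\item If $p<d$ with $p$ even, the only bad degree would be $d$ a multiple of $p$. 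Then $\|x\|_p^d$ is a polynomial, but its top-degree part normalized by $\mathrm{coeff}_d$ has variance $\Omega(1)$, because it is a power $f_p^{k}$ with $k\ge2$ and $\mathrm{Var}(\mathrm{U}^{kp/n})$ scales accordingly with a constant that survives. This case needs explicit coefficient computations, which is the step I expect to be hardest.
\end{itemize}
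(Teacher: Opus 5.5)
\paragraph{Gap in the even case.}
The even case $p\in 2\N$, $p<d$ contains a genuine gap. You assert that there ``the only bad degree would be $d$ a multiple of $p$'', but nothing in your plan handles $p\nmid d$ with $d$ even, e.g.\ $(p,d)=(4,6)$. Your first bullet does not cover this case either, even though $\|x\|_p^d$ is then not a polynomial.

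Because $|x|^p=x^p$ is itself a polynomial of degree $p<d$, the part of $\|x\|_p^d$ of degree $>d$ in $L^2(\gamma_p^n)$ is too small to contradict anything. The leading term of $\E_{\gamma_p^n}[\|x\|_p^dQ_m]$ is a multiple of $\E_{\gamma_p}[q_m(x)|x|^p]$, which vanishes for all $m>p$ (Lemma \ref{lem:Fourier coefficients of p-norm}(2)). Concretely, write $\|x\|_4^6=S^{3/2}$ with $S=\sum_i x_i^4$ and expand around $\E S$. The first term of degree exceeding $6$ is $\tfrac38(\E S)^{-1/2}(S-\E S)^2$, whose degree-$8$ part has squared $L^2(\gamma_4^n)$-norm $\Theta(n)$. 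Multiplied by $b^2=\Theta(n^{1-2d/p})=\Theta(n^{-2})$, this is $\Theta(n^{-1})$, exactly the error your closeness estimate allows. So here $\|x\|_p^d$ \emph{is} well approximated by polynomials of degree $\le d$, and the contradiction must come from the degree-$d$ coefficients of the approximant.

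This is the content of Lemma \ref{lem:spectrum for large degrees}. For even $p<d$, the $L^2(\gamma_p^n)$-projection $\widetilde f_{n,p,d}$ of $\|x\|_p^d$ onto polynomials of degree $\le d$ satisfies $\mathrm{coeff}_d(\widetilde f_{n,p,d})^2=O(n^{d/p})$. This is proved by iterated integration by parts, using $|x|^p=x^p$, which shows $\E_{\gamma_p^n}[\|x\|_p^d q_I]=O(n^{d/p-\sum_e\lceil i_e/p\rceil})$. Hence $b^2\,\mathrm{coeff}_d(\widetilde f_{n,p,d})^2=O(n^{1-d/p})=o(1)$, so $\mathrm{coeff}_d(f-b\widetilde f_{n,p,d})=1-o(1)$. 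Then \cite[Theorem 1]{GM22}, applied to this \emph{non-homogeneous} polynomial, gives $\E_{\gamma_p^n}[(f-b\|x\|_p^d)^2]\ge\E_{\gamma_p^n}[(f-b\widetilde f_{n,p,d})^2]=\Omega(1)$. This contradicts the $O(n^{-1})$ closeness.

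This argument treats $p\mid d$ and $p\nmid d$ at once; for $p\mid d$ it reduces to the count $\mathrm{coeff}_d((\sum_i x_i^p)^{d/p})^2=\Theta(n^{d/p})$. Your variance heuristic for $p\mid d$ is not by itself a contradiction. With the actual $b$ one has $\mathrm{Var}_{\mu_{n,p}}(b\|x\|_p^d)=b^2R_{n,p}^{2d}\mathrm{Var}(\mathrm{U}^{d/n})=\Theta(n^{-1})$. What fails is the identity $|b|\,\mathrm{coeff}_d(\|x\|_p^d)=1+o(1)$ that closeness plus \cite[Theorem 1]{GM22} would force.

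\paragraph{Two steps asserted without a mechanism.}
First, $\mathrm{Var}_{\mu_{n,p}}(f)=\Omega(n^{-1})$ ``in all cases'' does not follow from your displayed identity, in which the $(\E f)^2$ term carries a negative coefficient. You need the radial argument of Corollary \ref{cor:lower bound on variance}. Since $\mathrm{U}$ is independent of $Z/\|Z\|_p$,
\[
\mathrm{Var}_{\mu_{n,p}}(f)\ge\mathrm{Var}(R_{n,p}^d\mathrm{U}^{d/n})\,\E[f(Z/\|Z\|_p)]^2=\frac{d^2}{n(n+2d)}\E_{\mu_{n,p}}[f]^2 .
\]

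Second, you invoke the refinement $\mathrm{Var}_{\mu_{n,p}}(f)=\Theta(n^{-1})$ of Step 4 without saying how to obtain it. In the paper it comes from the asymptotic expansion of $C_{n,p,2d}\tr((\mathcal{V}^{\mathrm{sym}}_{n,p,d})^{-1})$ in powers of $n^{-1}$ (Theorem \ref{thm:Possible potential eigenvalues}). Combined with the radial bound, it yields the \emph{upper} bound $\E_{\mu_{n,p}}[f]^2=O(n)$, i.e.\ $b^2=O(n^{1-2d/p})$. That upper bound is what makes $b^2\,\mathrm{coeff}_d(\widetilde f_{n,p,d})^2=o(1)$ above. (For $p\notin 2\N$ or $p>d$ only the lower bound on $b$ from Step 2 is used, so your first bullet actually works with $o(1)$ in place of $O(n^{-1})$.)
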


\subsection{Some integrals on $L^{p}$-balls and $p$-Gaussian measures}
Our first step towards Theorem \ref{thm:Main theorem L^p} is to express the moments of $\mu_{n,p}$ through those of $\gamma_p^{n}$.

For each $a_{1},...,a_{k}\in\N$, let 
\[
\alpha_{(a_{1},...,a_{k}),n,p}:=\E_{\mu_{n,p}}\left[x_{1}^{a_{1}}...x_{k}^{a_{k}}\right] = \frac{1}{\mathrm{Vol}(B_{n,p})}\int\limits_{B_{n,p}} x_{1}^{a_{1}}...x_{k}^{a_{k}}dx.
\]
Further denote 
\begin{equation} \label{eq:gammamoments}
\beta_{k,p}:=\E_{\gamma_p}\left[x^{k}\right] = \int_{\R}x^{k}\frac{1}{\frac{2}{p}\Gamma(\frac{1}{p})}e^{-\left|x\right|^{p}}dx.
\end{equation}
\begin{lem}
\label{lem:moments of p-Gaussian}For each $k\in\N$ we have 
\[
\beta_{k,p}:=\begin{cases}
\frac{\Gamma(\frac{k+1}{p})}{\Gamma(\frac{1}{p})} & \text{if }k\text{ is even}\\
0 & \text{if }k\text{ is odd}
\end{cases}
\]
\end{lem}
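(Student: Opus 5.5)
The argument is a direct one-dimensional computation from the definition \eqref{eq:gammamoments}. The density of $\gamma_p$ is even in $x$. Hence for odd $k$ the integrand $x^{k}e^{-|x|^{p}}$ is an odd function. It is integrable, because $|x|^{k}e^{-|x|^{p}}$ decays faster than any power of $x$, so its integral over $\R$ vanishes and $\beta_{k,p}=0$.

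For even $k$ the integrand is even, so
\[
\beta_{k,p}=\frac{2}{\frac{2}{p}\Gamma(\frac{1}{p})}\int_{0}^{\infty}x^{k}e^{-x^{p}}dx .
\]
I will substitute $u=x^{p}$, so that $x=u^{1/p}$ and $dx=\frac{1}{p}u^{\frac{1}{p}-1}du$. This turns the half-line integral into
\[
\frac{1}{p}\int_{0}^{\infty}u^{\frac{k+1}{p}-1}e^{-u}du=\frac{1}{p}\Gamma\left(\frac{k+1}{p}\right).
\]
Multiplying by the prefactor $\frac{p}{\Gamma(1/p)}$ gives $\beta_{k,p}=\Gamma(\frac{k+1}{p})/\Gamma(\frac{1}{p})$, as claimed.

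The same computation with $k=0$ recovers the normalization $\int\gamma_p=1$, which serves as a consistency check on the constant $\frac{2}{p}\Gamma(\frac{1}{p})$. There is no real obstacle here. The only points requiring care are the convergence of the integral, which holds since $\frac{k+1}{p}>0$, and tracking the factor $\frac{1}{p}$ coming from the change of variables.
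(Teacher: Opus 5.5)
Your proposal is correct and follows the paper's proof. The odd case vanishes by evenness of $e^{-|x|^{p}}$, exactly as in the paper, and for even $k$ you carry out the substitution $u=x^{p}$ explicitly, whereas the paper simply cites \cite[Lemma 6]{GM22} for that computation.
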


\begin{proof}
The case $k\in2\N$ follows from \cite[Lemma 6]{GM22}. If $k$ is
odd, the integral vanishes since $e^{-\left|x\right|^{p}}$ is even. 
\end{proof}
For $\mu_{n,p}$, a key property that follows from its homogeneity is that one can integrate out powers of $\|\cdot\|_p$ in a straightforward manner.
\begin{lem}
\label{lem:properties of integral on L_p ball}For every $f\in\mathcal{P}_{d}(\R^{n})$
and every $\beta\geq-d$, we have, 
\[
\E_{\mu_{n,p}}\left[\left\Vert x\right\Vert _{p}^{\beta}f(x)\right]=\frac{n+d}{n+d+\beta}R_{n,p}^{\beta}\E_{\mu_{n,p}}\left[f\right].
\]
In particular, by taking $f\equiv 1$,
$$\E_{\mu_{n,p}}\left[\left\Vert x\right\Vert _{p}^{\beta}\right] = \frac{n}{n+\beta}R_{n,p}^{\beta}.$$
A similar relation holds for $\gamma_{p}^n$,
\[
\E_{\gamma_p^n}\left[\left\Vert x\right\Vert _{p}^{\beta}f(x)\right]=\frac{\Gamma\left(\frac{n+d+\beta}{p}\right)}{\Gamma\left(\frac{n+d}{p}\right)}\E_{\gamma_p^n}\left[f\right].
\]
\end{lem}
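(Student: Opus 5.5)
The plan is to reduce both identities to a polar decomposition in which the radial part and the angular part are independent. Homogeneity of $f$ and of $\left\Vert x\right\Vert_p$ then lets the radial factor be integrated out explicitly.

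\emph{The measure $\mu_{n,p}$.} Use the Schechtman--Zinn representation \eqref{eq:L^p ball from Gaussian}: $X=R_{n,p}\,\mathrm{U}^{1/n}\Theta$, where $\Theta:=Z/\left\Vert Z\right\Vert_p$ lies on the unit $L^p$-sphere and $\mathrm{U}$ is uniform on $[0,1]$ and independent of $\Theta$. Since $\left\Vert\Theta\right\Vert_p=1$ and $f\in\mathcal{P}_d(\R^n)$,
\[
\left\Vert X\right\Vert_p^{\beta}f(X)=R_{n,p}^{\beta+d}\,\mathrm{U}^{\frac{\beta+d}{n}}f(\Theta).
\]
By independence,
\[
\E_{\mu_{n,p}}\left[\left\Vert x\right\Vert_p^\beta f\right]=R_{n,p}^{\beta+d}\,\E\left[\mathrm{U}^{\frac{\beta+d}{n}}\right]\E[f(\Theta)]=R_{n,p}^{\beta+d}\,\frac{n}{n+d+\beta}\,\E[f(\Theta)].
\]
The condition $\beta\ge -d$ guarantees that the exponent $\frac{\beta+d}{n}$ is nonnegative, so $\E\left[\mathrm{U}^{(\beta+d)/n}\right]=\frac{n}{n+d+\beta}$ is finite and the manipulation is legitimate. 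Taking $\beta=0$ gives $\E_{\mu_{n,p}}[f]=R_{n,p}^{d}\frac{n}{n+d}\E[f(\Theta)]$. Dividing the two expressions eliminates the unknown angular average $\E[f(\Theta)]$ and yields the factor $\frac{n+d}{n+d+\beta}R_{n,p}^\beta$. The case $f\equiv1$ is simply $d=0$.

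\emph{The measure $\gamma_p^n$.} Write $Z=\left\Vert Z\right\Vert_p\,\Theta$. The key input is that $\left\Vert Z\right\Vert_p$ is independent of $\Theta$ and that $\left\Vert Z\right\Vert_p^p$ is $\mathrm{Gamma}(\frac{n}{p},1)$-distributed. One can take this from \cite{SZ90}, or verify it directly: integrate the density \eqref{eq:p_Gaussian} in $L^p$-polar coordinates $x=r\theta$, where $dx=n\,\mathrm{Vol}(\widetilde{B}_{n,p})\,r^{n-1}dr\,d\sigma(\theta)$ and $\sigma$ is the cone probability measure. The density then factors as $e^{-r^p}r^{n-1}$ times $\sigma$, which gives both the independence and the radial law.

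With this in hand,
\[
\E_{\gamma_p^n}\left[\left\Vert x\right\Vert_p^{\beta}f\right]=\E\left[\left\Vert Z\right\Vert_p^{\beta+d}\right]\E[f(\Theta)]=\frac{\Gamma\left(\frac{n+d+\beta}{p}\right)}{\Gamma\left(\frac{n}{p}\right)}\E[f(\Theta)].
\]
The same computation with $\beta=0$ gives $\E_{\gamma_p^n}[f]=\frac{\Gamma(\frac{n+d}{p})}{\Gamma(\frac{n}{p})}\E[f(\Theta)]$. Taking the ratio gives the claimed formula.

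The only step needing care is justifying the polar factorization, namely that the radial and angular parts are independent under both measures. The rest is a one-variable Beta/Gamma moment computation.
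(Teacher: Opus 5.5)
Your proof is correct and is essentially the paper's argument: both split each measure into an independent radial part and the cone measure on the unit $L^p$-sphere, integrate the radial factor explicitly as a Beta or Gamma moment, and then eliminate the angular average using the $\beta=0$ case. The only difference is presentation. You write the $\mu_{n,p}$ case through the Schechtman--Zinn representation $R_{n,p}\mathrm{U}^{1/n}\Theta$, while the paper uses the polar integration formula, and these encode the same factorization.
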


\begin{proof}
Denote by $\mu_{S_{n,p}}$ the cone probability measure on the unit
$L^{p}$-sphere $S_{n,p}:=\partial\widetilde{B}_{n,p}$. Using the polar
integration formula (see e.g.~\cite[Page 485]{BGMN05}), we have:
\begin{align}
\E_{\mu_{n,p}}\left[\left\Vert x\right\Vert _{p}^{\beta}f(x)\right] & =\frac{1}{\mathrm{Vol}(B_{n,p})}\int_{B_{n,p}}\left\Vert x\right\Vert _{p}^{\beta}f(x)dx=n\frac{\mathrm{Vol}(\widetilde{B}_{n,p})}{\mathrm{Vol}(B_{n,p})}\int_{0}^{R_{n,p}}r^{n-1}\int_{S_{n,p}}\left\Vert rz\right\Vert _{p}^{\beta}f(rz)\mu_{S_{n,p}}(z)\nonumber \\
 & =nR_{n,p}^{-n}\int_{0}^{R_{n,p}}r^{n+d+\beta-1}dr\int_{S_{n,p}}f(z)\mu_{S_{n,p}}(z)=\frac{n}{n+d+\beta}R_{n,p}^{d+\beta}\int_{S_{n,p}}f(z)\mu_{S_{n,p}}(z).\label{eq:polar integration 1}
\end{align}
Similarly, 
\begin{equation}
\E_{\mu_{n,p}}\left[f\right]=nR_{n,p}^{-n}\int_{0}^{R_{n,p}}r^{n+d-1}\int_{S_{n,p}}f(z)\mu_{S_{n,p}}(z)=\frac{n}{n+d}R_{n,p}^{d}\int_{S_{n,p}}f(z)\mu_{S_{n,p}}(z).\label{eq:poar integration 2}
\end{equation}
The lemma follows by combining \eqref{eq:polar integration 1} with
\eqref{eq:poar integration 2}. The proof for $\gamma_{p}^n$ is virtually identical except that the radial part now contains the term $e^{-r^p}$, leading to a Gamma integral.
\end{proof}
Denote by 
\begin{equation}
C_{n,p,d}:=R_{n,p}^{d}\frac{n}{n+d}\frac{\Gamma(\frac{n}{p})}{\Gamma(\frac{n+d}{p})}.\label{eq:formula for C}
\end{equation}
We next show that $C_{n,p,d}$ is the proportion between the $d$-moments of $\mu_{n,p}$ and $\gamma_p^n$.
\begin{prop}
\label{prop:Marginals of Euclidean ball}Let $d\in\N$ and let $a_{1},...,a_{k}\in\N$,
with $\sum_{i=1}^{k}a_{i}=d$. Then: 
\[
\alpha_{(a_{1},...,a_{k}),n,p}=C_{n,p,d}\prod_{i=1}^{k}\beta_{a_{i},p}.
\]
Consequently, by linearity of the expectation, if $f \in \mathcal{P}_d(\R^n)$,
$$\E_{\mu_{n,p}}\left[f\right] = C_{n,p,d}\E_{\gamma_p^n}\left[f\right].$$
\end{prop}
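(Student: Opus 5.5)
The plan is to apply Lemma \ref{lem:properties of integral on L_p ball} with $\beta=0$ and $\beta$ chosen to kill or produce $\|x\|_p$ factors, comparing both measures through the cone measure $\mu_{S_{n,p}}$ on $S_{n,p}$. Let $g(x)=x_1^{a_1}\cdots x_k^{a_k}$, which lies in $\mathcal{P}_d(\R^n)$. From the polar integration identity \eqref{eq:poar integration 2} established in the proof of that lemma,
\[
\E_{\mu_{n,p}}[g]=\frac{n}{n+d}R_{n,p}^{d}\int_{S_{n,p}}g\,d\mu_{S_{n,p}} .
\]
I will prove the analogous formula for $\gamma_p^n$ by polar integration. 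Since the density \eqref{eq:p_Gaussian} is $c_n e^{-r^p}$ on $\{\|x\|_p=r\}$, the same change of variables gives
\[
\E_{\gamma_p^n}[g]=n\,\mathrm{Vol}(\widetilde B_{n,p})\,c_n\int_0^\infty r^{n+d-1}e^{-r^p}dr\int_{S_{n,p}}g\,d\mu_{S_{n,p}}
=K_n\,\Gamma\!\left(\tfrac{n+d}{p}\right)\int_{S_{n,p}}g\,d\mu_{S_{n,p}},
\]
where $K_n$ does not depend on $d$ or on $g$. The normalization $K_n$ is fixed by taking $g\equiv1$ (so $d=0$), which gives $1=K_n\Gamma(\frac np)$. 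Thus $K_n=\Gamma(\frac np)^{-1}$.

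Eliminating $\int_{S_{n,p}}g\,d\mu_{S_{n,p}}$ between the two displays yields
\[
\E_{\mu_{n,p}}[g]=R_{n,p}^d\frac{n}{n+d}\frac{\Gamma(\frac np)}{\Gamma(\frac{n+d}{p})}\E_{\gamma_p^n}[g]=C_{n,p,d}\E_{\gamma_p^n}[g].
\]
Because $\gamma_p^n$ is a product measure, $\E_{\gamma_p^n}[g]=\prod_{i}\beta_{a_i,p}$. Together with Lemma \ref{lem:moments of p-Gaussian}, this gives $\alpha_{(a_1,\dots,a_k),n,p}=C_{n,p,d}\prod_i\beta_{a_i,p}$. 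The statement for general $f\in\mathcal P_d(\R^n)$ then follows by linearity, since every monomial in $f$ has total degree $d$ and so $C_{n,p,d}$ is the same for all of them.

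The only delicate point is the Gaussian polar formula and its normalization. Both rest on the fact that $\mu_{S_{n,p}}$ is the cone measure, so that $dx$ decomposes as $n\mathrm{Vol}(\widetilde B_{n,p})r^{n-1}dr\,d\mu_{S_{n,p}}$. The $\gamma_p^n$ part of Lemma \ref{lem:properties of integral on L_p ball} already encodes exactly this, so the argument can go through that lemma instead. Apply it with $\beta=d$ to $f\equiv1$ on one side and compare with the homogeneous $g$; equivalently, use that the ratio $\E[g]/\int_{S_{n,p}} g$ depends only on $d$. Either route is routine, and the odd-exponent case is consistent automatically, since both sides vanish by symmetry.
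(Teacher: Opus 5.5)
Your proof is correct and is essentially the paper's argument. Both express the $\mu_{n,p}$-moment through the cone-measure integral via \eqref{eq:poar integration 2}, and then convert that cone-measure integral into a $\gamma_p^n$-moment with the factor $\Gamma(\frac np)/\Gamma(\frac{n+d}p)$; the paper obtains this factor from the Schechtman--Zinn representation and the $\beta=-d$ case of Lemma \ref{lem:properties of integral on L_p ball}, whereas you redo the radial Gamma integral directly and fix the constant with $g\equiv 1$.
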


\begin{proof}
If one of the $a_{i}$'s is odd, then $\alpha_{(a_{1},...,a_{k}),n,p}=C_{n,p,d}\prod_{i=1}^{k}\beta_{a_{i},p}=0$,
since $\mu_{n,p}$ and $\gamma_{p}^{n}$ are invariant to reflections.
Hence, we may assume $a_{1},...,a_{k}\in2\N$. By (\ref{eq:poar integration 2}),
(\ref{eq:L^p ball from Gaussian}) and by Lemma \ref{lem:properties of integral on L_p ball},
\begin{align*}
\alpha_{(a_{1},...,a_{k}),n,p} & =\E_{\mu_{n,p}}\left[x_{1}^{a_{1}}...x_{k}^{a_{k}}\right]=R_{n,p}^{d}\frac{n}{n+d}\int_{S_{n,p}}z_{1}^{a_{1}}...z_{k}^{a_{k}}\mu_{S_{n,p}}(z)\\
& =R_{n,p}^{d}\frac{n}{n+d}\int_{\R^{n}}\frac{x_{1}^{a_{1}}...x_{k}^{a_{k}}}{\left\Vert x\right\Vert _{p}^{d}}\gamma_{p}^{n}(x)=C_{n,p,d}\int_{\R^{n}}x_{1}^{a_{1}}...x_{k}^{a_{k}}\gamma_{p}^{n}(x)=C_{n,p,d}\prod_{i=1}^{k}\beta_{a_{i},p},
\end{align*}
as required. 
\end{proof}

When computing the variance $\mathrm{Var}_{\mu_{n,p}}(f) = \E_{\mu_{n,p}}\left[ f^2 \right]- \E_{\mu_{n,p}}\left[ f \right]^2$, we need to deal with both $C_{n,p,2d}$ and $C^2_{n,p,d}$. We now show that the ratio of these constants has a tractable expansion.
\begin{lem}
\label{lem:asymptotic expansion}The function $n\mapsto\frac{C_{n,p,d}^{2}}{C_{n,p,2d}}$
admits an asymptotic expansion in $n^{-1}$. Namely, there exist real
numbers $\left\{ a_{p,d,k}\right\} _{k=0}^{\infty}$ such that for
every $D\in\N$: 
\[
\frac{C_{n,p,d}^{2}}{C_{n,p,2d}}=\frac{n(n+2d)}{(n+d)^{2}}\frac{\Gamma(\frac{n}{p})\Gamma(\frac{n+2d}{p})}{\Gamma(\frac{n+d}{p})^{2}}=\sum_{k=0}^{D}\frac{a_{p,d,k}}{n^{k}}+O_{p,d}(n^{-(D+1)}).
\]
Moreover, 
\[
\frac{C_{n,p,d}^{2}}{C_{n,p,2d}}=1+\frac{d^{2}}{pn}+O_{p,d}(n^{-2}).
\]
\end{lem}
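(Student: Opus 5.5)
First I substitute the definition \eqref{eq:formula for C} into the ratio. The factors $R_{n,p}^{2d}$ cancel exactly. What is left is
\[
\frac{C_{n,p,d}^{2}}{C_{n,p,2d}}=\frac{n^2}{(n+d)^2}\cdot\frac{n+2d}{n}\cdot\frac{\Gamma(\frac{n}{p})^2}{\Gamma(\frac{n+d}{p})^2}\cdot\frac{\Gamma(\frac{n+2d}{p})}{\Gamma(\frac{n}{p})},
\]
which is the displayed closed form. The problem therefore splits into a rational factor and a Gamma-ratio factor.

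\textbf{Rational factor.} The factor $\frac{n(n+2d)}{(n+d)^2}=1-\frac{d^2}{(n+d)^2}$ is a rational function of $n$ that is analytic at $n=\infty$. Its expansion in $1/n$ is convergent, and it reads $1-\frac{d^2}{n^2}+O(n^{-3})$.

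\textbf{Gamma factor.} Set $z=n/p$ and $h=d/p$. The Gamma factor becomes $G(z)=\frac{\Gamma(z)\Gamma(z+2h)}{\Gamma(z+h)^2}$. I take logarithms and use the Stirling series for $\log\Gamma(z+a)$ with fixed shift $a$. This gives, for every $D$,
\[
\log\Gamma(z+a)=(z+a-\tfrac12)\log z-z+\tfrac12\log 2\pi+\sum_{k=1}^{D}\frac{c_k(a)}{z^k}+O(z^{-D-1}),
\]
where the $c_k(a)$ are polynomials in $a$ (Bernoulli polynomials, up to normalization). I then form $\log\Gamma(z)+\log\Gamma(z+2h)-2\log\Gamma(z+h)$. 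The terms $(z-\tfrac12)\log z$, $-z$ and the constants cancel. The $a\log z$ terms also cancel, since $0+2h-2h=0$. Hence $\log G(z)$ has a pure asymptotic expansion in powers of $1/z=p/n$. Exponentiating and multiplying by the rational factor's expansion gives the first claim, with the error uniform in $n$ for fixed $p,d$. This is where $O_{p,d}$ comes from.

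\textbf{Leading correction.} For the $1/n$ term I only need $c_1(a)=\frac{a^2-a}{2}+\frac1{12}$, which is the standard $B_2(a)/2$ term. The $1/z$ coefficient of $\log G$ is
\[
c_1(0)+c_1(2h)-2c_1(h)=\tfrac12\left(0+(4h^2-2h)-2(h^2-h)\right)=h^2 .
\]
So $G(z)=1+\frac{h^2}{z}+O(z^{-2})=1+\frac{d^2}{pn}+O(n^{-2})$. Since the rational factor is $1+O(n^{-2})$, the product is $1+\frac{d^2}{pn}+O_{p,d}(n^{-2})$.

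The main obstacle is justifying that the Stirling expansion with shifted argument holds uniformly with a remainder. I would either cite the standard result $\log\Gamma(z+a)\sim$ (Bernoulli-polynomial series) for fixed $a$ as $z\to\infty$, or, more robustly, apply the integral representation of the remainder in Binet's formula for $\log\Gamma$. A quick sanity check of the coefficient is available: $d^2/p$ equals $h^2 p/\ldots$, i.e. it is just the second difference of $\frac{a^2}{2z}$, which is a convexity-type correction.
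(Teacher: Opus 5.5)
Your proof is correct and follows essentially the same route as the paper. You cancel $R_{n,p}^{2d}$, observe that the rational factor is $1+O_d(n^{-2})$, and expand the Gamma product with a standard shifted-Stirling asymptotic to extract the $\frac{d^2}{pn}$ term. The only difference is bookkeeping: the paper applies the Erd\'elyi--Tricomi ratio expansion to $\Gamma(\frac{n}{p})/\Gamma(\frac{n+u}{p})$ for $u=d,2d$ and combines the results multiplicatively. You instead take the second difference of the $\log\Gamma$ series, which makes both the cancellation of the $z^{a}$ factors and the coefficient $c_1(0)+c_1(2h)-2c_1(h)=h^2$ particularly transparent.
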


\begin{proof}
By \cite{ET51} (see also \cite[Eq. 4.3]{ELe15}), we have an asymptotic expansion: 
\begin{equation}
\frac{\Gamma(x+t)}{\Gamma(x+s)}\sim x^{t-s}\sum_{k=0}^{\infty}\frac{(-1)^{k}B_{k}^{(t-s+1)}(t)\cdot(t-s)_{k}}{k!}x^{-k},\text{ as }x\rightarrow\infty,\label{eq:ratios of gamma}
\end{equation}
where $(t-s)_{k}:=(t-s)(t-s+1)...(t-s+k-1)$ is the rising factorial and where $B_{k}^{(\alpha)}(y)$ is the generalized Bernoulli polynomials,
defined by 
\[
\frac{x^{\alpha}e^{yx}}{(e^{x}-1)^{\alpha}}=\sum_{k=0}^{\infty}B_{k}^{(\alpha)}(y)\frac{x^{k}}{k!}.
\]
Set $x=\frac{n}{p}$, $t=0$ and $s=\frac{u}{p}$ for $u\in\N$, and denote $b_{p,u,k}:=p^{k}\frac{(-1)^{k}B_{k}^{(1-\frac{u}{p})}(0)\cdot(-\frac{u}{p})_{k}}{k!}$.
An explicit computation shows that
\[
B_{0}^{(1-\frac{u}{p})}(0)=1,\text{\,\,\,\,\,and \,\,\,\,}B_{1}^{(1-\frac{u}{p})}(0)=\frac{p-u}{2p}.
\]
Hence, for every $D\in\N$ and every $u\in\N$, we have: 
\begin{equation}\label{eq:ratios of Gamma}
\frac{\Gamma(\frac{n}{p})}{\Gamma(\frac{n+u}{p})}=\left(\frac{n}{p}\right)^{-\frac{u}{p}}\left(\sum_{k=0}^{D}b_{p,u,k}n^{-k}+O_{p,u}(n^{-(D+1)})\right)=\left(\frac{n}{p}\right)^{-\frac{u}{p}}\left(1+\frac{u(p-u)}{2pn}+O_{p,u}(n^{-2})\right).
\end{equation}
Thus, taking the square of \eqref{eq:ratios of Gamma} with $u=d$, and dividing by \eqref{eq:ratios of Gamma} with $u=2d$ yields:
\begin{align*}
\frac{n(n+2d)}{(n+d)^{2}}\frac{\Gamma(\frac{n}{p})\Gamma(\frac{n+2d}{p})}{\Gamma(\frac{n+d}{p})^{2}} & =\frac{n(n+2d)}{(n+d)^{2}}\frac{\left(\sum_{k=0}^{D}b_{p,d,k}n^{-k}+O_{p,d}(n^{-(D+1)})\right)^{2}}{\sum_{k=0}^{D}b_{p,2d,k}n^{-k}+O_{p,d}(n^{-(D+1)})},\\
 & =\sum_{k=0}^{D}\frac{a_{p,d,k}}{n^{k}}+O_{p,d}(n^{-(D+1)})
\end{align*}
for suitable $\left\{ a_{p,d,k}\right\} _{k=0}^{\infty}$. Moreover,
since $\frac{n(n+2d)}{(n+d)^{2}}=1-\frac{d^{2}}{(n+d)^{2}}=1+O_{d}(n^{-2})$,
we have: 
\[
\frac{C_{n,p,d}^{2}}{C_{n,p,2d}}=1+\frac{d(p-d)}{pn}-\frac{d(p-2d)}{pn}+O_{p,d}(n^{-2})=1+\frac{d^{2}}{pn}+O_{p,d}(n^{-2}).
\]
This concludes the lemma. 
\end{proof}
We can now prove an a priori estimate for potential values of $\mathrm{Var}_{\mu_{n,p}}(f)$ when $f\in\mathcal{P}_{d,\mathrm{unit}}(\R^{n})$; If $\mathrm{Var}_{\mu_{n,p}}(f)$ is smaller than some constant, then $\left|\E_{\mu_{n,p}}\left[f\right]\right|$ must grow as a function of $n$.
\begin{lem}
\label{lem:low variance means expectation large}Let $d\in\N$
and $p\geq1$. There exists a constant $c(d,p)>0$, such that if $f\in\mathcal{P}_{d,\mathrm{unit}}(\R^{n})$
and $\mathrm{Var}_{\mu_{n,p}}(f)<c(d,p)$, then 
\[
\E_{\mu_{n,p}}\left[f\right]^{2}\geq c(d,p)n.
\]
\end{lem}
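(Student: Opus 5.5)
We compare the variance under $\mu_{n,p}$ with the variance under $\gamma_p^n$, using Proposition \ref{prop:Marginals of Euclidean ball} and Lemma \ref{lem:asymptotic expansion}. Let $f\in\mathcal{P}_{d,\mathrm{unit}}(\R^n)$. Then $f^2\in\mathcal{P}_{2d}(\R^n)$, so Proposition \ref{prop:Marginals of Euclidean ball}, applied in degrees $d$ and $2d$, gives
\[
\E_{\mu_{n,p}}[f^2]=C_{n,p,2d}\E_{\gamma_p^n}[f^2],\qquad \E_{\mu_{n,p}}[f]=C_{n,p,d}\E_{\gamma_p^n}[f].
\]
Writing $r_n:=\frac{C_{n,p,d}^2}{C_{n,p,2d}}$, we get
\[
\mathrm{Var}_{\mu_{n,p}}(f)=C_{n,p,2d}\left(\E_{\gamma_p^n}[f^2]-r_n\E_{\gamma_p^n}[f]^2\right)=C_{n,p,2d}\left(\mathrm{Var}_{\gamma_p^n}(f)-(r_n-1)\E_{\gamma_p^n}[f]^2\right).
\]

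Next we control the constants. From \eqref{eq:formula for C}, \eqref{eq:isotropic radius} and \eqref{eq:ratios of Gamma}, $R_{n,p}^{2d}=\Theta_p(n^{2d/p})$ and $\Gamma(\frac{n}{p})/\Gamma(\frac{n+2d}{p})=\Theta_{p,d}(n^{-2d/p})$. Hence $C_{n,p,2d}=\Theta_{p,d}(1)$, and likewise $C_{n,p,d}=\Theta_{p,d}(1)$.

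By Lemma \ref{lem:asymptotic expansion}, $r_n-1=\frac{d^2}{pn}+O_{p,d}(n^{-2})$. Thus there is $n_0=n_0(p,d)$ such that $0\le r_n-1\le \frac{2d^2}{pn}$ for $n\ge n_0$. (Alternatively, $r_n\ge1$ for all $n$ by log-convexity of $\Gamma$.)

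We now invoke the product-measure bound from \cite{GM22}, recalled in $\mathsection$\ref{sec:strat}: $\mathrm{Var}_{\gamma_p^n}(f)\ge c_1(p,d)>0$ for every $f\in\mathcal{P}_{d,\mathrm{unit}}(\R^n)$. Let $c_2>0$ be a lower bound for $C_{n,p,2d}$. Suppose $\mathrm{Var}_{\mu_{n,p}}(f)<c$ with $c\le \frac{1}{2}c_1c_2$. The displayed identity then gives
\[
(r_n-1)\E_{\gamma_p^n}[f]^2\ge \mathrm{Var}_{\gamma_p^n}(f)-\frac{c}{c_2}\ge\frac{c_1}{2}.
\]
For $n\ge n_0$ this yields $\E_{\gamma_p^n}[f]^2\ge \frac{c_1pn}{4d^2}$. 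Multiplying by $C_{n,p,d}^2=\Theta_{p,d}(1)$ gives $\E_{\mu_{n,p}}[f]^2\ge c' n$.

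It remains to handle the finitely many $n<n_0$. There $n$ is bounded in terms of $p,d$, and we argue by compactness. For fixed $n$, the map $f\mapsto\mathrm{Var}_{\mu_{n,p}}(f)$ is continuous on the compact unit sphere $\mathcal{P}_{d,\mathrm{unit}}(\R^n)$. It is strictly positive there, since a nonzero polynomial is not constant on a body with nonempty interior (for $d\ge1$). So it has a positive minimum $m_n$. Choosing $c(d,p)\le\min_{n<n_0}m_n$ makes the hypothesis vacuous for these $n$. Finally, take $c(d,p)$ to be the minimum of all the constants above, including $c'$.

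The main obstacle is the sign and size of $r_n-1$: everything hinges on the leading term $\frac{d^2}{pn}$ of Lemma \ref{lem:asymptotic expansion} being positive and of exact order $n^{-1}$. This is already supplied, so the remaining work is bookkeeping of the constants.
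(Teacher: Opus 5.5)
Your proposal is correct and follows essentially the same route as the paper. You rewrite $\mathrm{Var}_{\mu_{n,p}}(f)$ via Proposition~\ref{prop:Marginals of Euclidean ball} as $C_{n,p,2d}\bigl(\mathrm{Var}_{\gamma_p^n}(f)-(r_n-1)\E_{\gamma_p^n}[f]^2\bigr)$, combine $r_n-1=\frac{d^2}{pn}+O_{p,d}(n^{-2})$ from Lemma~\ref{lem:asymptotic expansion} with the \cite{GM22} lower bound on $\mathrm{Var}_{\gamma_p^n}(f)$, and transfer back using $C_{n,p,d}=\Theta_{p,d}(1)$. Your compactness argument for the finitely many small $n$ (the variance is strictly positive on the unit sphere, so the hypothesis becomes vacuous) is a cleaner justification than the paper's one-line reduction to $n\gg_{d,p}1$, which is phrased in terms of $\E_{\mu_{n,p}}[f]^2$ rather than the variance and fails as written for odd $d$, where that expectation is zero.
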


\begin{proof}
Since $\E_{\mu_{n,p}}[f]^{2}$ is uniformly bounded from below for
every fixed $n$, we may assume $n\gg_{d,p}1$. By Proposition \ref{prop:Marginals of Euclidean ball}
and Lemma \ref{lem:asymptotic expansion}, the following holds for
$n\gg_{p,d}1$: 
\begin{align*}
\mathrm{Var}_{\mu_{n,p}}\left(f\right) & =\E_{\mu_{n,p}}\left[f^{2}\right]-\E_{\mu_{n,p}}\left[f\right]^{2}=C_{n,p,2d}\E_{\gamma_{p}^{n}}\left[f^{2}\right]-C_{n,p,d}^{2}\E_{\gamma_{p}^{n}}\left[f\right]^{2}\\
 & =C_{n,p,2d}\left(\E_{\gamma_{p}^{n}}\left[f^{2}\right]-\frac{C_{n,p,d}^{2}}{C_{n,p,2d}}\E_{\gamma_{p}^{n}}\left[f\right]^{2}\right)\\
 & =C_{n,p,2d}\left(\mathrm{Var}_{\gamma_{p}^{n}}(f)-\E_{\gamma_{p}^{n}}\left[f\right]^{2}\left(\frac{C_{n,p,d}^{2}}{C_{n,p,2d}}-1\right)\right)\\
 & =C_{n,p,2d}\left(\mathrm{Var}_{\gamma_{p}^{n}}(f)-\E_{\gamma_{p}^{n}}\left[f\right]^{2}\left(\frac{d^{2}}{pn}+O_{p,d}(n^{-2})\right)\right).
\end{align*}
By (\ref{eq:isotropic radius}) and (\ref{eq:ratios of Gamma}), $C_{n,p,d}>c'(d,p)$
for $n\gg_{d,p}1$. Hence, by \cite[Theorem 1]{GM22}, $C_{n,p,2d}\mathrm{Var}_{\gamma_{p}^{n}}(f)\geq c''(d,p)$
for some $c''(d,p)>0$. Taking $c(d,p)<\frac{1}{2}c''(d,p)$, for
$n\gg_{d,p}1$, we get:
\[
\frac{1}{2}c''(d,p)\geq\mathrm{Var}_{\mu_{n,p}}(f)\geq c''(d,p)-\frac{2d^{2}}{pn}\E_{\gamma_{p}^{n}}\left[f\right]^{2}.
\]
 In particular, $\E_{\gamma_{p}^{n}}\left[f\right]^{2}\geq\frac{pn}{4d^{2}}c''(d,p)$.
If further $c(d,p)<\frac{p\left(c'(d,p)\right)^{2}c''(d,p)}{4d^2}$ then for $n\gg_{d,p}1$,
\[
\E_{\mu_{n,p}}\left[f\right]^{2}\geq\left(c'(d,p)\right)^{2}\E_{\gamma_{p}^{n}}\left[f\right]^{2}\geq\frac{pn}{4d^{2}}\left(c'(d,p)\right)^{2}c''(d,p)\geq c(d,p)n.\qedhere
\]
\end{proof}
\begin{cor}
\label{cor:lower bound on variance}For every $d\in\N$ and every
$p\geq1$, and every $f\in\mathcal{P}_{d,\mathrm{unit}}(\R^{n})$,
one has:
\begin{enumerate}
\item $\mathrm{Var}_{\mu_{n,p}}(f)\geq\frac{d^{2}}{(n+2d)n}\E_{\mu_{n,p}}\left[f\right]^{2}$. 
\item $\mathrm{Var}_{\mu_{n,p}}(f)=\Omega_{p,d}(n^{-1})$. 
\end{enumerate}
\end{cor}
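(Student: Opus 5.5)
The plan is to prove item (1) by a Cauchy--Schwarz (covariance) argument against the test function $h(x):=\left\Vert x\right\Vert _{p}^{d}$. Every moment involved can be computed exactly from Lemma \ref{lem:properties of integral on L_p ball}. Item (2) then follows by combining (1) with Lemma \ref{lem:low variance means expectation large}.

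For (1), I will use the elementary inequality $\mathrm{Var}_{\mu_{n,p}}(f)\geq\frac{\mathrm{Cov}_{\mu_{n,p}}(f,h)^{2}}{\mathrm{Var}_{\mu_{n,p}}(h)}$, which is Cauchy--Schwarz applied to $f-\E f$ and $h-\E h$. The two ingredients are computed as follows.

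\begin{itemize}
\item \emph{Covariance.} Applying Lemma \ref{lem:properties of integral on L_p ball} with $\beta=d$ to $f\in\mathcal{P}_{d}(\R^{n})$ gives $\E_{\mu_{n,p}}[\left\Vert x\right\Vert _{p}^{d}f]=\frac{n+d}{n+2d}R_{n,p}^{d}\E_{\mu_{n,p}}[f]$. With $f\equiv1$ it gives $\E_{\mu_{n,p}}[\left\Vert x\right\Vert _{p}^{d}]=\frac{n}{n+d}R_{n,p}^{d}$. Therefore
\[
\mathrm{Cov}_{\mu_{n,p}}(f,h)=R_{n,p}^{d}\E_{\mu_{n,p}}[f]\left(\frac{n+d}{n+2d}-\frac{n}{n+d}\right)=R_{n,p}^{d}\E_{\mu_{n,p}}[f]\frac{d^{2}}{(n+2d)(n+d)}.
\]
\item \emph{Variance of $h$.} Using $\E[\left\Vert x\right\Vert _{p}^{2d}]=\frac{n}{n+2d}R_{n,p}^{2d}$, we get
\[
\mathrm{Var}_{\mu_{n,p}}(h)=R_{n,p}^{2d}\left(\frac{n}{n+2d}-\frac{n^{2}}{(n+d)^{2}}\right)=R_{n,p}^{2d}\frac{nd^{2}}{(n+2d)(n+d)^{2}}.
\]
\end{itemize}

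Dividing the square of the covariance by this variance, the powers of $R_{n,p}$ and of $(n+d)$ cancel. This leaves exactly $\frac{d^{2}}{n(n+2d)}\E_{\mu_{n,p}}[f]^{2}$, which is (1). If $\mathrm{Var}(h)$ vanished the inequality would be vacuous, but here it is positive, so there is no degenerate case.

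For (2), let $c(d,p)$ be the constant of Lemma \ref{lem:low variance means expectation large}. If $\mathrm{Var}_{\mu_{n,p}}(f)\geq c(d,p)$ there is nothing to prove. Otherwise that lemma gives $\E_{\mu_{n,p}}[f]^{2}\geq c(d,p)n$, and plugging this into (1) yields
\[
\mathrm{Var}_{\mu_{n,p}}(f)\geq\frac{d^{2}c(d,p)}{n+2d}=\Omega_{p,d}(n^{-1}).
\]
In both cases the bound is $\Omega_{p,d}(n^{-1})$.

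The only real point is the choice of the test function $h=\left\Vert x\right\Vert _{p}^{d}$, motivated by boundary concentration. Once it is chosen, the rest is the exact bookkeeping of the rational factors in $n$ above, which I expect to be the only step needing care.
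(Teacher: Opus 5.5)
Your proof is correct. Item (2) is argued exactly as in the paper, but for item (1) you reach the same inequality by a different route.

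The paper uses the Schechtman--Zinn representation $X=R_{n,p}\mathrm{U}^{1/n}Z/\|Z\|_p$. It writes $f(X)=R_{n,p}^{d}\mathrm{U}^{d/n}\cdot f(Z/\|Z\|_p)$ as a product $A\cdot B$ of two independent factors and applies $\mathrm{Var}(AB)\ge \mathrm{Var}(A)(\mathbb{E}B)^2$. It then computes $\mathrm{Var}(\mathrm{U}^{d/n})$ and converts $\mathbb{E}_{\gamma_p^n}[\|Z\|_p^{-d}f(Z)]$ back to $\mathbb{E}_{\mu_{n,p}}[f]$ via the Gamma-ratio identity and $C_{n,p,d}$.

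You instead stay on $\mu_{n,p}$ and project onto the radial function $\|x\|_p^d$ by Cauchy--Schwarz, taking all moments from Lemma \ref{lem:properties of integral on L_p ball}. The two bounds coincide exactly. Since $A=\|X\|_p^d$ and $B$ are independent, $\mathrm{Cov}(AB,A)=\mathrm{Var}(A)\,\mathbb{E}B$, so $\mathrm{Cov}(f,h)^2/\mathrm{Var}(h)=\mathrm{Var}(A)(\mathbb{E}B)^2$.

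What each version buys:
\begin{itemize}
\item Yours is more self-contained: no $p$-Gaussian, no Gamma ratios, no $C_{n,p,d}$. Your covariance computation is precisely Lemma \ref{lem:covariance with p_norm}, which the paper proves only later and reuses in the projection argument of Lemma \ref{lem:reduction to p_Gaussian}.
\item The paper's version shows explicitly what is thrown away, namely $\mathbb{E}[A^2]\mathrm{Var}(B)$, the fluctuation of $f$ on the unit $L^p$-sphere. That is the intuition behind its later steps.
\end{itemize}
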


\begin{proof}
Recall from \eqref{eq:L^p ball from Gaussian} that $X\sim\mu_{n,p}$
distributes as $R_{n,p}\mathrm{U}^{1/n}\frac{Z}{\left\Vert Z\right\Vert _{p}},$ where
$\mathrm{U}$ has the uniform measure on $[0,1]$, and $Z\sim\gamma_{p}^{n}$
are independent. Since $\mathrm{Var}\left(\mathrm{U}^{\frac{d}{n}}\right)=\frac{nd^{2}}{(n+2d)(n+d)^{2}}$,
we obtain: 
\begin{align*}
\mathrm{Var}_{\mu_{n,p}}(f(X)) & =\mathrm{Var}\left(f(R_{n,p}\mathrm{U}^{1/n}\frac{Z}{\left\Vert Z\right\Vert _{p}})\right)=\mathrm{Var}\left(R_{n,p}^{d}\mathrm{U}^{\frac{d}{n}}f(\frac{Z}{\left\Vert Z\right\Vert _{p}})\right)\\
 & \geq\mathrm{Var}(R_{n,p}^{d}\mathrm{U}^{\frac{d}{n}})\cdot\left(\E_{\gamma_{p}^{n}}\left[f\left(\frac{Z}{\|Z\|_{p}}\right)\right]\right)^{2} \geq\frac{nd^{2}R_{n,p}^{2d}}{(n+2d)(n+d)^{2}}\cdot\left(\E_{\gamma_{p}^{n}}\left[\|Z\|_{p}^{-d}f(Z)\right]\right)^{2}.
\end{align*}
By Lemma \ref{lem:properties of integral on L_p ball}, and since
$\frac{n}{n+d}\frac{R_{n,p}^{d}\Gamma\left(\frac{n}{p}\right)}{\Gamma\left(\frac{n+d}{p}\right)}=C_{n,p,d}$,
we conclude Item (1):
\begin{align*}
\mathrm{Var}_{\mu_{n,p}}(f) & \geq\frac{nd^{2}R_{n,p}^{2d}}{(n+2d)(n+d)^{2}}\left(\frac{\Gamma\left(\frac{n}{p}\right)}{\Gamma\left(\frac{n+d}{p}\right)}\right)^{2}\cdot\E_{\gamma_{p}^{n}}\left[f\right]^{2}\\
 & =\frac{d^{2}}{(n+2d)n}\left(C_{n,p,d}\E_{\gamma_{p}^{n}}\left[f\right]\right)^{2}=\frac{d^{2}}{(n+2d)n}\E_{\mu_{n,p}}\left[f\right]^{2}.
\end{align*}
If $\mathrm{Var}_{\mu_{n,p}}(f)<\frac{c_{2}(p,d)}{n}$ for $c_{2}(p,d)>0$
small enough, then, if we combine with the above display,
\[
\E_{\mu_{n,p}}\left[f\right]^{2}\leq\frac{c_{2}(p,d)(n+2d)}{d^{2}},
\]
which contradicts Lemma \ref{lem:low variance means expectation large}.
This concludes Item (2).
\end{proof}

\subsection{On the variance spectrum and a reduction to $H_{n}$-symmetric polynomials}

\label{subsec:On-the-variance spectrum}

Each polynomial $f\in\mathcal{P}_{d}(\R^{n})$ can be written as $\sum_{I:|I|=d}a_{I}x^{I}$,
where $I\in\N^{n}$ is a multi-index, and $a_{I}\in\R$. We define
an the inner product on $\mathcal{P}_{d}(\R^{n})$ by $\langle\sum_{I}a_{I}x^{I},\sum b_{J}x^{J}\rangle:=\sum_{I}a_{I}b_{I}$,
so that $\langle f,f\rangle=\mathrm{coeff}^{2}_{d}(f)$. In particular,
the collection $\{x^{I}\}_{|I|=d}$ is an orthonormal basis for $\mathcal{P}_{d}(\R^{n})$. 
\begin{defn}
\label{def:definition of matrices}Let $\eta$ be a measure on $\R^{n}$,
let $d\in\N$ and set $M:={n+d-1 \choose d}$. Consider the $M\times M$
matrices $\mathcal{E}:=\left\{ \mathcal{E}_{I,J}\right\} _{\left|I\right|,\left|J\right|=d}$
and $\mathcal{V}=\left\{ \mathcal{V}_{I,J}\right\} _{\left|I\right|,\left|J\right|=d}$
where 
\[
\mathcal{E}_{I,J}=\mathbb{E}_{\eta}\left[x^{I+J}\right]\text{ and }\mathcal{V}_{I,J}=\mathbb{E}_{\eta}\left[x^{I+J}\right]-\mathbb{E}_{\eta}\left[x^{I}\right]\mathbb{E}_{\eta}\left[x^{J}\right].
\]
\end{defn}

Note that if $f=\sum_{I}a_{I}x^{I}$ then: 
\[
\mathbb{E}_{\eta}\left[f^{2}\right]=\sum_{I,J}a_{I}a_{J}\mathbb{E}_{\eta}\left[x^{I+J}\right]=\sum_{I,J}a_{I}a_{J}\mathcal{E}_{I,J}=\langle\mathcal{E}f,f\rangle=\frac{\langle\mathcal{E}f,f\rangle}{\langle f,f\rangle}\cdot\mathrm{coeff}_d^{2}(f),
\]
and similarly, 
\[
\mathrm{Var}_{\eta}(f)=\sum_{I,J}a_{I}a_{J}\left(\mathbb{E}_{\eta}\left[x^{I+J}\right]-\mathbb{E}_{\eta}\left[x^{I}\right]\mathbb{E}_{\eta}\left[x^{J}\right]\right)=\frac{\langle\mathcal{V}f,f\rangle}{\langle f,f\rangle}\cdot\mathrm{coeff}_d^{2}(f).
\]
In particular, we get: 
\begin{lem}
\label{lem:The-minimal-eigenvalue}The minimal eigenvalue of $\mathcal{E}$
(resp.~$\mathcal{V}$) is given by $\min_{f\in\mathcal{P}_{d,\mathrm{unit}}(\R^{n})}\mathbb{E}_{\eta}\left[f^{2}\right]$
(resp.~ $\min_{f\in\mathcal{P}_{d,\mathrm{unit}}(\R^{n})}\mathrm{Var}_{\eta}(f)$).
\end{lem}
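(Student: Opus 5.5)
This is a direct consequence of the Rayleigh quotient characterization of the extreme eigenvalues of a real symmetric matrix. I would argue it in three steps.

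First, check that $\mathcal{E}$ and $\mathcal{V}$ are real symmetric $M\times M$ matrices with respect to the orthonormal basis $\{x^{I}\}_{|I|=d}$ of $\mathcal{P}_{d}(\R^{n})$. This is immediate from the definitions: $\mathcal{E}_{I,J}=\E_\eta[x^{I+J}]$ is symmetric in $(I,J)$, and so is $\E_\eta[x^I]\E_\eta[x^J]$. I am tacitly assuming that $\eta$ has finite moments of order $2d$, which holds for all measures considered in the paper, so the entries are finite. Both matrices are moreover positive semidefinite, since $\langle\mathcal{E}f,f\rangle=\E_\eta[f^2]\geq 0$ and $\langle\mathcal{V}f,f\rangle=\mathrm{Var}_\eta(f)\ge0$, although this is not needed for the claim.

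Second, identify $f=\sum_I a_I x^I$ with its coefficient vector $(a_I)_I\in\R^{M}$. Under this identification, $\langle\cdot,\cdot\rangle$ is the standard Euclidean inner product and $\mathcal{P}_{d,\mathrm{unit}}(\R^n)$ is exactly the unit sphere $S^{M-1}$. The computation preceding the lemma gives
\[
\E_\eta[f^2]=\langle\mathcal{E}f,f\rangle \quad\text{and}\quad \mathrm{Var}_\eta(f)=\langle\mathcal{V}f,f\rangle \qquad \text{for } \langle f,f\rangle=1 .
\]

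Third, apply the spectral theorem. Write $\mathcal{E}=\sum_k\lambda_k v_kv_k^{T}$ with $\{v_k\}$ an orthonormal eigenbasis. For a unit vector $f=\sum_k c_kv_k$ with $\sum_k c_k^2=1$, we get $\langle\mathcal{E}f,f\rangle=\sum_k\lambda_kc_k^2\ge\lambda_{\min}$, with equality when $f$ is a unit eigenvector for $\lambda_{\min}$. Hence the minimum over $\mathcal{P}_{d,\mathrm{unit}}(\R^n)$ exists and equals $\lambda_{\min}(\mathcal{E})$. The identical argument with $\mathcal{V}$ in place of $\mathcal{E}$ gives the variance statement.

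There is no real obstacle here. The only points needing care are the finiteness of the moments and the fact that the minimum is attained; the latter follows either from the eigenvector equality case or from compactness of the sphere together with continuity of the quadratic form.
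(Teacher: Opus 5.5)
Your proposal is correct and matches the paper's argument. The paper deduces the lemma directly from the identities $\mathbb{E}_\eta[f^2]=\langle\mathcal{E}f,f\rangle$ and $\mathrm{Var}_\eta(f)=\langle\mathcal{V}f,f\rangle$ in the orthonormal monomial basis, which is the Rayleigh quotient characterization of the minimal eigenvalue of a real symmetric matrix that you spell out.
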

We specialize to the case of $\eta=\mu_{n,p}$ and set $\mathcal{V}_{n,p,d}=(\mathcal{V}_{IJ})_{\left|I\right|=\left|J\right|=d}$
to be the variance matrix as in Definition \ref{def:definition of matrices}, and similarly for $\mathcal{E}_{n,p,d}$.
In \cite[Theorem 2]{GM22}, we have shown that
\begin{equation} \label{eq:l2aprioribound}
    \lambda_{\min} (\mathcal{E}_{n,p,d}) := \min_{f\in\mathcal{P}_{d,\mathrm{unit}}(\R^{n})}\mathbb{E}_{\mu_{n,p}}\left[f^{2}\right]=\Theta_{p,d}(1).
\end{equation}
So, we are left with bounding the spectrum of $\mathcal{V}_{n,p,d}$. 

In principle, $\mathcal{V}_{n,p,d}$ is a matrix of dimensions roughly $\Theta(n^d) \times \Theta(n^d)$. However, as we will show in this subsection, the symmetries of $\mu_{n,p}$ allow us to reduce the complexity of $\mathcal{V}$ to a much smaller matrix of symmetric polynomials, allowing to obtain a tractable expansion. We formalize this notion in the next definition. Later, in $\mathsection$\ref{sec:Spectrum-of-unconditional,}, we will further capitalize on these ideas and extend some of the results to other symmetric measures, beyond $\mu_{n,p}$.
\begin{defn} \label{def:Hn}
The \emph{group of signed permutations,} or the \emph{Hyperoctahedral
group}, is the semidirect product 
\[
H_{n}=(\Z/2\Z)^{n}\rtimes S_{n},
\]
where $S_{n}$ acts on $(\Z/2\Z)^{n}$ by permuting the coordinates.
Concretely, each element of $H_{n}$ is of the form $(\varepsilon,\sigma)$
for $\varepsilon=(\varepsilon_{1},...,\varepsilon_{n})\in\left\{ \pm 1\right\} ^{n}$
and $\sigma\in S_{n}$, and the multiplication is given by $(\varepsilon,\sigma)\cdot(\varepsilon',\sigma')=(\varepsilon\cdot\sigma(\varepsilon'),\sigma\sigma')$, where $\sigma(\varepsilon'):=(\varepsilon'_{\sigma^{-1}(1)},...,\varepsilon'_{\sigma^{-1}(n)})$. 
\end{defn}

\begin{lem}
\label{lem:symmetrization is an eigenvector}Let $f$ be an eigenvector
of minimal eigenvalue $\lambda_{\mathrm{min}}$ for $\mathcal{V}_{n,p,d}$.
Then the symmetrization $\widetilde{f}:=\frac{1}{n!2^{n}}\sum_{(\varepsilon,\sigma)\in H_{n}}f\circ(\varepsilon,\sigma)$
of $f$ is also an eigenvector of eigenvalue
$\lambda_{\mathrm{min}}$. 
\end{lem}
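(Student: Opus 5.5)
The plan is to show that $\mathcal{V}_{n,p,d}$ commutes with the action of $H_n$ on $\mathcal{P}_d(\R^n)$, and then to use that the eigenspace of $\lambda_{\min}$ is a linear subspace invariant under this action. The symmetrization $\widetilde f$ is an average of elements of that subspace, so it lies in it. The only caveat is that $\widetilde f$ might vanish. In that case it is trivially in the eigenspace, and we read ``eigenvector'' in the lemma as ``lies in the $\lambda_{\min}$-eigenspace.''

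The first step is to check that, for $g=(\varepsilon,\sigma)\in H_n$, the map $T_g f:=f\circ g$ is an orthogonal operator on $\mathcal{P}_d(\R^n)$ with respect to $\langle\cdot,\cdot\rangle$. Indeed, $x^I\circ g=\pm x^{\sigma'(I)}$ for some permutation $\sigma'$ of multi-indices, so $T_g$ sends the orthonormal monomial basis to itself up to signs and is a signed permutation matrix. The second step uses $H_n$-invariance of $\mu_{n,p}$, which holds because $\|\cdot\|_p$ is invariant under coordinate permutations and reflections. This gives
\[
\mathrm{Cov}_{\mu_{n,p}}(f\circ g,h\circ g)=\mathrm{Cov}_{\mu_{n,p}}(f,h),
\]
that is, $\langle \mathcal{V}T_gf,T_gh\rangle=\langle\mathcal{V}f,h\rangle$. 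Since $T_g$ is orthogonal, $T_g^{*}\mathcal{V}T_g=\mathcal{V}$, and hence $\mathcal{V}T_g=T_g\mathcal{V}$.

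The third step is immediate from commutation: if $\mathcal{V}f=\lambda_{\min}f$, then $\mathcal{V}T_gf=T_g\mathcal{V}f=\lambda_{\min}T_gf$. Averaging over $g\in H_n$ and using linearity gives $\mathcal{V}\widetilde f=\lambda_{\min}\widetilde f$. (Alternatively, one can avoid commutation: each $T_gf$ is a unit vector achieving the minimal Rayleigh quotient, hence lies in the bottom eigenspace, which is a subspace.)

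I expect no real obstacle. The only points needing care are verifying that $T_g$ is orthogonal for the coefficient inner product, since signs appear from the reflections, and handling the possibility $\widetilde f=0$. I would address the latter by noting that the statement is about membership in the eigenspace; the later use (Step 3 of the strategy) needs a nonzero symmetric vector there, and that would be dealt with separately, for instance by choosing $f$ with $\widetilde f\neq0$ when one exists.
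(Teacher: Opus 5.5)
Your proposal is correct and essentially matches the paper. The paper's proof is exactly your parenthetical alternative: $H_n$-invariance of $\mu_{n,p}$ preserves the variance, so $f\circ(\varepsilon,\sigma)$ attains the minimal Rayleigh quotient and lies in the $\lambda_{\min}$-eigenspace, and the average is a (possibly zero) eigenvector. Your main commutation argument is the one the paper gives in Lemma \ref{lem:isotypic components}, and the paper remarks that the present lemma is a special case of it; your explicit check that $f\mapsto f\circ g$ acts as a signed permutation of the monomial basis, and so preserves $\mathrm{coeff}_d$, fills in a step the paper leaves implicit.
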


\begin{proof}
Since $\mu_{n,p}$ is invariant to permutations and reflections, $\mathrm{Var}_{\mu_{n,p}}(f\circ(\varepsilon,\sigma))=\mathrm{Var}_{\mu_{n,p}}(f)$
for every $(\varepsilon,\sigma)\in H_{n}$. Since $\mathcal{V}_{n,p,d}$
has a basis of orthonormal eigenvectors, this forces $f\circ(\varepsilon,\sigma)$
to be a $\lambda_{\mathrm{min}}$-eigenvector, and hence $\widetilde{f}$
is a (possibly $0$) eigenvector of eigenvalue $\lambda_{\mathrm{min}}$. 
\end{proof}
\begin{rem}
In $\mathsection$\ref{sec:Spectrum-of-unconditional,} we thoroughly investigate
$H_{n}$-invariant measures using the representation theory of $H_{n}$.
Lemma \ref{lem:symmetrization is an eigenvector} is simply a special
case of Lemma \ref{lem:isotypic components}. 
\end{rem}

The next proposition shows it is enough to consider $H_{n}$-symmetric
polynomials. 
\begin{prop}
\label{prop:reduction to B_n symmetric}Fix $d\in\N$ and $p\geq1$.
For each $n\in\N$, let $\lambda_{\mathrm{min},n}$ be the minimal
eigenvalue of $\mathcal{V}_{n,p,d}$. Suppose that $\underset{n\in\N}{\liminf}\lambda_{\mathrm{min},n}=0$,
and let $\left\{ n_{k}\right\} _{k=1}^{\infty}$ be a subsequence
such that $\underset{k\rightarrow\infty}{\lim}\lambda_{\mathrm{min},n_{k}}=0$.
If $\left\{ f_{n_{k}}\right\} _{n_{k}}$ is a sequence of non-zero
eigenvectors of $\mathcal{V}_{n,p,d}$ with eigenvalue $\lambda_{\mathrm{min},n_{k}}$,
then $f_{n_{k}}$ must be $H_{n_{k}}$-symmetric for $k\gg_{d,p}1$. 
\end{prop}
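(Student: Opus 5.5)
Throughout, $f=f_{n_k}$ is a $\lambda_{\min}$-eigenvector normalized so that $f\in\mathcal{P}_{d,\mathrm{unit}}(\R^{n})$, and we assume $k$ is large. The plan is to combine Lemma \ref{lem:low variance means expectation large} with a decomposition of the whole $\lambda_{\min}$-eigenspace into an $H_n$-symmetric part and a part with vanishing mean.

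\textbf{Step 1: the mean is large.} Since $\mathrm{Var}_{\mu_{n,p}}(f)=\lambda_{\min,n_k}\to 0$, Lemma \ref{lem:low variance means expectation large} gives $\E_{\mu_{n,p}}[f]^{2}\geq c(d,p)\,n$.

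\textbf{Step 2: the same holds for every element of the eigenspace.} Let $W$ be the $\lambda_{\min}$-eigenspace of $\mathcal{V}_{n,p,d}$. Every nonzero $g\in W$ has $\mathrm{Var}_{\mu_{n,p}}(g)/\mathrm{coeff}_d(g)^2=\lambda_{\min}$. Applying Step 1 to $g/\mathrm{coeff}_d(g)$ shows that $g$ has nonzero mean. Hence the linear functional $g\mapsto \E_{\mu_{n,p}}[g]$ is injective on $W$, so $\dim W\le 1$.

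\textbf{Step 3: $W$ is spanned by an $H_n$-symmetric polynomial.} The group $H_n$ acts on $\mathcal{P}_d$ orthogonally with respect to $\langle\cdot,\cdot\rangle$, since signed permutations of variables permute monomials up to sign. Because $\mu_{n,p}$ is $H_n$-invariant, this action commutes with $\mathcal{V}_{n,p,d}$. Therefore $W$ is $H_n$-stable, which is the content of Lemma \ref{lem:symmetrization is an eigenvector}. Since $\dim W=1$, each $\sigma\in H_n$ acts on $f$ by a sign $\chi(\sigma)\in\{\pm1\}$. Invariance of the measure gives $\E[f\circ\sigma]=\E[f]$, so $\chi(\sigma)\E[f]=\E[f]$. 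As $\E[f]\neq 0$ by Step 1, we get $\chi\equiv 1$, and $f$ is $H_n$-symmetric.

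\textbf{Main obstacle.} Only Step 2 needs care: one must check that Lemma \ref{lem:low variance means expectation large} applies uniformly to every normalized element of $W$. It does, because its hypothesis depends only on the variance value $\lambda_{\min}$, which tends to $0$ and is therefore eventually below $c(d,p)$. Everything else is linear algebra. An alternative route is to note that $f-\widetilde f$ lies in $W$ and has zero mean, since $\E[\widetilde f]=\E[f]$, so Step 1 forces $f=\widetilde f$.
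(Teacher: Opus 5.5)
Your proof is correct and follows essentially the same approach as the paper. Both arguments use Lemma~\ref{lem:low variance means expectation large} to show that every nonzero element of the $\lambda_{\min}$-eigenspace has nonzero mean, and combine this with the $H_n$-stability of that eigenspace and the $H_n$-invariance of $\E_{\mu_{n,p}}$; the paper applies this directly to $\widetilde f_{n_k}-f_{n_k}$ (exactly your ``alternative route''), whereas your main route first extracts $\dim W\le 1$ and then observes that the resulting character of $H_n$ on $W$ must be trivial, which is the same idea repackaged.
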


\begin{proof}
Suppose that $f_{n_k}$ is not $H_n$-invariant, and by assumption $\mathrm{Var}_{\mu_{n_k,p}}(f_{n_k}) \to 0$.
It then follows from Lemma \ref{lem:low variance means expectation large} that $\mathbb{E}_{\mu_{n,p}}\left[f_{n_{k}}\right]\neq0$ for $k$ large enough.
By Lemma \ref{lem:symmetrization is an eigenvector}, the symmetrization
$\widetilde{f}_{n_{k}}$ of $f_{n_{k}}$ is also an eigenvector, and furthermore $\mathbb{E}_{\mu_{n,p}}\left[\widetilde f_{n_{k}}\right] = \mathbb{E}_{\mu_{n,p}}\left[ f_{n_{k}}\right]\neq0$. However, $\widetilde{f}_{n_{k}}-f_{n_{k}}$
is an eigenvector with eigenvalue $\lambda_{\mathrm{min},n_{k}}$ 
and $\mathbb{E}_{\mu_{n,p}}\left[\widetilde{f}_{n_{k}}-f_{n_{k}}\right]=0$,
which again by Lemma \ref{lem:low variance means expectation large} is impossible when $k$ is sufficiently large. Thus $\widetilde{f}_{n_{k}}=f_{n_{k}}$
and $f_{n_{k}}$ is $H_{n_{k}}$-symmetric. 
\end{proof}
By Proposition \ref{prop:reduction to B_n symmetric}, we are motivated
to restrict to the space $\mathcal{P}_{d}^{H_{n}}(\R^{n})$ of $H_{n}$-symmetric polynomials. We now introduce a basis for this space. 
\begin{defn}
~\label{def:symmetric monomials} 
\begin{enumerate}
\item A \emph{partition} $\lambda$ of $d$, denoted $\lambda\vdash d$,
is a sequence $\lambda=(\lambda_{1},..,\lambda_{k})$ of positive integers with $\lambda_{1}\geq\ldots\geq\lambda_{k}>0$ and $\lambda_{1}+...+\lambda_{k}=d$. Denote by $\ell(\lambda)$ the number of parts
in $\lambda$. It is convenient to write $\lambda=(1^{a_{1}}\cdots d^{a_{d}})$
for the partition 
\[
(\underset{a_{d}\text{ times}}{\underbrace{d,\ldots,d}},\ldots,\underset{a_{1}\text{ times}}{\underbrace{1,\ldots,1}})\vdash d.
\]
\item For each $n\in\N$ and each partition $\lambda=(1^{a_{1}}\cdots d^{a_{d}})\vdash d$, let $m_{\lambda,n}$
be its \emph{monomial symmetric polynomial}, i.e.~the sum of all
monomials $x^{\mu}$ in $\mathcal{P}_{d}(\R^{n})$ of the same type
as $x^{\lambda}$. For example, $m_{4,n}=\sum_{i=1}^{n}x_{i}^{4}$
and $m_{(2,2),n}=\sum_{i<j}x_{i}^{2}x_{j}^{2}$. Denote $\widetilde{m}_{\lambda,n}:={n \choose \lambda}^{-\frac{1}{2}}m_{\lambda,n}$,
where 
\[
{n \choose \lambda}:=\frac{n!}{a_{1}!...a_{d}!(n-\ell(\lambda))!}\sim n^{\ell(\lambda)},
\]
so that $\mathrm{coeff}_d(\tilde{m}_{\lambda,n})=1$.
For each $\tau,\lambda\vdash d$, we denote by $\left\{ C_{\lambda,\tau}^{\nu}\right\} _{\nu}$
the integer coefficients satisfying: 
\[
m_{\lambda,n}\cdot m_{\tau,n}:=\sum_{\nu\vdash2d}C_{\lambda,\tau}^{\nu}m_{\nu,n}.
\]

\item The space $\mathcal{P}_{d}^{H_{n}}(\R^{n})$ is spanned by $\left\{ m_{\lambda,n}\right\} _{\lambda}$,
where each part $\lambda_{i}$ in $\lambda$ is even. We will denote $D := \mathrm{dim}\left(\mathcal{P}_d^{H_n}(\mathbb{R}^n)\right)$. Note that $D$ is \textbf{independent of $n$} for $n>d$.
\item Finally, for $\lambda=(\lambda_{1},..,\lambda_{k})\vdash d$, denote
by $\beta_{\lambda,p}:=\prod_{i=1}^{k}\beta_{\lambda_{i},p}$, for $\beta_{\lambda_i,p}$ as in \eqref{eq:gammamoments}.
\end{enumerate}
\end{defn}

Consider the $D\times D$ matrix $\mathcal{V}_{n,p,d}^{\mathrm{sym}}:=(\mathcal{V}_{\lambda\tau})_{\lambda,\tau\vdash d:\lambda_{i},\tau_{i}\text{ even}}$,
where 
\[
\mathcal{V}_{\lambda\tau}:=\mathrm{Cov}_{\mu_{n,p}}(\widetilde{m}_{\lambda,n},\widetilde{m}_{\tau,n}).
\]

\begin{thm}
\label{thm:Possible potential eigenvalues}Let $d\in\N$ and let $p\geq1$. Then: 
\begin{enumerate}
\item There exists a sequence of real numbers $\left\{ e_{p,d,k}\right\} _{k=-1}^{\infty}$,
such that: 
\[
C_{n,p,2d}\cdot\tr\left(\left(\mathcal{V}_{n,p,d}^{\mathrm{sym}}\right)^{-1}\right)\sim\sum_{k=-1}^{\infty}e_{p,d,k}n^{-k}.
\]
\item If $\min_{f\in\mathcal{P}_{d,\mathrm{unit}}(\R^{n})}\mathrm{Var}_{\mu_{n,p}}(f)=o_{p,d}(1),$
then for any collection of unit eigenvectors $\left\{ f_{n}\right\} _{n}$
of $\mathcal{V}_{n,p,d}$ of minimal eigenvalues, one has $\mathrm{Var}_{\mu_{n,p}}(f_{n})=\Theta_{p,d}(n^{-1})$
and $\E_{\mu_{n,p}}\left[f_{n}\right]=\Theta_{p,d}(\sqrt{n})$. 
\end{enumerate}
\end{thm}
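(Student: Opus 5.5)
Everything in $\mathcal{V}^{\mathrm{sym}}_{n,p,d}$ can be written through $\gamma_p^n$-moments using Proposition \ref{prop:Marginals of Euclidean ball}, and these moments are explicit polynomials in $n$. For $\lambda,\tau\vdash d$ with even parts we have
\[
\mathcal{V}_{\lambda\tau}=C_{n,p,2d}\Bigl(\E_{\gamma_p^n}[\widetilde m_{\lambda,n}\widetilde m_{\tau,n}]-\tfrac{C_{n,p,d}^2}{C_{n,p,2d}}\E_{\gamma_p^n}[\widetilde m_{\lambda,n}]\E_{\gamma_p^n}[\widetilde m_{\tau,n}]\Bigr).
\]
Expanding $m_\lambda m_\tau=\sum_\nu C^\nu_{\lambda,\tau}m_\nu$ gives $\E_{\gamma_p^n}[m_\nu]=\binom{n}{\nu}\beta_{\nu,p}$. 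Here $C^\nu_{\lambda,\tau}$ is independent of $n$ for $n\ge 2d$, and $\binom{n}{\nu}$ is a polynomial in $n$. Hence $G_{\lambda\tau}:=\E_{\gamma_p^n}[\widetilde m_\lambda\widetilde m_\tau]$ and $E_\lambda:=\E_{\gamma_p^n}[\widetilde m_\lambda]$ are ratios of polynomials and square roots of polynomials in $n$. We factor out the scaling $n^{\ell(\lambda)/2}$ from $E_\lambda$, which is the leading order since $\E[m_\lambda]=\binom n\lambda\beta_\lambda$. Each entry then has an expansion in powers of $n^{-1}$, possibly with half-integer powers, which one should check cancel. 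By Lemma \ref{lem:asymptotic expansion}, the scalar $\frac{C^2_{n,p,d}}{C_{n,p,2d}}=1+\frac{d^2}{pn}+\dots$ also has a full expansion.

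For part (1), write $A:=C_{n,p,2d}^{-1}\mathcal{V}^{\mathrm{sym}}=B-\delta_n\,vv^T$, where $B:=G-vv^T=\mathrm{Cov}_{\gamma_p^n}$ restricted to the symmetric basis, $v:=(E_\lambda)_\lambda$, and $\delta_n:=\frac{C^2_{n,p,d}}{C_{n,p,2d}}-1=\frac{d^2}{pn}+O(n^{-2})$. The matrix $B$ is a fixed-size $D\times D$ covariance matrix. By \cite[Theorem 1]{GM22} its smallest eigenvalue is $\Omega_{p,d}(1)$, and its entries are $O(1)$ since the covariance of normalized symmetric sums under a product measure is bounded. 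So $B^{-1}$ exists and has an entrywise expansion in $n^{-1}$, as $\det B$ has a nonzero leading term. By Sherman--Morrison,
\[
\tr(A^{-1})=\tr(B^{-1})+\frac{\delta_n\, v^TB^{-2}v}{1-\delta_n\, v^TB^{-1}v}.
\]
Every quantity here has an asymptotic expansion. Since $|v|^2=\Theta(n)$, the denominator is $1-\Theta(1)+O(n^{-1})$ and the numerator is $O(1)$. The main obstacle is the degenerate case, where the leading constant $\kappa:=\lim \delta_n v^TB^{-1}v$ equals $1$. Then the denominator is $\Theta(n^{-1})$ (or smaller), which produces the $e_{p,d,-1}n$ term. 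To rule out a worse blow-up I would invoke Corollary \ref{cor:lower bound on variance}(2). It gives $\lambda_{\min}(\mathcal{V}^{\mathrm{sym}})=\Omega(n^{-1})$, so $\tr(A^{-1})=O(n)$. Because the expression is a ratio of asymptotic series, this means its expansion starts at order at most $n^{1}$. This yields the expansion $\sum_{k\ge-1}e_{p,d,k}n^{-k}$. Half-integer powers should not occur: $v$ enters only through $v^TMv$, and these are genuine rational functions of $n$ once the square roots of $\binom n\lambda$ pair up, since $(E_\lambda E_\tau)$ times $G$-type entries always combine $\binom n\lambda^{1/2}\binom n\tau^{1/2}$ consistently. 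A congruence by the diagonal normalization shows that $\tr$ of the inverse in the unnormalized basis differs only by fixed rational factors.

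For part (2), take minimal-eigenvalue unit eigenvectors $f_n$. By Proposition \ref{prop:reduction to B_n symmetric} they are $H_n$-symmetric for large $n$, so $\mathrm{Var}(f_n)$ is comparable, up to a change of basis between $\{\widetilde m_\lambda\}$ and the coefficient norm with bounded distortion, to $\lambda_{\min}(\mathcal{V}^{\mathrm{sym}})$. Since $D$ is fixed, $\lambda_{\min}^{-1}\le\tr((\mathcal{V}^{\mathrm{sym}})^{-1})\le D\lambda_{\min}^{-1}$. The assumption $\lambda_{\min}=o(1)$ forces the trace to tend to infinity. By part (1), and because $C_{n,p,2d}=\Theta(1)$, this can only happen if $e_{p,d,-1}\neq0$. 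The trace is then $\Theta(n)$, so $\mathrm{Var}(f_n)=\Theta(n^{-1})$. Finally, Corollary \ref{cor:lower bound on variance}(1) gives $\E[f_n]^2\lesssim n^2\mathrm{Var}(f_n)=O(n)$, and Lemma \ref{lem:low variance means expectation large} gives $\E[f_n]^2\gtrsim n$. Together these give $\E_{\mu_{n,p}}[f_n]=\Theta(\sqrt n)$ up to sign.
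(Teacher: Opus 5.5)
Your argument is correct and uses the same ingredients as the paper:
\begin{itemize}
\item moments via Proposition~\ref{prop:Marginals of Euclidean ball};
\item the expansion of $C_{n,p,d}^2/C_{n,p,2d}$ from Lemma~\ref{lem:asymptotic expansion};
\item inversion of a fixed $D\times D$ matrix;
\item Corollary~\ref{cor:lower bound on variance}(2) to cap the leading order at $n^{1}$;
\item Proposition~\ref{prop:reduction to B_n symmetric}, Corollary~\ref{cor:lower bound on variance}(1) and Lemma~\ref{lem:low variance means expectation large} for part (2).
\end{itemize}
The difference is organizational. The paper applies Cramer's rule to the rescaled matrix $\mathcal{V}_{\lambda\tau}\binom{n}{\lambda}^{1/2}\binom{n}{\tau}^{1/2}/C_{n,p,2d}$. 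You instead split $A=B-\delta_n vv^T$ and use Sherman--Morrison. Your route buys some structure. The only possible blow-up sits in the scalar $1-\delta_n v^TB^{-1}v$, where $v^TB^{-1}v=\sup_f \E_{\gamma_p^n}[f]^2/\mathrm{Var}_{\gamma_p^n}(f)$ over nonzero $H_n$-symmetric $f$. This already anticipates the later comparison with $\|x\|_p^d$. In addition, interlacing for a rank-one negative perturbation gives $\lambda_2(A)\ge\lambda_{\min}(B)=\Omega_{p,d}(1)$. That yields the one-dimensionality of the bad eigenspace inside the symmetric sector without appealing to \eqref{eq:l2aprioribound}.

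Several of your supporting claims are false once $d\ge 4$, though none is load-bearing.
\begin{itemize}
\item The entries of $B$ are not $O(1)$. For instance, $\mathrm{Var}_{\gamma_p^n}(\widetilde m_{(2,2),n})=2\beta_{2,p}^2(\beta_{4,p}-\beta_{2,p}^2)\,n+O(1)$.
\item $|v|^2=\Theta(n^{d/2})$, coming from $\lambda=(2,\dots,2)$, not $\Theta(n)$.
\end{itemize}
So "denominator $1-\Theta(1)$, numerator $O(1)$" does not follow from your reasoning, but both conclusions hold for other reasons. Since $A$ and $B$ are positive definite, the identity $\det A=\det B\,(1-\delta_n v^TB^{-1}v)$ forces $0\le \delta_n v^TB^{-1}v<1$ for large $n$, where $\delta_n>0$. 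Hence $\delta_n v^TB^{-2}v\le \lambda_{\min}(B)^{-1}\delta_n v^TB^{-1}v=O(1)$, and also $\tr(B^{-1})\le D/\lambda_{\min}(B)$.

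The half-integer powers are handled cleanly in the unnormalized basis $\{m_{\lambda,n}\}$. If $A'$ denotes the matrix there, its entries are polynomials in $n$ plus $\delta_n$ times polynomials, and $\tr(A^{-1})=\sum_\lambda \binom{n}{\lambda}(A'^{-1})_{\lambda\lambda}$. The weights are the polynomials $\binom{n}{\lambda}$, not fixed rational constants, and this is exactly the paper's rescaling.

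Finally, $\{\widetilde m_{\lambda,n}\}$ is orthonormal for $\langle\cdot,\cdot\rangle$. So in part (2), $\mathrm{Var}_{\mu_{n,p}}(f_n)=\lambda_{\min}(\mathcal{V}^{\mathrm{sym}}_{n,p,d})$ exactly, with no distortion from a change of basis.
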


\begin{proof}
Note that, by Proposition \ref{prop:Marginals of Euclidean ball}, for $n>2d$,
\begin{align*}
\frac{\mathcal{V}_{\lambda\tau}{n \choose \lambda}^{\frac{1}{2}}{n \choose \tau}^{\frac{1}{2}}}{C_{n,p,2d}} & =\frac{1}{C_{n,p,2d}}\left(\E_{\mu_{n,p}}\left[m_{\lambda,n}m_{\tau,n}\right]-\E_{\mu_{n,p}}\left[m_{\lambda,n}\right]\E_{\mu_{n,p}}\left[m_{\tau,n}\right]\right)\\
 & =\E_{\gamma^n_{p}}\left[m_{\lambda,n}m_{\tau,n}\right]-\frac{C_{n,p,d}^{2}}{C_{n,p,2d}}\E_{\gamma^n_{p}}\left[m_{\lambda,n}\right]\E_{\gamma^n_{p}}\left[m_{\tau,n}\right]\\
 & =\sum_{\nu\vdash2d}C_{\lambda,\tau}^{\nu}\E_{\gamma^n_{p}}\left[m_{\nu,n}\right]-\frac{C_{n,p,d}^{2}}{C_{n,p,2d}}{n \choose \lambda}{n \choose \tau}\E_{\gamma^n_{p}}\big[x^\lambda\big]\E_{\gamma^n_{p}}\big[x^\tau\big]\\
 & =\sum_{\nu\vdash2d}C_{\lambda,\tau}^{\nu}{n \choose \nu}\beta_{\nu,p}-\frac{C_{n,p,d}^{2}}{C_{n,p,2d}}{n \choose \lambda}{n \choose \tau}\beta_{\lambda,p}\beta_{\tau,p}.
\end{align*}
By Lemma \ref{lem:asymptotic expansion}, there exists $\left\{ a_{p,d,k}\right\} _{k=0}^{\infty}$
so that 
\[
\frac{C_{n,p,d}^{2}}{C_{n,p,2d}}\sim\sum_{k=0}^{\infty}a_{p,d,k}n^{-k}.
\]
In particular, there exist $\left\{ a_{p,d,\tau,\lambda,k}\right\} _{k=-r'}^{\infty}$,
and $r'=O_{d}(1)$, such that: 
\begin{equation}
\frac{\mathcal{V}_{\lambda\tau}{n \choose \lambda}^{\frac{1}{2}}{n \choose \tau}^{\frac{1}{2}}}{C_{n,p,2d}}\sim\sum_{k=-r'}^{\infty}a_{p,d,\tau,\lambda,k}n^{-k}.\label{eq:nice coefficients}
\end{equation}
Similarly, as $\mathcal{V}_{n,p,d}^{\mathrm{sym}}$ is a $D\times D$-matrix,
we can find $\left\{ c_{p,d,k}\right\} _{k=-r''}^{\infty},$ so that:
\[
\frac{\det(\mathcal{V}_{n,p,d}^{\mathrm{sym}})\prod_{\tau}{n \choose \tau}}{(C_{n,p,2d})^{D}}\sim\sum_{k=-r''}^{\infty}c_{p,d,k}n^{-k},
\]
as in (\ref{eq:nice coefficients}). Similarly, for each of the diagonal
minors $\mathcal{M}_{\lambda\lambda}$ of $\mathcal{V}_{n,p,d}^{\mathrm{sym}}$,
the term $\frac{\mathcal{M}_{\lambda\lambda}\prod_{\tau}{n \choose \tau}}{(C_{n,p,2d})^{D-1}{n \choose \lambda}}$
admits a similar asymptotic expansion. By Cramer's rule, and again by the asymptotic expansions of the terms above, we may find $\left\{ e_{p,d,k}\right\} _{k=-r}^{\infty}$, for $r=O_{d}(1)$ so that 
\begin{equation}
C_{n,p,2d}\cdot\tr\left(\left(\mathcal{V}_{n,p,d}^{\mathrm{sym}}\right)^{-1}\right)=\sum_{\lambda\vdash d:\lambda\text{ has even parts}}\frac{C_{n,p,2d}\mathcal{M}_{\lambda\lambda}}{\det(\mathcal{V}_{n,p,d}^{\mathrm{sym}})}\sim\sum_{k=-r}^{\infty}e_{p,d,k}n^{-k}.\label{eq:asymptotic expansion of minimal eigenvalue}
\end{equation}
By Corollary \ref{cor:lower bound on variance}
$\lambda_{\min}\left(\mathcal{V}_{n,p,d}^{\mathrm{sym}}\right) \geq \frac{c}{n}$, and hence $r\leq1$.

Furthermore, by Proposition \ref{prop:reduction to B_n symmetric}, $\lambda_{\min}\left(\mathcal{V}_{n,p,d}\right) =o_{p,d}(1)$ if and only if
$\lambda_{\min}\left(\mathcal{V}_{n,p,d}^{\mathrm{sym}}\right) = o_{p,d}(1)$. Looking at the asymptotic expansion in \eqref{eq:asymptotic expansion of minimal eigenvalue},
this is equivalent to $e_{p,d,-1}\neq0$. The lower bound on the $L^2$-norm in \eqref{eq:l2aprioribound} implies that there can only be a one-dimensional
space of eigenvectors of low variance, so the minimal eigenvalue is
$\Theta_{p,d}(n^{-1})$. By Lemma \ref{lem:low variance means expectation large} and by Corollary \ref{cor:lower bound on variance}(1) we have $\E_{\mu_{n,p}}\left[f\right]=\Theta_{p,d}(\sqrt{n})$, concluding Item
(2). 
\end{proof}

\subsection{Bounding the variance}

\label{subsec:Bounding-the-variance}

So far we have shown that if $\left\{ \mu_{n,p}\right\} _{n}$ do
not have a uniform lower bound on the variance spectrum, then there
must be an $H_{n}$-invariant polynomial $f$, with
$\mathrm{coeff}_{d}(f)=1$ such that $\mathrm{Var}_{\mu_{n,p}}(f)=\Theta_{p,d}(n^{-1})$
and $\E_{\mu_{n,p}}\left[f\right]=\Theta_{p,d}(\sqrt{n})$. 
\begin{defn}
For each $d\in\N,p\geq1$ and $n\in\N$, define $f_{n,p,d}\in\mathcal{P}_{d}^{H_{n}}(\R^{n})$
to be the unique $H_{n}$-symmetric polynomial satisfying:
\[
\mathrm{Cov}_{\mu_{n,p}}\left(f,f_{n,p,d}\right)=\mathrm{Cov}_{\mu_{n,p}}\left(f,\left\Vert x\right\Vert _{p}^{d}\right)\text{ for every }f\in\mathcal{P}_{d}^{H_{n}}(\R^{n}).
\]
Note that $f_{n,p,d}$ uniquely exists, as it is the orthogonal projection
of $\left\Vert x\right\Vert _{p}^{d}$ to the space $\mathcal{P}_{d}^{H_{n}}(\R^{n})$,
with respect to the covariance inner product $\mathrm{Cov}_{\mu_{n,p}}(\cdot,\cdot)$. 
\end{defn}
We first state a useful result for representing the covariance of a polynomial against $\|x\|_p^d$.
\begin{lem}
\label{lem:covariance with p_norm}Let $d\geq1$ and let $f\in\mathcal{P}_{d}(\R^{n})$.
Then for every $p\geq1$, 
\[
\mathrm{Cov}_{\mu_{n,p}}\left(f,\frac{\left\Vert x\right\Vert _{p}^{d}}{R_{n,p}^{d/2}}\right)=R_{n,p}^{d/2}\frac{d^{2}}{(n+2d)(n+d)}\E_{\mu_{n,p}}[f].
\]
\end{lem}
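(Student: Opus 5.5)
The identity is a direct consequence of Lemma \ref{lem:properties of integral on L_p ball} applied with $\beta=d$. The plan is to expand the covariance, integrate out the power of $\left\Vert x\right\Vert _{p}$ in both resulting expectations, and simplify the rational prefactor.

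\textbf{Step 1: expand.} By bilinearity of the covariance,
\[
\mathrm{Cov}_{\mu_{n,p}}\left(f,\frac{\left\Vert x\right\Vert _{p}^{d}}{R_{n,p}^{d/2}}\right)=R_{n,p}^{-d/2}\left(\E_{\mu_{n,p}}\left[\left\Vert x\right\Vert _{p}^{d}f\right]-\E_{\mu_{n,p}}\left[\left\Vert x\right\Vert _{p}^{d}\right]\E_{\mu_{n,p}}\left[f\right]\right).
\]

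\textbf{Step 2: apply Lemma \ref{lem:properties of integral on L_p ball}.} Since $f\in\mathcal{P}_{d}(\R^{n})$ and $\beta=d\geq-d$, the lemma gives
\[
\E_{\mu_{n,p}}\left[\left\Vert x\right\Vert _{p}^{d}f\right]=\frac{n+d}{n+2d}R_{n,p}^{d}\E_{\mu_{n,p}}[f].
\]
Its special case $f\equiv1$ gives
\[
\E_{\mu_{n,p}}\left[\left\Vert x\right\Vert _{p}^{d}\right]=\frac{n}{n+d}R_{n,p}^{d}.
\]
Both terms are therefore multiples of $R_{n,p}^{d}\E_{\mu_{n,p}}[f]$.

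\textbf{Step 3: simplify.} Substituting, the covariance equals
\[
R_{n,p}^{d/2}\E_{\mu_{n,p}}[f]\left(\frac{n+d}{n+2d}-\frac{n}{n+d}\right).
\]
The bracket has numerator $(n+d)^{2}-n(n+2d)=d^{2}$ over the common denominator $(n+2d)(n+d)$. This gives exactly the claimed formula
\[
R_{n,p}^{d/2}\frac{d^{2}}{(n+2d)(n+d)}\E_{\mu_{n,p}}[f].
\]

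There is no real obstacle here. The only point needing care is that the lemma is used twice: once for a degree-$d$ homogeneous polynomial and once for the degree-$0$ polynomial $f\equiv1$. Its prefactor $\frac{n+d}{n+d+\beta}$ depends on the degree, and this difference is what produces the nonzero $d^{2}$ term.
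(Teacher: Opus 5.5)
Your proposal is correct and is essentially the paper's own proof. The paper also expands the covariance, applies Lemma \ref{lem:properties of integral on L_p ball} with $\beta=d$ both to $f$ and to $f\equiv1$, and simplifies $\frac{n+d}{n+2d}-\frac{n}{n+d}=\frac{d^{2}}{(n+2d)(n+d)}$.
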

\begin{proof}
By Lemma \ref{lem:properties of integral on L_p ball},
\begin{align*}
\mathrm{Cov}_{\mu_{n,p}}\left(f,\frac{\left\Vert x\right\Vert _{p}^{d}}{R_{n,p}^{d/2}}\right) & =\frac{1}{R_{n,p}^{d/2}}\left(\E_{\mu_{n,p}}\left[f(x)\left\Vert x\right\Vert _{p}^{d}\right]-\E_{\mu_{n,p}}\left[f\right]\E_{\mu_{n,p}}\left[\left\Vert x\right\Vert _{p}^{d}\right]\right)\\
 & =R_{n,p}^{d/2}\left(\frac{n+d}{n+2d}-\frac{n}{n+d}\right)\E_{\mu_{n,p}}\left[f\right]=R_{n,p}^{d/2}\frac{d^{2}}{(n+2d)(n+d)}\E_{\mu_{n,p}}\left[f\right].\qedhere
\end{align*}
\end{proof}

The next lemma is the key structural result $f_{n,p,d}$, the orthogonal projection of $\|x\|_p^d$. In particular, it shows that if there is a polynomial with low variance in
$\mathcal{P}_{d}^{H_{n}}(\R^{n})$, then $f_{n,p,d}$ must also have low variance.
\begin{lem}
\label{lem:approximate eigenvector}For each $d,n\in\N$ and $p\geq1$,
let $f_{1,n},...,f_{D,n}$ be an orthogonal basis of eigenvectors of the covariance
matrix $\mathcal{V}_{n,p,d}^{\mathrm{sym}}$, with $\mathrm{coeff}_{d}(f_{i,n})=1$,
and of eigenvalues $\lambda_{i}=\mathrm{Var}_{\mu_{n,p}}(f_{i,n})$.
Then: 
\begin{enumerate}
\item We have 
\[
\mathrm{Var}_{\mu_{n,p}}(f_{n,p,d})=\frac{d^{2}}{(n+2d)(n+d)}R_{n,p}^{d}\E_{\mu_{n,p}}\left[f_{n,p,d}\right].
\]
\item We have 
\[
f_{n,p,d}=\sum_{i=1}^{D}\frac{d^{2}}{(n+2d)(n+d)}R_{n,p}^{d}\frac{\E_{\mu_{n,p}}\left[f_{i,n}\right]}{\mathrm{Var}_{\mu_{n,p}}(f_{i,n})}f_{i,n}.
\]
\item If $\lambda_{\min}=\Theta_{p,d}(n^{-1})$, then: 
\begin{enumerate}
\item $\mathrm{coeff}_{d}(f_{n,p,d})=\Theta_{p,d}(n^{\frac{d}{p}-\frac{1}{2}})$. 
\item $\E_{\mu_{n,p}}\left[f_{n,p,d}\right]=\Theta_{p,d}(n^{\frac{d}{p}})$
and $\mathrm{Var}_{\mu_{n,p}}(f_{n,p,d})=\Theta_{p,d}(n^{\frac{2d}{p}-2})$. 
\item In particular, $\mathrm{Var}_{\mu_{n,p}}\left(\frac{f_{n,p,d}}{\mathrm{coeff}_{d}(f_{n,p,d})}\right)=\Theta_{p,d}(n^{-1})$.
\end{enumerate}
\end{enumerate}
\end{lem}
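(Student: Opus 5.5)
Items (1) and (2) come from pairing the defining property of $f_{n,p,d}$ with Lemma \ref{lem:covariance with p_norm}, and item (3) from plugging the eigenvalue bounds of Theorem \ref{thm:Possible potential eigenvalues} into the expansion in (2).

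\textbf{Items (1) and (2).} By Lemma \ref{lem:covariance with p_norm}, after multiplying through by $R_{n,p}^{d/2}$, every $f\in\mathcal{P}_d(\R^n)$ satisfies
\[
\mathrm{Cov}_{\mu_{n,p}}(f,\|x\|_p^d)=\kappa_n\E_{\mu_{n,p}}[f],\qquad \kappa_n:=\frac{d^2}{(n+2d)(n+d)}R_{n,p}^d.
\]
Taking $f=f_{n,p,d}$ in the defining relation of $f_{n,p,d}$ gives $\mathrm{Var}_{\mu_{n,p}}(f_{n,p,d})=\mathrm{Cov}(f_{n,p,d},\|x\|_p^d)=\kappa_n\E_{\mu_{n,p}}[f_{n,p,d}]$, which is (1).

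For (2), note that the $f_{i,n}$ are orthogonal both for $\langle\cdot,\cdot\rangle$ and for $\mathrm{Cov}_{\mu_{n,p}}$, since they are eigenvectors of the symmetric matrix $\mathcal{V}^{\mathrm{sym}}_{n,p,d}$. Write $f_{n,p,d}=\sum_i c_i f_{i,n}$. Pairing with $f_{j,n}$ in the covariance inner product gives
\[
c_j\lambda_j=\mathrm{Cov}(f_{j,n},f_{n,p,d})=\mathrm{Cov}(f_{j,n},\|x\|_p^d)=\kappa_n\E[f_{j,n}],
\]
hence $c_j=\kappa_n\E[f_{j,n}]/\lambda_j$. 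This uses $\lambda_j>0$, which holds by Corollary \ref{cor:lower bound on variance}. One bookkeeping point: $\mathcal{V}^{\mathrm{sym}}$ is written in the basis $\widetilde m_\lambda$, which is orthonormal for $\langle\cdot,\cdot\rangle$ because the $m_\lambda$ have disjoint monomial supports. So its eigenvalues are indeed $\mathrm{Var}(f_{i,n})$ when $\mathrm{coeff}_d(f_{i,n})=1$.

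\textbf{Item (3).} By (2) and orthonormality,
\[
\mathrm{coeff}_d(f_{n,p,d})^2=\kappa_n^2\sum_i\frac{\E[f_{i,n}]^2}{\lambda_i^2}.
\]
Note $\kappa_n=\Theta(n^{d/p-2})$ by \eqref{eq:isotropic radius}.

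Take $i=1$ to be the pathological index. By Theorem \ref{thm:Possible potential eigenvalues}(2), $\lambda_1=\Theta(n^{-1})$ and $\E[f_{1,n}]=\Theta(\sqrt n)$, so that term equals $\kappa_n^2\Theta(n^3)=\Theta(n^{2d/p-1})$.

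The other eigenvectors are the main obstacle. The argument in that theorem shows the low-variance eigenspace is one-dimensional, so $\lambda_i=\Theta(1)$ for $i\ge2$. I also need $\E[f_{i,n}]=O(\sqrt n)$. Here Corollary \ref{cor:lower bound on variance}(1) gives $\E[f_{i,n}]^2\le \lambda_i n(n+2d)/d^2$, which is $O(n^2)$ and too weak. Instead I would use $\E[f]^2\le\E[f^2]$ together with a uniform upper bound $\lambda_{\max}(\mathcal{E}^{\mathrm{sym}})=O(n)$. That bound follows because each entry $\E[\widetilde m_\lambda\widetilde m_\tau]$ is $O(n)$, via the moment formula of Proposition \ref{prop:Marginals of Euclidean ball} (the dominant term is of order $n^{(\ell(\lambda)+\ell(\tau))/2}$ when $\ell(\lambda)+\ell(\tau)\le d$), and the dimension $D$ is fixed. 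So those terms contribute $\kappa_n^2O(n)=O(n^{2d/p-3})$, which is negligible. Hence
\[
\mathrm{coeff}_d(f_{n,p,d})=\Theta(n^{d/p-1/2}),
\]
which is (a).

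For (b), $\E[f_{n,p,d}]=\kappa_n\sum_i\E[f_{i,n}]^2/\lambda_i$. This sum has positive terms dominated by $i=1$, which is $\Theta(n^2)$, while the others are $O(n)$. So $\E[f_{n,p,d}]=\Theta(n^{d/p})$, and item (1) then gives $\mathrm{Var}(f_{n,p,d})=\Theta(n^{2d/p-2})$.

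Finally, (c) is the ratio $n^{2d/p-2}/n^{2d/p-1}=n^{-1}$.
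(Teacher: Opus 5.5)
Your items (1), (2) and your treatment of the pathological index $i=1$ are correct and match the paper. The gap is in how you handle the indices $i\ge 2$ in item (3).

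\textbf{The bound $\lambda_{\max}(\mathcal{E}^{\mathrm{sym}})=O(n)$ is false for every even $d\ge 4$.} As your own parenthetical says, $\E[\widetilde m_{\lambda,n}\widetilde m_{\tau,n}]$ has order $n^{(\ell(\lambda)+\ell(\tau))/2}$, and $\ell(\lambda)+\ell(\tau)$ can be as large as $d$. The paper's $d=4$ computation shows that the largest eigenvalue of $\mathcal{E}$ on $\mathcal{P}_4^{H_n}(\R^n)$ is $\frac{n^2}{2}\alpha_{(2,2,2,2)}+O(n)$.

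\textbf{The intermediate claim $\E[f_{i,n}]=O(\sqrt n)$ for $i\ge 2$ also fails, in a case where the hypothesis holds.} Take $p=d=4$. Then $D=2$, $\mathrm{Var}(\widetilde m_{4,n})=\Theta(n^{-1})$, and $\lambda_2=\Omega(1)$. Writing $\widetilde m_{4,n}=c_1f_{1,n}+c_2f_{2,n}$ gives $c_2^2\lambda_2\le\mathrm{Var}(\widetilde m_{4,n})$, so $c_2^2=O(n^{-1})$. Hence $f_{2,n}$ is within $O(n^{-1/2})$ of $\pm\widetilde m_{(2,2),n}$ in coefficient norm. Since $\E[\widetilde m_{(2,2),n}]=\Theta(n)$ while $\E[\widetilde m_{4,n}]=\Theta(\sqrt n)$, you get $\E[f_{2,n}]=\Theta(n)$. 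So in (b) the $i=2$ term $\E[f_{2,n}]^2/\lambda_2$ is of order $n^2$, the same order as the $i=1$ term. Your statement that the other terms are $O(n)$ is therefore wrong.

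\textbf{The repair is the bound you dismissed.} Corollary \ref{cor:lower bound on variance}(1) gives $\E[f_{i,n}]^2/\lambda_i\le n(n+2d)/d^2=O(n^2)$ for every $i$. Corollary \ref{cor:lower bound on variance}(2) gives $\lambda_i=\Omega(n^{-1})$, hence $\E[f_{i,n}]^2/\lambda_i^2=O(n^3)$.

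You do not need the other terms to be negligible. Since $D$ is independent of $n$ and all summands are nonnegative:
\begin{itemize}
\item the $i=1$ term supplies the lower bounds $\Theta(n^3)$ for $\sum_i\E[f_{i,n}]^2/\lambda_i^2$ and $\Theta(n^2)$ for $\sum_i\E[f_{i,n}]^2/\lambda_i$;
\item the $O(n^3)$ and $O(n^2)$ bounds on every term supply the matching upper bounds.
\end{itemize}
This is exactly the paper's argument, and it does not even need the one-dimensionality of the low-variance eigenspace. Even on your own route, Corollary \ref{cor:lower bound on variance}(1) was not too weak for (a): combined with $\lambda_i=\Omega(1)$ for $i\ge 2$, it gives $\E[f_{i,n}]^2/\lambda_i^2\le n(n+2d)/(d^2\lambda_i)=O(n^2)=o(n^3)$.
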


\begin{proof}
Item (1) is a direct consequence of Lemma \ref{lem:covariance with p_norm},
since, by definition, 
\[
\mathrm{Var}_{\mu_{n,p}}(f_{n,p,d})=\mathrm{Cov}_{\mu_{n,p}}\left(f_{n,p,d},\|x\|_{p}^{d}\right).
\]
Note that $\frac{f_{1,n}}{\sqrt{\lambda_{1}}},...,\frac{f_{D,n}}{\sqrt{\lambda_{D}}}$
is an orthonormal basis with respect to $\mathrm{Cov}_{\mu_{n,p}}\left(\cdot,\cdot\right)$.
Item (2) now follows by a second application of Lemma \ref{lem:covariance with p_norm}:
\begin{align}
f_{n,p,d} & =\sum_{i=1}^{D}\mathrm{Cov}_{\mu_{n,p}}\left(\frac{f_{i,n}}{\sqrt{\lambda_{i}}},f_{n,p,d}\right)\frac{f_{i,n}}{\sqrt{\lambda_{i}}}=\sum_{i=1}^{D}\mathrm{Cov}_{\mu_{n,p}}\left(\frac{f_{i,n}}{\sqrt{\lambda_{i}}},\left\Vert x\right\Vert _{p}^{d}\right)\frac{f_{i,n}}{\sqrt{\lambda_{i}}}\nonumber \\
 & =\sum_{i=1}^{D}\frac{d^{2}}{(n+2d)(n+d)}R_{n,p}^{d}\frac{\E_{\mu_{n,p}}\left[f_{i,n}\right]}{\mathrm{Var}_{\mu_{n,p}}(f_{i,n})}f_{i,n}.\label{eq:expansion of f_n,p,d}
\end{align}
For Item (3), note that, by taking an expectation on both sides of
\eqref{eq:expansion of f_n,p,d},
\[
\E_{\mu_{n,p}}\left[f_{n,p,d}\right]=\sum_{i=1}^{D}\frac{d^{2}}{(n+2d)(n+d)}R_{n,p}^{d}\frac{\E_{\mu_{n,p}}\left[f_{i,n}\right]^{2}}{\mathrm{Var}_{\mu_{n,p}}(f_{i,n})}
\]
Now suppose, without loss of generality, that $\mathrm{Var}_{\mu_{n,p}}(f_{1,n})=\Theta_{p,d}(n^{-1})$.
By Theorem \ref{thm:Possible potential eigenvalues},
\[
\E_{\mu_{n,p}}\left[f_{1,n}\right]=\Theta_{p,d}(\sqrt{n}),\quad\text{ and }\quad\frac{\E_{\mu_{n,p}}\left[f_{1,n}\right]^{2}}{\mathrm{Var}_{\mu_{n,p}}(f_{1,n})}=\Theta_{p,d}(n^{2}).
\]
For $i\neq1$, by Corollary \eqref{cor:lower bound on variance}(1)
we have $\frac{\E_{\mu_{n,p}}\left[f_{i,n}\right]^{2}}{\mathrm{Var}_{\mu_{n,p}}(f_{i,n})}=O_{p,d}(n^{2})$,
and thus 
\[
\E_{\mu_{n,p}}\left[f_{n,p,d}\right]=\frac{d^{2}}{(n+2d)(n+d)}R_{n,p}^{d}\sum_{i=1}^{D}\frac{\E_{\mu_{n,p}}\left[f_{i,n}\right]^{2}}{\mathrm{Var}_{\mu_{n,p}}(f_{i,n})}=\Theta_{p,d}(n^{\frac{d}{p}}),
\]
where we have used \eqref{eq:isotropic radius}, the bound on the
isotropic radius $R_{n,p}$. In particular, by Item (1), 
\[
\mathrm{Var}_{\mu_{n,p}}(f_{n,p,d})=\frac{d^{2}}{(n+2d)(n+d)}R_{n,p}^{d}\E_{\mu_{n,p}}\left[f_{n,p,d}\right]=\Theta_{p,d}(n^{\frac{2d}{p}-2}).
\]
We are left with estimating $\mathrm{coeff}_{d}(f_{n,p,d})$. Note
that the square root of the matrix $\mathcal{V}_{n,p,d}^{\mathrm{sym}}$
is an isometry mapping the inner product induced by $\mathrm{Cov}_{\mu_{n,p}}\left(\cdot,\cdot\right)$
to the one induced by $\mathrm{coeff}_{d}$.
In particular, since $\frac{f_{1,n}}{\sqrt{\lambda_{1}}},...,\frac{f_{D,n}}{\sqrt{\lambda_{D}}}$
is an orthonormal basis with respect to $\mathrm{Cov}_{\mu_{n,p}}\left(\cdot,\cdot\right)$,
it follows that $f_{1,n},...,f_{D,n}$ is an orthonormal basis with
respect to $\mathrm{coeff}_{d}$. In particular, 
\begin{align*}
\mathrm{coeff}_{d}^{2}(f_{n,p,d}) & =\sum_{i=1}^{D}\left(\frac{d^{2}}{(n+2d)(n+d)}R_{n,p}^{d}\frac{\E_{\mu_{n,p}}\left[f_{i,n}\right]}{\mathrm{Var}_{\mu_{n,p}}(f_{i,n})}\right)^{2}\\
 & =\left(\frac{d^{2}}{(n+2d)(n+d)}R_{n,p}^{d}\right)^{2}\sum_{i=1}^{D}\frac{\E_{\mu_{n,p}}\left[f_{i,n}\right]^{2}}{\mathrm{Var}_{\mu_{n,p}}(f_{i,n})^{2}}\\
 & =\left(\frac{d^{2}}{(n+2d)(n+d)}R_{n,p}^{d}\right)^{2}\Theta_{p,d}(n^{3})=\Theta_{p,d}(n^{\frac{2d}{p}-1}),
\end{align*}
where we again used that, by Theorem \ref{thm:Possible potential eigenvalues},
$\mathrm{Var}_{\mu_{n,p}}(f_{i,n})=\Omega_{p,d}\left(n^{-1}\right)$.
This concludes the proof. 
\end{proof}

\subsection{Comparison to the $p$-Gaussian measure}
Lemma \ref{lem:approximate eigenvector} can be interpreted as follows. Suppose that the minimal eigenvalue of $\mathcal{V}_{n,p,d}^{\mathrm{sym}}$ is small, then $f_{n,p,d}$ is close to being an eigenvector with minimal eigenvalue. Since $f_{n,p,d}$ is the orthogonal projection of the norm $\|x\|_{p}^d$, there should be a corresponding statement for $\|x\|_p^d$. This is the content of the next lemma, with respect to the product measure $\gamma_p^n$
\begin{lem}
\label{lem:reduction to p_Gaussian} Suppose that $\left\{ f_{n}\right\} _{n\in\N}$
is a sequence of eigenvectors of $\mathcal{V}_{n,p,d}^{\mathrm{sym}}$
such that $f_{n}\in\mathcal{P}_{d,\mathrm{unit}}(\R^{n})$ and
$\mathrm{Var}_{\mu_{n,p}}(f_{n})=o_{p,d}(1)$. Then 
\[
\E_{\gamma_{p}^{n}}\left[\left(f_{n}-\frac{\E_{\mu_{n,p}}\left[f_{n}\right]}{\E_{\mu_{n,p}}\left[\left\Vert x\right\Vert _{p}^{d}\right]}\left\Vert x\right\Vert _{p}^{d}\right)^{2}\right]=O_{p,d}(n^{-1}).
\]
\end{lem}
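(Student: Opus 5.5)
The plan is to set $g_n:=f_n-b_n\|x\|_p^d$ with $b_n:=\E_{\mu_{n,p}}[f_n]/\E_{\mu_{n,p}}[\|x\|_p^d]$, and to split the argument in two. First we show that $\E_{\gamma_p^n}[g_n^2]$ and $\E_{\mu_{n,p}}[g_n^2]$ are comparable up to a factor $\Theta_{p,d}(1)$. Then we compute $\E_{\mu_{n,p}}[g_n^2]$ exactly, using only Lemma \ref{lem:properties of integral on L_p ball}, Lemma \ref{lem:covariance with p_norm} and Theorem \ref{thm:Possible potential eigenvalues}.

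\emph{Step 1 (transfer).} The function $g_n^2$ is not a polynomial, but it is homogeneous of degree $2d$. The polar integration argument in the proof of Lemma \ref{lem:properties of integral on L_p ball} uses only homogeneity, so it applies verbatim. It gives
\[
\E_{\mu_{n,p}}[g_n^2]=\tfrac{n}{n+2d}R_{n,p}^{2d}\int_{S_{n,p}}g_n^2\,d\mu_{S_{n,p}}
\]
and
\[
\E_{\gamma_p^n}[g_n^2]=\tfrac{\Gamma(\frac{n+2d}{p})}{\Gamma(\frac np)}\int_{S_{n,p}}g_n^2\,d\mu_{S_{n,p}}.
\]
For the second identity we use that under $\gamma_p^n$ the direction $Z/\|Z\|_p$ has law $\mu_{S_{n,p}}$ and is independent of $\|Z\|_p$. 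Hence
\[
\E_{\gamma_p^n}[g_n^2]=C_{n,p,2d}^{-1}\,\E_{\mu_{n,p}}[g_n^2].
\]
By \eqref{eq:isotropic radius} and \eqref{eq:ratios of Gamma}, $C_{n,p,2d}=\Theta_{p,d}(1)$. So it suffices to show $\E_{\mu_{n,p}}[g_n^2]=O_{p,d}(n^{-1})$.

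\emph{Step 2 (exact computation on the ball).} By the choice of $b_n$ we have $\E_{\mu_{n,p}}[g_n]=0$. Therefore
\[
\E_{\mu_{n,p}}[g_n^2]=\mathrm{Var}_{\mu_{n,p}}(f_n)-2b_n\mathrm{Cov}_{\mu_{n,p}}(f_n,\|x\|_p^d)+b_n^2\mathrm{Var}_{\mu_{n,p}}(\|x\|_p^d).
\]
We evaluate each term.
\begin{itemize}
\item From Lemma \ref{lem:properties of integral on L_p ball}, $\E_{\mu_{n,p}}[\|x\|_p^d]=\frac{n}{n+d}R_{n,p}^d$. Hence $b_n=\frac{n+d}{nR_{n,p}^d}\E_{\mu_{n,p}}[f_n]$.
\item By the same lemma, $\mathrm{Var}_{\mu_{n,p}}(\|x\|_p^d)=R_{n,p}^{2d}\frac{nd^2}{(n+2d)(n+d)^2}$.
\item By Lemma \ref{lem:covariance with p_norm} (rescaled), $\mathrm{Cov}_{\mu_{n,p}}(f_n,\|x\|_p^d)=R_{n,p}^d\frac{d^2}{(n+2d)(n+d)}\E_{\mu_{n,p}}[f_n]$.
\end{itemize}
Substituting, both the cross term $b_n\mathrm{Cov}$ and the quadratic term $b_n^2\mathrm{Var}$ equal $\frac{d^2}{n(n+2d)}\E_{\mu_{n,p}}[f_n]^2$. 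This yields the identity
\[
\E_{\mu_{n,p}}[g_n^2]=\mathrm{Var}_{\mu_{n,p}}(f_n)-\frac{d^2}{n(n+2d)}\E_{\mu_{n,p}}[f_n]^2\le \mathrm{Var}_{\mu_{n,p}}(f_n).
\]
This is consistent with Corollary \ref{cor:lower bound on variance}(1), which says the left side is nonnegative.

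\emph{Step 3 (conclusion).} The $f_n$ are eigenvectors of $\mathcal{V}^{\mathrm{sym}}_{n,p,d}$ with $o_{p,d}(1)$ variance. By Proposition \ref{prop:reduction to B_n symmetric} and Theorem \ref{thm:Possible potential eigenvalues}(2), the only possible such eigenvalue is the minimal one, so $\mathrm{Var}_{\mu_{n,p}}(f_n)=\Theta_{p,d}(n^{-1})$. Combining with Steps 1 and 2 gives the claim.

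The main point needing care is Step 1: justifying the transfer between $\gamma_p^n$ and $\mu_{n,p}$ for the non-polynomial function $g_n^2$. We handle it exactly as in Lemma \ref{lem:properties of integral on L_p ball}, since that polar integration argument uses only degree-$2d$ homogeneity. Everything after that is an exact algebraic cancellation.
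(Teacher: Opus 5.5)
Your proof is correct and follows the paper's route. You transfer $\E_{\gamma_p^n}[g_n^2]=C_{n,p,2d}^{-1}\E_{\mu_{n,p}}[g_n^2]$ by $2d$-homogeneity, bound $\E_{\mu_{n,p}}[g_n^2]=\mathrm{Var}_{\mu_{n,p}}(g_n)\le\mathrm{Var}_{\mu_{n,p}}(f_n)$ since $b_n\|x\|_p^d$ is the covariance-projection of $f_n$ onto $\mathrm{span}(\|x\|_p^d)$, and finish with Theorem \ref{thm:Possible potential eigenvalues}(2). The differences are cosmetic: you compute the projection explicitly, getting the exact identity $\E_{\mu_{n,p}}[g_n^2]=\mathrm{Var}_{\mu_{n,p}}(f_n)-\frac{d^2}{n(n+2d)}\E_{\mu_{n,p}}[f_n]^2$ (Corollary \ref{cor:lower bound on variance}(1) as an equality), and you justify the transfer for the non-polynomial $g_n^2$ via polar integration, where the paper simply cites Proposition \ref{prop:Marginals of Euclidean ball}.
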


\begin{proof}
For every $f\in\mathcal{P}_{d,\mathrm{unit}}(\R^{n})$,
set $g(x):=f(x)-\frac{\E_{\mu_{n,p}}\left[f\right]}{\E_{\mu_{n,p}}\left[\left\Vert x\right\Vert _{p}^{d}\right]}\left\Vert x\right\Vert _{p}^{d}$. We first show that:
\begin{align}
\E_{\gamma_{p}^{n}}\left[g^{2}\right]\leq O_{p,d}\left(\mathrm{Var}_{\mu_{n,p}}(f)\right).\label{eq:projection}
\end{align}
First, since $g(x)^{2}$ is $2d$-homogeneous, by Proposition \ref{prop:Marginals of Euclidean ball},
\[
\E_{\gamma_{p}^{n}}\left[g^{2}\right]=\frac{1}{C_{n,p,2d}}\E_{\mu_{n,p}}\left[g^{2}\right].
\]
By Lemma \ref{lem:covariance with p_norm}, $\frac{\E_{\mu_{n,p}}[f]}{\E_{\mu_{n,p}}[\left\Vert x\right\Vert _{p}^{d}]}=\frac{\mathrm{Cov}_{\mu_{n,p}}(f,\left\Vert x\right\Vert _{p}^{d})}{\mathrm{Var}_{\mu_{n,p}}(\left\Vert x\right\Vert _{p}^{d})}$.
In other words, $\frac{\E_{\mu_{n,p}}\left[f\right]}{\E_{\mu_{n,p}}\left[\left\Vert x\right\Vert _{p}^{d}\right]}\left\Vert x\right\Vert _{p}^{d}$
is the orthogonal projection of $f$ onto $\mathrm{span}(\left\Vert x\right\Vert _{p}^{d})$
with respect to the inner product $\mathrm{Cov}_{\mu_{n,p}}(\cdot,\cdot)$.
So, 
\[
\E_{\mu_{n,p}}\left[g^{2}\right]=\mathrm{Var}_{\mu_{n,p}}\left(g\right)\leq\mathrm{Var}_{\mu_{n,p}}(f).
\]
We now establish \eqref{eq:projection}, since by combining \eqref{eq:formula for C}
and \eqref{eq:isotropic radius} we can see that $\frac{1}{C_{n,p,2d}}=O_{p,d}(1).$
To finish the proof we specialize \eqref{eq:projection} to $f_{n}$.
Since, by assumption, $\mathrm{Var}_{\mu_{n,p}}(f_{n})=o_{p,d}(1)$,
Theorem \ref{thm:Possible potential eigenvalues}(2) implies $\mathrm{Var}_{\mu_{n,p}}(f_{n})=O_{p,d}(n^{-1})$,
and so we are done. 
\end{proof}
With the last lemma in mind, we would like bound the second moment
of $d$-homogeneous functions of the form $f-b\left\Vert x\right\Vert _{p}^{d}$,
with $f$ a $d$-homogeneous polynomial, with respect to the $p$-Gaussian
distribution $\gamma_{p}^{n}$. The upshot is that we have reduced the problem to a computation in a product space which has a tractable orthogonal decomposition.

Let $\left\{ q_{m}\right\} _{m=0}^{\infty}$ be the collection of
polynomials in $\R$, with $\deg(q_{i})=i$, after applying the Gram-Schmidt
algorithm to $1,x,...,x^{m},...$ with respect to the inner product
$\E_{\gamma_{p}}\left[f_{1}f_{2}\right]$.
For a multi-index $I \in \N^n$, let $q_{I}:=q_{1}^{i_{1}}...q_{n}^{i_{n}}$. Since $\gamma_p^n$ is a product measure we have that $\left\{ q_{I}\right\} _{I\in\N^{n}}$
is an orthonormal basis of $L^{2}(\gamma_{p}^{n})$. Further define the
symmetric polynomials $Q_{m}(x_{1},...,x_{n})=\frac{1}{\sqrt{n}}\sum_{i=1}^{n}q_{m}(x_{i})$,
normalized to ensure $\mathrm{Var}_{\gamma_{p}^{n}}(Q_{m})=1$. 
\begin{lem}
\label{lem:Fourier coefficients of p-norm}Let $d\in\N$ be an even
number, and let $p\geq1$ be a real number. 
\begin{enumerate}
\item Suppose that either $p>d$, or $1\leq p<d$ is not an even integer.
Then there exists $m>d$ and a constant $c_{p,d}$ such that 
\[
\E_{\gamma_{p}^{n}}\left[\left\Vert x\right\Vert _{p}^{d}Q_{m}(x)\right]^{2}>c_{p,d}\cdot n^{\frac{2d}{p}-1}
\]
\item On the other hand, if $1<p<d$ is an even integer then for every $m>p$:
\[
\E_{\gamma_{p}^{n}}\left[\left\Vert x\right\Vert _{p}^{d}Q_{m}(x)\right]^{2}=O_{p,d}(n^{\frac{2d}{p}-3}).
\]
\end{enumerate}
\end{lem}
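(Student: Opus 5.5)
The plan is to reduce everything to a one-dimensional computation, using the permutation symmetry of $\gamma_p^n$ and the Gamma structure of $|x_i|^p$.

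\textbf{Reduction to one coordinate.} Since $\gamma_p^n$ is exchangeable,
\[
\E_{\gamma_p^n}\left[\|x\|_p^d Q_m\right]=\sqrt{n}\,\E_{\gamma_p^n}\left[\|x\|_p^d q_m(x_1)\right].
\]
Write $\|x\|_p^d=(T+S')^{d/p}$, where $T=|x_1|^p$ and $S'=\sum_{i\ge2}|x_i|^p$. Under $\gamma_p$ the variable $|x_i|^p$ is $\mathrm{Gamma}(\frac1p,1)$-distributed, so $S'\sim\mathrm{Gamma}(\frac{n-1}{p},1)$ exactly, and it is independent of $x_1$. Consequently, for every real $s$ and $n\gg_{p,s}1$,
\[
\E[S'^{s}]=\frac{\Gamma(\frac{n-1}{p}+s)}{\Gamma(\frac{n-1}{p})}=\Theta_{p,s}(n^{s}),
\]
and this holds also for negative $s$. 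Moreover $S'$ concentrates exponentially around $n/p$.

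\textbf{Binomial expansion.} Fix $K$ large depending on $p,d$. On the event $\{T\le S'/2\}$ I expand
\[
(S'+T)^{d/p}=\sum_{k=0}^{K-1}\binom{d/p}{k}S'^{d/p-k}T^{k}+R_K,\qquad |R_K|\lesssim_{p,d,K} T^{K}S'^{d/p-K}.
\]
The complementary event has probability $e^{-cn}$. On it, Cauchy--Schwarz with the polynomial moments of $\|x\|_p^d$ and $q_m$ makes its contribution negligible, and the remainder contributes $O(n^{d/p-K})$. Taking expectations and using independence gives
\[
\E\left[\|x\|_p^d q_m(x_1)\right]=\sum_{k=0}^{K-1}\binom{d/p}{k}\E[S'^{d/p-k}]\,c_{m,k}+O(n^{d/p-K}),\qquad c_{m,k}:=\E_{\gamma_p}\left[|x|^{pk}q_m(x)\right].
\]
The $k=0$ term vanishes, since $c_{m,0}=\E[q_m]=0$ for $m\ge1$ by orthogonality to $q_0=1$.

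\textbf{Part (1).} The $k=1$ term is $\frac dp\,\Theta(n^{d/p-1})\,c_{m,1}$, while all later terms are $O(n^{d/p-2})$. So it suffices to find $m>d$ with $c_{m,1}\neq0$, since then the squared quantity is $n\cdot\Theta(n^{2d/p-2})=\Theta(n^{2d/p-1})$. There are two cases.
\begin{itemize}
\item If $p\notin 2\N$, then $|x|^p$ is not a polynomial. Because $\{q_m\}$ is an orthonormal basis of $L^2(\gamma_p)$ (polynomials are dense, as $\gamma_p$ has exponential tails), $|x|^p$ has infinitely many nonzero coefficients $c_{m,1}$, so one of them has $m>d$.
\item If $p>d$ and $p\in2\N$, take $m=p$. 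Then $c_{p,1}=\E[x^p q_p]$ equals the reciprocal of the leading coefficient of $q_p$, which is nonzero.
\end{itemize}

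\textbf{Part (2).} Here $p\in 2\N$ and $m>p$, so $c_{m,1}=\E[x^p q_m]=0$ because $\deg x^p<m$. The first surviving term is therefore $k\ge2$, of order $O(n^{d/p-2})$, and after multiplying by $\sqrt n$ and squaring this gives $O(n^{2d/p-3})$.

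The main obstacle I expect is making the remainder control rigorous when $d/p-K<0$. This requires negative moments of $S'$, which are available from the explicit Gamma law, together with the tail estimate on $\{T>S'/2\}$. Beyond that, the only delicate input is the non-vanishing of $c_{m,1}$ for some $m>d$, which follows from completeness of $\{q_m\}$.
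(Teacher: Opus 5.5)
Your proof is correct. It arrives at the same leading term as the paper, $d\,p^{-d/p}n^{d/p-1/2}\,\E_{\gamma_p}[q_m(x)|x|^p]$, but by a genuinely different route.

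The paper uses neither the Gamma law of $|x_i|^p$ nor a Taylor expansion. It writes $q_m=\sum_j a_{m,j}x^j$ and applies, monomial by monomial, the exact polar-integration identity $\E_{\gamma_p^n}[\|x\|_p^d f]=\frac{\Gamma(\frac{n+j+d}{p})}{\Gamma(\frac{n+j}{p})}\E_{\gamma_p^n}[f]$ for $j$-homogeneous $f$ (Lemma \ref{lem:properties of integral on L_p ball}). It then expands each Gamma ratio as $(n/p)^{d/p}\bigl(1+\frac{d(2j+d-p)}{2pn}+O(n^{-2})\bigr)$. The leading term vanishes because $\E[q_m]=0$, so the answer sits in the $1/n$ correction. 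Since that correction is affine in $j$, what survives is $\frac{d}{p}\E_{\gamma_p}[x q_m'(x)]$, which integration by parts turns into $d\,\E_{\gamma_p}[q_m(x)|x|^p]$. This route is exact and algebraic, with no truncation, tail event or negative moments. The same mechanism, pushed to all orders via generalized Bernoulli polynomials and iterated integration by parts, is what the paper reuses in Lemma \ref{lem:spectrum for large degrees}.

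Your expansion of $(S'+T)^{d/p}$ around $S'$ separates the orders by powers of $T$ instead. What this buys you:
\begin{itemize}
\item You only need the crude size $\E[S'^s]=\Theta(n^s)$, not the precise subleading coefficient of a Gamma ratio.
\item No integration by parts is needed.
\item You see directly why $|x|^{pk}$ appears at order $n^{d/p-k}$. In the even case all terms with $pk<m$ vanish, which is the one-variable analogue of the bound $O(n^{d/p-\sum_e\lceil i_e/p\rceil})$ in Lemma \ref{lem:spectrum for large degrees}.
\end{itemize}
The price is the analytic bookkeeping you flagged. You must remove the indicator of $\{T\le S'/2\}$ before factoring by independence, which uses negative Gamma moments of $S'$ together with the $e^{-cn}$ tail bound.

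You are also more explicit than the paper about the completeness of $\{q_m\}$ in $L^2(\gamma_p)$. The paper's step ``there exists $m>d$'' uses this implicitly.
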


\begin{proof}
Write $q_{m}=\sum_{j\leq m}a_{m,j}x^{j}$, and $p_{j,n}(x):=\sum_{i=1}^{n}x_{i}^{j}$,
so that 
\[
Q_{m}(x)=\frac{1}{\sqrt{n}}\sum_{j\leq m}a_{m,j}p_{j,n}(x).
\]
As a consequence of Lemma \ref{lem:properties of integral on L_p ball}, for every $j$-homogeneous polynomial $f$, we have: 
\[
\E_{\gamma_p^n}\left[\left\Vert x\right\Vert _{p}^{d}f(x)\right]=\frac{\Gamma(\frac{n+j+d}{p})}{\Gamma(\frac{n+j}{p})}\E_{\gamma_p^n}\left[f\right].
\]
Hence, by linearity of expectation, 
\begin{align*}
\E_{\gamma_{p}^{n}}\left[\left\Vert x\right\Vert _{p}^{d}Q_{m}(x)\right] & =\frac{1}{\sqrt{n}}\sum_{j\leq m}a_{m,j}\E_{\gamma_{p}^{n}}\left[\left\Vert x\right\Vert _{p}^{d}p_{j,n}(x)\right]=\frac{1}{\sqrt{n}}\sum_{j\leq m}a_{m,j}\frac{\Gamma(\frac{n+j+d}{p})}{\Gamma(\frac{n+j}{p})}\E_{\gamma_{p}^{n}}\left[p_{j,n}(x)\right]\\
 & =\sqrt{n}\sum_{j\leq m}a_{m,j}\frac{\Gamma(\frac{n+j+d}{p})}{\Gamma(\frac{n+j}{p})}\E_{\gamma^1_{p}}\left[x_1^{j}\right]=\sqrt{n}\E_{\gamma^1_{p}}\left[\sum_{j\leq m}a_{m,j}\frac{\Gamma(\frac{n+j+d}{p})}{\Gamma(\frac{n+j}{p})}x_1^{j}\right].
\end{align*}
Taking \eqref{eq:ratios of Gamma} with $u=j$ and dividing by \eqref{eq:ratios of Gamma} with $u=j+d$ yields:
\[
\frac{\Gamma(\frac{n+j+d}{p})}{\Gamma(\frac{n+j}{p})}=\left(\frac{n}{p}\right)^{\frac{d}{p}}\left(\frac{1+\frac{j(p-j)}{2pn}+O_{p,m}(n^{-2})}{1+\frac{(j+d)(p-j-d)}{2pn}+O_{p,m}(n^{-2})}\right)=\left(\frac{n}{p}\right)^{\frac{d}{p}}\left(1+\frac{d(2j+d-p)}{2pn}+O_{p,m}(n^{-2})\right).
\]
Hence, for $n$ large enough, we obtain
\begin{align*}
\E_{\gamma_{p}^{n}}\left[\left\Vert x\right\Vert _{p}^{d}Q_{m}(x)\right] & =\frac{n^{\frac{d}{p}+\frac{1}{2}}}{p^{\frac{d}{p}}}\E_{\gamma_{p}^{1}}\left[\sum_{j\leq m}a_{m,j}x_{1}^{j}\right]+\frac{n^{\frac{d}{p}-\frac{1}{2}}}{p^{\frac{d}{p}}}\E_{\gamma_{p}^{1}}\left[\sum_{j\leq m}a_{m,j}\frac{d(2j+d-p)}{2p}x_{1}^{j}\right]+O_{p,m}(n^{\frac{d}{p}-\frac{3}{2}})\\
 & =\frac{n^{\frac{d}{p}-\frac{1}{2}}}{p^{\frac{d}{p}}}\frac{d}{p}\E_{\gamma_{p}^{1}}\left[\sum_{j\leq m}a_{m,j}jx_{1}^{j}\right]+O_{p,m}(n^{\frac{d}{p}-\frac{3}{2}}),
\end{align*}
where we have used that $\E_{\gamma_{p}^{1}}\left[\sum_{j\leq m}a_{m,j}x_{1}^{j}\right]=\E_{\gamma_{p}^{1}}\left[q_{m}(x_{1})\right]=0$,
since $q_{m}$ is orthogonal to $q_{0}\equiv1.$ Further, note that
$(q_{m})'(x_{1})=\sum\limits _{j\leq m}ja_{m,j}x_{1}^{j-1}$. Hence,
we integrate by parts,
\begin{align}
 & \E_{\gamma_{p}^{1}}\left[\sum_{j\leq m}a_{m,j}jx_{1}^{j}\right]=\E_{\gamma_{p}^{1}}\left[x_{1}(q_{m})'(x_{1})\right]=\frac{1}{\frac{2}{p}\Gamma(\frac{1}{p})}\int_{\R}x_{1}(q_{m})'(x_{1})e^{-\left|x_{1}\right|^{p}}dx_{1}\nonumber \\
= & -\frac{1}{\frac{2}{p}\Gamma(\frac{1}{p})}\int_{-\infty}^{\infty}q_{m}(x_{1})\cdot\left(e^{-\left|x_{1}\right|^{p}}-p\left|x_{1}\right|^{p}e^{-\left|x_{1}\right|^{p}}\right)dx_{1}=p\E_{\gamma_{p}^{1}}\left[q_{m}(x_{1})\left|x_{1}\right|^{p}\right],\label{eq:integration by parts}
\end{align}
where we again used the fact that $q_{m}$ has vanishing expectation.
So, 
\begin{align*}
\E_{\gamma_{p}^{n}}\left[\left\Vert x\right\Vert _{p}^{d}Q_{m}(x)\right]^{2} & =\left(\frac{n^{\frac{d}{p}-\frac{1}{2}}}{p^{\frac{d}{p}}}d\E_{\gamma_{p}^{1}}\left[q_{m}(x_{1})\left|x_{1}\right|^{p}\right]+O_{p,m}(n^{\frac{d}{p}-\frac{3}{2}})\right)^{2}\\
 & =n^{\frac{2d}{p}-1}\frac{d^{2}}{p^{\frac{2d}{p}}}\left(\E_{\gamma_{p}^{1}}\left[q_{m}(x_{1})\left|x_{1}\right|^{p}\right]\right)^{2}+O_{p,m}(n^{\frac{2d}{p}-2}).
\end{align*}
For the first item, if $p$ is not an even integer, then $\left|x_{1}\right|^{p}$
is not a polynomial of degree $\leq d$. Since $\{q_{m}\}_{m=0}^{d}$
span the space of degree-$d$ polynomials, there exists $m>d$ such
that $\left|\E_{\gamma_{p}^{1}}\left[q_{m}(x_{1})\left|x_{1}\right|^{p}\right]\right|=\alpha_{p,d}>0$.
If $p$ is an even integer larger than $d$, we can take $m=p$, and
get $\left|\E_{\gamma_{p}^{1}}\left[q_{p}(x_{1})\left|x_{1}\right|^{p}\right]\right|=\beta_{p}>0$.
Combining with the above estimate we now get 
\[
\E_{\gamma_{p}^{n}}\left[\left\Vert x\right\Vert _{p}^{d}Q_{m}(x)\right]^{2}>n^{\frac{2d}{p}-1}\frac{d^{2}}{2p^{\frac{2d}{p}}}\min(\alpha_{p,d},\beta_{p}),
\]
as required. For the second item, suppose that $1<p<d$ is an even
integer. In this case when $m>p$, $q_{m}$ is orthogonal to the degree
$p$ polynomial $x_{1}^{p}=\left|x_{1}\right|^{p}$. I.e. $\E_{\gamma_{p}^{1}}\left[q_{m}(x_{1})\left|x_{1}\right|^{p}\right]=0$.
Hence, for every $m>p$,
\[
\E_{\gamma_{p}^{n}}\left[\left\Vert x\right\Vert _{p}^{d}Q_{m}(x)\right]^{2}=O_{p,d}\left(n^{\frac{d}{p}-\frac{3}{2}}\right)^{2}=O_{p,d}\left(n^{\frac{2d}{p}-3}\right).\qedhere
\]
\end{proof}

As a direct consequence of Lemma \ref{lem:Fourier coefficients of p-norm} we prove the following result about possible second moments of functions of the form $f-b\left\Vert x\right\Vert _{p}^{d}$, with $f$ a $d$-homogeneous polynomial.
\begin{prop}
\label{prop:generalization of spectrum of p_Gaussian}Let $d\in\N$ and suppose that either $p>d$, or $1\leq p<d$ is not an even integer. Let $b\in\R$, let $f=\sum_{I}a_{I}x^{I}\in\mathcal{P}_{d}(\R^{n})$, and let $g=f+b\left\Vert x\right\Vert _{p}^{d}$.
Then: 
\[
\E_{\gamma_{p}^{n}}\left[g^{2}\right]\geq c_{p,d}b^{2}\cdot n^{\frac{2d}{p}-1}.
\]
\end{prop}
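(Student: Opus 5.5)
The plan is to project $g$ onto a single unit vector of $L^{2}(\gamma_{p}^{n})$, namely $Q_{m}$ with $m>d$ the index supplied by Lemma \ref{lem:Fourier coefficients of p-norm}(1), and to apply Bessel's inequality. Since $\mathrm{Var}_{\gamma_{p}^{n}}(Q_{m})=1$ and $\E_{\gamma_p^n}[Q_m]=0$ (because $q_m\perp q_0$), $Q_{m}$ is a unit vector. Hence
\[
\E_{\gamma_{p}^{n}}\left[g^{2}\right]\geq\E_{\gamma_{p}^{n}}\left[g\,Q_{m}\right]^{2}.
\]
We then expand
\[
\E_{\gamma_{p}^{n}}[gQ_{m}]=\E_{\gamma_{p}^{n}}[fQ_{m}]+b\,\E_{\gamma_{p}^{n}}\left[\left\Vert x\right\Vert _{p}^{d}Q_{m}\right]
\]
and show that the first term vanishes.

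To see the vanishing, write $Q_{m}=\frac{1}{\sqrt{n}}\sum_{i}q_{m}(x_{i})$. Each summand $q_{m}(x_{i})=q_{(0,\dots,m,\dots,0)}$ is an element of the orthonormal product basis $\{q_{I}\}$ of total degree $m$. The polynomial $f$ has degree $d<m$, so it lies in the span of $\{q_{J}\}$ with $|J|\leq d$, since every monomial $x^{J}$ expands in $q_{j_1}(x_1)\cdots q_{j_n}(x_n)$ with $j_i$ at most the exponent of $x_i$. These basis elements are orthogonal to $q_{m}(x_{i})$. Concretely, $\E[x^{J}q_{m}(x_{i})]$ factorizes, and the factor $\E_{\gamma_p}[x_i^{j_i}q_m(x_i)]$ vanishes because $j_i\le d<m$. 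Hence $\E_{\gamma_{p}^{n}}[fQ_{m}]=0$.

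It follows that
\[
\E_{\gamma_{p}^{n}}[g^{2}]\geq b^{2}\,\E_{\gamma_{p}^{n}}\left[\left\Vert x\right\Vert _{p}^{d}Q_{m}\right]^{2}>c_{p,d}\,b^{2}n^{\frac{2d}{p}-1},
\]
where the last inequality is Lemma \ref{lem:Fourier coefficients of p-norm}(1). That lemma holds for all $n$ once it holds for $n\gg_{p,d}1$, after shrinking the constant. For small $n$, the quantity $\E[\|x\|_p^dQ_m]$ is a nonzero explicit expression, so the lower bound persists after decreasing $c_{p,d}$. If needed, we can sidestep the small-$n$ issue by noting that only $n\gg1$ matters for the application.

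The main obstacle is bookkeeping. Lemma \ref{lem:Fourier coefficients of p-norm} is stated for $d$ even, so for odd $d$ we must either observe that its proof never uses parity of $d$, or reduce to the even case. The first option seems correct: the computation of $\E[\|x\|_p^dQ_m]$ via Lemma \ref{lem:properties of integral on L_p ball} and the integration by parts \eqref{eq:integration by parts} does not depend on the parity of $d$. We also need to check that the index $m$ produced in the lemma really satisfies $m>d$ in both cases: when $p$ is not an even integer, and when $p$ is an even integer with $p>d$, where the lemma takes $m=p>d$. This is exactly what makes the orthogonality to $f$ work.
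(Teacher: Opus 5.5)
Your argument is correct and is essentially the paper's proof. The paper applies Bessel's inequality to all $q_I$ with $|I|>d$ (each orthogonal to $f$) and then bounds that sum from below by the single projection onto $Q_m$ from Lemma~\ref{lem:Fourier coefficients of p-norm}(1); you project onto $Q_m$ directly, and you are right that the parity of $d$ plays no role in that lemma. One caution: your claim that $\E_{\gamma_p^n}[\|x\|_p^d Q_m]\neq 0$ for small $n$ is false (for $n=1$ and $d$ even, $\|x\|_p^d=x^d\in\mathcal{P}_d(\R)$, so $g\equiv 0$ is possible), hence the bound genuinely needs $n\gg_{p,d}1$, which is your other fallback and also all that the proof of Lemma~\ref{lem:Fourier coefficients of p-norm} establishes.
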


\begin{proof}
Since $f\in\mathcal{P}_{d}(\R^{n})$, $\E_{\gamma_{p}^{n}}\left[f\cdot q_{I}\right]=0$
for every $I$ with $\left|I\right|>d$. Hence, by Lemma \ref{lem:Fourier coefficients of p-norm}(1), there exists some $m > d$ such that
\begin{align*}
\E_{\gamma_{p}^{n}}\left[g^{2}\right] & \geq\sum_{I:\left|I\right|>d}\E_{\gamma_{p}^{n}}\left[g\cdot q_{I}\right]^{2}=b^{2}\sum_{I:\left|I\right|>d}\E_{\gamma_{p}^{n}}\left[\left\Vert x\right\Vert _{p}^{d}\cdot q_{I}\right]^{2}\\
 & \geq b^{2}\E_{\gamma_{p}^{n}}\left[\left\Vert x\right\Vert _{p}^{d}\cdot Q_{m}(x)\right]^{2}>c_{p,d}b^{2}\cdot n^{\frac{2d}{p}-1}.\qedhere
\end{align*}
\end{proof}
Proposition \ref{prop:generalization of spectrum of p_Gaussian} together with Lemma \ref{lem:reduction to p_Gaussian} are already enough to show that in some cases there cannot exist polynomials of low variance. Lemma \ref{lem:reduction to p_Gaussian} suggests the existence of a function of the form $f + b\|x\|_p^d$ with a small second moment, while Proposition \ref{prop:generalization of spectrum of p_Gaussian} asserts that the second moment of \emph{every} such function must be large, allowing to reach a contradiction.

However, in order to prove Theorem \ref{thm:Main theorem L^p} in full generality, we will need to deal separately
with the case when $p<d$ which is an even integer, where Lemma \ref{lem:Fourier coefficients of p-norm}(2) predicts a different behavior. In a similar fashion to the definition above, we denote by $\widetilde{f}_{n,p,d}$
the projection of $\left\Vert x\right\Vert _{p}^{d}$ to $\mathcal{P}_{\leq d}(\R^{n})$, inside $L^2(\gamma_p^n).$
The product structure of $\gamma_p^n$ allows a more precise representation of $\widetilde{f}_{n,p,d}$: If $\left\Vert x\right\Vert _{p}^{d}=\sum_{I}b_{I}q_{I}(x)$,
then 
\begin{equation}
\widetilde{f}_{n,p,d}(x):=\sum_{I:\left|I\right|\leq d}b_{I}q_{I}(x)=\sum_{I:\left|I\right|\leq d}\E_{\gamma_{p}^{n}}\left[\left\Vert x\right\Vert _{p}^{d}q_{I}(x)\right]\cdot q_{I}(x).\label{eq:def of projection to polynomials}
\end{equation}
We have the following analog of Lemma \ref{lem:approximate eigenvector}. 
\begin{lem}
\label{lem:spectrum for large degrees}Let $p\in2\N$ and $p<d$.
Then: 
\[
\mathrm{coeff}_{d}(\widetilde{f}_{n,p,d})^{2}=O_{p,d}\left(n^{\frac{d}{p}}\right).
\]
\end{lem}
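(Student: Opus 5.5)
The plan is to reduce the statement to a bound on the degree-$d$ coefficients $b_I$ in \eqref{eq:def of projection to polynomials}, and then estimate each $b_I$ by a Taylor expansion of $\|x\|_p^d$ in the variables on which $q_I$ depends.

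\emph{Reduction.} For $|I|<d$, $q_I$ has degree $<d$ and contributes nothing to the degree-$d$ part of $\widetilde f_{n,p,d}$. For $|I|=d$, the degree-$d$ part of $q_I$ is $\ell_I x^I$, where $\ell_I=\prod_j \ell_{i_j}$ is the product of the leading coefficients of the one-dimensional polynomials $q_{i_j}$. Since $i_j\le d$, these are bounded by a constant depending only on $p,d$. Distinct $I$ give distinct monomials, so
\[
\mathrm{coeff}_d(\widetilde f_{n,p,d})^2=\sum_{|I|=d}\ell_I^2\,b_I^2\lesssim_{p,d}\sum_{|I|=d}\E_{\gamma_p^n}\big[\|x\|_p^d\,q_I\big]^2 .
\]
It therefore suffices to show $\sum_{|I|=d}b_I^2=O_{p,d}(n^{d/p})$.

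\emph{Estimating $b_I$.} Fix $I$ with $|I|=d$ and support $J=\{j_1,\dots,j_k\}$, so $k\le d$. Split $\|x\|_p^p=S+T$ with $S=\sum_{j\in J}|x_j|^p$ and $T=\sum_{j\notin J}|x_j|^p$. These are independent, and $T$ is a sum of $n-k$ i.i.d.\ variables, hence concentrated around $(n-k)/p$. Set $a=d/p$, which need not be an integer. Expand
\[
(T+S)^{a}=\sum_{r=0}^{R-1}\binom{a}{r}T^{a-r}S^r+\mathrm{Rem}_R
\]
with $R$ a fixed large integer depending on $p,d$. Since $p$ is even, $S^r$ is a polynomial in $(x_j)_{j\in J}$, and it is a sum of monomials $\prod_{j\in J}x_j^{pa_j}$ with $\sum_j a_j=r$. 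By independence and orthogonality of $q_{i_j}$ to polynomials of degree $<i_j$, such a term is orthogonal to $q_I$ unless $pa_j\ge i_j$ for every $j\in J$. This forces
\[
r\ge k':=\sum_{j\in J}\lceil i_j/p\rceil ,
\]
and note $k'\ge\max(k,d/p)$. Each surviving term is therefore $O_{p,d}(\E[T^{a-r}])=O_{p,d}(n^{a-k'})$.

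\emph{The remainder.} I expect the main technical obstacle here, since $a-R<0$ makes the Taylor remainder blow up when $T$ is small. I would handle it by restricting to the event $\{T\ge n/(4p)\}$. Its complement has probability $e^{-\Omega(n)}$, and there Cauchy--Schwarz with polynomial moment bounds on $\|x\|_p^d$ and $q_I$ gives a negligible contribution. On the event, the Lagrange form gives $|\mathrm{Rem}_R|\lesssim S^R\,T^{a-R}$, whose product with $q_I$ has expectation $O(n^{a-R})$. Choosing $R>d$ makes this $O(n^{a-d})$, which is dominated by $O(n^{a-k'})$ since $k'\le d$. Hence $|b_I|=O_{p,d}(n^{a-k'(I)})$.

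\emph{Summation.} The number of multi-indices $I$ with support size $k$ and a prescribed pattern of exponents is $O(n^k)\le O(n^{k'})$. Summing over the finitely many patterns gives
\[
\sum_{|I|=d}b_I^2\lesssim\sum_{\text{patterns}}n^{k'}\,n^{2a-2k'}=\sum_{\text{patterns}} n^{2d/p-k'}\le O_{p,d}\big(n^{d/p}\big),
\]
where the last step uses $k'\ge d/p$. This is the claimed bound.
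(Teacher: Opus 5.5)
Your proof is correct. It reaches the same intermediate estimate as the paper, $b_I=O_{p,d}(n^{d/p-\sum_e\lceil i_e/p\rceil})$, and your reduction to $\sum_{|I|=d}b_I^2$ and final count over support patterns are essentially the paper's.

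The genuine difference is how you estimate a single $b_I$. The paper proceeds as follows:
\begin{enumerate}
\item It expands $q_I$ into monomials $x^J$.
\item It applies the exact homogeneity identity of Lemma~\ref{lem:properties of integral on L_p ball}, $\E_{\gamma_p^n}[\|x\|_p^d x^J]=\frac{\Gamma((n+|J|+d)/p)}{\Gamma((n+|J|)/p)}\E_{\gamma_p^n}[x^J]$.
\item It expands this Gamma ratio in powers of $n^{-1}$; the coefficients are generalized Bernoulli polynomials in $|J|$.
\item It kills the first $\sum_e\lceil i_e/p\rceil$ orders via the integration-by-parts fact $\E_{\gamma_p}[(x\frac{d}{dx})^u q_m]=0$ for $u<\lceil m/p\rceil$.
\end{enumerate}
You instead split $\|x\|_p^p=S+T$ into the part on the support of $I$ and an independent bulk, and Taylor expand in $S$. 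The cancellation then appears directly as orthogonality of $q_{i_j}$ to $x_j^{pa_j}$ when $pa_j<i_j$.

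Your route is more elementary and makes the mechanism transparent. It uses only the evenness of $p$, independence, and one-dimensional orthogonality. It would also work verbatim for $\phi(\|x\|_p^p)$ with polynomially controlled derivatives. The paper's route instead exploits the exact homogeneity of $\gamma_p^n$ and yields a full expansion in $n^{-1}$. It also reuses the Gamma-ratio and integration-by-parts computations already set up for Lemma~\ref{lem:Fourier coefficients of p-norm}.

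A small simplification: the cutoff to $\{T\ge n/(4p)\}$ is unnecessary. Since $T\sim\mathrm{Gamma}(\frac{n-k}{p},1)$, you have $\E[T^{-s}]=\Gamma(\frac{n-k}{p}-s)/\Gamma(\frac{n-k}{p})$, which is of order $n^{-s}$ once $n>k+ps$. For $R>a$, the bound $|\mathrm{Rem}_R|\le|\binom{a}{R}|S^RT^{a-R}$ holds pointwise, so everything can be integrated directly. If you keep the cutoff, it is harmless because the event is $T$-measurable. The factorization $\E[T^{a-r}1_{\{T\ge n/(4p)\}}]\,\E[S^rq_I]$, and hence the exact vanishing for $r<k'$, survive.
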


\begin{proof}
By (\ref{eq:def of projection to polynomials}), we have 
\[
\mathrm{coeff}_{d}(\widetilde{f}_{n,p,d})=\mathrm{coeff}_{d}\left(\sum_{I:\left|I\right|=d}\E_{\gamma_{p}^{n}}\left[\left\Vert x\right\Vert _{p}^{d}q_{I}(x)\right]\cdot q_{I}(x)\right).
\]
Note there exist constants $\{c_{I,J}\}_{|I|=d,|J|<d}$ such
that $q_{I}=c_{I,I}x^{I}+\sum_{J:\left|J<d\right|}c_{J,I}x^{J}$,
and for every $|I|=d$, $c_{I,I}<\alpha_{p,d}$, for some constant
$\alpha_{p,d}$. Thus, 
\begin{equation}
\mathrm{coeff}_{d}(\widetilde{f}_{n,p,d})^{2}\leq\alpha_{p,d}^{2}\sum_{I:\left|I\right|=d}\E_{\gamma_{p}^{n}}\left[\left\Vert x\right\Vert _{p}^{d}q_{I}(x)\right]^{2}.\label{eq:coefftildebound}
\end{equation}
We estimate $\E_{\gamma_{p}^{n}}\left[\left\Vert x\right\Vert _{p}^{d}q_{I}(x)\right]$
using an argument similar to that in Lemma \ref{lem:Fourier coefficients of p-norm}.
Write $x^{I}=x_{1}^{i_{1}}...x_{n}^{i_{n}}$, and $q_{m}(x_{u})=\sum_{j\leq m}a_{m,j}x_{u}^{j}$,
so that 
\[
q_{I}(x_{1},...,x_{n})=\sum_{0\leq j_{1}\leq i_{1}}...\sum_{0\leq j_{n}\leq i_{n}}a_{i_{1},j_{1}}...a_{i_{n},j_{n}}x_{1}^{j_{1}}\cdot...\cdot x_{n}^{j_{n}}.
\]
Now, by Lemma \ref{lem:properties of integral on L_p ball},
\begin{align*}
 & \E_{\gamma_{p}^{n}}\left[\left\Vert x\right\Vert _{p}^{d}q_{I}(x_{1},...,x_{n})\right]=\sum_{0\leq j_{1}\leq i_{1}}...\sum_{0\leq j_{n}\leq i_{n}}a_{i_{1},j_{1}}...a_{i_{n},j_{n}}\E_{\gamma_{p}^{n}}\left[\left\Vert x\right\Vert _{p}^{d}x_{1}^{j_{1}}\cdot...\cdot x_{n}^{j_{n}}\right]\\
= & \sum_{0\leq j_{1}\leq i_{1}}...\sum_{0\leq j_{n}\leq i_{n}}a_{i_{1},j_{1}}...a_{i_{n},j_{n}}\frac{\Gamma(\frac{n+j_{1}+...+j_{n}+d}{p})}{\Gamma(\frac{n+j_{1}+...+j_{n}}{p})}\E_{\gamma_{p}^{n}}\left[x_{1}^{j_{1}}\cdot...\cdot x_{n}^{j_{n}}\right].
\end{align*}
Using (\ref{eq:ratios of gamma}) with $t=(\frac{j_{1}+...+j_{n}+d}{p})$,
$s=(\frac{j_{1}+...+j_{n}}{p})$, and $x=\frac{n}{p}$, yields: 
\[
\frac{\Gamma(\frac{n+j_{1}+...+j_{n}+d}{p})}{\Gamma(\frac{n+j_{1}+...+j_{n}}{p})}\sim\sum_{k=0}^{\infty}\frac{(-1)^{k}B_{k}^{(\frac{d}{p}+1)}(\frac{j_{1}+...+j_{n}+d}{p})\cdot(\frac{d}{p})_{k}}{k!}\left(\frac{n}{p}\right)^{\frac{d}{p}-k},
\]
where $B_{k}^{(\alpha)}(y)$ is the generalized Bernoulli polynomial,
which in particular is a polynomial of degree $k$ in $(\frac{j_{1}+...+j_{n}+d}{p})$.
Hence 
\begin{align*}
 & \E_{\gamma_{p}^{n}}\left[\left\Vert x\right\Vert _{p}^{d}q_{I}(x_{1},...,x_{n})\right]\\
\sim & \sum_{k=0}^{\infty}\left(\frac{n}{p}\right)^{\frac{d}{p}-k}\left((-1)^{k}\frac{(\frac{d}{p})_{k}}{k!}\right)\cdot\sum_{0\leq j_{1}\leq i_{1}}...\sum_{0\leq j_{n}\leq i_{n}}a_{i_{1},j_{1}}...a_{i_{n},j_{n}}B_{k}^{(\frac{d}{p}+1)}\left(\frac{j_{1}+...+j_{n}+d}{p}\right)\E_{\gamma_{p}^{n}}\left[x_{1}^{j_{1}}\dots x_{n}^{j_{n}}\right].
\end{align*}
We claim that for every $k<\sum_{e=1}^{n}\left\lceil \frac{i_{e}}{p}\right\rceil $,
we have 
\begin{equation}
\sum_{0\leq j_{1}\leq i_{1}}...\sum_{0\leq j_{n}\leq i_{n}}a_{i_{1},j_{1}}...a_{i_{n},j_{n}}B_{k}^{(\frac{d}{p}+1)}\left(\frac{j_{1}+...+j_{n}+d}{p}\right)\E_{\gamma_{p}^{n}}\left[x_{1}^{j_{1}}\dots x_{n}^{j_{n}}\right]=0.\label{eq:lowdegvanish}
\end{equation}
Indeed, as $B_{k}$ is a degree $k$ polynomial, it is enough to show
\[
\sum_{0\leq j_{1}\leq i_{1}}...\sum_{0\leq j_{n}\leq i_{n}}a_{i_{1},j_{1}}...a_{i_{n},j_{n}}j_{1}^{u_{1}}...j_{n}^{u_{n}}\E_{\gamma_{p}^{n}}\left[x_{1}^{j_{1}}\dots x_{n}^{j_{n}}\right]=0,\text{ whenever }\sum_{e=1}^{n}u_{e}<\sum_{e=1}^{n}\left\lceil \frac{i_{e}}{p}\right\rceil .
\]
But, if $\sum_{e=1}^{n}u_{e}<\sum_{e=1}^{n}\left\lceil \frac{i_{e}}{p}\right\rceil $
then necessarily $u_{e}<\left\lceil \frac{i_{e}}{p}\right\rceil $
for some $e\in[n]$, so it will suffice to show, for any $m\in\N$,
\[
\E_{\gamma_{p}^{1}}\left[\sum_{j\leq m}a_{m,j}j^{u}x_{1}^{j}\right]=0\text{ whenever }u<\left\lceil \frac{m}{p}\right\rceil .\tag{\ensuremath{\star}}
\]
Integrating by parts, in a similar way to \eqref{eq:integration by parts},
we get 
\begin{align*}
\E_{\gamma_{p}^{1}}\left[\sum_{j\leq m}a_{m,j}j^{u}x_{1}^{j}\right] & =\E_{\gamma_{p}^{1}}\left[\left(x_{1}\frac{d}{dx_{1}}\right)^{u}q_{m}(x_{1})\right]=\frac{1}{\frac{2}{p}\Gamma(\frac{1}{p})}\int_{\R}\frac{d}{dx_{1}}\left(\left(x_{1}\frac{d}{dx_{1}}\right)^{u-1}q_{m}(x)\right)\cdot x_{1}e^{-\left|x_{1}\right|^{p}}dx_{1}\\
 & =-\frac{1}{\frac{2}{p}\Gamma(\frac{1}{p})}\int_{\R}\left(\left(x_{1}\frac{d}{dx_{1}}\right)^{u-1}q_{m}(x_{1})\right)\cdot\left(e^{-\left|x_{1}\right|^{p}}-p\left|x_{1}\right|^{p}e^{-\left|x_{1}\right|^{p}}\right)dx_{1}\\
 & =p\E_{\gamma_{p}^{1}}\left[\left(\left(x_{1}\frac{d}{dx_{1}}\right)^{u-1}q_{m}(x_{1})\right)x_{1}^{p}\right]-\E_{\gamma_{p}^{1}}\left[\left(x_{1}\frac{d}{dx_{1}}\right)^{u-1}q_{m}(x_{1})\right],
\end{align*}
where we used the fact that $\left|x_{1}\right|^{p}=x_{1}^{p}$ since $p$
is an even integer. Continuing by induction, $\E_{\gamma_{p}^{1}}\left[\sum_{j\leq m}a_{i,j}j^{u}x_{1}^{j}\right]$
reduces to a linear combination of terms of the form
\[
\left\{ \E_{\gamma_{p}^{1}}\left[q_{m}(x_{1})x_{1}^{pu'}\right]\right\} _{u'\leq u}.
\]
If $u<\left\lceil \frac{m}{p}\right\rceil $ then $up<m$, and so,
because $q_{m}$ is orthogonal to polynomials of degree smaller than
$m$, $\E_{\gamma_{p}^{1}}\left[q_{m}(x_{1})x_{1}^{pu'}\right]=0$ for every
$u'\leq u$. As a consequence, $\E_{\gamma_{p}^{1}}\left[\sum_{j\leq m}a_{m,j}j^{u}x_{1}^{j}\right]=0$,
and $(\star)$ is established. In particular, \eqref{eq:lowdegvanish}
holds as well.

Returning to the asymptotic expansion of $\E_{\gamma_{p}^{n}}\left[\left\Vert x\right\Vert _{p}^{d}q_{I}(x_{1},...,x_{n})\right]$,
we deduce 
\begin{equation}
\E_{\gamma_{p}^{n}}\left[\left\Vert x\right\Vert _{p}^{d}q_{I}(x_{1},...,x_{n})\right]=O_{p,d}\left(n^{\frac{d}{p}-\sum_{e=1}^{n}\left\lceil \frac{i_{e}}{p}\right\rceil }\right).\label{eq:contribution of q_I}
\end{equation}
Therefore, summing over all $I$ with $\left|I\right|=d$, and plugging
\eqref{eq:contribution of q_I} into \eqref{eq:coefftildebound} we
arrive at, 
\begin{equation}
\mathrm{coeff}_{d}(\widetilde{f}_{n,p,d})^{2}=O_{p,d}\left(\sum\limits _{|I|=d}n^{\frac{2d}{p}-2\left(\left\lceil \frac{i_{1}}{p}\right\rceil +...+\left\lceil \frac{i_{n}}{p}\right\rceil \right)}\right).\label{eq:coefftildefinalbound}
\end{equation}
For a multi-index $I$, set $|\mathrm{support}(I)|=\#\{i_{e}>0\}$,
corresponding to the number of different variables appearing in $q_{I}$.
For any $I$ with $|\mathrm{support}(I)|=\ell$,
\[
\sum_{e=1}^{n}\left\lceil \frac{i_{e}}{p}\right\rceil =\sum_{e:i_{e}>0}\left\lceil \frac{i_{e}}{p}\right\rceil \geq\max\left\{ \sum_{e:i_{e}>0}\frac{i_{e}}{p},\ell\right\} =\max\left\{ \frac{d}{p},\ell\right\} .
\]
We therefore get: 
\[
\sum\limits _{|I|=d,\ |\mathrm{support}(I)|=\ell}n^{\frac{2d}{p}-2\left(\left\lceil \frac{i_{1}}{p}\right\rceil +...+\left\lceil \frac{i_{n}}{p}\right\rceil \right)}=O_{d}(n^{\ell}\cdot n^{\frac{2d}{p}-\max\left\{ \frac{2d}{p},2\ell\right\} })=O_{d}(n^{\min\left\{ \ell,\frac{2d}{p}-\ell\right\} })=O_{d}(n^{\frac{d}{p}}).
\]
Applying this into \eqref{eq:coefftildefinalbound} we can finally
conclude $\mathrm{coeff}_{d}(\widetilde{f}_{n,p,d})^{2}=O_{p,d}\left(n^{\frac{d}{p}}\right)$.
\end{proof}
\subsection{Proof of the  main results for $\mu_{n,p}$}
We are now ready to prove Theorem \ref{thm:Main theorem L^p}. 

\begin{proof}[Proof of Theorem \ref{thm:Main theorem L^p}]
We begin with the case of $p=d$ and $p\in2\N$, which requires
us to show that the minimal variance is $\Theta_{p,d}(n^{-1}).$ This
case follows from a direct computation for the polynomial $f(x)=\frac{1}{\sqrt{n}}\|x\|_{p}^{p}$,
as in \cite[Example 3]{GM22}, showing that $\mathrm{Var}_{\mu_{n,p}}(f)=\Theta_{p}(n^{-1})$,
combining with the general characterization of minimal eigenvalues
in Theorem \ref{thm:Possible potential eigenvalues}(2) finishes this
case.

Thus, from now on we assume that $p\neq d$, and assume towards a
contradiction that 
\[
\underset{f\in\mathcal{P}_{d,\mathrm{unit}}(\R^{n})}{\min}\mathrm{Var}_{\mu_{n,p}}(f)=o_{p,d}(1),
\]
Again invoking Theorem \ref{thm:Possible potential eigenvalues}(2)
this assumption implies 
\[
\underset{f\in\mathcal{P}_{d,\mathrm{unit}}(\R^{n})}{\min}\mathrm{Var}_{\mu_{n,p}}(f)=\Theta_{p,d}(n^{-1}).
\]
Furthermore, by Lemma \ref{lem:approximate eigenvector}, if $f_{n,p,d}$
is as in the lemma, we also know that 
\[
\mathrm{Var}_{\mu_{n,p}}(f_{n,p,d})=\Theta_{p,d}(n^{\frac{2d}{p}-2}),\ \E_{\mu_{n,p}}\left[f_{n,p,d}\right]=\Theta_{p,d}(n^{\frac{d}{p}}),\ \mathrm{coeff}_{d}(f_{n,p,d})=\Theta_{p,d}(n^{\frac{d}{p}-\frac{1}{2}}),
\]
as well as $\mathrm{Var}_{\mu_{n,p}}(\frac{f_{n,p,d}}{\mathrm{coeff}_{d}(f_{n,p,d})})=\Theta_{p,d}(n^{-1}).$

Passing to $p$-Gaussian spaces, by Lemma \ref{lem:reduction to p_Gaussian},
and normalizing by $\mathrm{coeff}_{d}(f_{n,p,d})$, 
\begin{equation}
\E_{\gamma_{p}^{n}}\left[\left(f_{n,p,d}-\frac{\E_{\mu_{n,p}}\left[f_{n,p,d}\right]}{\E_{\mu_{n,p}}\left[\left\Vert x\right\Vert _{p}^{d}\right]}\left\Vert x\right\Vert _{p}^{d}\right)^{2}\right]=O_{p,d}(n^{\frac{2d}{p}-2}).\label{eq:1.8}
\end{equation}
Suppose for now that either $p>d$, or $1\leq p<d$ is not an even
integer. Applying Proposition \ref{prop:generalization of spectrum of p_Gaussian}
for $g=f_{n,p,d}-\frac{\E_{\mu_{n,p}}[f_{n,p,d}]}{\E_{\mu_{n,p}}\left[\left\Vert x\right\Vert _{p}^{d}\right]}\left\Vert x\right\Vert _{p}^{d}$,
and using Lemma \ref{lem:properties of integral on L_p ball} to estimate
$\E_{\mu_{n,p}}\left[\left\Vert x\right\Vert _{p}^{d}\right]$, we have 
\[
\E_{\gamma_{p}^{n}}\left[\left(f_{n,p,d}-\frac{\E_{\mu_{n,p}}\left[f_{n,p,d}\right]}{\E_{\mu_{n,p}}\left[\left\Vert x\right\Vert _{p}^{d}\right]}\left\Vert x\right\Vert _{p}^{d}\right)^{2}\right]\geq c_{p,d}\left(\frac{\E_{\mu_{n,p}}\left[f_{n,p,d}\right]}{\E_{\mu_{n,p}}\left[\left\Vert x\right\Vert _{p}^{d}\right]}\right)^{2}\cdot n^{\frac{2d}{p}-1}=\Theta_{p,d}(n^{\frac{2d}{p}-1}),
\]
which is a contradiction to \eqref{eq:1.8}. This proves the cases
where $p>d$ and where $1\leq p\leq d$ is not an even integer.

Finally, assume that $p<d$ is an even integer. Then by \eqref{eq:def of projection to polynomials},
we can replace $\|x\|_{p}^{d}$ by its orthogonal projection $\widetilde{f}_{n,p,d}$,
\[
\E_{\gamma_{p}^{n}}\left[\left(f_{n,p,d}-\frac{\E_{\mu_{n,p}}\left[f_{n,p,d}\right]}{\E_{\mu_{n,p}}\left[\left\Vert x\right\Vert _{p}^{d}\right]}\left\Vert x\right\Vert _{p}^{d}\right)^{2}\right]\geq\E_{\gamma_{p}^{n}}\left[\left(f_{n,p,d}-\frac{\E_{\mu_{n,p}}\left[f_{n,p,d}\right]}{\E_{\mu_{n,p}}\left[\left\Vert x\right\Vert _{p}^{d}\right]}\widetilde{f}_{n,p,d}\right)^{2}\right].
\]
On the other hand, by Lemma \ref{lem:spectrum for large degrees},
\[
\mathrm{coeff}_{d}\left(\frac{\E_{\mu_{n,p}}\left[f_{n,p,d}\right]}{\E_{\mu_{n,p}}\left[\left\Vert x\right\Vert _{p}^{d}\right]}\widetilde{f}_{n,p,d}\right)^{2}=\Theta_{p,d}(1)\cdot\mathrm{coeff}_{d}\left(\widetilde{f}_{n,p,d}\right)^{2}=O_{p,d}(n^{\frac{d}{p}}).
\]
However, recall that by our assumption (towards a contradiction),
$\mathrm{coeff}_{d}\left(f_{n,p,d}\right)^{2}=\Theta_{p,d}(n^{\frac{2d}{p}-1}).$
Now, if $d>p$, then $n^{\frac{2d}{p}-1}\gg n^{\frac{d}{p}}$ and
so, 
\[
\mathrm{coeff}_{d}\left(f_{n,p,d}-\frac{\E_{\mu_{n,p}}\left[f_{n,p,d}\right]}{\E_{\mu_{n,p}}\left[\left\Vert x\right\Vert _{p}^{d}\right]}\widetilde{f}_{n,p,d}\right)^{2}=\Theta_{p,d}(n^{\frac{2d}{p}-1}).
\]
Hence, by \cite[Theorem 1]{GM22}, which gives a general lower bound
for second moments of polynomials, we deduce: 
\[
\E_{\gamma_{p}^{n}}\left[\left(f_{n,p,d}-\frac{\E_{\mu_{n,p}}\left[f_{n,p,d}\right]}{\E_{\mu_{n,p}}\left[\left\Vert x\right\Vert _{p}^{d}\right]}\left\Vert x\right\Vert _{p}^{d}\right)^{2}\right]\geq\Theta_{p,d}(1)\cdot\mathrm{coeff}_{d}\left(f_{n,p,d}-\frac{\E_{\mu_{n,p}}\left[f_{n,p,d}\right]}{\E_{\mu_{n,p}}\left[\left\Vert x\right\Vert _{p}^{d}\right]}\widetilde{f}_{n,p,d}\right)^{2}\geq\Theta(n^{\frac{2d}{p}-1}),
\]
in contradiction to (\ref{eq:1.8}). This concludes the proof.
\end{proof}
Theorem \ref{thm:lpball} is now a consequence of Theorem \ref{thm:Main theorem L^p}.
\begin{proof}[Proof of Theorem \ref{thm:lpball}]
For each fixed $n_{0}\in\N$, $\inf\limits_{f\in\mathcal{P}_{d,\mathrm{unit}}\left(\R^{n_{0}}\right)}\mathrm{Var}_{\mu_{n_{0},p}}(f)>v_{p,d,n_{0}}>0$.
Hence, we may assume that $n\gg_{p,d}1$. When $p\notin2\N$
or when $p\neq d$, by Theorem \ref{thm:Main theorem L^p}, $\inf\limits_{f\in\mathcal{P}_{d,\mathrm{unit}}\left(\R^{n}\right)}\mathrm{Var}_{\mu_{n,p}}(f)=\Omega_{p,d}(1)$,
which is the required result.

Suppose, on the other hand, that $p=d\in2\N$. Let $\left\{ g_{i,n}\right\} _{i=1}^{M}$
be a basis of unit eigenvectors of $\mathcal{V}_{n,p,d}$ with eigenvalues
$\lambda_{M}\geq...\geq\lambda_{1}$. Theorem \ref{thm:Main theorem L^p}
implies that $\lambda_{1}=\Theta_{p}(n^{-1})$. Note that the second moment matrix $\mathcal{E}_{n,p,d}$ differs from the variance matrix $\mathcal{V}_{n,p,d}$ by a matrix of rank one, namely the expectation. Thus, as a consequence of the lower bound on second moment \eqref{eq:l2aprioribound}, there can be at
most a single one-dimensional eigenspace of eigenvalue $o_{p}(1)$, so $\lambda_{M}\geq...\geq\lambda_{2}=\Omega_{p}(1)$.
Since $\mathrm{Var}_{\mu_{n,p}}(\frac{1}{\sqrt{n}}\left\Vert x\right\Vert _{p}^{p})=\Theta_{p}(n^{-1})$,
it follows that
\[
\left\langle \frac{1}{\sqrt{n}}\left\Vert x\right\Vert _{p}^{p},g_{1,n}\right\rangle =1-o_{p}(1),
\]
Therefore if $\left|\langle f,\frac{1}{\sqrt{n}}\left\Vert x\right\Vert _{p}^{p}\rangle\right|\leq c_{p}<1$
then for $n\gg_{p}1$,
\[
\left|\langle f,g_{1,n}\rangle\right|=\left|\langle f,\frac{1}{\sqrt{n}}\left\Vert x\right\Vert _{p}^{p}\rangle+\langle f,\frac{1}{\sqrt{n}}\left\Vert x\right\Vert _{p}^{p}-g_{1,n}\rangle\right|\leq\left|\langle f,\frac{1}{\sqrt{n}}\left\Vert x\right\Vert _{p}^{p}\rangle\right|+o_{p}(1)\leq c'_{p}<1,
\]
and in particular, $\mathrm{coeff}_{d}(f-\langle f,g_{1,n}\rangle g_{1,n})\geq\sqrt{1-c'{}_{p}^{2}}$.
We conclude the proof as follows,
\[
\mathrm{Var}_{\mu_{n,p}}(f)\geq\mathrm{Var}_{\mu_{n,p}}(f-\langle f,g_{1,n}\rangle g_{1,n})\geq\lambda_{2}(1-c'{}_{p}^{2})>c''_{p}.\qedhere
\]
\end{proof}
\section{Preliminaries in representation theory}\label{sec:Preliminaries-in-representation}

Our goal in this section is to describe the irreducible representations
of the symmetric group $S_{n}$ and the group $H_{n}$, from Definition \ref{def:Hn}. We start by recollecting basic facts about
the representation theory of finite groups. For a detailed exposition,
see \cite{Ste12} and \cite{FH91}.

\subsection{Basic facts in representation theory }

Let $G$ be a finite group, and let $F$ be a field. A \emph{representation}
$(V,\pi)$ of $G$ is a homomorphism $\pi:G\rightarrow\mathrm{GL}(V)$,
where $V$ is an $F$-vector space. A \emph{$G$-morphism} $T:(V,\pi)\rightarrow(W,\tau)$
is a linear map $T:V\rightarrow W$ such that $T\circ\pi(g)=\tau(g)\circ T$
for every $g\in G$. If $T$ is a linear isomorphism, then we say
that $(V,\pi)$ and $(W,\tau)$ are \emph{isomorphic representations}.

A $G$-invariant subspace $W$ in $V$ is called a \emph{subrepresentation}.
A representation $(V,\pi)$ of $G$ is called \emph{irreducible},
if any subrepresentation $W\leq V$ is either $\{0\}$ or $V$. We
write $\mathrm{Rep}_{F}(G)$ (resp.~$\mathrm{Irr}_{F}(G)$) for the
set of equivalence classes of (resp.~irreducible) representations of
$G$. For every $(V_{1},\pi_{1}),(V_{2},\pi_{2})\in\Rep_{F}(G)$,
we denote by $\mathrm{Hom}_{G}(\pi_{1},\pi_{2})$ the space of $G$-morphisms
$T:(V_{1},\pi_{1})\rightarrow(V_{2},\pi_{2})$ and denote by $\langle\pi_{1},\pi_{2}\rangle_{G}:=\dim\mathrm{Hom}_{G}(\pi_{1},\pi_{2})$.

Given $(V,\pi)\in\mathrm{Rep}_{F}(G)$, one can define its \emph{character}
$\chi_{\pi}:G\rightarrow F$ by $\chi_{\pi}(g):=\tr(\pi(g))$. 
\begin{example}[{see e.g.~\cite[Proposition 3.12]{FH91}}]
\label{exa:standard rep}Let $G=S_{n}$. Then $S_{n}$ acts on the
set $X:=\{1,...,n\}$ by permutations. If $F[X]=F^{|X|}$ is the vector
space of functions on $X$, this induces a representation $\pi:S_{n}\rightarrow\mathrm{GL}(F[X])$
by 
\begin{equation}
\left(\pi(\sigma).f\right)(i):=f(\sigma^{-1}(i)).\label{eq:standard representation}
\end{equation}
The representation $(F[X],\pi)$ is not irreducible. Indeed, $F[X]=F_{\mathrm{const}}[X]\oplus F_{0}[X]$,
where $F_{\mathrm{const}}[X]$ is the subspace of constant functions
on $X$, and $F_{0}[X]$ is the subspace of $f\in F[X]$ with $\sum_{i=1}^{n}f(i)=0$.
Denoting $\pi_{\mathrm{std}}:=\pi|_{F_{0}[X]}$ for $\pi|_{F_{0}[X]}:S_{n}\rightarrow\mathrm{GL}(F_{0}[X])$,
then $(F_{0}[X],\pi_{\mathrm{std}})$ is called the \emph{standard
(irreducible) representation} of $S_{n}$. Its character is $\chi_{\mathrm{std}}(\sigma):=\mathrm{fix}(\sigma)-1$,
where $\mathrm{fix}(\sigma)$ is the number of elements in $\{1,...,n\}$
fixed by $\sigma$. 
\end{example}

\begin{defn}
~\label{def:isotypic}$(V_{\pi},\pi)\in\Rep(G)$ is called\emph{
isotypic} if it is a direct sum of isomorphic irreducible representations,
that is, if $V_{\pi}=V_{\tau}^{\oplus a_{\tau}}$ for $(V_{\tau},\tau)\in\Irr(G)$
and $a_{\tau}\in\N$. In this case we say that $V_{\pi}$ is \emph{$\tau$-isotypic}. 
\end{defn}

We now specialize to the case that $F\in\left\{ \R,\C\right\} $. 
\begin{thm}
\label{thm:Maschkes}Let $G$ be a finite group, let $F\in\left\{ \R,\C\right\} $
and let $(V,\pi)\in\Rep_{F}(G)$ be a finite-dimensional representation.
Then: 
\begin{enumerate}
\item There exists a \textbf{unique} set of subrepresentations $W_{\tau}$
of $V$, indexed by $\tau\in\Irr_{F}(G)$, such that $W_{\tau}$ is
$\tau$-isotypic and: 
\[
V=\bigoplus_{\tau\in\Irr_{F}(G)}W_{\tau}.
\]
The subspace $W_{\tau}$ is called the \textbf{$\tau$-isotypic component}
of $V$. 
\item For every subrepresentation $V'\subseteq V$, we have $V'=\bigoplus_{\tau\in\Irr_{F}(G)}W_{\tau}\cap V'$. 
\item If $T\in\mathrm{Hom}_{G}(\pi,\pi)$, then $T(W_{\tau})\subseteq W_{\tau}$,
with equality if $T$ is an isomorphism. 
\end{enumerate}
\end{thm}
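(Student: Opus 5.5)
The plan is to derive all three items from complete reducibility (Maschke) together with Schur's lemma, using the isotypic projectors built from characters. Since $F\in\{\R,\C\}$ has characteristic zero, averaging any inner product over $G$ gives a $G$-invariant inner product on $V$. Orthogonal complements of subrepresentations are then again subrepresentations. By induction on $\dim V$, $V$ splits as a direct sum of irreducibles $V=\bigoplus_j U_j$. Setting $W_\tau$ to be the sum of those $U_j$ isomorphic to $\tau$ gives existence in (1).

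For uniqueness, and for (2) and (3), I will use one basic fact: if $U$ is irreducible and $T:U\to V$ is a $G$-morphism, then $T(U)\subseteq W_{\tau}$, where $\tau\cong U$. To prove it, compose $T$ with the projection $V\to U_j$ along the decomposition. Each such composition is a $G$-morphism between irreducibles, so by Schur's lemma it is zero unless $U_j\cong U$. Hence the image of $T$ lies in $W_\tau$.

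The three items then follow.
\begin{enumerate}
\item Uniqueness. Suppose $V=\bigoplus_\tau W'_\tau$ is a second such decomposition. Each $W'_\tau$ is a sum of copies of $\tau$, so applying the fact to the inclusion maps gives $W'_\tau\subseteq W_\tau$. Comparing dimensions, which sum to $\dim V$ on both sides, forces $W'_\tau=W_\tau$.
\item Subrepresentations. Decompose $V'$ into irreducibles by the same Maschke argument. Applying the fact to the inclusion $V'\hookrightarrow V$ puts each $\tau$-summand of $V'$ inside $W_\tau\cap V'$. Hence $V'=\sum_\tau W_\tau\cap V'$, and this sum is direct because the $W_\tau$ are independent.
\item Morphisms. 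Let $T\in\mathrm{Hom}_G(\pi,\pi)$. Restricting $T$ to each irreducible summand of $W_\tau$ and applying the fact gives $T(W_\tau)\subseteq W_\tau$. If $T$ is an isomorphism it is injective, so $\dim T(W_\tau)=\dim W_\tau$, and therefore $T(W_\tau)=W_\tau$.
\end{enumerate}

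The only delicate point is Schur's lemma over $\R$. Here $\mathrm{End}_G(\tau)$ need not be $\R$; it may be $\C$ or the quaternions $\mathbb{H}$. However, the argument only needs the weaker statement that a nonzero $G$-morphism between irreducibles is an isomorphism. That holds over any field, since the kernel and image are subrepresentations. So I expect no real obstacle. Alternatively, one can cite \cite{FH91,Ste12}, as the paper does, since this is standard.
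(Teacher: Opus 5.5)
Your argument is correct and complete. The paper does not prove this theorem; it quotes it as standard background from \cite{Ste12} and \cite{FH91}, and your route---an averaged invariant inner product for complete reducibility, then the weak form of Schur's lemma (which indeed holds over $\R$) to force every $G$-morphism out of a copy of $\tau$ into $W_\tau$---is the standard textbook proof. The only loose end is cosmetic: your opening sentence promises isotypic projectors built from characters, which you never use. That is just as well, since over $\R$ the operator $\frac{\dim\tau}{|G|}\sum_{g\in G}\chi_\tau(g^{-1})\pi(g)$ equals $\langle\tau,\tau\rangle_G=\dim_{\R}\mathrm{End}_G(\tau)\in\{1,2,4\}$ times the projector onto $W_\tau$, so it would need renormalizing when $\tau$ is of complex or quaternionic type.
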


The following construction will be useful to describe the representations
of $S_{n}$ and $B_{n}$. 
\begin{defn}
\label{def:induced representations}Let $G$ be a finite group, let
$H\leq G$ be a subgroup, and let $(V_{\tau},\tau)\in\Rep_{F}(H)$
with character $\chi_{\tau}$. We define the \emph{induction of $\pi$
from $H$ to $G$}, as $\left(\Ind_{H}^{G}(V_{\tau}),\Ind_{H}^{G}(\tau)\right)\in\Rep_{F}(G)$,
where 
\begin{equation}
\Ind_{H}^{G}(V_{\tau}):=\left\{ f:G\rightarrow V_{\tau}:f(gh)=\tau(h^{-1}).f(g)\right\} ,\label{eq:definition of induction}
\end{equation}
and where $G$ acts by left translations, $\left(\Ind_{H}^{G}(\tau)(x)\right).f(g)=f(x^{-1}g)$.
The character of $\Ind_{H}^{G}(\tau)$ is denoted by $\Ind_{H}^{G}(\chi_{\tau})$.
Moreover, $\dim\Ind_{H}^{G}(V_{\tau})=\dim V_{\tau}\cdot\left|G:H\right|$. 
\end{defn}

\begin{thm}[{{Frobenius reciprocity, \cite[Corollary 3.20]{FH91}}}]
\label{thm:Frobenius reciprocity}Let $G$ be a finite group and
$H<G$ a subgroup. Then for every $\pi\in\Rep_{F}(G)$ and $\tau\in\Rep_{F}(H)$,
\begin{equation}
\langle\Ind_{H}^{G}\tau,\pi\rangle_{G}=\langle\pi,\Ind_{H}^{G}\tau\rangle_{G}=\langle\pi|_{H},\tau\rangle_{H}=\langle\tau,\pi|_{H}\rangle_{H}.\label{eq:Frobenous reciprocity}
\end{equation}
\end{thm}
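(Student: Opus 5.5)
The plan is to prove the middle equality $\langle\pi,\Ind_{H}^{G}\tau\rangle_{G}=\langle\pi|_{H},\tau\rangle_{H}$ by an explicit linear isomorphism of Hom-spaces. The two outer equalities will then follow from a general symmetry $\dim\mathrm{Hom}_{G}(\pi_{1},\pi_{2})=\dim\mathrm{Hom}_{G}(\pi_{2},\pi_{1})$, valid for any finite group over $F\in\{\R,\C\}$. That symmetry is used once for $G$ and once for $H$.

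\textbf{Step 1 (the isomorphism).} Let $\pi$ act on $V$. Define
\[
\Phi:\mathrm{Hom}_{G}(\pi,\Ind_{H}^{G}\tau)\to\mathrm{Hom}_{H}(\pi|_{H},\tau),\qquad \Phi(T)(v):=(Tv)(e),
\]
i.e.\ evaluate the function $Tv\in\Ind_{H}^{G}(V_{\tau})$ at the identity. We first check that $\Phi(T)$ is $H$-equivariant. For $h\in H$, $G$-equivariance of $T$ and the left-translation action give $(T\pi(h)v)(e)=(Tv)(h^{-1})$. The defining relation in \eqref{eq:definition of induction}, with $g=e$ and $h^{-1}$ in place of $h$, gives $(Tv)(h^{-1})=\tau(h)\,(Tv)(e)$, as required.

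For the inverse we set
\[
\Psi(S)(v)(g):=S(\pi(g^{-1})v).
\]
We must verify three things:
\begin{itemize}
\item $\Psi(S)(v)$ satisfies $f(gh)=\tau(h^{-1})f(g)$; this uses the $H$-equivariance of $S$.
\item $\Psi(S)$ intertwines $\pi$ with left translation; this is immediate.
\item $\Phi\Psi=\mathrm{id}$ and $\Psi\Phi=\mathrm{id}$; the second identity uses $(Tv)(g)=(T\pi(g^{-1})v)(e)$, which is again $G$-equivariance.
\end{itemize}
All of this is routine bookkeeping with the conventions of Definition \ref{def:induced representations}. Taking dimensions gives $\langle\pi,\Ind_{H}^{G}\tau\rangle_{G}=\langle\pi|_{H},\tau\rangle_{H}$.

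\textbf{Step 2 (symmetry of $\langle\cdot,\cdot\rangle$).} Average an arbitrary inner product over the group to obtain $G$-invariant inner products on $V_{1}$ and $V_{2}$. Then each $\pi_{i}(g)$ is unitary (orthogonal when $F=\R$), so $\pi_{i}(g)^{*}=\pi_{i}(g^{-1})$. For $T\in\mathrm{Hom}_{G}(\pi_{1},\pi_{2})$, taking adjoints of $T\pi_{1}(g)=\pi_{2}(g)T$ gives $\pi_{1}(g^{-1})T^{*}=T^{*}\pi_{2}(g^{-1})$. Hence $T\mapsto T^{*}$ is an injective map $\mathrm{Hom}_{G}(\pi_{1},\pi_{2})\to\mathrm{Hom}_{G}(\pi_{2},\pi_{1})$, and it is its own inverse.

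When $F=\R$ this map is linear, so the two spaces have the same dimension. When $F=\C$ it is only conjugate-linear, so it first gives equality of real dimensions, and equality of complex dimensions follows from that. Applying this with the group $G$ gives $\langle\Ind_{H}^{G}\tau,\pi\rangle_{G}=\langle\pi,\Ind_{H}^{G}\tau\rangle_{G}$. Applying it with the group $H$ gives $\langle\pi|_{H},\tau\rangle_{H}=\langle\tau,\pi|_{H}\rangle_{H}$.

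\textbf{Expected obstacle.} The only real care needed is in Step 2 when $F=\R$, where irreducibles may have endomorphism algebras $\R$, $\C$ or $\mathbb{H}$. An argument that counts multiplicities via Theorem \ref{thm:Maschkes} would have to track these algebras, whereas the adjoint argument sidesteps the issue entirely; this is why I would use it. The sign and inverse conventions in Step 1 are the other place where a slip could occur, so I would fix the action $(x.f)(g)=f(x^{-1}g)$ throughout and check each identity against it.
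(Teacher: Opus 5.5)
Your proof is correct. The evaluation map $\Phi(T)(v)=(Tv)(e)$ and its inverse $\Psi(S)(v)(g)=S(\pi(g^{-1})v)$ check out against the paper's conventions $f(gh)=\tau(h^{-1})f(g)$ and $(x.f)(g)=f(x^{-1}g)$. The adjoint argument also correctly gives $\dim\mathrm{Hom}_G(\pi_1,\pi_2)=\dim\mathrm{Hom}_G(\pi_2,\pi_1)$ over both $\R$ and $\C$.

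The paper gives no proof of its own; it only cites Fulton--Harris, whose argument runs in the other direction. There $\Ind$ is characterized by a universal property, which gives $\mathrm{Hom}_G(\Ind_H^G\tau,\pi)\cong\mathrm{Hom}_H(\tau,\pi|_H)$ directly, with induction as a left adjoint. The identity is then phrased through inner products of complex characters. Symmetry in the two arguments is automatic there, because those inner products equal dimensions of Hom-spaces and are therefore real.

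Your route proves the opposite adjunction $\mathrm{Hom}_G(\pi,\Ind_H^G\tau)\cong\mathrm{Hom}_H(\pi|_H,\tau)$. This is the adjunction native to the function-space model of Definition~\ref{def:induced representations}. You then swap the arguments by taking adjoints with respect to invariant inner products. What this buys is a self-contained proof, matched to the paper's own definition, that works verbatim over $\R$ for an arbitrary finite group. It needs no character theory, no complexification, and no bookkeeping of real division algebras.

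What it gives up is explicitness in the direction the paper actually uses. In Propositions~\ref{prop: quadratic case} and~\ref{prop:cubic case}, reciprocity is invoked as: an $H$-equivariant vector in $\pi$ yields a nonzero $G$-map $\Ind_H^G\tau\to\pi$. The universal property provides that map explicitly as an extension, whereas your version reaches it only at the level of dimensions after Step~2. That is still fully sufficient, since only nonvanishing is needed there.
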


\subsection{Representation theory of $S_{n}$ and $H_{n}$}

In general, finite groups might have different representation theory
over $\R$ and $\C$. However, for a finite Coxeter group like $S_{n}$
and $H_{n}$, every representation can be realized over $\R$ (such
groups are called \emph{totally orthogonal}, see e.g.~\cite{Ben71}).
In particular, if $G\in\left\{ S_{n},H_{n}\right\} $, then $\Irr_{\R}(G)$
is naturally identified with $\Irr_{\C}(G)$, and every $(V,\pi)\in\Rep_{\R}(G)$
is uniquely determined by its character $\chi_{\pi}$. Thus, we may
drop the subscripts, and keep the notation $\mathrm{Rep}(G)$ and
$\Irr(G)$ both for representations and for characters. We now turn
to describe $\Irr(G)$. 

A partition $\lambda=(\lambda_{1},..,\lambda_{\ell})\vdash n$, with
$\lambda_{\ell}>0$ is graphically encoded by a \emph{Young diagram},
which is a finite collection of boxes arranged in $\ell$ left-justified
rows, where the $j$-th row has $\lambda_{j}$ boxes. Each $\lambda\vdash n$
defines a unique \emph{$\lambda$-tableau} $T_{\lambda}$, by placing
the numbers $1,...,\lambda_{1}$ in the first row of the Young diagram
for $\lambda$, ordered from left to right, and similarly, the numbers
$\lambda_{1}+1,...,\lambda_1 + \lambda_{2}$ are placed in the second row, etc. 
\begin{defn}
\label{def:Young subgroups}Let $\lambda=(\lambda_{1},...,\lambda_{\ell})\vdash n$
with $\lambda_{\ell}>0$. The \emph{row stabilizer} $R_{\lambda}$
of $\lambda$ is the subgroup of $S_{n}$ preserving the rows of $T_{\lambda}$,
i.e.~
\[
R_{\lambda}:=S_{\left\{ 1,...,\lambda_{1}\right\} }\times S_{\left\{ \lambda_{1}+1,...,\lambda_1+\lambda_{2}\right\} }...\times S_{\left\{ n-\lambda_{\ell}+1,...,n\right\} }\leq S_{n}.
\]
The \emph{column stabilizer} $C_{\lambda}$ of $\lambda$ is the subgroup
of $S_{n}$ preserving the columns of $T_{\lambda}$. 
\end{defn}

\begin{example}
If $\lambda=(5,4,2,1)\vdash12$, then $T_{\lambda}$ is given by\[
\begin{ytableau}
1 & 2 & 3 & 4 & 5 \\
6 & 7 & 8 & 9 \\
10 & 11 \\
12 \\
\end{ytableau}
\]Moreover, the row and column stabilizers of $\lambda$ are given by:
\begin{align*}
R_{\lambda} & \simeq S_{\left\{ 1,2,3,4,5\right\} }\times S_{\left\{ 6,7,8,9\right\} }\times S_{\left\{ 10,11\right\} }\times S_{\left\{ 12\right\} }\leq S_{12}.\\
C_{\lambda} & \simeq S_{\left\{ 1,6,10,12\right\} }\times S_{\left\{ 2,7,11\right\} }\times S_{\left\{ 3,8\right\} }\times S_{\left\{ 4,9\right\} }\times S_{\{5\}}\leq S_{12}.
\end{align*}
\end{example}

We denote by $\mathrm{triv}:S_{n}\rightarrow\R$ the trivial representation
of $S_{n}$, and by $\sgn:S_{n}\rightarrow\R$ the sign representation,
taking $\sigma$ to its sign as a permutation.
\begin{fact}[{{cf.~\cite[Section 10.2]{Ste12}}}]
To each $\lambda\vdash n$, one can associate the \textbf{Specht
representation} $(V_{\lambda},\pi_{\lambda})\in\Irr(S_{n})$, with
character $\chi_{\lambda}$. It is the unique subrepresentation of
both $\Ind_{C_{\lambda}}^{S_{n}}\sgn$ and $\Ind_{R_{\lambda}}^{S_{n}}\mathrm{triv}$. This construction gives rise to all irreducible characters of $S_{n}$. I.e.:
\[
\Irr(S_{n})=\left\{ \chi_{\lambda}\right\} _{\lambda\vdash n}.
\]
\end{fact}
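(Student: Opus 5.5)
The statement just before the cut is the Fact about Specht representations: for each $\lambda\vdash n$ there is an irreducible $V_\lambda$, it is the unique common irreducible constituent of $\Ind_{C_\lambda}^{S_n}\sgn$ and $\Ind_{R_\lambda}^{S_n}\mathrm{triv}$, and the $V_\lambda$ exhaust $\Irr(S_n)$. I would follow the classical Young symmetrizer route and work entirely inside the group algebra $F[S_n]$.

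First I would set up the symmetrizers. Put $a_\lambda=\sum_{r\in R_\lambda} r$ and $b_\lambda=\sum_{c\in C_\lambda}\sgn(c)\,c$. Then the left ideals $F[S_n]a_\lambda$ and $F[S_n]b_\lambda$ are isomorphic to $\Ind_{R_\lambda}^{S_n}\mathrm{triv}$ and $\Ind_{C_\lambda}^{S_n}\sgn$ respectively, by the identification in Definition \ref{def:induced representations}. Next I would prove the combinatorial lemma. If $g\in S_n$ is not of the form $rc$ with $r\in R_\lambda$, $c\in C_\lambda$, then some transposition $t\in R_\lambda$ has $g^{-1}tg\in C_\lambda$. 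The proof uses pigeonhole: two entries in the same row of $T_\lambda$ must lie in the same column of $gT_\lambda$. The consequence is $a_\lambda\, x\, b_\lambda\in F\cdot a_\lambda b_\lambda$ for every $x\in F[S_n]$, and $c_\lambda:=a_\lambda b_\lambda\neq0$, since the identity appears with coefficient $1$ because $R_\lambda\cap C_\lambda=\{e\}$.

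From this I would deduce irreducibility and uniqueness. Set $V_\lambda=F[S_n]c_\lambda$. The identity $c_\lambda F[S_n]c_\lambda\subseteq Fc_\lambda$ gives $c_\lambda^2=n_\lambda c_\lambda$. A trace computation on right multiplication gives $n_\lambda=n!/\dim V_\lambda\neq0$. Then $\mathrm{End}_{S_n}(V_\lambda)\cong c_\lambda F[S_n]c_\lambda$ is one-dimensional, so by Maschke (Theorem \ref{thm:Maschkes}) $V_\lambda$ is irreducible. For the uniqueness claim I would use Frobenius reciprocity (Theorem \ref{thm:Frobenius reciprocity}). We have $\langle \Ind_{R_\lambda}\mathrm{triv},\Ind_{C_\lambda}\sgn\rangle=\dim a_\lambda F[S_n] b_\lambda=1$, so the two induced representations share exactly one irreducible constituent. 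It appears with multiplicity one in each, and it is $V_\lambda$, because $V_\lambda=F[S_n]a_\lambda b_\lambda$ is a quotient of, equivalently embeds in, both.

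For exhaustion I would order partitions lexicographically. If $\lambda>\mu$, the same pigeonhole lemma shows $a_\lambda F[S_n] b_\mu=0$, so $V_\lambda\not\cong V_\mu$. The number of partitions of $n$ equals the number of conjugacy classes (cycle types), and that equals $|\Irr(S_n)|$, so the $V_\lambda$ give all irreducibles. The main obstacle is the pigeonhole lemma, including the $\lambda>\mu$ variant. It needs careful bookkeeping with tableaux: one takes the first row where the shapes differ and compares rows of $T_\lambda$ against columns of $gT_\mu$. Everything else is formal.

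Since this is a textbook fact, the paper itself will likely just cite \cite[Section 10.2]{Ste12} rather than prove it.
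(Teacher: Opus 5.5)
Your proposal is correct, and you predicted the paper's treatment: the paper gives no argument of its own and simply cites \cite[Section 10.2]{Ste12}. What you describe is the Young-symmetrizer proof of \cite[Lecture~4]{FH91}, and each step holds:
the elements $x\in F[S_n]$ with $xr=x$ for all $r\in R_\lambda$ (resp.\ $xc=\sgn(c)x$ for all $c\in C_\lambda$) are exactly $F[S_n]a_\lambda$ (resp.\ $F[S_n]b_\lambda$);
$a_\lambda F[S_n]b_\lambda=Fc_\lambda$;
the trace of right multiplication by $c_\lambda$ gives $n_\lambda\dim V_\lambda=n!$;
$\mathrm{End}_{S_n}(V_\lambda)\cong(c_\lambda F[S_n]c_\lambda)^{\mathrm{op}}$ is one-dimensional;
and $\dim\mathrm{Hom}_{S_n}(\Ind_{R_\lambda}^{S_n}\mathrm{triv},\Ind_{C_\lambda}^{S_n}\sgn)=\dim(F[S_n]b_\lambda)^{R_\lambda}=\dim a_\lambda F[S_n]b_\lambda=1$.
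This last count forces a single common irreducible constituent, with multiplicity one on each side. That is the right reading of ``the unique subrepresentation of both''.

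The cited source takes a different route, following Sagan and James.
It works inside the permutation representation $M^\lambda\cong\Ind_{R_\lambda}^{S_n}\mathrm{triv}$ on tabloids.
It defines $V_\lambda$ as the span of the polytabloids $e_t=b_t\{t\}$, where $b_t$ is the signed column sum of the tableau $t$ (your $b_\lambda$ when $t=T_\lambda$).
It proves irreducibility by James's submodule theorem: every subrepresentation of $M^\lambda$ either contains $V_\lambda$ or lies in $V_\lambda^{\perp}$, using the invariant inner product and $b_tM^\lambda\subseteq Fe_t$.
It separates different shapes with the dominance order rather than the lexicographic one.
Both routes rest on the same combinatorial lemma, that two entries in one row of a tableau share a column of another.

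Your route buys exactly the formulation used in the Fact. The multiplicity-one, unique-common-constituent statement comes out of the single computation $\dim a_\lambda F[S_n]b_\lambda=1$. With polytabloids, the link to $\Ind_{C_\lambda}^{S_n}\sgn$ needs an extra remark: $V_\lambda=F[S_n]b_t\{t\}$ is a quotient of $F[S_n]b_t$. In return, the polytabloid route gives an explicit model of $V_\lambda$ inside $M^\lambda$ and leads to the standard-polytabloid basis. Its key step, the submodule theorem, also works over any field, while your argument divides by $n_\lambda$; this is harmless for $F\in\{\R,\C\}$.

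Two of your steps deserve one more line each.
First, $a_\lambda F[S_n]b_\mu=0$ gives $V_\lambda\not\cong V_\mu$ because $V_\lambda$ is a quotient of $F[S_n]a_\lambda$ and $V_\mu\subseteq F[S_n]b_\mu$. Hence $\mathrm{Hom}_{S_n}(V_\lambda,V_\mu)$ embeds into $\mathrm{Hom}_{S_n}(F[S_n]a_\lambda,F[S_n]b_\mu)\cong a_\lambda F[S_n]b_\mu=0$.
Second, the equality of $|\Irr(S_n)|$ with the number of conjugacy classes is a statement over $\C$. It transfers to $\R$ because a one-dimensional endomorphism algebra survives extension of scalars, so each $V_\lambda$ is absolutely irreducible.
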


\begin{example}
~\label{exa:few examples} 
\begin{enumerate}
\item The partition $(n)\vdash n$ gives $\chi_{(n)}=\mathrm{triv}$. Indeed,
$R_{(n)}=S_{n}$ so $\Ind_{R_{(n)}}^{S_{n}}\mathrm{triv}=\mathrm{triv}$. 
\item The partition $(1,1,...,1)\vdash n$ gives $\chi_{(1,...,1)}=\sgn$.
Indeed, $\Ind_{C_{(1,...,1)}}^{S_{n}}\sgn=\sgn$. 
\item The partition $(n-1,1)\vdash n$ induces the character $\chi_{(n-1,1)}$
of the standard representation $(\R_{0}[X],\pi_{\mathrm{std}})$ defined
in Example \ref{exa:standard rep}. Note that $R_{(n-1,1)}=S_{n-1}<S_{n}$
and $\R[X]\simeq\Ind_{S_{n-1}}^{S_{n}}\mathrm{triv}$. 
\end{enumerate}
\end{example}

The representation theory of $H_{n}$ is discussed in \cite{JK81},
\cite[Chapter 1, Appendix B]{Mac95} and \cite{GK78}. It turns out that
\[
\Irr(H_{n})=\left\{ \chi_{(\mu,\nu)}:\mu,\nu\text{ are partitions, }\left|\mu\right|+\left|\nu\right|=n\right\} .
\]
Explicitly, writing $H_{j,n-j}:=\left(S_{j}\times S_{n-j}\right)\rtimes(\Z/2\Z)^{n}$,
$\left|\mu\right|=j$ and $\left|\nu\right|=n-j$, 
\begin{equation}
\chi_{(\mu,\nu)}:=\mathrm{Ind}_{H_{j,n-j}}^{H_{n}}((\chi_{\mu}\otimes\chi_{\nu})\otimes\varepsilon_{\{1,...,j\}}),\label{eq:(reps of B_n)}
\end{equation}
where $\varepsilon_{j}:(\Z/2\Z)^{n}\rightarrow\R$ is defined by $\varepsilon_{j}(a_{1},...,a_{n})=\begin{cases}
1 & \text{if }a_{j}=0\\
-1 & \text{if }a_{j}=1
\end{cases}$, and $\varepsilon_{J}:=\prod_{j\in J}\varepsilon_{j}$. Any $\chi_{\lambda}\in\Irr(S_{n})$
can be pulled back to $\chi_{(0,\lambda)}\in\Irr(H_{n})$ via the
projection map $H_{n}\rightarrow S_{n}$. We denote by $\left(V_{(\mu,\nu)},\pi_{(\mu,\nu)}\right)$
the representation with character $\chi_{(\mu,\nu)}$. 
\begin{example}[{{\cite[p.8]{GK78}}}]
\label{exa:standard action}Let $\mu=(1)$ and $\nu=(n-1)$. Then
\[
\chi_{((1),(n-1))}=\mathrm{Ind}_{H_{1,n-1}}^{B_{n}}(\left(\mathrm{triv}_{S_{1}}\otimes\mathrm{triv}_{S_{n-1}}\right)\otimes\varepsilon_{\left\{ 1\right\} })\in\Irr(H_{n}),
\]
is the character of the standard representation of $H_{n}$ on $\text{\ensuremath{\R}}^{n}$,
defined by $(\varepsilon,\sigma).(a_{1},...,a_{n}):=(\varepsilon_{1}a_{\sigma(1)},...,\varepsilon_{n}a_{\sigma(n)})$. 
\end{example}

\section{Spectrum of $H_{n}$-invariant log-concave isotropic measures }\label{sec:Spectrum-of-unconditional,}

A measure $\eta$ on $\R^{n}$ is called \emph{unconditional} if $\eta$
is invariant under $x_{i}\rightarrow-x_{i}$ for all $i$. $\eta$
is called \emph{$S_{n}$-invariant} if $\sigma_{*}\eta=\eta$ for
all coordinate permutations $\sigma:\R^{n}\rightarrow\R^{n}$. If
$\eta$ is both unconditional and $S_{n}$-invariant, then it is \emph{$H_{n}$-invariant.}
Given $\eta$ as above, we recall the matrices $\mathcal{E}:=\left\{ \mathcal{E}_{I,J}\right\} _{\left|I\right|,\left|J\right|=d}$
and $\mathcal{V}=\left\{ \mathcal{V}_{I,J}\right\} _{\left|I\right|,\left|J\right|=d}$
from $\mathsection$\ref{subsec:On-the-variance spectrum}, where
\[
\mathcal{E}_{I,J}=\mathbb{E}_{\eta}\left[x^{I+J}\right]\text{ and }\mathcal{V}_{I,J}=\mathbb{E}_{\eta}\left[x^{I+J}\right]-\mathbb{E}_{\eta}\left[x^{I}\right]\mathbb{E}_{\eta}\left[x^{J}\right].
\]
In this section we compute the spectrum of $\mathcal{E}$ and $\mathcal{V}$
for $H_{n}$-invariant measures. Note that $\mathcal{P}_{d}(\R^{n})\simeq\mathrm{Sym}^{d}(\R^{n})$
is a representation of $H_{n}$ of dimension $\binom{n+d-1}{d}$.
By Theorem \ref{thm:Maschkes} we can write: 
\[
\mathcal{P}_{d}(\R^{n})=\bigoplus_{(\mu,\nu):\left|\mu\right|+\left|\nu\right|=n}W_{(\mu,\nu)},
\]
where $W_{(\mu,\nu)}$ is the $\chi_{(\mu,\nu)}$-isotypic component
of $\mathcal{P}_{d}(\R^{n})$, for $\chi_{(\mu,\nu)}\in\Irr(H_{n})$. 
\begin{lem}
\label{lem:isotypic components}Let $\mathcal{C}\in\left\{ \mathcal{E},\mathcal{V}\right\} $.
Then: 
\begin{enumerate}
\item The linear map $\mathcal{C}:\mathcal{P}_{d}(\R^{n})\rightarrow\mathcal{P}_{d}(\R^{n})$
is a morphism of $H_{n}$-representations. 
\item For every ordered pair $(\mu,\nu)$ of partitions summing to $n$,
$\mathcal{C}(W_{(\mu,\nu)})\subseteq W_{(\mu,\nu)}$. 
\item For any $\lambda\geq0$, the $\mathcal{C}$-eigenspace $V_{\lambda}\subseteq\mathcal{P}_{d}(\R^{n})$
with eigenvalue $\lambda$ is a direct sum of irreducible representations
of $H_{n}$. 
\end{enumerate}
\end{lem}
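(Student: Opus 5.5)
The plan is to verify item (1) directly, at the level of quadratic forms, and then derive items (2) and (3) from Theorem \ref{thm:Maschkes}. To make \(\mathcal{C}\) a linear map on \(\mathcal{P}_{d}(\R^{n})\), we identify it with the operator that is self-adjoint with respect to \(\langle\cdot,\cdot\rangle\) and satisfies
\[
\langle\mathcal{E}f,g\rangle=\E_{\eta}[fg],\qquad\langle\mathcal{V}f,g\rangle=\mathrm{Cov}_{\eta}(f,g),
\]
as in the computation preceding Lemma \ref{lem:The-minimal-eigenvalue}. The group acts by \(g\mapsto(\varepsilon,\sigma).f:=f\circ(\varepsilon,\sigma)^{-1}\).

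First we check that this action is orthogonal with respect to \(\langle\cdot,\cdot\rangle\). A signed permutation sends a monomial \(x^{I}\) to \(\pm x^{\sigma(I)}\), so it permutes the orthonormal basis \(\{x^{I}\}\) up to signs. Hence it preserves \(\langle\cdot,\cdot\rangle\).

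Next we use the \(H_{n}\)-invariance of \(\eta\). For every \(h\in H_{n}\) we have \(\E_{\eta}[(h.f)(h.g)]=\E_{\eta}[fg]\) and \(\E_{\eta}[h.f]=\E_{\eta}[f]\), so both bilinear forms are \(H_{n}\)-invariant. Combining this with orthogonality gives
\[
\langle\mathcal{C}(h.f),g\rangle=B(h.f,g)=B(f,h^{-1}.g)=\langle\mathcal{C}f,h^{-1}.g\rangle=\langle h.\mathcal{C}f,g\rangle,
\]
where \(B\) is the relevant form. This holds for all \(g\), so \(\mathcal{C}\circ h=h\circ\mathcal{C}\), which is item (1).

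Item (2) then follows at once from Theorem \ref{thm:Maschkes}(3), since \(\mathcal{C}\in\mathrm{Hom}_{H_{n}}(\pi,\pi)\).

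For item (3), note that since \(\mathcal{C}\) commutes with the action, each eigenspace \(V_{\lambda}=\ker(\mathcal{C}-\lambda)\) is \(H_{n}\)-invariant: if \(\mathcal{C}f=\lambda f\), then \(\mathcal{C}(h.f)=h.\mathcal{C}f=\lambda\,h.f\). So \(V_{\lambda}\) is a finite-dimensional real subrepresentation. By Maschke's theorem (Theorem \ref{thm:Maschkes}(1),(2)), \(V_{\lambda}\) decomposes into a direct sum of irreducibles; concretely, one can split off orthogonal complements of invariant subspaces using the invariant inner product \(\langle\cdot,\cdot\rangle\).

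I expect no real obstacle. The only point requiring care is to define \(\mathcal{C}\) as an operator through the orthonormal monomial basis, rather than through an arbitrary matrix identification, and to check that the \(H_{n}\)-action is orthogonal, since the signs arising from reflections must preserve the basis up to sign.
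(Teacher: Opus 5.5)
Your proposal is correct and follows essentially the same route as the paper. The paper also gets item (1) by combining the $H_{n}$-invariance of $\eta$ with the $H_{n}$-invariance and non-degeneracy of the coefficient inner product $\langle\cdot,\cdot\rangle$; it checks the identity only on monomials $x^{I},x^{J}$ rather than on general $f,g$. It then derives items (2) and (3) from Theorem \ref{thm:Maschkes}, after observing, as you do, that each eigenspace $V_{\lambda}$ is $H_{n}$-stable because $\mathcal{C}$ commutes with the action.
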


\begin{proof}
Since both $\eta$ and the inner product $\langle\,,\,\rangle$ in
$\mathcal{P}_{d}(\R^{n})$ are $H_{n}$-invariant, we get: 
\begin{align*}
\langle\mathcal{E}\circ\left(\varepsilon,\sigma\right).x^{I},\left(\varepsilon,\sigma\right).x^{J}\rangle & =\mathbb{E}_{\eta}(\left(\varepsilon,\sigma\right).x^{I}\cdot\left(\varepsilon,\sigma\right).x^{J})=\mathbb{E}_{\eta}(x^{I+J})\\
 & =\langle\mathcal{E}x^{I},x^{J}\rangle=\langle\left(\varepsilon,\sigma\right)\circ\mathcal{E}.x^{I},\left(\varepsilon,\sigma\right).x^{J}\rangle,
\end{align*}
for every $I,J\subseteq n$ with $\left|I\right|=\left|J\right|=d$.
Since $\langle\,,\,\rangle$ is non-degenerate, we get that $\mathcal{E}\circ\left(\varepsilon,\sigma\right)=\left(\varepsilon,\sigma\right)\circ\mathcal{E}$
for every $\left(\varepsilon,\sigma\right)\in H_{n}$. A similar computation
shows $\mathcal{V}\circ\left(\varepsilon,\sigma\right)=\left(\varepsilon,\sigma\right)\circ\mathcal{V}$,
so both $\mathcal{E}$ and $\mathcal{V}$ are $H_{n}$-morphisms, and Item (1) follows.

If $f\in V_{\lambda}$ then $\mathcal{C}$.$\left(\left(\varepsilon,\sigma\right).f\right)=\left(\varepsilon,\sigma\right)\circ\mathcal{C}.f=\lambda\left(\varepsilon,\sigma\right).f$
for every $\left(\varepsilon,\sigma\right)\in H_{n}$ and therefore
$V_{\lambda}$ is an $H_{n}$-subrepresentation. Items (2) and (3)
now follows from Theorem \ref{thm:Maschkes}. 
\end{proof}
The following corollary will be the key statement for the next subsection. 
\begin{cor}
\label{cor:Multiplicity free}Let $\mathcal{C}\in\left\{ \mathcal{E},\mathcal{V}\right\} $,
let $G_{n}\in\{S_{n},H_{n}\}$ and let $\mathcal{P}_{d}(\R^{n})=\bigoplus_{\tau\in\Irr(G_{n})}W_{\tau}$
be its decomposition into $G_{n}$-isotypic components. Then: 
\begin{enumerate}
\item Each $W_{\tau}$ is a direct sum of $\mathcal{C}$-eigenspaces. 
\item In particular, if every $W_{\tau}$ is irreducible (i.e.~$\mathcal{P}_{d}(\R^{n})$
is multiplicity free as a $G_{n}$-representation), then each $W_{\tau}$
is a $\mathcal{C}$-eigenspace. 
\end{enumerate}
\end{cor}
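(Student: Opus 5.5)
The corollary follows by combining Lemma \ref{lem:isotypic components} with the uniqueness of isotypic components in Theorem \ref{thm:Maschkes}. First we need the analogue of Lemma \ref{lem:isotypic components}(1) for $G_n=S_n$. This is immediate, since $S_n\le H_n$ and a morphism of $H_n$-representations restricts to a morphism of $S_n$-representations. (Alternatively, repeat the same computation using only permutation invariance of $\eta$ and of $\langle\,,\,\rangle$.) So in both cases $\mathcal{C}$ is a $G_n$-morphism of $\mathcal{P}_d(\R^n)$, and it is symmetric with respect to $\langle\,,\,\rangle$. Hence $\mathcal{P}_d(\R^n)=\bigoplus_\lambda V_\lambda$ is an orthogonal decomposition into $\mathcal{C}$-eigenspaces.

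For Item (1), the first step is that each eigenspace $V_\lambda$ is a $G_n$-subrepresentation: if $f\in V_\lambda$ and $g\in G_n$, then $\mathcal{C}(g.f)=g.\mathcal{C}f=\lambda\, g.f$. By Theorem \ref{thm:Maschkes}(2) we get $V_\lambda=\bigoplus_\tau (W_\tau\cap V_\lambda)$. Summing over $\lambda$ gives
\[
\mathcal{P}_d(\R^n)=\bigoplus_\tau\bigoplus_\lambda (W_\tau\cap V_\lambda).
\]
Since $\bigoplus_\lambda (W_\tau\cap V_\lambda)\subseteq W_\tau$ for each $\tau$, and the $W_\tau$ are independent and together span the whole space, a dimension count forces equality: $W_\tau=\bigoplus_\lambda (W_\tau\cap V_\lambda)$. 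This means $W_\tau$ is a direct sum of (restricted) $\mathcal{C}$-eigenspaces, which is Item (1).

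For Item (2), suppose $W_\tau$ is irreducible. Each nonzero $W_\tau\cap V_\lambda$ is a subrepresentation of $W_\tau$, so it must equal $W_\tau$. Therefore exactly one $\lambda$ appears, and $\mathcal{C}|_{W_\tau}=\lambda\,\mathrm{Id}$; this is also just Schur's lemma over $\R$, using that $\mathcal{C}$ is real symmetric. Thus $W_\tau$ lies inside a single eigenspace. This is how we read ``is a $\mathcal{C}$-eigenspace''. (Distinct $\tau$ could share an eigenvalue, so $W_\tau$ need not be a full eigenspace; we state it in this form.)

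The only real subtlety is this interpretive point in Item (2), together with the need to avoid complex Schur arguments. The second issue is handled by working with the real symmetric eigenspace decomposition directly, as above, rather than invoking Schur's lemma over $\C$.
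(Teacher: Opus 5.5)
Your argument is correct and is essentially the paper's. Item (2) is proved identically. For Item (1), the paper takes a slightly shorter route: it uses $\mathcal{C}(W_\tau)\subseteq W_\tau$ (Lemma \ref{lem:isotypic components}(2)) and diagonalizes the symmetric restriction $\mathcal{C}|_{W_\tau}$, instead of decomposing each eigenspace via Theorem \ref{thm:Maschkes}(2) and counting dimensions.

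Two of your remarks go slightly beyond the paper. First, for $G_n=S_n$ with $\eta$ only $S_n$-invariant, the commutation computation must be redone using permutation invariance alone; the paper leaves this implicit. Second, your reading of ``is a $\mathcal{C}$-eigenspace'' as ``lies in a single eigenspace'' is exactly what the paper's proof establishes.
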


\begin{proof}
Item (1) follows since $\mathcal{C}(W_{\tau})\subseteq W_{\tau}$ and $\mathcal{C}|_{W_{\tau}}$
is diagonalizable. Item (2) follows by Lemma \ref{lem:isotypic components}(3), since for every $\lambda\geq0$, $(W_{\tau})_{\lambda}:=V_{\lambda}\cap W_{\tau}$
is a subrepresentation of $W_{\tau}$ and hence either $(W_{\tau})_{\lambda}=0$
or $W_{\tau}=(W_{\tau})_{\lambda}$. 
\end{proof}
\begin{example}
The isotypic component of the trivial representation of $S_{n}$ (resp.~$H_{n}$)
in $\mathcal{P}_{d}(\R^{n})$ is the space $\mathcal{P}_{d}^{S_{n}}(\R^{n})$
of degree $d$-homogeneous symmetric polynomials (resp.~the space
$\mathcal{P}_{d}^{H_{n}}(\R^{n})$ of degree $d$-homogeneous symmetric
polynomials in the variables $x_{1}^{2},...,x_{n}^{2}$). 
\end{example}

\subsection{Computation of the spectrum for low degree homogeneous polynomials}

We now turn to compute the spectrum of $\mathcal{E}$ and $\mathcal{V}$
on $\mathcal{P}_{d}(\R^{n})$ for $d\leq3$, for an $H_{n}$-invariant
measure $\eta$ on $\R^{n}$. If $\eta$ is log-concave and isotropic,
we further show that the eigenvalues of $\mathcal{E}$ are bounded
from below by an absolute constant for $d\leq3$, and that the eigenvalues
of $\mathcal{V}$ are bounded, expect possibly when $d=2$ if the
polynomial $\frac{1}{\sqrt{n}}\|x\|_2^2$ has a low variance.

\subsubsection{The linear case}

In this case, the spectrum of $\mathcal{E}$ and $\mathcal{V}$ can
be easily computed without representation theory (Corollary \ref{cor:linear polynomials}
below), but the analysis here serves as a simple model case for higher
degrees. 
\begin{lem}
\label{lem:multiplicity free}Let $n\geq2$. The space $\mathcal{P}_{1}(\R^{n})$
of linear polynomials on $\R^{n}$: 
\begin{enumerate}
\item Decomposes into direct sum of $S_{n}$-subrepresentations: 
\[
V_{1}:=\left\langle \frac{1}{\sqrt{n}}\sum_{i=1}^{n}x_{i}\right\rangle \text{ and }V_{2}:=\left\langle\left\{ \frac{1}{\sqrt{2}}(x_{i}-x_{i+1})\right\} _{i=1}^{n-1}\right\rangle.
\]
where $V_{1}$ has the trivial character $\chi_{(n)}$ and $V_{2}$
has character $\chi_{(n-1,1)}$. 
\item Is irreducible as an $H_{n}$-representation, with character $\chi_{((1),(n-1))}$. 
\end{enumerate}
\end{lem}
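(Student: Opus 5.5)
The plan is to identify $\mathcal{P}_{1}(\R^{n})$ with $\R^{n}$ via $x_{i}\leftrightarrow e_{i}$. Under this identification the action $f\mapsto f\circ(\varepsilon,\sigma)^{-1}$ of $H_{n}$ becomes the signed permutation action on coordinates, and restricting to $S_{n}$ gives the permutation representation $\R[X]$ of Example \ref{exa:standard rep}. One should check that the inverse in the action only relabels the group elements, so the representation obtained is the same.

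For Item (1), we first note that $V_{1}$ is spanned by the $S_{n}$-fixed vector $\sum_{i}x_{i}$, so it carries the trivial character $\chi_{(n)}$. Next, $V_{2}$ is spanned by the differences $x_{i}-x_{i+1}$, which is exactly the space of linear forms whose coefficient vector sums to zero, namely $F_{0}[X]$. This space is $S_{n}$-invariant and has dimension $n-1$, and $V_{1}\cap V_{2}=0$, so $\mathcal{P}_{1}(\R^{n})=V_{1}\oplus V_{2}$. Irreducibility of $V_{2}$ and the identification of its character with $\chi_{(n-1,1)}$ then follow from Example \ref{exa:standard rep} together with Example \ref{exa:few examples}(3). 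If one prefers a direct check, compute $\langle\chi,\chi\rangle_{S_{n}}$ for $\chi=\mathrm{fix}-1$: Burnside's lemma applied to the actions on $X$ and on $X\times X$ gives $\langle\mathrm{fix},\mathrm{fix}\rangle=2$ and $\langle\mathrm{fix},1\rangle=1$, hence $\langle\chi,\chi\rangle=1$ for $n\geq2$.

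For Item (2), by Example \ref{exa:standard action} the standard action of $H_{n}$ on $\R^{n}$ has character $\chi_{((1),(n-1))}$, which is irreducible, and this already gives the claim. For a self-contained argument, let $W\neq0$ be an $H_{n}$-invariant subspace and pick $f=\sum a_{i}x_{i}\in W$ with some $a_{j}\neq0$. Let $r_{j}$ be the reflection $x_{j}\mapsto-x_{j}$. Then $\frac12(f-r_{j}f)=a_{j}x_{j}$ lies in $W$, so $x_{j}\in W$. Applying permutations then puts every $x_{i}$ in $W$, so $W=\mathcal{P}_{1}(\R^{n})$. Note that Item (2) holds even for $n=1$.

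The only step that needs care is the matching of conventions. The action on polynomials is through $\sigma^{-1}$, while Example \ref{exa:standard action} is written with $a_{\sigma(i)}$. Characters are invariant under inversion for these groups, since they are real-valued, and the dual of the permutation representation is isomorphic to itself. So the representation in Item (2) is the one with character $\chi_{((1),(n-1))}$, and likewise in Item (1).
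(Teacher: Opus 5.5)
Your proposal is correct and follows the paper's route: the paper proves this lemma by citing Examples \ref{exa:standard rep}, \ref{exa:few examples} and \ref{exa:standard action}, exactly as you do. Your extra direct checks are valid and make the argument self-contained: the Burnside computation $\langle \mathrm{fix}-1,\mathrm{fix}-1\rangle_{S_n}=1$, and the reflection-averaging step $\frac{1}{2}(f-r_jf)=a_jx_j$ giving $H_n$-irreducibility. The convention worry is harmless, since for signed permutation matrices the coefficient vector of $f\circ g^{-1}$ is exactly $g$ applied to the coefficient vector of $f$.
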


\begin{proof}
Item (1) is as known fact in the representation theory of $S_{n}$
(see Examples \ref{exa:standard rep} and \ref{exa:few examples}).
Item (2) follows from Example \ref{exa:standard action}. 
\end{proof}
\begin{defn}
\label{def:moments of invariant measures}Let $\eta$ be an $S_{n}$-invariant
measure on $\R^{n}$. For each $m\in\N$ and $k_{1},...,k_{m}\in\N$, denote by 
\[
\alpha_{(k_{1},...k_{m})}:=\E_{\eta}\left[x_{1}^{k_{1}}...x_{m}^{k_{m}}\right].
\]
\end{defn}
When $\eta$ is $S_n$-invariant it is clear that both $\mathcal{E}$ and $\mathcal{V}$ are of the form $\alpha_1I_{n} + \alpha_2J$, where $J$ is the all-ones matrix. Therefore, there are at most $2$ eigenvalues. If $\eta$ is also unconditional, the situation simplifies, and both $\mathcal{E}$ and $\mathcal{V}$ are scalar matrices. This simple fact is captured in the next result. Below, we will use the same strategy to handle higher degrees, for which the situation is not as clear.
\begin{prop}
\label{prop:spectrum for quadratic case }Let $\eta$ be an $S_{n}$-invariant
measure on $\R^{n}$. Let $\mathcal{C}\in\left\{ \mathcal{E},\mathcal{V}\right\} $.
Then: 
\begin{enumerate}
\item The $\mathcal{C}$-eigenspaces in $\mathcal{P}_{1}(\R^{n})$ are $V_{1}$
and $V_{2}$, with eigenvalues: 
\begin{align*}
(\lambda_{1},\lambda_{2}) & =(\alpha_{2}+(n-1)\alpha_{(1,1)},\alpha_{2}-\alpha_{(1,1)})\text{ \,for\,}\ \mathcal{E}.\\
(\delta_{1},\delta_{2}) & =(\alpha_{2}+(n-1)\alpha_{(1,1)}-n\alpha_{1}^{2},\alpha_{2}-\alpha_{(1,1)})\text{ \,for\,} \ \mathcal{V}.
\end{align*}
\item If $\eta$ is $H_{n}$-invariant, then $\mathcal{P}_{1}(\R^{n})$
is a $\mathcal{C}$-eigenspace, with eigenvalue $\lambda=\delta=\alpha_{2}$. 
\end{enumerate}
\end{prop}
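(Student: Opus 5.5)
We will compute $\mathcal{E}$ and $\mathcal{V}$ on $\mathcal{P}_1(\R^n)$ in the monomial basis $\{x_1,\dots,x_n\}$, which is orthonormal for $\langle\cdot,\cdot\rangle$. By $S_n$-invariance of $\eta$, we have $\mathcal{E}_{ii}=\E_\eta[x_i^2]=\alpha_2$ and $\mathcal{E}_{ij}=\E_\eta[x_ix_j]=\alpha_{(1,1)}$ for $i\neq j$. Likewise $\E_\eta[x_i]=\alpha_1$ for every $i$. Hence
\[
\mathcal{E}=(\alpha_2-\alpha_{(1,1)})I_n+\alpha_{(1,1)}J,\qquad \mathcal{V}=(\alpha_2-\alpha_{(1,1)})I_n+(\alpha_{(1,1)}-\alpha_1^2)J,
\]
where $J$ is the all-ones matrix.

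For Item (1), we use the decomposition of Lemma \ref{lem:multiplicity free}(1). Since $\mathcal{P}_1(\R^n)=V_1\oplus V_2$ is multiplicity free as an $S_n$-representation, Corollary \ref{cor:Multiplicity free}(2) already tells us that $V_1$ and $V_2$ are $\mathcal{C}$-eigenspaces. The eigenvalues can also be read off directly from the matrices above. The vector $\frac{1}{\sqrt n}\sum x_i$ corresponds to the all-ones vector, on which $J$ acts by $n$. This gives $\lambda_1=\alpha_2+(n-1)\alpha_{(1,1)}$ and $\delta_1=\alpha_2-\alpha_{(1,1)}+n(\alpha_{(1,1)}-\alpha_1^2)=\alpha_2+(n-1)\alpha_{(1,1)}-n\alpha_1^2$. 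The vectors $x_i-x_{i+1}$ have coordinate sum zero and are therefore annihilated by $J$, which gives $\lambda_2=\delta_2=\alpha_2-\alpha_{(1,1)}$. One caveat: if $\lambda_1=\lambda_2$ (for instance when $\alpha_{(1,1)}=0$), the two spaces lie in a single eigenspace. The statement should then be read as saying that $V_1$ and $V_2$ are contained in eigenspaces with the listed eigenvalues.

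For Item (2), unconditionality gives $\alpha_{(1,1)}=\E[x_1x_2]=0$, since the reflection $x_1\mapsto -x_1$ negates this moment. In the same way $\alpha_1=0$. Substituting these into the formulas from Item (1) gives $\lambda_1=\lambda_2=\delta_1=\delta_2=\alpha_2$, so all of $\mathcal{P}_1(\R^n)$ is a single eigenspace. Alternatively, this follows from Lemma \ref{lem:multiplicity free}(2) together with Corollary \ref{cor:Multiplicity free}(2), since $\mathcal{P}_1(\R^n)$ is irreducible as an $H_n$-representation.

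There is no real obstacle here. The only points needing care are the bookkeeping of the rank-one mean correction $\alpha_1^2J$ in $\mathcal{V}$ and the degenerate case of coinciding eigenvalues noted above.
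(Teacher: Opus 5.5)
Your proof is correct and follows essentially the same route as the paper. Both arguments use the multiplicity-free decomposition $\mathcal{P}_1(\R^n)=V_1\oplus V_2$ and then read off the eigenvalues on the representatives $\frac{1}{\sqrt n}\sum_i x_i$ and $x_1-x_2$; the paper does this through the quadratic forms $\E_\eta[f^2]$ and $\mathrm{Var}_\eta(f)$, while you apply the explicit $aI_n+bJ$ form, which the paper itself notes just before the proposition. Your caveat that ``eigenspace'' should be read as ``contained in an eigenspace'' when $\lambda_1=\lambda_2$ is well taken, since that is exactly what happens in Item (2).
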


\begin{proof}
By Lemma \ref{lem:multiplicity free}, $\mathcal{P}_{1}(\R^{n})$
is a multiplicity free representation for both $S_{n}$ and $H_{n}$,
and hence by Corollary \ref{cor:Multiplicity free}, each of its isotypic
components is a $\mathcal{C}$-eigenspace. We can choose representatives:
$\frac{1}{\sqrt{n}}\sum_{i=1}^{n}x_{i}$ in $V_{1}$ and $\frac{1}{\sqrt{2}}(x_{1}-x_{2})\in V_{2}$. 
Note that: 
\[
\lambda_{1}=\mathbb{E}\left[\frac{1}{\sqrt{n}}\sum_{i=1}^{n}x_{i}\right]^{2}=\frac{1}{n}\mathbb{E}\left[\sum_{i=1}x_{i}^{2}+\sum_{i\neq j}x_{i}x_{j}\right]=\alpha_{2}+(n-1)\alpha_{(1,1)},
\]
and 
\[
\lambda_{2}=\delta_{2}=\mathbb{E}\left[\frac{1}{\sqrt{2}}(x_{1}-x_{2})\right]^{2}=\frac{1}{2}\mathbb{E}\left[x_{1}^{2}-2x_{1}x_{2}+x_{2}^{2}\right]=\alpha_{2}-\alpha_{(1,1)}.
\]
Moreover,
\[
\delta_{1}=\mathbb{E}\left[\frac{1}{\sqrt{n}}\sum_{i=1}^{n}x_{i}\right]^{2}-\mathbb{E}\left[\frac{1}{\sqrt{n}}\sum_{i=1}^{n}x_{i}\right]^{2}=\alpha_{2}+(n-1)\alpha_{(1,1)}-n\alpha_{1}^{2}.\qedhere
\]
\end{proof}
\begin{cor}
\label{cor:linear polynomials}Let $\eta$ be an $H_{n}$-invariant,
isotropic measure on $\R^{n}$, and let $f\in\mathcal{P}_{1}(\R^{n})$
be a linear polynomial with $\mathrm{coeff}(f)=1$. Then 
\[
\E_{\eta}\left[f^{2}\right]=\mathrm{Var}_{\eta}(f^{2})=1.
\]
\end{cor}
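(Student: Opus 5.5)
The corollary follows directly from Proposition~\ref{prop:spectrum for quadratic case }(2), with isotropy fixing the eigenvalue. (The statement writes $\mathrm{Var}_{\eta}(f^{2})$; by context the intended quantity is $\mathrm{Var}_{\eta}(f)$, and that is what I will prove.)

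\textbf{Step 1: the eigenspace.} Since $\eta$ is $H_{n}$-invariant, Proposition~\ref{prop:spectrum for quadratic case }(2) says that all of $\mathcal{P}_{1}(\R^{n})$ is a single eigenspace of both $\mathcal{E}$ and $\mathcal{V}$. In both cases the eigenvalue is $\alpha_{2}=\E_{\eta}[x_{1}^{2}]$. Isotropy gives $\alpha_{2}=1$.

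\textbf{Step 2: evaluate the quadratic forms.} Write $f=\sum_i a_i x_i$ with $\mathrm{coeff}(f)^{2}=\sum a_i^2=1$. By the identities preceding Lemma~\ref{lem:The-minimal-eigenvalue},
\[
\E_{\eta}[f^{2}]=\langle\mathcal{E}f,f\rangle=\alpha_{2}\langle f,f\rangle=1,
\qquad
\mathrm{Var}_{\eta}(f)=\langle\mathcal{V}f,f\rangle=\alpha_{2}\langle f,f\rangle=1 .
\]

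\textbf{Alternative direct check.} Unconditionality gives $\alpha_{(1,1)}=\E[x_1x_2]=0$ and $\alpha_1=\E[x_1]=0$. Hence $\mathcal{E}=\mathcal{V}=\alpha_{2}I_{n}=I_{n}$, so $\E[f^2]=\sum a_i^2=1$, and since $\E[f]=0$ also $\mathrm{Var}(f)=1$.

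There is no real obstacle here. The only care needed is noting that centering makes $\mathcal{E}$ and $\mathcal{V}$ coincide on $\mathcal{P}_1(\R^n)$; this holds both by isotropy and by unconditionality.
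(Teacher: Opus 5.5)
Your proof is correct and is the paper's argument: it applies part (2) of the proposition on the spectrum of $\mathcal{P}_{1}(\R^{n})$, which says that for $H_{n}$-invariant $\eta$ the whole space is an eigenspace of both $\mathcal{E}$ and $\mathcal{V}$ with eigenvalue $\alpha_{2}$, and then uses isotropy to get $\alpha_{2}=1$. You are also right to read the displayed $\mathrm{Var}_{\eta}(f^{2})$ as $\mathrm{Var}_{\eta}(f)$: for $f=x_{1}$ one has $\mathrm{Var}_{\eta}(x_{1}^{2})=\alpha_{4}-1$, which is not $1$ in general.
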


\begin{proof}
For isotropic measures, $\alpha_{2}=1$. 
\end{proof}

\subsubsection{The quadratic case}

We now assume that $d=2$. As before, we write $\lambda_{1},...,\lambda_{k}$
for the eigenvalues of $\mathcal{E}$ and $\delta_{1},...,\delta_{k}$
for the eigenvalues of $\mathcal{V}$. 
\begin{prop} \label{prop: quadratic case}
Let $n\geq2$, let $\eta$ be an $H_{n}$-invariant measure, and let $\mathcal{C}\in\left\{ \mathcal{E},\mathcal{V}\right\}$.
Then the $\mathcal{C}$-eigenspaces in $\mathcal{P}_{2}(\R^{n})$
are as follows: 
\begin{align*}
V_{1} & :=\left\langle\frac{1}{\sqrt{n}}\sum_{i=1}^{n}x_{i}^{2}\right\rangle\text{ with eigenvalues }\lambda_{1}=\alpha_{4}+(n-1)\alpha_{(2,2)}\text{ \,and\, }\delta_{1}=\alpha_{4}+(n-1)\alpha_{(2,2)}-n\alpha_{2}^{2}.\\
V_{2} & :=\left\langle\left\{ \frac{1}{\sqrt{2}}(x_{i}^{2}-x_{i+1}^{2})\right\} _{i=1}^{n-1}\right\rangle\text{ with eigenvalue }\lambda_{2}=\delta_{2}=\alpha_{4}-\alpha_{(2,2)}.\\
V_{3} & :=\left\langle\{x_{i}x_{j}\}_{i<j}\right\rangle\text{ with eigenvalues }\lambda_{3}=\delta_{3}=\alpha_{(2,2)}.
\end{align*}
\end{prop}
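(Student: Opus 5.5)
The plan follows the linear case: show that $V_1,V_2,V_3$ are exactly the $H_n$-isotypic components of $\mathcal{P}_2(\R^n)$, each irreducible. Corollary \ref{cor:Multiplicity free}(2) then makes each one a $\mathcal{C}$-eigenspace, and the eigenvalues are obtained by evaluating the quadratic form on one representative of each.

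\textbf{Step 1 (decomposition).} First, $\mathcal{P}_2(\R^n)=\langle x_i^2\rangle_i\oplus\langle x_ix_j\rangle_{i<j}$, and both summands are $H_n$-invariant: sign changes fix each $x_i^2$ and only change the sign of $x_ix_j$, while permutations preserve both families.

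On $\langle x_i^2\rangle$ the subgroup $(\Z/2\Z)^n$ acts trivially. So this is the pullback of the permutation representation $\R[X]$ of $S_n$, which splits as $V_1\oplus V_2$, with characters $\chi_{(0,(n))}$ and $\chi_{(0,(n-1,1))}$ (Example \ref{exa:few examples}).

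For $V_3$, note that $(\Z/2\Z)^n$ acts on $x_ix_j$ by the character $\varepsilon_{\{i,j\}}$. These characters are pairwise distinct for distinct pairs $\{i,j\}$, so any $H_n$-invariant subspace of $V_3$ is a sum of lines $\langle x_ix_j\rangle$. Since $S_n$ permutes the pairs transitively, $V_3$ is irreducible. (Concretely, $V_3\cong\Ind_{H_{2,n-2}}^{H_n}((\mathrm{triv}\otimes\mathrm{triv})\otimes\varepsilon_{\{1,2\}})=\chi_{((2),(n-2))}$.)

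The three representations are pairwise non-isomorphic. $V_3$ is separated from the other two because $(\Z/2\Z)^n$ acts nontrivially on it. $V_1$ and $V_2$ are separated as distinct $S_n$-irreducibles; the only exception is $n=2$, where both are one-dimensional but still distinct (trivial versus sign). Hence $\mathcal{P}_2(\R^n)$ is multiplicity free, and Corollary \ref{cor:Multiplicity free}(2) applies.

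\textbf{Step 2 (eigenvalues).} Each eigenvalue is the value of the quadratic form on a coefficient-unit representative, using Lemma \ref{lem:The-minimal-eigenvalue} and the identities preceding it.

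For $V_1$, take $\frac{1}{\sqrt n}\sum_i x_i^2$. Then
\[
\E\Big[\tfrac{1}{n}\big(\textstyle\sum_i x_i^2\big)^2\Big]=\alpha_4+(n-1)\alpha_{(2,2)},
\]
and subtracting $\big(\E\big[\tfrac{1}{\sqrt n}\sum_i x_i^2\big]\big)^2=n\alpha_2^2$ gives $\delta_1$.

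For $V_2$, take $\frac{1}{\sqrt2}(x_1^2-x_2^2)$, which has mean zero by $S_n$-invariance. Its second moment is $\frac12(2\alpha_4-2\alpha_{(2,2)})=\alpha_4-\alpha_{(2,2)}$, and this is also its variance.

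For $V_3$, take $x_1x_2$. Its mean vanishes by unconditionality, and its second moment is $\alpha_{(2,2)}$.

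The only real obstacle is the irreducibility and non-isomorphism in Step 1. The character argument above handles it without needing the full classification of $\Irr(H_n)$. Everything else is direct computation.
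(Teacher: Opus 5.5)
Your proof is correct and has the same skeleton as the paper: show that $\mathcal{P}_2(\R^n)=V_1\oplus V_2\oplus V_3$ is multiplicity free, apply Corollary \ref{cor:Multiplicity free}(2), and read off each eigenvalue from a coefficient-unit representative. The two arguments differ in how $V_3$ is shown to be irreducible.

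The paper observes that $x_1x_2$ is $(H_{2,n-2},\mathrm{triv}\otimes\varepsilon_{\{1,2\}})$-equivariant. Frobenius reciprocity then gives a nonzero $H_n$-morphism from $\Ind_{H_{2,n-2}}^{H_n}(\mathrm{triv}\otimes\varepsilon_{\{1,2\}})$ to $V_3$. The paper concludes using the cited classification, which says this induced representation is irreducible, together with the matching dimension $\binom{n}{2}$.

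You instead restrict to the normal subgroup $(\Z/2\Z)^n$. The lines $\langle x_ix_j\rangle$ carry pairwise distinct characters $\varepsilon_{\{i,j\}}$, so every invariant subspace is spanned by monomials. Transitivity of $S_n$ on pairs then finishes the argument. This is self-contained and does not use the classification of $\Irr(H_n)$.

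You also check explicitly that $V_1,V_2,V_3$ are pairwise non-isomorphic, which multiplicity-freeness genuinely requires. The paper's dimension count only shows that the three pieces exhaust $\mathcal{P}_2(\R^n)$; non-isomorphism is left implicit in the distinct labels $\chi_{((0),(n))}$, $\chi_{(0,(n-1,1))}$, $\chi_{((2),(n-2))}$.

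The paper's route buys uniformity: the same induced-representation bookkeeping is reused in the cubic case. There the $\varepsilon_{\{j\}}$-weight space is $\langle x_j^3, x_i^2x_j : i\neq j\rangle$ rather than a single line. Your monomial-weight argument would then have to be supplemented by the representation theory of the stabilizer $S_{n-1}$ (Clifford theory) to detect the standard component of multiplicity two.
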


\begin{proof}
By Corollary \ref{cor:Multiplicity free}, it will be enough to show that
$\mathcal{P}_{2}(\R^{n})$ is a multiplicity free $H_{n}$-representation,
and deduce that each isotypic component is a $\mathcal{C}$-eigenspace.
Firstly, the space of $H_{n}$-symmetric polynomials $V_{1}:=\left\langle \sum_{i=1}^{n}x_{i}^{2}\right\rangle \subseteq\mathcal{P}_{2}(\R^{n})$
is isomorphic to the trivial representation of $H_{n}$ whose character
is $\chi_{((0),(n))}$. Secondly, the action of $H_{n}$ on the subspace
$V_{2}$ factors through the action of $S_{n}$. Since $V_{2}$ is isomorphic to the standard representation
of $S_{n}$ (see Example \ref{exa:standard rep}), it is irreducible
as an $H_{n}$-representation, with character $\chi_{(0,(n-1,1))}$.

Finally, note that $V_{3}$ is irreducible as an $H_{n}$-representation.
Indeed, the vector $x_{1}x_{2}\in V_{3}$ is $\left(H_{2,n-2},\mathrm{triv}\otimes\varepsilon_{\{1,2\}}\right)$-equivariant.
By Frobenius reciprocity (Theorem \ref{thm:Frobenius reciprocity}),
there is a non-zero $H_{n}$-morphism $\mathrm{Ind}_{H_{2,n-2}}^{H_{n}}(\mathrm{triv}\otimes\varepsilon_{\{1,2\}})\rightarrow V_{3}$.
By (\ref{eq:(reps of B_n)}), $\mathrm{Ind}_{H_{2,n-2}}^{H_{n}}(\mathrm{triv}\otimes\varepsilon_{\{1,2\}})$
is irreducible, with character $\chi_{((2),(n-2))}$ and of dimension
$\left|H_{n}:H_{2,n-2}\right|={n \choose 2}$. Since $V_{3}$ is of
the same dimension, we deduce that $V_{3}$ is irreducible, with character
$\chi_{((2),(n-2))}$. Since 
\[
\dim V_{1}+\dim V_{2}+\dim V_{3}=1+(n-1)+{n \choose 2}={n+1 \choose 2}=\dim\mathcal{P}_{2}(\R^{n}),
\]
we deduce that $\mathcal{P}_{2}(\R^{n})$ is multiplicity free, and
decomposes into isotypic components by: 
\[
\mathcal{P}_{2}(\R^{n})=V_{1}\oplus V_{2}\oplus V_{3}.
\]
By Corollary \ref{cor:Multiplicity free}, each $V_{i}$ is a $\mathcal{C}$-eigenspace.
The eigenvalues are: 
\begin{align*}
\lambda_{1} & =\E_{\eta}\left[\left(\frac{1}{\sqrt{n}}\sum_{i=1}^{n}x_{i}^{2}\right)^{2}\right]=\frac{1}{n}\E_{\eta}\left[\sum_{i=1}^{n}x_{i}^{4}+\sum_{i\neq j}x_{i}^{2}x_{j}^{2}\right]=\alpha_{4}+(n-1)\alpha_{(2,2)}.\\
\delta_{1} & =\E_{\eta}\left[\left(\frac{1}{\sqrt{n}}\sum_{i=1}^{n}x_{i}^{2}\right)^{2}\right]-\E_{\eta}\left[\frac{1}{\sqrt{n}}\sum_{i=1}^{n}x_{i}^{2}\right]^{2}=\alpha_{4}+(n-1)\alpha_{(2,2)}-n\alpha_{2}^{2}.
\end{align*}
\begin{align*}
\lambda_{2} & =\delta_{2}=\E_{\eta}\left[\left(\frac{1}{\sqrt{2}}\left(x_{1}^{2}-x_{2}^{2}\right)\right)^{2}\right]=\frac{1}{2}\mathbb{E}_\eta\left[x_{1}^{4}+x_{2}^{4}-2x_{1}^{2}x_{2}^{2}\right]=\alpha_{4}-\alpha_{(2,2)}.\\
\lambda_{3} & =\delta_{3}=\mathbb{E}_\eta\left[x_{1}^{2}x_{2}^{2}\right]=\alpha_{(2,2)}.\qedhere
\end{align*}
\end{proof}
\begin{rem}
The quadratic case is interesting from two reasons: 
\begin{enumerate}
\item Unlike the linear case, the isotropic condition is not sufficient
for obtaining a lower bound on the spectrum of $\mathcal{E}$. Indeed,
one can take $\mu$ supported on $\pm1$ with probability $\frac{1}{2}$.
Then $\eta=\mu^{\otimes n}$ is unconditional, permutation-invariant,
isotropic measure. But $\alpha_{4}-\alpha_{(2,2)}=0$, and indeed
the polynomial $\frac{1}{\sqrt{2}}\left(x_{1}^{2}-x_{2}^{2}\right)$
is constant, and thus has zero variance. The proposition below shows
that the log-concavity is a sufficient condition for a lower bound
on the spectrum of $\mathcal{E}$. 
\item Even in the log-concave, isotropic case, it is not always the case
that the spectrum of $\mathcal{V}$ is bounded. The only potentially
small eigenvalue is 
\[
\delta_{1}=\alpha_{4}+(n-1)\alpha_{(2,2)}-n\alpha_{2}^{2}
\]
for the polynomial $\frac{1}{\sqrt{n}}\sum_{i=1}^{n}x_{i}^{2}$, see for example Theorem \ref{thm:lpball}.
\end{enumerate}
\end{rem}

We prove the following proposition in Appendix \ref{sec:Proof-of-Propositions}. 
\begin{prop}
\label{prop:Lower bound on second moment}Let $\eta$ be an $H_{n}$-invariant,
log-concave, isotropic measure on $\R^{n}$, and let $f\in\mathcal{P}_{2}(\R^{n})$
with $\mathrm{coeff}_2(f)=1$. Then 
\begin{equation}
\E_{\eta}[f^{2}]\geq\frac{4}{5}+O(n^{-1}),\label{eq:asymptotic lower bound}
\end{equation}
and 
\begin{equation}
\mathrm{Var}_{\eta}[f^{2}]\geq\min\left(\mathrm{Var}_\eta\left(\frac{1}{\sqrt{n}}\|x\|_2^2\right),\frac{4}{5}+O(n^{-1})\right).\label{eq:asymptotic var lower bound}
\end{equation}
\end{prop}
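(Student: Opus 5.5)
By Proposition \ref{prop: quadratic case}, $\mathcal{P}_{2}(\R^{n})=V_{1}\oplus V_{2}\oplus V_{3}$ is an orthogonal decomposition into eigenspaces of both $\mathcal{E}$ and $\mathcal{V}$. Hence for $\mathrm{coeff}_2(f)=1$, $\E_\eta[f^2]$ is a convex combination of $\lambda_1,\lambda_2,\lambda_3$, and $\mathrm{Var}_\eta(f)$ is a convex combination of $\delta_1=\mathrm{Var}_\eta(\frac{1}{\sqrt n}\|x\|_2^2)$, $\delta_2=\lambda_2$ and $\delta_3=\lambda_3$. It therefore suffices to prove
\[
\lambda_1,\ \lambda_2,\ \lambda_3\ \geq\ \frac45+O(n^{-1}).
\]
Since $\lambda_1\ge\delta_1$, we need the bound on $\lambda_1$ only for \eqref{eq:asymptotic lower bound}; \eqref{eq:asymptotic var lower bound} then follows by taking minima. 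Isotropy gives $\alpha_2=1$, so $\lambda_1=\delta_1+n\geq n$, which is trivially large. The whole problem thus reduces to two moment inequalities for the two-dimensional marginal $(x_1,x_2)$:
\[
\alpha_{(2,2)}\geq \tfrac45+O(n^{-1}),\qquad \alpha_4-\alpha_{(2,2)}\geq\tfrac45+O(n^{-1}).
\]

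For the second inequality, I would use that the one-dimensional marginal is symmetric, log-concave and of unit variance, so it should satisfy $\alpha_4\geq \frac95$. This is the uniform-distribution bound, attained by $[-\sqrt3,\sqrt3]$, and it follows from the standard fact that among symmetric log-concave densities the ratio $\E x^4/(\E x^2)^2$ is minimized by the uniform density; one proves this by comparing to a uniform density with the same variance via a single sign-change argument. It then remains to show $\alpha_{(2,2)}\leq 1+O(n^{-1})$. For this I would use the variance of $\|x\|_2^2$:
\[
0\leq \delta_1=\alpha_4+(n-1)\alpha_{(2,2)}-n,
\]
which only yields a lower bound on $\alpha_{(2,2)}$, so it is the wrong direction. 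Instead I would use the known negative-correlation property of unconditional (or at least $H_n$-invariant) log-concave measures for $x_1^2,x_2^2$: by Pr\'ekopa--Leindler applied to the conditional law of $x_2$ given $x_1$, $\E[x_2^2\mid x_1]$ is nonincreasing in $|x_1|$. This gives $\alpha_{(2,2)}\leq \alpha_2^2=1$ exactly. This is the Lemma \ref{lem:Inequality on mements} type input, which I expect is the fact credited to Kosov. Combining, $\lambda_2\geq \frac95-1=\frac45$.

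For the first inequality, $\alpha_{(2,2)}\geq \frac45+O(n^{-1})$, I would use $\delta_1\geq 0$ together with an upper bound $\alpha_4\leq C$ (log-concave moments give $\alpha_4\le 6$ or similar). This yields $\alpha_{(2,2)}\geq \frac{n-\alpha_4}{n-1}=1-O(n^{-1})\geq\frac45+O(n^{-1})$. In fact this lower bound already gives $\lambda_3\approx 1$, so the binding constant $\frac45$ comes from $\lambda_2$, consistent with the cube being extremal.

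The main obstacle is the step $\alpha_{(2,2)}\le 1$, that is, the negative correlation of squares under $H_n$-invariant log-concave measures. If the monotonicity argument for the conditional second moment fails without full unconditionality of the two-dimensional marginal, I would fall back on the reflection invariance in each coordinate, which is part of $H_n$-invariance. That invariance makes the conditional law of $x_2$ given $x_1$ symmetric and log-concave, with a support and shape that shrink as $|x_1|$ grows by convexity of the superlevel sets. The sharp constant $\frac95$ in the one-dimensional fourth-moment bound is the other nontrivial ingredient, but it is classical.
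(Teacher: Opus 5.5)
\textbf{There is a genuine gap in the step $\alpha_{(2,2)}\le 1$.} The rest of your proposal is sound, but this step rests on a false claim. You state that $x_1^2$ and $x_2^2$ are negatively correlated under $H_n$-invariant log-concave measures because $\E[x_2^2\mid x_1]$ is nonincreasing in $|x_1|$. Neither holds in general. Convex superlevel sets of the two-dimensional marginal do not stop the \emph{normalized} conditional law from spreading out.

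For the density $e^{-\max(|x_1|,|x_2|)}$ on $\R^2$, the law of $x_2$ given $|x_1|=a$ is flat on $[-a,a]$ with exponential tails. Its second moment is $\frac{a^3/3+a^2+2a+2}{a+1}$, which is increasing in $a$.

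In every dimension, take the rotation-invariant (hence $H_n$-invariant) log-concave density $\propto e^{-\|x\|_2}$. Write $x=R\theta$ with $R$ of density $\propto r^{n-1}e^{-r}$ and $\theta$ uniform on $S^{n-1}$. After isotropic rescaling,
\[
\alpha_{(2,2)}=\frac{\E R^4\,\E\theta_1^2\theta_2^2}{\left(\E R^2\,\E\theta_1^2\right)^2}=\frac{n(n+1)(n+2)(n+3)\cdot\frac{1}{n(n+2)}}{(n+1)^2}=\frac{n+3}{n+1}>1 .
\]
So the exact inequality you aim for is false, and your fallback via reflection invariance cannot rescue it. Also, Kosov's Lemma \ref{lem:Inequality on mements} is a one-dimensional moment inequality, giving $\alpha_4\ge\frac95$; it is not a correlation inequality.

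\textbf{What you actually need is $\alpha_{(2,2)}\le 1+O(n^{-1})$, and this is not a soft fact.} From the identity
\[
n(n-1)\alpha_{(2,2)}+n\alpha_4=\E_\eta\|x\|_2^4=n^2+\mathrm{Var}_\eta(\|x\|_2^2),
\]
together with $\alpha_4=O(1)$, it is \emph{equivalent} to the thin-shell estimate $\mathrm{Var}_\eta(\|x\|_2^2)=O(n)$. The paper supplies exactly this input in Lemma \ref{lem:mixed moments} via the thin-shell theorem, obtaining $\alpha_{(2,2)}=1+O(n^{-1})$. With that replacement your argument goes through.

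The other ingredients are correct:
\begin{itemize}
\item The reduction to $\lambda_1,\lambda_2,\lambda_3$ and $\delta_1$, including $\lambda_1=\delta_1+n\ge n$, matches the paper.
\item Your bound $\alpha_4\ge\frac95$ has the same content as Lemma \ref{lem:Inequality on mements} with $m=2$, $k=0$.
\item Your lower bound $\alpha_{(2,2)}\ge 1-\frac{\alpha_4-1}{n-1}$, from $\delta_1\ge 0$ and $\alpha_4\le 6$, is a correct and more elementary substitute for the lower half of the paper's thin-shell argument.
\end{itemize}
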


\begin{rem}
\label{rem:Maybe cube}The lower bound in Proposition \ref{prop:Lower bound on second moment}
is asymptotically tight. Indeed, if $\eta$ is the isotropic cube,
then $\alpha_{4}-\alpha_{(2,2)}=\alpha_{4}-1=\frac{4}{5}$. 
\end{rem}
\subsubsection{The cubic case}

We now turn to the case of $d=3$. 
\begin{prop}
\label{prop:cubic case}Let $\eta$ be an $H_{n}$-invariant measure
on $\R^{n}$. For $\alpha_{(4,2)}\neq0$,
set 
\[
\beta:=\frac{(n-2)\alpha_{(2,2,2)}-\alpha_{6}}{\alpha_{(4,2)}}+1,\text{ and let }a_{\pm}:=\frac{-\beta\pm\sqrt{\beta^{2}+4(n-1)}}{2}
\]
Then for $n\geq3$, and $\alpha_{(4,2)}\neq0$, the $\mathcal{C}$-eigenspaces in $\mathcal{P}_{3}(\R^{n})$
are as follows: 
\begin{align*}
V_{1} & :=\left\langle \{x_{i}x_{j}x_{k}\}_{i<j<k}\right\rangle \text{ with eigenvalue }\lambda_{1}=\delta_{1}=\alpha_{(2,2,2)}.\\
V_{2} & :=\left\langle \left\{ \frac{1}{\sqrt{2}}(x_{i}^{2}-x_{i+1}^{2})x_{j}\right\} _{i=1...n-1,j\neq i,i+1}\right\rangle \text{ with eigenvalue }\lambda_{2}=\delta_{2}=\alpha_{(4,2)}-\alpha_{(2,2,2)}.\\
V_{3} & :=\left\langle \left\{ (\sum_{i\neq j}x_{i}^{2})x_{j}+a_{+}x_{j}^{3}\right\} _{j=1,...,n}\right\rangle \text{ with eigenvalue }\lambda_{3}=\delta_{3}=\frac{(n-1)\alpha_{(4,2)}}{a_{+}}+\alpha_{6}.\\
V_{4} & :=\left\langle \left\{ (\sum_{i\neq j}x_{i}^{2})x_{j}+a_{-}x_{j}^{3}\right\} _{j=1,...,n}\right\rangle \text{ with eigenvalue }\lambda_{4}=\delta_{4}=\frac{(n-1)\alpha_{(4,2)}}{a_{-}}+\alpha_{6}.
\end{align*}
If $\alpha_{(4,2)}=0$, then $V_3=\left\langle \left\{ x_{i}^{3}\right\}_{i=1}^{n}\right\rangle$, $V_4=\left\langle \left\{ (\sum_{i\neq j}x_{i}^{2})x_{j}\right\} _{j=1}^{n}\right\rangle$, $\lambda_i=\delta_i=0$ for $i\in\{1,2,4\}$, and $\lambda_3=\delta_3=\alpha_6$.

Moreover, the spaces $V_{1}$ and $V_{2}$ are irreducible $H_{n}$-representations
with characters $\chi_{((3),(n-3))}$ and $\chi_{((1),(n-2,1))}$,
both appearing in multiplicity one. On the other hand, the direct
sum $V_{3}\oplus V_{4}$ is the isotypic component of the standard
representation $\chi_{((1),(n-1))}$, which appears in multiplicity
$2$ in $\mathcal{P}_{3}(\R^{n})$. 
\end{prop}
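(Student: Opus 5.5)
Since every element of $\mathcal{P}_3(\R^n)$ is odd and $\eta$ is unconditional, $\E_\eta[f]=0$ for every cubic $f$, so $\mathcal{E}=\mathcal{V}$ on $\mathcal{P}_3(\R^n)$ and it suffices to treat $\mathcal{E}$. The plan is to decompose $\mathcal{P}_3(\R^n)$ into $H_n$-isotypic components and apply Corollary \ref{cor:Multiplicity free}. We use the monomial basis, split by type: $x_ix_jx_k$ ($i<j<k$), $x_i^2x_j$ ($i\neq j$), and $x_j^3$. We first check that the span $A$ of the squarefree monomials $x_ix_jx_k$ is a subrepresentation. The vector $x_1x_2x_3$ is $(H_{3,n-3},\mathrm{triv}\otimes\varepsilon_{\{1,2,3\}})$-equivariant. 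Hence, exactly as for $V_3$ in Proposition \ref{prop: quadratic case}, Frobenius reciprocity (Theorem \ref{thm:Frobenius reciprocity}) gives a nonzero morphism from the irreducible $\chi_{((3),(n-3))}$, which has dimension $\binom n3=\dim A$, onto $A$. So $A=V_1$ is irreducible.

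Next, the span $B$ of $\{x_i^2x_j\}_{i\neq j}\cup\{x_j^3\}$ is $H_n$-stable. Fix $j$. On $B$, the reflection $\varepsilon_j$ acts by $-1$ on monomials of the form $x_\ast^2x_j$ and $x_j^3$, and by $+1$ on those with odd index $k\ne j$. So $B$ is induced from the subgroup $H_{1,n-1}$ acting on the span $B_j$ of $\{x_i^2x_j\}_{i\neq j}\cup\{x_j^3\}$ (dimension $n$). As an $H_{1,n-1}$-module, $B_j\cong(\mathrm{triv}\oplus\mathrm{triv}\oplus\mathrm{std}_{S_{n-1}})\otimes\varepsilon_{\{1\}}$, spanned by $x_j^3$, $(\sum_{i\neq j}x_i^2)x_j$, and the differences $(x_i^2-x_{i+1}^2)x_j$. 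Inducing via \eqref{eq:(reps of B_n)} gives $B\cong2\chi_{((1),(n-1))}\oplus\chi_{((1),(n-2,1))}$. The dimensions check: $n+n+n(n-2)=n(n-1)+n$. Thus $V_2$ (the span of the $(x_i^2-x_{i+1}^2)x_j$) is the multiplicity-one component $\chi_{((1),(n-2,1))}$, and $V_3\oplus V_4$ is the $\chi_{((1),(n-1))}$-isotypic component. That component is spanned by the two copies $\{x_j^3\}_j$ and $\{(\sum_{i\neq j}x_i^2)x_j\}_j$ of the standard representation. By Corollary \ref{cor:Multiplicity free}, $V_1$ and $V_2$ are eigenspaces.

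The eigenvalues on the multiplicity-one pieces are direct moment computations with representatives. For $V_1$, $\E[(x_1x_2x_3)^2]=\alpha_{(2,2,2)}$. For $V_2$, $\frac12\E[(x_1^2-x_2^2)^2x_3^2]=\alpha_{(4,2)}-\alpha_{(2,2,2)}$.

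The main step is the multiplicity-two component. Here $\mathcal{E}$ commutes with $H_n$, so by Schur's lemma it acts on $\chi_{((1),(n-1))}\otimes\R^2$ as $\mathrm{id}\otimes M$ for a $2\times2$ matrix $M$. To find $M$, use the equivariant vectors $u=x_1^3$ and $w=(\sum_{i\neq1}x_i^2)x_1$ and compute $\mathcal{E}u$ and $\mathcal{E}w$ projected onto $\mathrm{span}(u,w)$. By the $H_{1,n-1}$-equivariance, $\mathcal{E}u$ lies in the $\varepsilon_{\{1\}}$-isotypic, $S_{n-1}$-fixed part of $B$, which is exactly $\mathrm{span}(u,w)$. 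The coefficient pairings are $\langle\mathcal{E}x^I,x^J\rangle=\E[x^{I+J}]$, which gives
\[
\mathcal{E}u=\alpha_6u+\alpha_{(4,2)}w,\qquad \mathcal{E}w=(n-1)\alpha_{(4,2)}u+\big(\alpha_{(4,2)}+(n-2)\alpha_{(2,2,2)}\big)w .
\]
These should be double-checked using the fact that $\mathcal{E}$ is symmetric in the monomial inner product, where $\langle w,w\rangle=n-1$. Looking for eigenvectors $w+au$ gives the quadratic
\[
\alpha_{(4,2)}a^2+\big(\alpha_{(4,2)}+(n-2)\alpha_{(2,2,2)}-\alpha_6\big)a-(n-1)\alpha_{(4,2)}=0 .
\]
Dividing by $\alpha_{(4,2)}$ gives $a^2+\beta a-(n-1)=0$, so $a=a_\pm$. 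The eigenvalue, read off from the $u$-coefficient, is $(n-1)\alpha_{(4,2)}/a+\alpha_6$. The degenerate case $\alpha_{(4,2)}=0$ is handled directly. Then $M$ is triangular and $u$ is an eigenvector with eigenvalue $\alpha_6$. Moreover, for a measure with $\alpha_{(4,2)}=\E[x_1^4x_2^2]=0$, the moment $\alpha_{(2,2,2)}$ vanishes as well (since $x_1^2x_2^2x_3^2\le\frac12(x_1^4+x_2^4)x_3^2$), and this yields the remaining zero eigenvalues.

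I expect the bookkeeping in $M$ to be the main obstacle, namely getting the off-diagonal entries and the normalization right.
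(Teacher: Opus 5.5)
Your proposal is correct and follows essentially the paper's route. You identify the isotypic components via Frobenius reciprocity and induction from $H_{1,n-1}$, where the paper uses separate dimension counts but the argument is the same in substance. You read off $\alpha_{(2,2,2)}$ and $\alpha_{(4,2)}-\alpha_{(2,2,2)}$ on the multiplicity-one pieces, and you reduce the multiplicity-two component to the same $2\times 2$ problem on $x_1^3$ and $(\sum_{i\ge 2}x_i^2)x_1$, which gives $a^2+\beta a-(n-1)=0$. Your remark that $\alpha_{(4,2)}=0$ forces $\alpha_{(2,2,2)}=0$, via $x_1^2x_2^2\le\frac12(x_1^4+x_2^4)$, is a worthwhile addition. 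The paper leaves implicit why the eigenvalues on $V_1$, $V_2$ and on the span of $\{(\sum_{i\neq j}x_i^2)x_j\}_j$ vanish in the degenerate case.
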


\begin{proof}
Because $d=3$ is odd, $\mathcal{E} = \mathcal{V}$ and we need to make no further distinctions in this case.
The subspace $V_{1}$ is $H_{n}$-stable, and the vector $x_{1}x_{2}x_{3}\in V_{1}$
is $(H_{3,n-3},\mathrm{triv}\otimes\varepsilon_{\{1,2,3\}})$-equivariant.
For $V_{((3),(n-3))}$, the representation associated with $ \mathrm{Ind}_{H_{3,n-3}}^{H_{n}}(\mathrm{triv}\otimes\varepsilon_{\{1,2,3\}})$, as in \eqref{eq:(reps of B_n)}, by Frobenius reciprocity, Theorem \ref{thm:Frobenius reciprocity}:
\[
\langle V_{((3),(n-3))},V_{1}\rangle_{H_{n}}=
\langle\mathrm{triv}\otimes\varepsilon_{\{1,2,3\}},V_{1}\rangle_{H_{3,n-3}}\geq1
\]
But $V_{((3),(n-3))}$ is irreducible, and $\dim V_{((3),(n-3))}=\left|H_{n}:H_{3,n-3}\right|=\binom{n}{3}=\dim V_{1}$.
Now, any non-zero $H_{n}$-morphism $T:V_{((3),(n-3))}\rightarrow V_{1}$
must be an embedding (since $\ker T$ is an $H_{n}$-subrepresentation),
so $V_{1} \simeq V_{((3),(n-3))} \in \mathrm{Irr}(H_n)$.

Next, the subspace $\left\langle \left\{ (x_{i}^{2}-x_{i+1}^{2})x_{1}\right\} _{i=2...n-1}\right\rangle $
in $V_{2}$ is isomorphic to $(\mathrm{triv}\otimes V_{(n-2,1)})\otimes\varepsilon_{\{1\}}$ as $H_{1,n-1}$-representations. Again
by Frobenius reciprocity, 
\[
\langle\mathrm{Ind}_{H_{1,n-1}}^{H_{n}}((\mathrm{triv}\otimes V_{(n-2,1)})\otimes\varepsilon_{\{1\}}),V_{2}\rangle_{H_{n}}=\langle (\mathrm{triv}\otimes V_{(n-2,1)})\otimes\varepsilon_{\{1\}},V_{2}|_{H_{1,n-1}}\rangle_{H_{1,n-1}}\geq1.
\]
Since $V_{((1),(n-2,1))}:=\mathrm{Ind}_{H_{1,n-1}}^{H_{n}}((\mathrm{triv}\otimes V_{(n-2,1)})\otimes\varepsilon_{\{1\}})$ is irreducible and since $\dim V_{2}=\dim V_{((1),(n-2,1))}=n(n-2)$,
as above, we deduce $V_{2}\simeq V_{((1),(n-2,1))}\in\Irr(H_{n})$.

Next, the space $W_{3}:=\left\langle \left\{ x_{i}^{3}\right\} _{i=1}^{n}\right\rangle $
is irreducible of dimension $n$, and isomorphic to the standard representation
$V_{((1),(n-1))}\in\Irr(H_{n})$. Similarly, the subspace $W_{4}=\left\langle \left\{ (\sum_{i\neq j}x_{i}^{2})x_{j}\right\} _{j=1}^{n}\right\rangle $
is irreducible, again isomorphic to $V_{((1),(n-1))}$. Observing
that 
\[
\mathcal{P}_{3}(\R^{n})=V_{1}\oplus V_{2}\oplus W_{3}\oplus W_{4},
\]
we deduce that $W_{3}\oplus W_{4}=\left\langle \left\{ x_{i}^{3}\right\} _{i=1}^{n},\left\{ (\sum_{i\neq j}x_{i}^{2})x_{j}\right\} _{j=1}^{n}\right\rangle $
is the isotypic component of $V_{((1),(n-1))}$ of multiplicity $2$,
and that the other isotypic components are irreducible.

By Corollary \ref{cor:Multiplicity free}, a direct computation shows that $V_{1}$ and $V_{2}$ are
$\mathcal{C}$-eigenspaces of eigenvalues 
\[
\lambda_{1}=\delta_{1}=\alpha_{(2,2,2)}\text{ and }\lambda_{2}=\delta_{2}=\alpha_{(4,2)}-\alpha_{(2,2,2)}.
\]
To find the eigenspaces in $W_{3}\oplus W_{4}$, we know by Frobenius
reciprocity, that each of them contains an $(H_{1,n-1},\varepsilon_{\{1\}})$-equivariant
eigenvector. Note that 
\[
(W_{3}\oplus W_{4})^{(H_{1,n-1},\varepsilon_{\{1\}})}=\left\langle x_{1}^{3},(\sum_{i=2}^{n}x_{i}^{2})x_{1}\right\rangle .
\]
If $\alpha_{(4,2)}=0$, then $W_{3}$ and $W_{4}$ are eigenspaces of eigenvalues $\alpha_{6}$ and $0$, respectively. If $\alpha_{(4,2)}\neq0$, we should search for eigenvectors of the form 
\[
f_{a}:=(\sum_{i=2}^{n}x_{i}^{2})x_{1}+ax_{1}^{3},\text{ for some }a\in\R.
\]
Note that 
\begin{equation}
\E_\eta\left[f_{a}x_{k}^{3}\right]=\begin{cases}
(n-1)\alpha_{(4,2)}+a\alpha_{6} & \text{if }k=1\\
0 & \text{otherwise.}
\end{cases}\label{eq:coefficients of C}
\end{equation}
\begin{equation}
\E_\eta\left[f_{a}x_{k}x_{j}^{2}\right]=\begin{cases}
(n-2)\alpha_{(2,2,2)}+(1+a)\alpha_{(4,2)} & \text{if }k=1\\
0 & \text{otherwise.}
\end{cases}.\label{eq:Coeff of C 2}
\end{equation}
  Hence, in order for $f_{a}$ to be a $\mathcal{C}$-eigenvector, we
want 
\begin{equation}
\frac{(n-1)\alpha_{(4,2)}+a\alpha_{6}}{a}=(n-2)\alpha_{(2,2,2)}+(1+a)\alpha_{(4,2)}.\label{eq:a satisfies}
\end{equation}
Multiplying by $a$, dividing by $\alpha_{(4,2)}$, and rearranging
gives: 
\[
a^{2}+\left(\frac{(n-2)\alpha_{(2,2,2)}-\alpha_{6}}{\alpha_{(4,2)}}+1\right)a-(n-1)=0.
\]
Denote $\beta:=\frac{(n-2)\alpha_{(2,2,2)}-\alpha_{6}}{\alpha_{(4,2)}}+1$,
the solutions are 
\[
a_{\pm}=\frac{-\beta\pm\sqrt{\beta^{2}+4(n-1)}}{2}.
\]
We find that the subspaces generated by $(\sum_{i\neq1}x_{i}^{2})x_{1}+a_{+}x_{1}^{3})$
and $(\sum_{i\neq1}x_{i}^{2})x_{1}+a_{-}x_{1}^{3})$ are the two eigenspaces,
with eigenvalues $\frac{(n-1)\alpha_{(4,2)}}{a_{\pm}}+\alpha_{6}$,
as required. 
\end{proof}
We prove the following in Appendix \ref{sec:Proof-of-Propositions}. 
\begin{prop}
\label{prop:lower bound on spectrum cubic}Let $\eta$ be an $H_{n}$-invariant,
log-concave, isotropic measure on $\R^{n}$, and let $f\in\mathcal{P}_{3}(\R^{n})$
with $\mathrm{coeff}_3(f)=1$. Then 
\[
\E_{\eta}\left[f^{2}\right]\geq\frac{108}{175}+O(n^{-1}).
\]
\end{prop}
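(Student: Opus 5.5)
By Proposition \ref{prop:cubic case}, $\mathcal{E}$ on $\mathcal{P}_3(\R^n)$ has at most four eigenvalues. The minimum of $\E_\eta[f^2]$ over $\mathrm{coeff}_3(f)=1$ is therefore
\[
\min\{\alpha_{(2,2,2)},\ \alpha_{(4,2)}-\alpha_{(2,2,2)},\ \lambda_3,\ \lambda_4\}.
\]
One normalization point must be handled first. The eigenvalues are computed from representatives, and these must be normalized in $\mathrm{coeff}_3$. For $V_1$ and $V_2$ the representatives $x_1x_2x_3$ and $\frac{1}{\sqrt2}(x_1^2-x_2^2)x_3$ are unit vectors. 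On $W_3\oplus W_4$, however, the Rayleigh quotient of $f_a=(\sum_{i\ge2}x_i^2)x_1+ax_1^3$ is
\[
\frac{(n-1)\alpha_{(4,2)}+2a\alpha_{(4,2)}(n-1)/(\dots)}{n-1+a^2},
\]
so I would recompute $\lambda_{3,4}$ as the eigenvalues of the $2\times2$ Gram matrix in the orthonormal basis $x_1^3,\ (n-1)^{-1/2}(\sum_{i\ge 2}x_i^2)x_1$. That matrix is
\[
\begin{pmatrix}\alpha_6 & \sqrt{n-1}\,\alpha_{(4,2)}\\ \sqrt{n-1}\,\alpha_{(4,2)} & \alpha_{(4,2)}+(n-2)\alpha_{(2,2,2)}\end{pmatrix},
\]
and the task reduces to bounding its smallest eigenvalue, together with $\alpha_{(2,2,2)}$ and $\alpha_{(4,2)}-\alpha_{(2,2,2)}$, from below using only log-concavity, isotropy and $H_n$-invariance.

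\emph{Step 1: the easy eigenvalues.} Consider $Y=(x_1^2,x_2^2,x_3^2)$. By Borell's lemma and log-concavity all moments are $O(1)$. The key inputs are negative-correlation type facts for $H_n$-invariant log-concave measures. I would use the known result that unconditional isotropic log-concave vectors satisfy $\alpha_{(2,2)}\le \alpha_2^2=1$ up to $O(1/n)$ corrections; in fact $\sum_i x_i^2$ has variance $O(n)$ by the thin-shell/KLS-type bound for unconditional bodies (Klartag). This gives $n\alpha_{(2,2)}\le n + O(1)$, and similarly $\alpha_{(2,2,2)}=1+O(1/n)$, since the variance of $\|x\|_2^2$ being $O(n)$ forces the mixed moments to be $1+O(1/n)$. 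Then $\alpha_{(2,2,2)}\ge 108/175$ trivially. For $\alpha_{(4,2)}-\alpha_{(2,2,2)}$, condition on $x_2$: the conditional law of $x_1$ is symmetric log-concave, so $\E[x_1^4\mid \cdot]\ge \tfrac95\E[x_1^2\mid\cdot]^2$ (the sharp bound, attained by the uniform law, is Lemma \ref{lem:Inequality on mements}, which I would invoke). Combined with $H_n$-invariance this should give $\alpha_{(4,2)}\gtrsim \tfrac95 + O(1/n)$, and hence a difference of at least $\tfrac45+O(1/n)$.

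\emph{Step 2: the $2\times2$ block, the main obstacle.} Write $A=\alpha_6$, $B=\alpha_{(4,2)}$ and $C=\alpha_{(4,2)}+(n-2)\alpha_{(2,2,2)}$. Then $C=n+O(1)$ while the off-diagonal entry is about $\sqrt n\,B$. The small eigenvalue is therefore
\[
A-\frac{(n-1)B^2}{C}+O(n^{-1})=\alpha_6-\alpha_{(4,2)}^2/\alpha_{(2,2,2)}+O(n^{-1})=\alpha_6-\alpha_{(4,2)}^2+O(n^{-1}),
\]
which is heuristically the variance-type quantity $\E[x_1^6]-\E[x_1^4 x_2^2]^2$, i.e. the residual of $x_1^3$ after projecting onto $x_1\|x\|^2/\sqrt n$. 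The hard part is to show $\alpha_6-\alpha_{(4,2)}^2\ge \tfrac{108}{175}-o(1)$. My plan is to use again that $\|x\|_2^2/n$ concentrates (the thin shell), so that $\alpha_{(4,2)}=\alpha_4+O(n^{-1/2})$ up to a controlled error. The bound then becomes the one-dimensional inequality $\E X^6-(\E X^4)^2\ge \tfrac{108}{175}$ for the marginal $X=x_1$, which is symmetric log-concave with $\E X^2=1$. Minimizing over symmetric log-concave laws with unit variance, I expect the extremizer to be the uniform law on $[-\sqrt3,\sqrt3]$. There $\E X^4=9/5$ and $\E X^6=27/7$, giving
\[
\tfrac{27}{7}-\tfrac{81}{25}=\tfrac{675-567}{175}=\tfrac{108}{175},
\]
which matches the claimed constant. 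I would prove this 1D extremal statement via the localization/Fradelizi-type reduction: by a standard moment-problem convexity argument, the extremal symmetric log-concave laws are (truncated) symmetric exponentials, and the resulting two-parameter family can be checked directly. The delicate points are controlling the $O(n^{-1/2})$ errors well enough to reach $O(n^{-1})$, and making the one-dimensional extremal argument rigorous.
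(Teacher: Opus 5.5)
Your reduction matches the paper's. You take the four eigenvalues from Proposition \ref{prop:cubic case}. Your asymptotics for the small eigenvalue of the $2\times2$ block amount exactly to the paper's bound $\lambda_4\ge\alpha_6-\frac{(n-1)\alpha_{(4,2)}^2}{C-\alpha_6}=\alpha_6-\frac{\alpha_{(4,2)}^2}{\alpha_{(2,2,2)}}+O(n^{-1})$, where $C=(n-2)\alpha_{(2,2,2)}+\alpha_{(4,2)}$. Your normalization worry is unnecessary: eigenvalues do not depend on how eigenvectors are scaled. The paper's $\lambda_{3,4}=\alpha_6+(n-1)\alpha_{(4,2)}/a_\pm$ are precisely the roots of $(\lambda-\alpha_6)(\lambda-C)=(n-1)\alpha_{(4,2)}^2$, i.e.\ the eigenvalues of your matrix.

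However, the proposal stops where the actual content begins. The inequality $\E X^6-(\E X^4)^2\ge\frac{108}{175}$, for symmetric log-concave $X$ with $\E X^2=1$, is only conjectured. The localization sketch never specifies the extremal family or carries out the final check. The missing idea is that Lemma \ref{lem:Inequality on mements}, which you already use with $(m,k)=(2,0)$, proves it at once with $(m,k)=(3,1)$. It gives $\alpha_4^2\le\frac{21}{25}\alpha_6\alpha_2=\frac{21}{25}\alpha_6$, hence $\alpha_6-\alpha_4^2\ge\frac{4}{21}\alpha_4^2\ge\frac{4}{21}\cdot\frac{81}{25}=\frac{108}{175}$.

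The second gap is the passage from $\alpha_{(4,2)}$ to $\alpha_4$. For $\lambda_4$ you need the upper bound $\alpha_{(4,2)}\le\alpha_4+O(n^{-1})$. An $O(n^{-1/2})$ comparison, such as Cauchy--Schwarz applied to $\E[x_1^4(\frac{1}{n-1}\sum_{i\ge2}x_i^2-1)]$, only yields $\frac{108}{175}+O(n^{-1/2})$, which is weaker than the statement. The paper symmetrizes first: $n(n-1)\alpha_{(4,2)}+n\alpha_6-n^2\alpha_4=\mathrm{Cov}_\eta(\|x\|_4^4,\|x\|_2^2)$. This is $O(n)$ by Cauchy--Schwarz, using $\mathrm{Var}_\eta(\|x\|_2^2)=O(n)$ and the unconditional thin-shell bound $\mathrm{Var}_\eta(\|x\|_4^4)=O(n)$; this is Lemma \ref{lem:mixed moments}.

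Your conditioning argument from Step 1 does not substitute for this. Completed via $\E[x_2^2g(x_2)^2]\ge\alpha_{(2,2)}^2$ with $g(x_2)=\E[x_1^2\mid x_2]$, and $\alpha_{(2,2)}\ge1-O(n^{-1})$ by Jensen, it is a valid alternative route to $\alpha_{(4,2)}\ge\frac95-O(n^{-1})$ and hence to the bound on $\lambda_2$. But it bounds $\alpha_{(4,2)}$ from the wrong side for $\lambda_4$.

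A minor point: the upper bound $\alpha_{(2,2,2)}\le1+O(n^{-1})$, needed for $\lambda_2$, does not follow from $\mathrm{Var}_\eta(\|x\|_2^2)=O(n)$ alone. One also needs control of $\E_\eta[(\|x\|_2^2-n)^3]$, e.g.\ the $O(n^{3/2})$ bound from the reverse H\"older inequality used in Lemma \ref{lem:mixed moments}.
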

As in the case $d=2$, Proposition \ref{prop:lower bound on spectrum cubic}
is asymptotically tight, where the minimum is obtained for the isotropic
cube.

\subsubsection{The case $d=4$}

We now consider the case that $d=4$, which is the first case we don't know whether the spectrum of $\mathcal{E}$ is bounded from below
by an absolute constant, for log-concave measures. To find a potentially pathological eigenvalue,
we focus on the isotypic component of the trivial representation of
$H_{n}$ in $\mathcal{P}_{4}(\R^{n})$, namely:
\[
\mathcal{P}_{4}^{H_{n}}(\R^{n})=\left\langle \sum_{i=1}^{n}x_{i}^{4},\sum_{i<j}x_{i}^{2}x_{j}^{2}\right\rangle .
\]

\begin{lem}
Let $n\geq4$ and suppose $\alpha_{(2,2,2,2)}>0$. Then the eigenvalues of $\mathcal{E}|_{\mathcal{P}_{4}^{H_{n}}(\R^{n})}$
are 
\[
\lambda_{1}=\frac{n^{2}\alpha_{(2,2,2,2)}}{2}+O(n),
\]
and 
\[
\lambda_{2}=\left(n\left(\alpha_{(4,4)}-\frac{\alpha_{(4,2,2)}^{2}}{\alpha_{(2,2,2,2)}}\right)+\frac{\alpha_{(4,2,2)}\left(4\alpha_{(4,4)}-4\alpha_{(6,2)}+5\alpha_{(4,2,2)}\right)}{\alpha_{(2,2,2,2)}}+\alpha_{8}-6\alpha_{(4,4)}\right)(1+O(n^{-1})).
\]
\end{lem}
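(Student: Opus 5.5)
We compute the $2\times2$ matrix of $\mathcal{E}$ on $\mathcal{P}_{4}^{H_{n}}(\R^{n})$ explicitly and then read off its eigenvalues from the trace and the determinant.

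\textbf{Step 1: an orthonormal basis.} For the $\mathrm{coeff}_4$ inner product, take
\[
u:=\tfrac{1}{\sqrt n}\textstyle\sum_i x_i^4,\qquad v:=\binom n2^{-1/2}\textstyle\sum_{i<j}x_i^2x_j^2,
\]
that is, $u=\widetilde m_{4,n}$ and $v=\widetilde m_{(2,2),n}$. By $\langle \mathcal{E}f,g\rangle=\E_\eta[fg]$, the matrix entries are $\E_\eta[u^2]$, $\E_\eta[uv]$ and $\E_\eta[v^2]$.

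\textbf{Step 2: the entries.} Using $S_n$-invariance, we expand the products of monomial symmetric polynomials and count index patterns:
\begin{align*}
\E_\eta[u^2]&=\alpha_8+(n-1)\alpha_{(4,4)},\\
\E_\eta[uv]&=\frac{n\binom{n-1}{2}\alpha_{(4,2,2)}+n(n-1)\alpha_{(6,2)}}{\sqrt{n\binom n2}},\\
\E_\eta[v^2]&=\alpha_{(4,4)}+2(n-2)\alpha_{(4,2,2)}+\binom{n-2}{2}\alpha_{(2,2,2,2)}.
\end{align*}
The three terms of $\E_\eta[v^2]$ come from pairs $\{i,j\},\{k,l\}$ that are equal, share exactly one index, or are disjoint. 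Unconditionality kills no terms here, since all exponents are even.

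\textbf{Step 3: eigenvalues.} The trace is $\tfrac{n^2}{2}\alpha_{(2,2,2,2)}+O(n)$. Since $\alpha_{(2,2,2,2)}>0$ and $\det=\E[u^2]\E[v^2]-\E[uv]^2=O(n^3)$, the larger eigenvalue is
\[
\lambda_1=\tr-\det/\tr+\dots=\tfrac{n^2}{2}\alpha_{(2,2,2,2)}+O(n).
\]
For the smaller one we use $\lambda_2=\det/\lambda_1$. Note that $\lambda_2/\lambda_1=O(n^{-1})$, so $\lambda_1=\tr(1-O(n^{-2}))$ is not the right approximation; this is only an $O(1/n)$ relative correction, so writing $\lambda_2=\tfrac{\det}{\tr}(1+O(n^{-1}))$ is legitimate as long as we track the needed orders.

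\textbf{The main obstacle: the determinant to two orders.} We must expand $\det$ through the orders $n^3$ and $n^2$ exactly, and $\tr$ to relative order $n^{-1}$, namely
\[
\tr=\tfrac{n^2}{2}\alpha_{(2,2,2,2)}\Bigl(1+\tfrac{1}{n}\bigl(-5+\tfrac{2(\alpha_{(4,4)}+2\alpha_{(4,2,2)})}{\alpha_{(2,2,2,2)}}\bigr)+O(n^{-2})\Bigr),
\]
with $\binom{n-2}{2}=\tfrac{n^2-5n+6}{2}$. Then we divide.

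For the determinant:
\begin{itemize}
\item The product $\E[u^2]\E[v^2]$ gives $\tfrac{n^3}{2}\alpha_{(4,4)}\alpha_{(2,2,2,2)}$ plus explicit $n^2$ terms involving $\alpha_8\alpha_{(2,2,2,2)}$, $\alpha_{(4,4)}\alpha_{(4,2,2)}$ and $\alpha_{(4,4)}\alpha_{(2,2,2,2)}$.
\item For $\E[uv]^2$, simplify $\binom{n-1}{2}^2/\binom n2=\tfrac{(n-1)(n-2)^2}{2n}$. It then equals $\tfrac{n}{2}\bigl((n-2)\alpha_{(4,2,2)}+2\alpha_{(6,2)}\bigr)^2(1-\tfrac1n)$. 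This gives $\tfrac{n^3}{2}\alpha_{(4,2,2)}^2$ plus $n^2$ terms in $\alpha_{(4,2,2)}^2$ and $\alpha_{(4,2,2)}\alpha_{(6,2)}$.
\end{itemize}

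Dividing, the leading part gives $n\bigl(\alpha_{(4,4)}-\alpha_{(4,2,2)}^2/\alpha_{(2,2,2,2)}\bigr)$. The constant part should assemble into
\[
\frac{\alpha_{(4,2,2)}\bigl(4\alpha_{(4,4)}-4\alpha_{(6,2)}+5\alpha_{(4,2,2)}\bigr)}{\alpha_{(2,2,2,2)}}+\alpha_8-6\alpha_{(4,4)}.
\]
The $-5/n$ correction from the trace combines with the $n^2$ terms of $\det$ to produce the $+5\alpha_{(4,2,2)}^2$ and $-6\alpha_{(4,4)}$ pieces. I would verify this bookkeeping carefully, since it is where sign or factor errors are likely.

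All remaining errors are of relative order $O(n^{-1})$, which yields the stated $(1+O(n^{-1}))$ form. If the leading coefficient happens to vanish, the statement is still correct in this multiplicative form provided the $O(n^{-1})$ is read relative to the displayed quantity; I would note this.
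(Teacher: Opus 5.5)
Your setup is the same as the paper's. You use the basis $\widetilde m_{4,n},\widetilde m_{(2,2),n}$, your three entries are correct, and you expand $\det$ through orders $n^3$ and $n^2$. The problem is your account of the constant term. It is wrong, and the careful check you propose would not confirm it.

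The displayed constant is exactly $\frac{2}{\alpha_{(2,2,2,2)}}$ times the $n^2$-coefficient of $\det$; the paper simply writes $\lambda_2=\frac{2\det}{n^2\alpha_{(2,2,2,2)}}(1+O(n^{-1}))$. The $-6\alpha_{(4,4)}$ comes from $(n-1)(n-2)(n-3)=n^3-6n^2+\cdots$ in $\E_\eta[u^2]\E_\eta[v^2]$. The $+5\alpha_{(4,2,2)}^2$ comes from $(n-1)(n-2)^2=n^3-5n^2+\cdots$ in $\E_\eta[uv]^2$. The trace's $-5/n$ plays no role.

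Suppose you do track the constant. You must then divide by $\tr$ to relative order $n^{-1}$, and also use $\lambda_1=\tr-\lambda_2$, since $\lambda_2/\tr$ is itself of relative order $n^{-1}$. The cleanest route is the exact relation $\lambda_2=p-q^2/(r-\lambda_2)$ with $p=\E_\eta[u^2]$, $q=\E_\eta[uv]$, $r=\E_\eta[v^2]$. Setting $L:=\alpha_{(4,4)}-\alpha_{(4,2,2)}^2/\alpha_{(2,2,2,2)}$, this gives
\[
\lambda_2=nL+\alpha_8-\alpha_{(4,4)}-\frac{4\alpha_{(6,2)}\alpha_{(4,2,2)}}{\alpha_{(2,2,2,2)}}+\frac{4\alpha_{(4,2,2)}^3}{\alpha_{(2,2,2,2)}^2}-\frac{2\alpha_{(4,2,2)}^2L}{\alpha_{(2,2,2,2)}^2}+O(n^{-1}).
\]
This differs from the displayed constant by $L\left(5-\frac{4\alpha_{(4,2,2)}}{\alpha_{(2,2,2,2)}}-\frac{2\alpha_{(4,2,2)}^2}{\alpha_{(2,2,2,2)}^2}\right)$. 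In particular, the $-5/n$ in the trace contributes $+5L$. That turns $-6\alpha_{(4,4)}$ into $-\alpha_{(4,4)}$ and cancels the $5\alpha_{(4,2,2)}^2/\alpha_{(2,2,2,2)}$ term, rather than producing them.

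What is missing is the observation that this discrepancy is a multiple of $L$.
\begin{itemize}
\item If $L\neq0$, the constant is immaterial. Any bounded constant is absorbed into the relative $(1+O(n^{-1}))$ error, which is why the paper can ignore all lower-order corrections to $\tr$ and $\lambda_1$.
\item If $L=0$, the discrepancy vanishes and the displayed constant is exactly right. This is the case where the constant is the leading term. It holds for every product measure (including the Gaussian remark giving $24$) and for every $\mu_{n,p}$.
\end{itemize}
So your closing caveat has the vanishing case backwards. It cannot be dismissed by reading the error relative to the displayed quantity. It is precisely the case in which the constant must be correct, and it is, for the reason above.

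Relatedly, pairing $\lambda_2=\frac{\det}{\tr}(1+O(n^{-1}))$ with a relative-order-$n^{-1}$ expansion of $\tr$ is inconsistent. Either both corrections matter (for the exact constant) or neither does (for the stated multiplicative form).
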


\begin{proof}
Let $\widetilde{m}_{4,n}:=\frac{1}{\sqrt{n}}m_{4,n}$ and $\widetilde{m}_{(2,2),n}:=\frac{1}{\sqrt{n(n-1)/2}}m_{(2,2),n}$
be as in Definition \ref{def:symmetric monomials}. Then: 
\begin{align*}
 & \mathcal{E}=\left(\begin{array}{cc}
\E_{\eta}[(\widetilde{m}_{4,n})^{2}] & \E_{\eta}[\widetilde{m}_{4,n}\cdot\widetilde{m}_{(2,2),n}]\\
\E_{\eta}[\widetilde{m}_{4,n}\cdot\widetilde{m}_{(2,2),n}] & \E_{\eta}[(\widetilde{m}_{(2,2),n})^{2}]
\end{array}\right)\\
= & \left(\begin{array}{cc}
\frac{1}{n}\E_{\eta}\left[m_{8,n}+2m_{(4,4),n}\right] & \frac{1}{n\sqrt{(n-1)/2}}\E_{\eta}\left[m_{(6,2),n}+m_{(4,2,2),n}\right]\\
\frac{1}{n\sqrt{(n-1)/2}}\E_{\eta}\left[m_{(6,2),n}+m_{(4,2,2),n}\right]\,\, & \frac{2}{n(n-1)}\E_{\eta}\left[m_{(4,4),n}+6m_{(2,2,2,2),n}+2m_{(4,2,2),n}\right]
\end{array}\right)\\
= & \left(\begin{array}{cc}
\alpha_{8}+(n-1)\alpha_{(4,4)} & \sqrt{\frac{n-1}{2}}\left(2\alpha_{(6,2)}+\alpha_{(4,2,2)}(n-2)\right)\\
\sqrt{\frac{n-1}{2}}\left(2\alpha_{(6,2)}+\alpha_{(4,2,2)}(n-2)\right) & \,\,\alpha_{(4,4)}+\frac{1}{2}(n-2)(n-3)\alpha_{(2,2,2,2)}+2(n-2)\alpha_{(4,2,2)}
\end{array}\right).
\end{align*}
Hence, 
\[
\tr(\mathcal{E})=\alpha_{8}+n\alpha_{(4,4)}+\frac{1}{2}(n-2)(n-3)\alpha_{(2,2,2,2)}+2(n-2)\alpha_{(4,2,2)}=\frac{n^{2}\alpha_{(2,2,2,2)}}{2}+O(n).
\]
The determinant is 
\begin{align}
\det(\mathcal{E})= & \left(\alpha_{8}+(n-1)\alpha_{(4,4)}\right)\left(\alpha_{(4,4)}+\frac{1}{2}(n-2)(n-3)\alpha_{(2,2,2,2)}+2(n-2)\alpha_{(4,2,2)}\right)\nonumber \\
 & -\frac{(n-1)}{2}\left(2\alpha_{(6,2)}+\alpha_{(4,2,2)}(n-2)\right)^{2}\nonumber \\
= & \alpha_{8}\alpha_{(4,4)}+\frac{1}{2}(n-2)(n-3)\alpha_{8}\alpha_{(2,2,2,2)}+(n-1)\alpha_{(4,4)}^{2}+\frac{1}{2}(n-1)(n-2)(n-3)\alpha_{(4,4)}\alpha_{(2,2,2,2)}\nonumber \\
 & +2(n-2)\alpha_{(4,2,2)}\alpha_{8}+2(n-1)(n-2)\alpha_{(4,2,2)}\alpha_{(4,4)}\nonumber \\
 & -\frac{(n-1)}{2}\left(4\alpha_{(6,2)}^{2}+\alpha_{(4,2,2)}^{2}(n-2)^{2}+4(n-2)\alpha_{(6,2)}\alpha_{(4,2,2)}\right)\nonumber \\
= & \left((n-3)\alpha_{(4,4)}\alpha_{(2,2,2,2)}-(n-2)\alpha_{(4,2,2)}^{2}\right)\frac{(n-1)(n-2)}{2}\nonumber \\
 & +\frac{1}{2}(n-2)(n-3)\alpha_{8}\alpha_{(2,2,2,2)}-2(n-1)(n-2)\alpha_{(6,2)}\alpha_{(4,2,2)}+2(n-1)(n-2)\alpha_{(4,2,2)}\alpha_{(4,4)}\nonumber \\
 & +(n-1)\left(\alpha_{(4,4)}^{2}-\frac{\alpha_{(6,2)}^{2}}{2}+2\alpha_{(4,2,2)}\alpha_{8}\right)+\alpha_{8}\alpha_{(4,4)}-2\alpha_{(4,2,2)}\alpha_{8}.\nonumber \\
= & \frac{n^{3}}{2}\left(\alpha_{(4,4)}\alpha_{(2,2,2,2)}-\alpha_{(4,2,2)}^{2}\right)\nonumber \\
 & +n^{2}\left(2\alpha_{(4,2,2)}\alpha_{(4,4)}-2\alpha_{(6,2)}\alpha_{(4,2,2)}+\frac{1}{2}\alpha_{8}\alpha_{(2,2,2,2)}-3\alpha_{(4,4)}\alpha_{(2,2,2,2)}+\frac{5}{2}\alpha_{(4,2,2)}^{2}\right)+O(n).\label{eq:formula for det}
\end{align}
Note that the eigenvalues are 
\[
\frac{\tr(\mathcal{E})\pm\sqrt{\tr(\mathcal{E})^{2}-4\det(\mathcal{E})}}{2}=\frac{\tr(\mathcal{E})\pm\tr(\mathcal{E})\sqrt{1-4\frac{\det(\mathcal{E})}{\tr(\mathcal{E})^{2}}}}{2}.
\]
We have $\det(\mathcal{E})=\mathrm{O}(n^{3})$ and $\tr(\mathcal{E})^{2}=\Theta(n^{4})$
so for $n\gg1$, the eigenvalues are: 
\[
\lambda_{1}=\tr(\mathcal{E})+O(n)=\frac{n^{2}\alpha_{(2,2,2,2)}}{2}+O(n),
\]
and 
\[
\lambda_{2}=\frac{\det(\mathcal{E})}{\tr(\mathcal{E})}(1+O(n^{-1}))=\frac{2\det(\mathcal{E})}{n^{2}\alpha_{(2,2,2,2)}}(1+O(n^{-1})).
\]
Plugging (\ref{eq:formula for det}) implies the lemma. 
\end{proof}
\begin{rem}
For the standard Gaussian measure on $\R^{n}$,
$\alpha_{(4,4)}-\frac{\alpha_{(4,2,2)}^{2}}{\alpha_{(2,2,2,2)}}=0$
and then 
\[
\lambda_{2}=\left(4\alpha_{4}^{3}-4\alpha_{6}\alpha_{4}+\alpha_{8}-\alpha_{4}^{2}\right)(1+O(n^{-1}))=24+O(n^{-1})
\]
so the eigenvalues are $(\frac{n^{2}}{2}+O(n),24+O(n^{-1}))$. However, in general it is not true that $\alpha_{(4,4)}\alpha_{(2,2,2,2)}=\alpha_{(4,2,2)}^{2}$.
\end{rem}
\section{On the roles of symmetry, homogeneity, and the normalization} \label{sec:just}
In this section, we expand upon the conditions imposed by Theorem \ref{thm:lpball} and \ref{thm:symmetric}. Specifically, we will explain why changing any of the conditions could either render these results null or lead to completely different statements. Below, we focus on the role of symmetry, the focus on homogeneous polynomials, and the normalization by $\mathrm{coeff}_d$.
\subsection{Lack of symmetry can lead to pathological polynomials} \label{sec:product}
We will show that the assumption of $H_n$-invariance helps us rule out potentially pathological behavior. We demonstrate this through one bad example, where a slight lack of symmetry can create polynomials with poor anti-concentration properties. As will become clear, this construction can be generalized to create many other examples. 
Let $\mu_{n}:=\mu_{n,2}^{\otimes2}$ be
a product of two isotropic $L_{2}$-balls, which is an isotropic and log-concave measure. The next proposition shows that the spectrum of $\mathcal{E}$
(and thus also $\mathcal{V}$) is not bounded from below. 
\begin{prop}
Let $f_{n}:\R^{2n}\rightarrow\R$ be the homogeneous polynomial 
\[
f_{n}=\frac{1}{\sqrt{2n}}\left(x_{1}^{2}+...+x_{n}^{2}-x_{n+1}^{2}-...-x_{2n}^{2}\right).
\]
Then for every $n\in\N$ we have $\mathrm{Var}_{\mu_{n}}(f_{n})\leq \frac{4}{n}$, and 
\[
\mathbb{P}\left(\left|f_{n}\right|<\frac{4}{\sqrt{n}}\right)\geq\frac{3}{4}.
\]
In particular, there are no $C(d)>0$ and $\alpha(d)>0$ such that
for every $\varepsilon>0$: 
\[
\mathbb{P}\left(\left|f_{n}\right|<\varepsilon\right)<C(d)\varepsilon^{\alpha(d)}.
\]
\end{prop}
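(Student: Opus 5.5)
We write $X=(Y,Y')$ with $Y,Y'$ independent, each uniformly distributed on the isotropic Euclidean ball $B_{n,2}\subset\R^n$. Then
\[
f_n(X)=\frac{1}{\sqrt{2n}}\left(\|Y\|_2^2-\|Y'\|_2^2\right),
\]
so $\E[f_n]=0$ by symmetry between the two factors. By independence,
\[
\mathrm{Var}_{\mu_n}(f_n)=\frac{1}{2n}\cdot 2\,\mathrm{Var}_{\mu_{n,2}}\left(\|Y\|_2^2\right)=\frac{1}{n}\mathrm{Var}_{\mu_{n,2}}\left(\|Y\|_2^2\right).
\]
The whole problem therefore reduces to bounding the variance of the squared norm on the isotropic Euclidean ball. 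The observation behind the example is that $\|x\|_2^2$ concentrates tightly there, while $f_n$ still has $\mathrm{coeff}_2(f_n)=1$.

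\textbf{Computing the variance.} We use Lemma \ref{lem:properties of integral on L_p ball} with $p=2$, $f\equiv1$, and $\beta=2,4$:
\[
\E\|Y\|_2^2=\frac{n}{n+2}R_{n,2}^2,\qquad \E\|Y\|_2^4=\frac{n}{n+4}R_{n,2}^4 .
\]
Hence
\[
\mathrm{Var}\left(\|Y\|_2^2\right)=R_{n,2}^4\left(\frac{n}{n+4}-\frac{n^2}{(n+2)^2}\right)=R_{n,2}^4\,\frac{4n}{(n+4)(n+2)^2}.
\]
Isotropy gives $\E\|Y\|_2^2=n$, so $R_{n,2}^2=n+2$. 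Substituting,
\[
\mathrm{Var}\left(\|Y\|_2^2\right)=\frac{4n}{n+4}\le 4.
\]
Therefore $\mathrm{Var}_{\mu_n}(f_n)\le \frac{4}{n}$. This step is a direct computation and should pose no difficulty. The only care needed is to get the normalization $R_{n,2}^2=n+2$ right; it can be read off from \eqref{eq:isotropic radius} or from the isotropy identity above.

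\textbf{Small-ball probability.} Since $\E f_n=0$, Chebyshev's inequality gives
\[
\P\left(|f_n|\ge \tfrac{4}{\sqrt n}\right)\le \frac{4/n}{16/n}=\frac14,
\]
which is exactly the claimed bound $\P\left(|f_n|<\frac{4}{\sqrt n}\right)\ge\frac34$.

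\textbf{Ruling out a uniform small-ball estimate.} Suppose there were $C,\alpha>0$ with $\P(|f_n|<\varepsilon)<C\varepsilon^\alpha$ for all $n$ and all $\varepsilon>0$. Taking $\varepsilon=4/\sqrt n$ would give
\[
\frac34< C\,4^\alpha n^{-\alpha/2}.
\]
The right-hand side tends to $0$ as $n\to\infty$, which is a contradiction. (Here $d=2$ is the degree, so $C(d)$ and $\alpha(d)$ are just constants independent of $n$.)

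I do not expect any real obstacle. The only points that need attention are the constant bookkeeping in the variance and the observation that $\mu_n$ is indeed isotropic and log-concave, being a product of two isotropic log-concave measures.
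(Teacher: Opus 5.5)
Your proposal is correct and follows essentially the same route as the paper: write $f_n$ as the difference of two independent squared-norm pieces, compute $\mathrm{Var}_{\mu_n}(f_n)=\frac{4}{n+4}<\frac{4}{n}$, and apply Chebyshev. Your derivation of $\mathrm{Var}(\|Y\|_2^2)=\frac{4n}{n+4}$ from the polar-integration moment formula with $R_{n,2}^2=n+2$ is the same computation; the paper simply states the mean $\sqrt{n/2}$ and variance $\frac{2}{n+4}$ of each piece. Your explicit argument with $\varepsilon=4/\sqrt{n}$ and $n\to\infty$ spells out the final claim, which the paper leaves implicit.
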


\begin{proof}
Write $f_{n}=f_{1,n}(x_{1},...,x_{n})+f_{2,n}(x_{n+1},...,x_{2n})$.
Then $\mathbb{E}_{\mu_{n}}[f_{1,n}]=-\mathbb{E}_{\mu_{n}}[f_{2,n}]=\sqrt{\frac{n}{2}}$
and $\mathrm{Var}_{\mu_{n}}(f_{1,n})=\mathrm{Var}_{\mu_{n}}(f_{2,n})=\frac{2}{n+4}$.
Hence, $\mathbb{E}_{\mu_{n}}[f_{n}]=0$ and $\mathrm{Var}_{\mu_{n}}(f_{n})=\frac{4}{n+4}<\frac{4}{n}$.
By Chebyshev's inequality: 
\[
\mathbb{P}\left(\left|f_{n}\right|<\frac{4}{\sqrt{n}}\right)\geq\frac{3}{4}.\qedhere
\]
\end{proof}
\subsection{Non-homogeneous polynomials can have much smaller variance}
Both Theorem \ref{thm:lpball} and Theorem \ref{thm:symmetric} focus on homogeneous polynomials. We will now show that allowing for non-homogeneous polynomials could drastically alter the statement of these results, as well as the classification of the pathological polynomials. For concreteness, we focus on $\mu_{n,2}$, the uniform measure on the isotropic Euclidean ball. Under this measure, let $k \in \mathbb{N}$ and consider the non-homogeneous polynomial $f_{n,k}(x) = n^{-\frac{k}{2}}\left(\|x\|_2^2 - \E_{\mu_{n,2}}[\|x\|_2^2]\right)^k$. This is a polynomial of degree $2k$ with $\mathrm{coeff}_{2k}(f_{n,k}) = \Theta_k(1)$, yet by the reverse H\"older inequality for log-concave measures, as in \cite{carbery2001distributional}, there exists $C_k > 0$ depending only on $k$, such that:
$$\E_{\mu_{n,2}}\left[f_{n,k}^2\right]\leq \frac{C_k}{n^k}\E_{\mu_{n,2}}\left[\left(\|x\|_2^2 - \E_{\mu_{n,2}}[\|x\|_2^2]\right)^2\right]^k = O_k(n^{-k}),$$
where we applied Theorem \ref{thm:Main theorem L^p} for the last inequality. Thus, the lower bound of $\Theta(n^{-1})$ for the minimal variance fails to hold in the general case.

To further explain why the situation is different for non-homogeneous polynomials, we also remark that if $g$ is {\bf any} other fixed polynomial, then by the Cauchy-Schwarz inequality and applying the reverse H\"older inequality again,
$$\E_{\mu_{n,2}}\left[(g\cdot f_{n,k})^2\right]\lesssim \E_{\mu_{n,2}}[g^2]\sqrt{\E_{\mu_{n,2}}\left[(f_{n,k})^4\right]} = \E_{\mu_{n,2}}[g^2]\sqrt{\E_{\mu_{n,2}}\left[(f_{n,2k})^2\right]} \lesssim \frac{\E_{\mu_{n,2}}[g^2]}{n^k}.$$
Thus, $g\cdot f_{n,k}$ can also have a very small second moment. In addition, in the non-homogeneous case it is not clear what the correct normalization is. In the next section, we see that the naive guess, summing the square of all coefficients, does not give desired results. 
\subsection{Normalizing by $\mathrm{coeff}_d$ captures the correct scale}
Finally, we explain the role of $\mathrm{coeff}_d$, which only sees the $d$-homogeneous part of a degree-$d$ polynomial. If $f(x) = \sum\limits_{|I|\leq d}\alpha_Ix^I$ is non-homogeneous, it seems natural to define
$$\mathrm{coeff}(f)^2 = \sum\limits_{0\neq I} \alpha_I^2.$$ 
However, for $f_{n,k}(x) = n^{-\frac{k}{2}}\left(\|x\|_2^2 - \E_{\mu_{n,2}}[\|x\|_2^2]\right)^k$ as in the example above, $\mathrm{coeff}_{2k}(f_{n,k}) = \Theta_k(1)$, yet $\mathrm{coeff}(f_{n,k}) = \Omega_k(n^\frac{k-1}{2})$. On the other hand, if $\gamma_n$ is the standard Gaussian in $\R^n$, then a calculation and another application of the reverse H\"older inequality give:
$$\mathrm{Var}_\gamma(f_{n,k})\leq \E_\gamma\left[f^2_{n,k}\right] = O_k(1).$$
Thus, the normalization $\mathrm{coeff}$ is overly pessimistic: it grows like
$n^{(k-1)/2}$, while the second moment under the Gaussian remains bounded. By contrast,
$\mathrm{coeff}_{2k}$ stays of constant order and therefore captures the correct scale,
consistent with \cite[Theorem 1]{GM22} for non-homogeneous polynomials of product measures.

\appendix


\section{Proof of Proposition \ref{prop:Lower bound on second moment} and Proposition \ref{prop:lower bound on spectrum cubic}}
\label{sec:Proof-of-Propositions}

In this appendix, we prove Propositions \ref{prop:Lower bound on second moment}
and \ref{prop:lower bound on spectrum cubic}, thus completing the proof of Theorem \ref{thm:symmetric}. We start with Lemmas
\ref{lem:Inequality on mements} and \ref{lem:mixed moments} below. The first lemma, about possible growth of moments of even log-concave measures, is due to Egor Kosov.
\begin{lem}
\label{lem:Inequality on mements}Let $\rho$ be an even log-concave
density on $\R$. Then for any $k,m\in\N$: 
\[
\left(\int_{-\infty}^{\infty}t^{m+k}\rho(t)dt\right)^{2}\leq\left(1-\frac{(m-k)^{2}}{(m+k+1)^{2}}\right)\left(\int_{-\infty}^{\infty}t^{2m}\rho(t)dt\right)\left(\int_{-\infty}^{\infty}t^{2k}\rho(t)dt\right).
\]
\end{lem}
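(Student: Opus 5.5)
The key observation is that the constant factors nicely:
\[
1-\frac{(m-k)^{2}}{(m+k+1)^{2}}=\frac{(m+k+1)^{2}-(m-k)^{2}}{(m+k+1)^{2}}=\frac{(2m+1)(2k+1)}{(m+k+1)^{2}}.
\]
Writing $M_{s}:=\int_{-\infty}^{\infty}t^{s}\rho(t)dt$, the claimed inequality is therefore equivalent to
\[
\bigl((m+k+1)M_{m+k}\bigr)^{2}\leq\bigl((2m+1)M_{2m}\bigr)\bigl((2k+1)M_{2k}\bigr).
\]
This is exactly midpoint log-convexity of $s\mapsto(s+1)M_{s}$ at $s=2m,\ 2k$, with midpoint $m+k$. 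The plan is to prove this log-convexity by writing $\rho$ as a mixture of uniform densities and then applying Cauchy--Schwarz. The uniform density on $[-u,u]$ gives $(s+1)M_{s}=u^{s}$ for even $s$, so the inequality is an equality for uniform $\rho$ and the constant is sharp. If $m+k$ is odd, the left-hand side vanishes because $\rho$ is even, so we may assume $m+k$ is even. Then all three moments are integrals of the even function $t^{s}\rho$, so it suffices to work on $[0,\infty)$.

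\emph{Step 1 (unimodality).} Since $\rho$ is even and log-concave, it is non-increasing on $[0,\infty)$. Log-concavity also gives exponential tails, so all moments are finite. Only this monotonicity will be used.

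\emph{Step 2 (layer-cake representation).} A non-increasing, right-continuous (after modification on a null set) $\rho\geq0$ on $[0,\infty)$ with $\rho\to0$ can be written as
\[
\rho(t)=\int_{(0,\infty)}\mathbf{1}_{[0,u)}(t)\,d\nu(u)
\]
for a positive Borel measure $\nu$, namely $\nu$ is minus the distributional derivative of $\rho$. By Fubini, for every $s\geq0$,
\[
\int_{0}^{\infty}t^{s}\rho(t)dt=\int\frac{u^{s+1}}{s+1}\,d\nu(u).
\]
Hence $(s+1)M_{s}=2\int u^{s}\,d\tilde\nu(u)$, where $d\tilde\nu(u)=u\,d\nu(u)$. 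The measure $\tilde\nu$ is a positive measure, finite since $\int u\,d\nu=\int_{0}^{\infty}\rho=\tfrac12$.

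\emph{Step 3 (Cauchy--Schwarz).} Write $u^{m+k}=u^{m}\cdot u^{k}$. Cauchy--Schwarz in $L^{2}(\tilde\nu)$ gives
\[
\Bigl(\int u^{m+k}d\tilde\nu\Bigr)^{2}\leq\int u^{2m}d\tilde\nu\int u^{2k}d\tilde\nu.
\]
Multiplying through by $4$ yields the reformulated inequality, and hence the lemma.

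The only step needing care is Step 2: justifying the representation of $\rho$ and the use of Fubini. Both are routine, since all integrands are nonnegative and $\rho$ decreases to $0$ at infinity (log-concave densities decay exponentially). Everything else is the algebraic identity for the constant together with Cauchy--Schwarz.
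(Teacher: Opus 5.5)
Your proof is correct and is essentially the paper's argument: the paper also writes $\rho$ via the layer-cake formula as a superposition of uniform densities on the symmetric level sets $[-a_s,a_s]$, uses the exact equality case for uniform densities, and then applies Cauchy--Schwarz in the level variable $s$. Your parametrization by the half-width $u$ with weight $u\,d\nu(u)$ is just the change of variables $u=a_s$, and your reformulation as log-convexity of $s\mapsto(s+1)M_s$ makes the source of the sharp constant (and the fact that only evenness and monotonicity on $[0,\infty)$ are used) a bit more transparent.
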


\begin{proof}
Without loss of generality, we can assume that $m+k$ is an even number.
We first consider the case where $\rho(t)$ is the density of the uniform
distribution on a symmetric segment $[-a,a]$. In this case, on the
one hand, we have 
\[
\left(\frac{1}{2a}\int_{-a}^{a}t^{m+k}\,dt\right)^{2}=\left(\frac{1}{a}\int_{0}^{a}t^{m+k}\,dt\right)^{2}=\frac{a^{2m+2k}}{(m+k+1)^{2}}.
\]
On the other hand, 
\[
\frac{1}{2a}\int_{-a}^{a}t^{2m}\,dt=\frac{1}{a}\int_{0}^{a}t^{2m}\,dt=\frac{a^{2m}}{2m+1},\text{ \,\,and\,\, }\frac{1}{2a}\int_{-a}^{a}t^{2k}\,dt=\frac{a^{2k}}{2k+1}.
\]
Thus 
\begin{equation}
\frac{\left(\frac{1}{2a}\int_{-a}^{a}t^{m+k}\,dt\right)^{2}}{\left(\frac{1}{2a}\int_{-a}^{a}t^{2m}\,dt\right)\left(\frac{1}{2a}\int_{-a}^{a}t^{2k}\,dt\right)}=\frac{(2m+1)(2k+1)}{(m+k+1)^{2}}=1-\frac{(m-k)^{2}}{(m+k+1)^{2}}.\label{eq:even segment}
\end{equation}
We now assume that $\rho$ is any even log-concave density, so the
level sets $A_{s}:=\{t:\rho(t)\ge s\}=[-a_{s},a_{s}]$ are symmetric
segments. Thus, 
\begin{align*}
\int_{-\infty}^{\infty}t^{m+k}\rho(t)\,dt & =\int_{-\infty}^{\infty}t^{m+k}\int_{0}^{\infty}1_{\{\rho(t)\ge s\}}(s,t)\,ds\,dt=\int_{0}^{\infty}\int_{-\infty}^{\infty}t^{m+k}1_{A_{s}}(t)\,dt\,ds\\
 & =\int_{0}^{\infty}\left(\frac{1}{2a_{s}}\int_{-a_{s}}^{a_{s}}t^{m+k}\,dt\right)2a_{s}\,ds.\tag{\ensuremath{\star}}
\end{align*}
By (\ref{eq:even segment}), by the Cauchy--Schwarz inequality, and by
Fubini's theorem, 
\begin{align*}
(\star) & =\left(1-\frac{(m-k)^{2}}{(m+k+1)^{2}}\right)^{1/2}\int_{0}^{\infty}\left(\frac{1}{2a_{s}}\int_{-a_{s}}^{a_{s}}t^{2m}dt\right)^{\frac{1}{2}}\left(\frac{1}{2a_{s}}\int_{-a_{s}}^{a_{s}}t^{2k}\,dt\right)^{\frac{1}{2}}2a_{s}\,ds\\
 & =\left(1-\frac{(m-k)^{2}}{(m+k+1)^{2}}\right)^{1/2}\int_{0}^{\infty}\left(\int_{-\infty}^{\infty}t^{2m}1_{A_{s}}(t)\,dt\right)^{\frac{1}{2}}\left(\int_{-\infty}^{\infty}t^{2k}1_{A_{s}}(t)\,dt\right)^{\frac{1}{2}}\,ds\\
 & \le\left(1-\frac{(m-k)^{2}}{(m+k+1)^{2}}\right)^{1/2}\left(\int_{0}^{\infty}\int_{-\infty}^{\infty}t^{2m}1_{\{\rho(t)\ge s\}}(s,t)\,dt\,ds\right)^{\frac{1}{2}}\left(\int_{0}^{\infty}\int_{-\infty}^{\infty}t^{2k}1_{\{\rho(t)\ge s\}}(s,t)\,dt\,ds\right)^{\frac{1}{2}}\\
 & =\left(1-\frac{(m-k)^{2}}{(m+k+1)^{2}}\right)^{1/2}\left(\int_{-\infty}^{\infty}t^{2m}\rho(t)\,dt\right)^{\frac{1}{2}}\left(\int_{-\infty}^{\infty}t^{2m}\rho(t)\,dt\right)^{\frac{1}{2}}.
\end{align*}
The lemma is proved. 
\end{proof}
\begin{lem}
\label{lem:mixed moments}For every $H_{n}$-invariant, log-concave,
isotropic measure $\eta$ on $\R^{n}$: 
\[
\alpha_{(2,2,2)}=\alpha_{(2,2)}=1+O(n^{-1})\text{ and }\alpha_{(4,2)}=\alpha_{4}+O(n^{-1}).
\]
\end{lem}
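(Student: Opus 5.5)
The plan is to get all three identities by expanding covariances of $H_n$-symmetric polynomials against $\|x\|_2^2$ in terms of the moments $\alpha_{(\cdot)}$, and to control those covariances with the thin-shell estimate for unconditional log-concave measures plus reverse H\"older inequalities. Throughout I use that, by log-concavity and isotropy, every one-dimensional marginal moment $\alpha_{2k}$ (in particular $\alpha_4,\alpha_6,\alpha_8$) is $O(1)$. I also use the reverse H\"older inequality for polynomials under log-concave measures (as in \cite{carbery2001distributional}).

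\textbf{Step 1: $\alpha_{(2,2)}$.} By $S_n$-invariance,
\[
\mathrm{Var}_\eta(\|x\|_2^2)=n\alpha_4+n(n-1)\alpha_{(2,2)}-n^2 ,
\]
so
\[
(n-1)(\alpha_{(2,2)}-1)=\tfrac{1}{n}\mathrm{Var}_\eta(\|x\|_2^2)-(\alpha_4-1).
\]
The lower bound $\mathrm{Var}\ge 0$ is trivial. For the upper bound I invoke Klartag's thin-shell theorem for unconditional isotropic log-concave vectors, which gives $\mathrm{Var}_\eta(\|x\|_2^2)\le Cn$. Together these give $\alpha_{(2,2)}=1+O(n^{-1})$.

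\textbf{Step 2: $\alpha_{(2,2,2)}$.} By symmetry,
\[
\E[x_1^2x_2^2\|x\|_2^2]-\E[x_1^2x_2^2]\,\E\|x\|_2^2=2\alpha_{(4,2)}+(n-2)\alpha_{(2,2,2)}-n\alpha_{(2,2)} .
\]
Averaging over pairs, the left side equals $\binom n2^{-1}\mathrm{Cov}(m_{(2,2),n},\|x\|_2^2)$, and $m_{(2,2),n}=\tfrac12(\|x\|_2^4-m_{4,n})$. I would bound the two pieces separately.
\begin{itemize}
\item Write $\|x\|_2^2=n+Y$ with $\E Y^2=O(n)$. Then $\mathrm{Cov}(\|x\|_2^4,\|x\|_2^2)=2n\E Y^2+\E Y^3$. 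Reverse H\"older applied to the quadratic $Y$ gives $\E|Y|^3\lesssim(\E Y^2)^{3/2}$, so this covariance is $O(n^2)$.
\item By Cauchy--Schwarz, $\mathrm{Cov}(m_{4,n},\|x\|_2^2)\le (\E m_{4,n}^2)^{1/2}\,O(\sqrt n)=O(n^{3/2})$, using $\E m_{4,n}^2\le n^2\alpha_8$.
\end{itemize}
Hence the left side is $O(1)$. Since $\alpha_{(4,2)}\le \alpha_4^{1/2}\alpha_8^{1/2}$-type bounds make it $O(1)$, dividing by $n-2$ yields $\alpha_{(2,2,2)}=\alpha_{(2,2)}+O(n^{-1})=1+O(n^{-1})$.

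\textbf{Step 3: $\alpha_{(4,2)}$, the main obstacle.} The analogous identity is
\[
\mathrm{Cov}(x_1^4,\|x\|_2^2)=\alpha_6+(n-1)\alpha_{(4,2)}-n\alpha_4 ,
\]
so I need $\mathrm{Cov}(x_1^4,\|x\|_2^2)=O(1)$. The crude Cauchy--Schwarz bound only gives $O(\sqrt n)$, because averaging over coordinates no longer gains a factor. What I would try first is to redo the thin-shell argument with a weight. Klartag's proof bounds $\mathrm{Var}(\|x\|_2^2)$ by a sum of coordinatewise terms $\sum_i \E[x_i^2\,\partial_i(\cdot)^2]$, obtained from unconditionality and a one-dimensional Poincaré/log-concavity step along each coordinate after conditioning on the others. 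I would run the same scheme for the bilinear quantity $\mathrm{Cov}(g(x_1),\|x\|_2^2)$ with $g(t)=t^4$. The hope is that only the $i=1$ term and $O(1)$ cross terms survive, giving $O(1)$.

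As a fallback, I would condition on $x_1$. The conditional law of $(x_2,\dots,x_n)$ is again unconditional, $S_{n-1}$-invariant and log-concave (though not isotropic), so
\[
\mathrm{Cov}(x_1^4,\|x\|_2^2)=\mathrm{Cov}\bigl(x_1^4,\;x_1^2+(n-1)\,\E[x_2^2\mid x_1]\bigr).
\]
It then suffices to show $\E[x_2^2\mid x_1]=1+O\bigl((1+|x_1|^{k})/n\bigr)$ for some fixed $k$. I would obtain this from Step 1 applied to the conditional measures, using the fact that $\|x\|_2^2$ concentrates within $O(\sqrt n)$ of $n$ uniformly over slices. Once this covariance bound is in place, $\alpha_{(4,2)}=\alpha_4+O(n^{-1})$ follows immediately from the displayed identity.
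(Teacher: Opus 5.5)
Your Steps 1 and 2 are fine. Step 1 is the paper's argument. Step 2 repackages the paper's expansion of $\E_\eta\|x\|_2^6$ with reverse H\"older for $(\|x\|_2^2-n)^3$, using the same ingredients.

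The gap is in Step 3: neither of your two routes proves $\mathrm{Cov}_\eta(x_1^4,\|x\|_2^2)=O(1)$.

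The first route is only a hope. A coordinatewise variance inequality for unconditional log-concave measures bounds variances, not covariances term by term. Polarizing it through Cauchy--Schwarz gives $|\mathrm{Cov}(x_1^4,\|x\|_2^2)|\le(\mathrm{Var}\,x_1^4)^{1/2}(\mathrm{Var}\,\|x\|_2^2)^{1/2}=O(\sqrt n)$ again. Nothing in that inequality isolates the $i=1$ term, because the measure couples the coordinates.

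The fallback rests on the pointwise bound $\E[x_2^2\mid x_1]=1+O((1+|x_1|^k)/n)$, and this does not follow from what you cite. Write $\sigma(t)^2=\E[x_2^2\mid x_1=t]$.
\begin{itemize}
\item Step 1 applied to the rescaled (isotropic) slices only gives $\E[x_2^2x_3^2\mid x_1=t]=\sigma(t)^4(1+O(n^{-1}))$. It says nothing about $\sigma(t)$ itself.
\item Thin shell only gives $\E\bigl[(\E[\|x\|_2^2\mid x_1]-n)^2\bigr]\le\mathrm{Var}_\eta(\|x\|_2^2)=O(n)$, i.e.\ $\sigma(x_1)^2-1=O(n^{-1/2})$ in $L^2$.
\item Even concentration on each slice only places $\E[\|x\|_2^2\mid x_1=t]$ within $O(\sqrt n)$ of $n$.
\end{itemize}
Plugged into your conditional identity, this reproduces exactly the $O(\sqrt n)$ bound.

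The missing ingredient is a thin-shell bound for the $\ell_4$ norm: for unconditional log-concave $\eta$, $\mathrm{Var}_\eta(\|x\|_4^4)=O(n)$. As in the paper, this follows from the unconditional variance estimate of \cite[Lemma 12.4.8]{BGVV14} applied to $f=\sum_i x_i^4$, with each coordinate contributing $O(1)$. The trivial bound is only $O(n^2)$. With this bound, your claim that averaging over coordinates gains nothing is false. By $S_n$-invariance and Cauchy--Schwarz,
\[
n\bigl|\alpha_6+(n-1)\alpha_{(4,2)}-n\alpha_4\bigr|=\bigl|\mathrm{Cov}_\eta\bigl(\|x\|_4^4,\|x\|_2^2\bigr)\bigr|\le\bigl(\mathrm{Var}_\eta(\|x\|_4^4)\,\mathrm{Var}_\eta(\|x\|_2^2)\bigr)^{1/2}=O(n).
\]
Hence $\mathrm{Cov}_\eta(x_1^4,\|x\|_2^2)=O(1)$, and your displayed identity gives $\alpha_{(4,2)}=\alpha_4+O(n^{-1})$. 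This is exactly the paper's argument. Without the $O(n)$ bound on $\mathrm{Var}_\eta(\|x\|_4^4)$, the symmetrized Cauchy--Schwarz does only give $O(\sqrt n)$, which is where you got stuck.
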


\begin{proof}
Since $\eta$ is isotropic, the thin-shell theorem \cite[Theorem 1.1]{klartag2025thin}
gives an a priori bound on $\mathrm{Var}_{\eta}(\left\Vert x\right\Vert _{2}^{2})$,
so 
\begin{equation}
n(n-1)\alpha_{(2,2)}+n\cdot\alpha_{4}=\E_{\eta}\left[\left\Vert x\right\Vert _{2}^{4}\right]=\mathrm{Var}_{\eta}(\left\Vert x\right\Vert _{2}^{2})+n^{2}=n^{2}+O(n).\label{eq:thin shell}
\end{equation}
Similarly, by the reverse  inequality for log-concave measures
(See \cite{carbery2001distributional} for example), there is an absolute
constant $C>0$ such that: 
\[
\E_{\eta}[\left\Vert x\right\Vert _{2}^{6}]-3n\E_{\eta}[\left\Vert x\right\Vert _{2}^{4}]+2n^{3}=\E_{\eta}\left[\left(\left\Vert x\right\Vert _{2}^{2}-n\right)^{3}\right]\leq C\E_{\eta}\left[\left(\left\Vert x\right\Vert _{2}^{2}-n\right)^{2}\right]^{\frac{3}{2}}=O(n^{\frac{3}{2}}).
\]
Hence, 
\[
\E_{\eta}[\left\Vert x\right\Vert _{2}^{6}]-n^{3}=3n\E_{\eta}[\left\Vert x\right\Vert _{2}^{4}]-3n^{3}+O(n^{\frac{3}{2}})=O(n^{2}).
\]
Since $\E_{\eta}[\left\Vert x\right\Vert _{2}^{6}]=n^{3}\alpha_{(2,2,2)}+O(n^{2})$,
we deduce that $\alpha_{(2,2,2)}=1+O(n^{-1})$. Furthermore, a thin-shell estimate for unconditional log-concave measures (see e.g.~\cite[Lemma 12.4.8]{BGVV14}) also applies to $\|x\|_4^4$,
\begin{equation}
\mathrm{Var}_{\eta}(\left\Vert x\right\Vert _{4}^{4})=O(n).\label{eq:variance of 4-norm}
\end{equation}
We now turn to estimate $\alpha_{(4,2)}$. By the Cauchy-Schwarz inequality,
and by (\ref{eq:thin shell}) and (\ref{eq:variance of 4-norm}),
\begin{align*}
\left|n(n-1)\alpha_{4,2}+n\alpha_{6}-n^{2}\alpha_{4}\right|  &=\left|\E_{\eta}\left[(\left\Vert x\right\Vert _{4}^{4}-n\alpha_{4})(\left\Vert x\right\Vert _{2}^{2}-n)\right]\right|\\
&
\leq\left(\mathrm{Var}_{\eta}(\left\Vert x\right\Vert _{4}^{4})\mathrm{Var}_{\eta}(\left\Vert x\right\Vert _{2}^{2})\right)^{\frac{1}{2}}=O(n).
\end{align*}
Dividing by $n^{2}$, we deduce that $\alpha_{4,2}=\alpha_{4}+O(n^{-1})$,
and we are done. 
\end{proof}
\begin{prop}
\label{prop:quadratic}Let $\eta$ be an $H_{n}$-invariant, log-concave,
isotropic measure on $\R^{n}$, and let $f\in\mathcal{P}_{2}(\R^{n})$
with $\mathrm{coeff}_2(f)=1$. Then 
\[
\E_{\eta}[f^{2}]\geq\frac{4}{5}+O(n^{-1}) > 0,
\]
and 
\[
\mathrm{Var}_{\eta}(f^{2})\geq\min\left(\mathrm{Var}_\eta\left(\frac{1}{\sqrt{n}}\|x\|_2^2\right),\frac{4}{5}+O(n^{-1})\right).
\]
\end{prop}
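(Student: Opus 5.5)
The proof reduces to the spectral decomposition of Proposition \ref{prop: quadratic case}. Since $\mathcal{P}_2(\R^n)=V_1\oplus V_2\oplus V_3$ is an orthogonal decomposition with respect to the $\mathrm{coeff}_2$ inner product, and each $V_i$ is a common eigenspace of $\mathcal{E}$ and $\mathcal{V}$, Lemma \ref{lem:The-minimal-eigenvalue} gives
\[
\min_{\mathrm{coeff}_2(f)=1}\E_\eta[f^2]=\min(\lambda_1,\lambda_2,\lambda_3),\qquad \min_{\mathrm{coeff}_2(f)=1}\mathrm{Var}_\eta(f)=\min(\delta_1,\delta_2,\delta_3).
\]
Moreover, $\delta_1=\mathrm{Var}_\eta(\frac{1}{\sqrt n}\|x\|_2^2)$, while $\delta_2=\lambda_2=\alpha_4-\alpha_{(2,2)}$ and $\delta_3=\lambda_3=\alpha_{(2,2)}$. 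Since $\lambda_1\ge\delta_1\ge 0$ and also $\lambda_1=\alpha_4+(n-1)\alpha_{(2,2)}\ge\alpha_{(2,2)}=\lambda_3$ (all moments are nonnegative), both claims reduce to showing
\[
\alpha_{(2,2)}\ge\tfrac45+O(n^{-1}),\qquad \alpha_4-\alpha_{(2,2)}\ge\tfrac45+O(n^{-1}).
\]

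For $\lambda_3$ I would invoke Lemma \ref{lem:mixed moments}, which gives $\alpha_{(2,2)}=1+O(n^{-1})\ge\frac45+O(n^{-1})$. For $\lambda_2$ the same lemma reduces the claim to $\alpha_4\ge\frac95+O(n^{-1})$, i.e.\ a lower bound on the fourth moment of the one-dimensional marginal $\rho$ of $x_1$. By Pr\'ekopa--Leindler this marginal is log-concave, it is even by unconditionality, and it has $\int t^2\rho=1$ by isotropy.

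The remaining step is to show $\int t^4\rho\ge\frac95\left(\int t^2\rho\right)^2$ for every even log-concave $\rho$ on $\R$. The bound is sharp: equality holds for the uniform density on $[-\sqrt3,\sqrt3]$, consistent with Remark \ref{rem:Maybe cube}. My first attempt would be Lemma \ref{lem:Inequality on mements} with $m=2$, $k=0$, which gives
\[
\Big(\int t^2\rho\Big)^2\le\Big(1-\tfrac{4}{9}\Big)\int t^4\rho=\tfrac59\int t^4\rho,
\]
i.e.\ $\alpha_4\ge\frac95\alpha_2^2=\frac95$, as needed. This step is the crux; if the lemma did not apply directly, I would fall back on the standard layer-cake argument. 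Write $\rho$ as a mixture of uniform densities on symmetric intervals, and use that $\lambda\mapsto\int t^2\rho$ and $\int t^4\rho$ have an extremal ratio at indicator functions, as in the proof of that lemma.

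Combining these estimates yields $\E_\eta[f^2]\ge\min(\lambda_2,\lambda_3)\ge\frac45+O(n^{-1})$. For the variance, $\mathrm{Var}_\eta(f)\ge\min(\delta_1,\delta_2,\delta_3)\ge\min\big(\mathrm{Var}_\eta(\tfrac1{\sqrt n}\|x\|_2^2),\tfrac45+O(n^{-1})\big)$. For fixed small $n$ the statement is absorbed into the $O(n^{-1})$ term.
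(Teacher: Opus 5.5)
Your argument coincides with the paper's proof in all its substantive steps. You reduce through Proposition \ref{prop: quadratic case} and Lemma \ref{lem:The-minimal-eigenvalue} to the eigenvalues $\lambda_2,\lambda_3$, keeping $\delta_1$ aside for the variance. You then take $\alpha_{(2,2)}=1+O(n^{-1})$ from Lemma \ref{lem:mixed moments} and $\alpha_4\ge\frac{9}{5}$ from Lemma \ref{lem:Inequality on mements} with $m=2$, $k=0$. Your remarks that the $x_1$-marginal is even and log-concave (Pr\'ekopa--Leindler) and that $\lambda_1\ge\lambda_3$ make explicit what the paper leaves implicit.

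The proposal is incomplete in its last sentence. Absorbing small $n$ into the $O(n^{-1})$ term gives the asymptotic inequality, but not the ``$>0$'' part of the statement. That part is what Theorem \ref{thm:symmetric} needs: a single $c>0$ valid for all $n$ and all admissible $\eta$.

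The constant hidden in Lemma \ref{lem:mixed moments} comes from the thin-shell bound $\mathrm{Var}_\eta(\|x\|_2^2)\le Cn$, with $C$ uncontrolled. Your route gives exactly $\alpha_4-\alpha_{(2,2)}=\frac{n(\alpha_4-1)-n^{-1}\mathrm{Var}_\eta(\|x\|_2^2)}{n-1}\ge\frac{\frac{4}{5}n-C}{n-1}$, which certifies nothing for $n\le\frac{5}{4}C$. The fact that $\frac{1}{2}\E_\eta[(x_1^2-x_2^2)^2]>0$ for each individual $\eta$ does not give a bound uniform in $\eta$.

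The paper closes this with a separate fixed-$n$ argument. An isotropic log-concave density on $\R^n$ is bounded below by some $c(n)>0$ on a ball $B(0,r(n))$ (\cite[Theorem 5.14]{LV07}). Hence $\E_\eta[f^2]\ge c(n)\int_{B(0,r(n))}f^2\,dx\ge c'(n)\,\mathrm{coeff}_2(f)^2$. The last step follows from \cite[Theorem 2]{GM22} and homogeneity, or simply from positive definiteness of this quadratic form on the finite-dimensional space $\mathcal{P}_2(\R^n)$. Adding this for the finitely many $n$ below the threshold completes your proof.
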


\begin{proof}
Firstly, we may assume that $n\gg1$. Indeed for each fixed $n\in\N$,
we can bound the density of $\eta$ by the indicator function of a
small Euclidean ball in $\R^{n}$, up to some constant $C(n)$ (see
e.g. \cite[Theorem 5.14]{LV07}). For isotropic balls the proposition
is known by \cite[Theorem 2]{GM22}, which implies the proposition
for every fixed $n$. 

Secondly, by Lemma \ref{lem:The-minimal-eigenvalue}, it is enough
to show that 
\begin{equation}
\min\left\{ \lambda_{1},\lambda_{2},\lambda_{3}\right\} =\min\left\{ \alpha_{4}+(n-1)\alpha_{(2,2)},\alpha_{4}-\alpha_{(2,2)},\alpha_{(2,2)}\right\} \geq\frac{4}{5}+O(n^{-1}).\label{eq:enough to show}
\end{equation}
Applying Lemma \ref{lem:Inequality on mements} with $m=2$ and $k=0$
gives $\alpha_{4}\geq\frac{9}{5}$. (\ref{eq:enough to show}) now
follows from Lemma \ref{lem:mixed moments}.
Finally, the variance bound follows by Proposition \ref{prop: quadratic case}, as the only difference between $\mathcal{V}$ and $\mathcal{E}$ lies in the eigenspace corresponding to the eigenvalue
$\delta_1 = \mathrm{Var}_\eta\left(\frac{1}{\sqrt{n}}\|x\|_2^2\right).$
\end{proof}
\begin{prop}
Let $\eta$ be an $H_{n}$-invariant, log-concave, isotropic measure
on $\R^{n}$, and let $f\in\mathcal{P}_{3}(\R^{n})$ with $\mathrm{coeff}_3(f)=1$.
Then 
\[
\E_{\eta}[f^{2}]\geq\frac{108}{175}+O(n^{-1}) >0.
\]
\end{prop}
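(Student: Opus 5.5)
The plan is to use the explicit spectral decomposition of Proposition \ref{prop:cubic case} together with the moment asymptotics of Lemma \ref{lem:mixed moments}. Since $d=3$ is odd, $\mathcal{E}=\mathcal{V}$. By Lemma \ref{lem:The-minimal-eigenvalue} it then suffices to bound each eigenvalue of $\mathcal{E}$ on $\mathcal{P}_3(\R^n)$ from below by $\frac{108}{175}+O(n^{-1})$.

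First, I reduce to $n\gg1$ exactly as in the proof of Proposition \ref{prop:quadratic}. For each fixed $n$, compare the density of $\eta$ with a small Euclidean ball and invoke \cite[Theorem 2]{GM22}.

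Next, I treat the two multiplicity-one components $V_1,V_2$. By Proposition \ref{prop:cubic case} their eigenvalues are $\alpha_{(2,2,2)}$ and $\alpha_{(4,2)}-\alpha_{(2,2,2)}$. Lemma \ref{lem:mixed moments} gives $\alpha_{(2,2,2)}=1+O(n^{-1})$ and $\alpha_{(4,2)}=\alpha_4+O(n^{-1})$. Lemma \ref{lem:Inequality on mements} with $m=2,k=0$ and $\alpha_2=1$ gives $\alpha_4\ge\frac95$. Hence these two eigenvalues are at least $1+O(n^{-1})$ and $\frac45+O(n^{-1})$ respectively, both above $\frac{108}{175}$.

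The main step is the multiplicity-two isotypic component $V_3\oplus V_4$. I will not work with the formulas involving $a_\pm$. Instead, I use that each eigenspace there contains an $(H_{1,n-1},\varepsilon_{\{1\}})$-equivariant vector, so both eigenvalues are the eigenvalues of $\mathcal{E}$ compressed to the two-dimensional space $\langle u,w\rangle$, where
\[
u=x_1^3,\qquad w=\frac{1}{\sqrt{n-1}}\Big(\sum_{i\ge2}x_i^2\Big)x_1 .
\]
These two vectors are orthonormal for the $\mathrm{coeff}_3$ inner product. Computing the entries as in \eqref{eq:coefficients of C}–\eqref{eq:Coeff of C 2}, the Gram matrix is
\[
\begin{pmatrix}\alpha_6 & \sqrt{n-1}\,\alpha_{(4,2)}\\ \sqrt{n-1}\,\alpha_{(4,2)} & \alpha_{(4,2)}+(n-2)\alpha_{(2,2,2)}\end{pmatrix}.
\]
Its trace is $n+O(1)$, using Lemma \ref{lem:mixed moments} and the fact that $\alpha_6,\alpha_4=O(1)$ by reverse Hölder. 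Its determinant is
\[
(n-2)\alpha_6\alpha_{(2,2,2)}-(n-1)\alpha_{(4,2)}^2+O(1)=n\big(\alpha_6-\alpha_4^2\big)+O(1).
\]
The large eigenvalue is therefore about $n$, and the small one equals $\det/\mathrm{tr}=\alpha_6-\alpha_4^2+O(n^{-1})$. The obstacle is to bound $\alpha_6-\alpha_4^2$ from below.

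For this I apply Lemma \ref{lem:Inequality on mements} to the even log-concave one-dimensional marginal, with $m=3$ and $k=1$. This gives $\alpha_4^2\le\big(1-\frac{4}{25}\big)\alpha_6\alpha_2=\frac{21}{25}\alpha_6$. Hence
\[
\alpha_6-\alpha_4^2\ge\frac{4}{21}\alpha_4^2\ge\frac{4}{21}\cdot\frac{81}{25}=\frac{108}{175},
\]
where the last inequality uses $\alpha_4\ge\frac95$. Combining this with the bounds on $V_1$ and $V_2$ completes the argument, and it also shows the constant is attained when the marginal is uniform, i.e.\ for the cube.
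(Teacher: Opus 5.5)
Your proposal is correct and follows the paper's proof essentially step by step. It uses the same reduction to $n\gg1$, the same treatment of $V_1,V_2$ via Lemma \ref{lem:mixed moments}, and the same applications of Lemma \ref{lem:Inequality on mements} with $(m,k)=(2,0)$ and $(3,1)$ to get $\alpha_6-\alpha_4^2\ge\frac{4}{21}\cdot\frac{81}{25}=\frac{108}{175}$. The only difference is minor: you read off the small eigenvalue of the multiplicity-two block as $\det/\mathrm{tr}$ of the $2\times2$ Gram matrix in the orthonormal equivariant basis $\{x_1^3,\frac{1}{\sqrt{n-1}}(\sum_{i\ge2}x_i^2)x_1\}$, while the paper bounds $\lambda_3,\lambda_4$ through the explicit roots $a_\pm$ (your version even gives the asymptotic equality $\lambda_{\min}=\alpha_6-\alpha_4^2+O(n^{-1})$ on that block).
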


\begin{proof}
As in the case of $d=2$, we may assume that $n\gg1$. By Lemma \ref{lem:The-minimal-eigenvalue} and Proposition \ref{prop:cubic case},
it is enough to show that all eigenvalues $\lambda_{1},\lambda_{2},\lambda_{3},\lambda_{4}$
are bounded from below by $\frac{108}{175}+O(n^{-1})$. By Lemma \ref{lem:mixed moments}
and the proof of Proposition \ref{prop:quadratic}, $\lambda_{1}=1+O(n^{-1})$
and $\lambda_{2}\geq\frac{4}{5}+O(n^{-1})$, so it is left to bound
$\lambda_{3},\lambda_{4}$. Recall that 
\[
\lambda_{3}=\frac{(n-1)\alpha_{(4,2)}}{a_{+}}+\alpha_{6}\text{ and }\lambda_{4}=\frac{(n-1)\alpha_{(4,2)}}{a_{-}}+\alpha_{6},
\]
with 
\[
a_{\pm}=\frac{-\beta\pm\sqrt{\beta^{2}+4(n-1)}}{2},\text{ where }\beta=\frac{(n-2)\alpha_{(2,2,2)}-\alpha_{6}}{\alpha_{(4,2)}}+1.
\]
Since $\sqrt{1+x}<1+x/2$ for $x>0$, we have 
\[
a_{+}=\frac{-\beta+\beta\sqrt{1+\frac{4(n-1)}{\beta^{2}}}}{2}\leq\frac{-\beta+\beta(1+\frac{2(n-1)}{\beta^{2}})}{2}=\frac{n-1}{\beta},
\]
and since $\alpha_{(2,2,2)}=\E_{\eta}[x_{1}^{2}x_{2}^{2}x_{3}^{2}]>\frac{1}{2}$
for $n\gg1$, we have: 
\[
\lambda_{3}=\frac{(n-1)\alpha_{(4,2)}}{a_{+}}+\alpha_{6}\geq\beta\alpha_{(4,2)}+\alpha_{6}=(n-2)\alpha_{(2,2,2)}+\alpha_{(4,2)}>\frac{n-2}{2}.
\]
for $n\gg1$. It is left to bound $\lambda_{4}$. Now we have 
\begin{equation}
a_{-}=\frac{-\beta-\beta\sqrt{1+\frac{4(n-1)}{\beta^{2}}}}{2}\leq-\frac{2\beta}{2}=-\beta.\label{eq:ineq}
\end{equation}
Hence, by (\ref{eq:ineq}) and by Lemma \ref{lem:mixed moments},
we deduce that
\begin{align*}
\lambda_{4} & =\alpha_{6}+\frac{(n-1)\alpha_{(4,2)}}{a_{-}}\geq\alpha_{6}-\frac{(n-1)\alpha_{(4,2)}}{\beta}=\alpha_{6}-\frac{(n-1)\alpha_{(4,2)}^{2}}{(n-2)\alpha_{(2,2,2)}-\alpha_{6}+\alpha_{(4,2)}}\\
 & =\alpha_{6}-\frac{\alpha_{(4,2)}^{2}}{\alpha_{(2,2,2)}}+O(n^{-1})=\alpha_{6}-\alpha_{4}^{2}+O(n^{-1}).
\end{align*}
Applying Lemma \ref{lem:Inequality on mements} with $m=3$ and $k=1$
we deduce that $\alpha_{6}\geq\frac{25}{21}\alpha_{4}^{2}$. We have
seen that $\alpha_{4}\geq\frac{9}{5}$, so $\alpha_{6}-\alpha_{4}^{2}\geq\frac{4}{21}\alpha_{4}^{2}\geq\frac{108}{175}$,
which concludes the proof. 
\end{proof}
\bibliographystyle{alpha}
\bibliography{bibfile}

\end{document}